\numberwithin{equation}{section}
\setlist[enumerate,1]{label={\rm(\arabic*)}, ref={\rm\arabic*}} 
\newtheorem{theorem}{Theorem}[section]
\newtheorem{corollary}[theorem]{Corollary}
\newtheorem{lemma}[theorem]{Lemma}
\newtheorem{proposition}[theorem]{Proposition}
\newtheorem{defthm}[theorem]{Definition-Theorem}
\theoremstyle{definition}
\newtheorem{definition}[theorem]{Definition}
\theoremstyle{remark}
\newtheorem{example}[theorem]{Example}
\newtheorem{remark}[theorem]{Remark}
\DeclareMathOperator{\im}{im}
\DeclareMathOperator{\Hom}{Hom}
\DeclareMathOperator{\Def}{Def}
\DeclareMathOperator{\codim}{codim}
\DeclareMathOperator{\Mon}{Mon}
\DeclareMathOperator{\id}{id}
\DeclareMathOperator{\Sym}{Sym}
\DeclareMathOperator{\Tr}{Tr}
\DeclareMathOperator{\gr}{gr}
\DeclareMathOperator{\Supp}{Supp}
\DeclareMathOperator{\Aut}{Aut}
\DeclareMathOperator{\SO}{SO}
\DeclareMathOperator{\IH}{IH}
\DeclareMathOperator{\SH}{SH}
\DeclareMathOperator{\SIH}{SIH}
\newcommand{\lt}{\mathrm{lt}}
\newcommand{\extp}{\@ifnextchar^\@extp{\@extp^{\,}}}
\def\@extp^#1{\mathop{\bigwedge\nolimits^{\!#1}}}
\newcommand*\bigcdot{\mathpalette\bigcdot@{.5}}
\newcommand*\bigcdot@[2]{\mathbin{\vcenter{\hbox{\scalebox{#2}{$\m@th#1\bullet$}}}}}
\def\lowsim{\vbox to 0pt{\vss\hbox{$\sim$}\vskip-3pt}}
\newcommand{\lra}{\longrightarrow}
\newcommand{\longhookrightarrow}{\lhook\joinrel\longrightarrow}
\newcommand{\supst}[1]{\ensuremath{#1^{\mathrm{st}}}}
\newcommand{\supth}[1]{\ensuremath{#1^{\mathrm{th}}}}
\title{The Looijenga--Lunts--Verbitsky Algebra for Primitive Symplectic Varieties with Isolated Singularities}
\author{Benjamin Tighe}
\address{Department of Mathematics, University of Oregon, Eugene, OR 97403, USA}
\email{bentighe@uoregon.edu}
\cong \mathfrak{so}\left(\left(\IH^2(X, \mathbb Q), Q_X\right)\oplus \mathfrak h\right), 
\begin{document}



\maketitle

\begin{prelims}

\DisplayAbstractInEnglish

\bigskip

\DisplayKeyWords

\medskip

\DisplayMSCclass

\end{prelims}


\newpage

\setcounter{tocdepth}{1}

\tableofcontents


\section{Introduction}

\subsection{Background} Hyperk\"ahler manifolds are distinguished in complex algebraic geometry due to their rich Hodge theory and form one of the three building blocks of the Beauville--Bogomolov decomposition of K-trivial varieties.  Verbitsky's global Torelli theorem, see \cite{verbitsky2009global}, states that hyperk\"ahler manifolds are essentially determined by their second cohomology, along with its monodromy representation.  One then expects the higher cohomology groups to be determined by the Hodge theory of $H^2$.  This can be described using the Looijenga--Lunts--Verbitsky (LLV) algebra, a Lie algebra on the total cohomology $H^*(X, \mathbb Q)$ of a compact hyperk\"ahler manifold~$X$, which was studied independently by Looijenga--Lunts \cite{looijenga1997lie} and by Verbitsky \cite{verbitsky1996cohomology} in his thesis.
    
We say a class $\omega \in H^2(X, \mathbb Q)$ is HL if it satisfies the hard Lefschetz theorem: For every $k$, the cupping morphism $L_\omega$ on cohomology gives isomorphisms
$$
L_\omega^k\colon H^{\dim X-k}(X, \mathbb Q) \overset{\lowsim}\lra H^{\dim X + k}(X, \mathbb Q).
$$
Equivalently, $\omega$ is HL if the nilpotent operator $L_\omega$ completes to an $\mathfrak{sl}_2$-triple $\mathfrak g_\omega = \langle L_\omega, \Lambda_\omega, H\rangle$, where $H = [L_\omega, \Lambda_\omega]$ acts as $(k-\dim X)\id$ on $H^k(X, \mathbb Q)$.  We define the \textit{LLV algebra} of a hyperk\"ahler manifold $X$ to be the Lie algebra generated by all possible  $\mathfrak{sl}_2$-algebras of the cohomology ring $H^*(X)$ coming from hard Lefschetz operators: $$
\mathfrak{g} = \langle L_\omega, \Lambda_\omega:\omega~\mathrm{is~HL}\rangle.
$$
Looijenga--Lunts and Verbitsky show that the LLV algebra admits a natural isomorphism 
    \begin{equation} \label{LLVstructurehyperkahler} 
    \mathfrak g \cong \mathfrak{so}\left(\left(H^2(X, \mathbb Q),q_X\right)\oplus \mathfrak h\right), 
    \end{equation}
    where $q_X$ is the Beauville--Bogomolov--Fujiki form and $\mathfrak h$ a hyperbolic plane.  The LLV algebra not only acts on $H^*(X, \mathbb Q)$, but the structure theorem shows that the algebra is dependent only on the pair $(H^2(X, \mathbb Q),q_X)$ and is therefore a deformation invariant.  In fact, the various Hodge structures of the higher cohomology groups corresponding to deformations of $X$ are detected by the representation theory of $\mathfrak g$, as $\mathfrak g_{\mathbb R} : = \mathfrak{so}((H^2(X, \mathbb R), q_X)\oplus \mathfrak h)$ contains the Weil operators $C = i(p-q)\id$ for all complex structures on $X$.
    
    In recent years, there has been progress in generalizing the Hodge theory of hyperk\"ahler manifolds to the singular setting.  A \textit{primitive symplectic variety} is a normal compact K\"ahler variety $X$ such that $H^1(\mathscr O_X) = 0$ and the regular locus $U$ admits a global holomorphic symplectic form $\sigma$ which extends holomorphically across any resolution of singularities and satisfies $H^0(U, \Omega_U^2) = \mathbb C\cdot \sigma$.  Such varieties also enjoy a rich Hodge theory: By work of Bakker--Lehn \cite{bakker2018global}, the second cohomology group of a primitive symplectic variety~$X$ carries a pure Hodge structure and admits a version of global Torelli, which for \textit{$\mathbb Q$-factorial terminal singularities} says that $X$ is essentially recovered by its $H^2$.  It is then natural  to ask if there is a generalization of the LLV algebra for primitive symplectic varieties which encodes the Hodge theory.
    \subsection{Main Results}  
        
        \subsubsection{The LLV algebra for intersection cohomology}    
        
        Constructing the LLV algebra for the \textit{ordinary} cohomology of a primitive symplectic variety is difficult since $H^*(X, \mathbb Q)$, \textit{a priori}, neither carries a pure Hodge structure nor satisfies the hard Lefschetz theorem.  Instead, we work with the \textit{intersection cohomology groups}.
        
        Intersection cohomology was invented by Goresky--MacPherson \cite{goresky1980intersection} as a way of generalizing Poincar\'e duality to singular topological spaces.  Beilinson--Bernstein--Deligne \cite{deligne1983faisceaux} observed, using characteristic $p$ methods, that the intersection cohomology groups of a projective variety admit a decomposition theorem with respect to projective morphisms.  As a consequence, the intersection cohomology groups carry pure Hodge structures and satisfy the hard Lefschetz theorem.  This was also observed by Saito \cite{saito1988modules} in greater generality using the theory of mixed Hodge modules, as well as work of de Cataldo--Migliorini \cite{de2005hodge} using purely Hodge theoretic techniques.  The goal of this paper is to understand the total Lie algebra with respect to intersection cohomology, which we define analogously as the Lie algebra generated by the $\mathfrak{sl}_2$-operators corresponding to any HL class, \textit{i.e.}, those classes $\omega \in \IH^2(X, \mathbb Q)$ such that
        $$
        L_\omega^k\colon \IH^{\dim X - k}(X, \mathbb Q) \overset{\lowsim}\lra \IH^{\dim X + k}(X, \mathbb Q).
        $$
        
        To this end, we define a Beauville--Bogomolov--Fujiki (BBF) form $Q_X$ on the intersection cohomology $\IH^2(X, \mathbb Q)$ of a primitive symplectic variety $X$; see Section~\ref{subsubsection bbf form}.  It is compatible with the standard BBF form $q_X$ on $H^2(X, \mathbb Q)$ (see Definition~\ref{BBFdef}) and satisfies
        $$
        Q_X|_{H^2(X, \mathbb Q)} = q_X
        $$
        corresponding to the natural inclusion $H^2(X, \mathbb Q) \subset \IH^2(X, \mathbb Q)$ (see Remark~\ref{remark intersection cohomology inclusion rational sings}).
        
        \begin{theorem} \label{LLVint}
          Let $X$ be a primitive symplectic variety with isolated singularities and $b_2 \ge 5$ and $\mathfrak g$ the algebra generated by all $\mathfrak{sl}_2$-triples corresponding to HL classes in $\IH^2(X, \mathbb Q)$.  There are isomorphisms
          $$
          \mathfrak g \cong \mathfrak{so}\left(\left(\IH^2(X, \mathbb Q), Q_X\right)\oplus \mathfrak h\right), \quad \mathfrak g_{\mathbb R} \cong \mathfrak{so}(B_2-2,4), 
          $$
          where $B_2 = \dim \IH^2(X, \mathbb Q)$.
        
        Moreover, a Hodge structure on $\IH^*(X, \mathbb Q)$ is determined by a Hodge structure on $\IH^2(X, \mathbb Q)$ and the action of\, $\mathfrak g$ on $\IH^*(X, \mathbb Q)$. 
         \end{theorem}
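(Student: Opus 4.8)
The plan is to transport the Looijenga--Lunts--Verbitsky argument to the graded $H^*(X,\mathbb Q)$-module $IH^*(X,\mathbb Q)$, using in place of the classical K\"ahler package its intersection-cohomology analogue --- Hard Lefschetz with respect to ample classes, the Hodge--Riemann relations, and the primitive decomposition --- provided by Beilinson--Bernstein--Deligne, Saito, and de Cataldo--Migliorini. Isolated singularities enter at once through the identification $IH^2(X,\mathbb Q)\cong H^2(X,\mathbb Q)$: for $\dim X\ge 3$ the support conditions for $IC_X$ at the finitely many singular points make the comparison map an isomorphism in degree $2$, while a primitive symplectic surface has Du Val singularities, which are $\mathbb Q$-homology-manifold points. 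Thus $IH^2(X,\mathbb Q)$ is the pure weight-two Hodge structure of Bakker--Lehn, it carries the intersection Beauville--Bogomolov--Fujiki form $Q_X$ and the Fujiki relation $\int_X\alpha^{2n}=c\,Q_X(\alpha)^n$ (which I would deduce from the smooth case by passing to a resolution $\pi\colon\tilde X\to X$, so that $\pi^*\alpha\in H^2(\tilde X)$ and $IH^*(X)$ is a direct summand of $H^*(\tilde X)$ by the decomposition theorem), and --- crucially --- it is precisely this group that acts on $IH^*(X,\mathbb Q)$ by cup product through the module structure. When the singularities are not isolated $IH^2$ is in general strictly larger than $H^2$, and this is exactly where the argument would break down.

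With the package in hand I would argue as follows. K\"ahler classes of $X$ are Lefschetz operators on $IH^*(X,\mathbb C)$ by Hard Lefschetz for intersection cohomology, so the Lefschetz classes form a non-empty Zariski-open, hence dense, subset of $IH^2(X,\mathbb C)$, and therefore every $L_\alpha$ with $\alpha\in IH^2$ lies in $\mathfrak g$. Since all the $L_\alpha$ commute and raise degree by $2$, the grading operator $H$ equips $\mathfrak g$ with a short $\mathbb Z$-grading $\mathfrak g=\mathfrak g_{-2}\oplus\mathfrak g_0\oplus\mathfrak g_2$ in which $\mathfrak g_{\pm2}$ is abelian of dimension $B_2$. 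The crux is the identification $\mathfrak g_0=\mathbb Q\,H\oplus\mathfrak{so}(IH^2,Q_X)$: one computes the brackets $[L_\alpha,\Lambda_\beta]$ and shows, using the Fujiki relation together with Poincar\'e duality on $IH^*(X,\mathbb Q)$, that the action of $\mathfrak g_0$ on $\mathfrak g_2$ preserves $Q_X$ up to the scalar contributed by $H$, while conversely all of $\mathfrak{so}(Q_X)$ is attained because there are enough Lefschetz classes; semisimplicity of $[\mathfrak g,\mathfrak g]$ follows, as in the smooth case, from the polarized Hodge--Lefschetz structure. A short-graded Lie algebra with pieces $IH^2,\ \mathbb Q\,H\oplus\mathfrak{so}(Q_X),\ IH^2$ and the tautological bracket is $\mathfrak{so}(IH^2\oplus\mathfrak h)$, which gives the first isomorphism. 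For the real form, $Q_X$ on $IH^2(X,\mathbb R)=H^2(X,\mathbb R)$ has signature $(3,B_2-3)$ --- two positive directions from $(IH^{2,0}\oplus IH^{0,2})_{\mathbb R}$ since $h^{2,0}=1$, one from a K\"ahler class, and $B_2-3$ negative directions on the primitive $(1,1)$-part --- so $IH^2\oplus\mathfrak h$ has signature $(4,B_2-2)$ and $\mathfrak g_{\mathbb R}\cong\mathfrak{so}(B_2-2,4)$.

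For the statement on Weil operators, a pure Hodge structure on $IH^*(X,\mathbb Q)$ restricts on $IH^2$ to the weight-two structure and thereby determines the real element $N\in\mathfrak{so}(IH^2,Q_X)_{\mathbb R}$ acting as $i(p-q)$ on $IH^{p,q}$ --- that is, $i$ on $IH^{2,0}$, $0$ on $IH^{1,1}$, $-i$ on $IH^{0,2}$; by the first part $N$ lies in $\mathfrak{so}(Q_X)\subseteq\mathfrak g_0\subseteq\mathfrak g$. I would then show that $\exp(\tfrac{\pi}{2}N)$, acting on $IH^*(X,\mathbb Q)$ through the integrated $\mathfrak g$-action, is exactly the Weil operator $C_\sigma$ of the total Hodge structure, and is tautologically the image of the Weil operator of $IH^2(X,\mathbb Q)=H^2(X,\mathbb Q)$ extended by the identity on $\mathfrak h$. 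The key point is that $N$ acts on all of $IH^*$ with eigenvalue $i(p-q)$ on $IH^{p,q}$: it does so on $IH^0$ (which $\mathfrak{so}(Q_X)$ annihilates) and on $IH^2$ by construction, hence on the entire Verbitsky component because $\mathfrak g_0$ acts by derivations of the cup product $IH^2\otimes IH^\bullet\to IH^{\bullet+2}$, and on the remaining $\mathfrak g$-isotypic summands of $IH^*$ because their Hodge structures sit inside tensor powers of $IH^2$ --- the Mumford--Tate / Kuga--Satake input. Exponentiating then yields $i^{p-q}=C_\sigma$.

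I expect the main obstacle to be twofold, with both difficulties lying in assembling the right ingredients rather than in the Lie-theoretic bookkeeping, which is as in \cite{looijenga1997lie,verbitsky1996cohomology}. First, the Fujiki relation for $IH^*(X)$ --- on which the computation of $\mathfrak g_0$ rests --- must be set up directly in the K\"ahler setting, via the resolution and the decomposition theorem, including the compatibility of $Q_X$ with the quadratic form induced from $H^2(\tilde X)$. Second, the Weil-operator statement requires the $\mathfrak g$-representation $IH^*(X,\mathbb Q)$ to be compatible with Hodge structures; in the smooth case this is extracted from the hyperk\"ahler metric and its $SU(2)$-action, which is unavailable here, so one must instead obtain it from the Hodge module structure on $IC_X$ and control the non-Verbitsky summands of $IH^*$ by Mumford--Tate-theoretic means.
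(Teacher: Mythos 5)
There are two genuine gaps here, and both sit exactly where the paper has to do real work. First, your starting identification $IH^2(X,\mathbb Q)\cong H^2(X,\mathbb Q)$ for isolated singularities is false in general. For isolated singularities one has $IH^2(X,\mathbb Q)\cong H^2(U,\mathbb Q)$ (cohomology of the regular locus), and the natural map $H^2(X,\mathbb Q)\to IH^2(X,\mathbb Q)$ is injective but typically \emph{not} surjective: in the paper this surjectivity is precisely a $\mathbb Q$-factoriality criterion (Proposition \ref{Qcrit}), and it fails, e.g., for a small contraction of a hyperk\"ahler fourfold, where $IH^2(X)\cong H^2(Y)\supsetneq H^2(X)$. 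This is not a cosmetic issue: the monodromy density theorem of Bakker--Lehn lives on $\Gamma=(H^2(X,\mathbb Z),q_X)$, and the cup-product action of classes in $IH^2\setminus H^2$ on $IH^*(X)$ is not even defined a priori ($IH^*$ is only an $H^*(X)$-module). The paper therefore proves the structure theorem first in the $\mathbb Q$-factorial terminal case (where $H^2=IH^2$), and then reaches a general $X$ by passing to a $\mathbb Q$-factorial terminalization $\phi:Z\to X$, using that $\phi$ is semismall (Corollary \ref{Qfactorialterminalsemismall}) to embed $\mathfrak g\subset\mathfrak g_Z$ and to compare $Q_X$ with $q_Z$ (Lemma \ref{IntBBFrest}). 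Your plan has no substitute for this reduction, and the restriction to isolated singularities does not make it unnecessary.

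Second, the Lie-theoretic step you treat as bookkeeping is the actual crux. Commutativity of the $L_\alpha$ is trivial and does \emph{not} give the short grading $\mathfrak g=\mathfrak g_{-2}\oplus\mathfrak g_0\oplus\mathfrak g_2$; what is needed is $[\Lambda_\alpha,\Lambda_\beta]=0$ for all HL classes, together with the derivation property of the commutators $[L_\alpha,\Lambda_\beta]$. In the smooth case Looijenga--Lunts and Verbitsky obtain these from the hyperk\"ahler metric (the $\mathfrak{so}(4,1)$-triples of K\"ahler forms and density of twistor three-planes), which is unavailable here; appealing to the ``polarized Hodge--Lefschetz structure'' of intersection cohomology or to Hodge--Riemann relations does not produce these identities. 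The paper's replacement is its main new geometric input: the symplectic Hard Lefschetz theorem for $IH^*$ (Theorem \ref{symHL}, proved via degeneration of Hodge-to-de~Rham on $U$ and reflexivity of $\pi_*\Omega^p_{\tilde X}(\log E)(-E)$), which yields $[\Lambda_\sigma,\Lambda_{\overline\sigma}]=0$ by a simultaneous primitive decomposition, hence a single non-isotropic pair $(\gamma,\gamma')=(\mathfrak R\sigma,\mathfrak I\sigma)$ with $[\Lambda_\gamma,\Lambda_{\gamma'}]=0$ and $[L_\gamma,\Lambda_{\gamma'}]=i(p-q)\mathrm{id}$; these Zariski-closed relations are then propagated to all non-isotropic pairs by Zariski density of the monodromy group in $SO(\Gamma_{\mathbb C})$ (Theorem \ref{monodensity}), which also characterizes HL classes as the non-isotropic ones (Proposition \ref{HLregclass}). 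The same mechanism gives the Weil-operator statement directly, as $C_\sigma=[L_\gamma,\Lambda_{\gamma'}]\in\mathfrak g$, so the Mumford--Tate/Kuga--Satake machinery you invoke for that part is downstream of the structure theorem, not an input to it. As written, your proposal assumes the two facts that constitute the theorem's real content.
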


         The assumption on $b_2$ is due to our use of the global moduli theory of Bakker--Lehn \cite{bakker2018global}.  We note that the case $b_2 \le 4$ holds assuming the surjectivity of the period map and other special cases (see Section~\ref{subsection remark on b_2}).  We emphasize the fact that our proof of Theorem~\ref{LLVint} gives an \textit{algebraic proof} of (\ref{LLVstructurehyperkahler}).  We expect our methods to generalize to any primitive symplectic variety.
        
         \subsubsection{Symplectic symmetry on the intersection cohomology groups}  One of the key features of the cohomology of a hyperk\"ahler manifold $X$ is its structure as an \textit{irreducible holomorphic symplectic manifold}.  The holomorphic symplectic form $\sigma$ on $X$ induces isomorphisms $\Omega_X^{n-p} \xrightarrow{\lowsim} \Omega_X^{n+ p}$ by wedging, where $2n$ is the (complex) dimension of $X$.  Passing to cohomology, we get the \textit{symplectic hard Lefschetz theorem}
         $$
         L_\sigma^p\colon H^{n-p,q}(X) \overset{\lowsim}\lra H^{n+p,q}(X), 
         $$
         which induces the extra symmetry on the Hodge diamond of $X$. An interesting observation is that, by deforming a compact hyperk\"ahler manifold $X$, we can see that the hard Lefschetz theory of $H^*(X, \mathbb Q)$ is related to its symplectic hard Lefschetz theory due to Verbitsky's global Torelli theorem.  With this in mind, we first show that the intersection cohomology groups of primitive symplectic varieties with isolated singularities also admit this symplectic symmetry.
        
        \begin{theorem} \label{symHL}
          Let $X$ be a primitive symplectic variety of dimension $2n$ with isolated singularities, and let $\IH^{p,q}(X) \subset \IH^k(X, \mathbb C)$ be the $(p,q)$-part of the canonical Hodge structure on $\IH^k(X, \mathbb Q)$.  There is a cupping morphism $L_\sigma\colon \IH^{p,q}(X) \to \IH^{p+2,q}(X)$ on the Hodge pieces of the intersection cohomology which induced isomorphisms
          $$
          L_\sigma^p\colon \IH^{n-p,q}(X) \overset{\lowsim}\lra \IH^{n+p,q}(X).
          $$
        \end{theorem}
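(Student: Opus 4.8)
The approach is to deduce Theorem~\ref{symHL} from the structure theorem, Theorem~\ref{LLVint}, by exhibiting an $\mathfrak{sl}_2$-triple inside $\mathfrak g_{\mathbb C}$ whose raising operator is cup product with the symplectic class and whose neutral element grades $IH^*(X)$ by the first Hodge index. Write $2n=\dim X$, let $[\sigma]\in IH^2(X,\mathbb C)$ be the class of the symplectic form, and let $L_\sigma$ be the corresponding cupping operator on $IH^*(X)$, so $L_\sigma\colon IH^{p,q}(X)\to IH^{p+2,q}(X)$. First I would record two Hodge-theoretic facts: $[\sigma]$ spans $IH^{2,0}(X)$ (one-dimensional for a primitive symplectic variety), and $[\sigma]$ is isotropic for $Q_X$ while $Q_X([\sigma],\overline{[\sigma]})\neq 0$ --- both immediate from the Hodge type and the polarization (Riemann bilinear relations) of the weight-two Hodge structure on $IH^2(X)$. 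Isotropy identifies $L_\sigma$, under the isomorphism $\mathfrak g_{\mathbb C}\cong\mathfrak{so}((IH^2(X,\mathbb C),Q_X)\oplus\mathfrak h)_{\mathbb C}$, with the two-step nilpotent endomorphism of $IH^2(X,\mathbb C)\oplus\mathfrak h$ attached to $[\sigma]$; in particular $L_\sigma\in\mathfrak g_{\mathbb C}$. Since $Q_X([\sigma],\overline{[\sigma]})\neq 0$, the span $W$ of $[\sigma]$, $\overline{[\sigma]}$ and $\mathfrak h$ is a nondegenerate $4$-dimensional subspace, so $\mathfrak{so}(W)\cong\mathfrak{so}(4,\mathbb C)$ is a subalgebra of $\mathfrak g_{\mathbb C}$ containing $L_\sigma$ and the lowering operator $\Lambda_{\overline{[\sigma]}}$ attached to $\overline{[\sigma]}$.

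Next I would build the neutral element. Let $H\in\mathfrak g$ be the Lefschetz grading operator (acting by $k-2n$ on $IH^k(X)$), and let $\mathbf h_C\in\mathfrak g_{\mathbb C}$ be the element acting by $p-q$ on $IH^{p,q}(X)$; this is $-i$ times the generator of the Hodge circle $S^1\to G_{\mathbb R}$, which lies in $\mathfrak g_{\mathbb R}$ precisely by the second assertion of Theorem~\ref{LLVint} placing the Weil operator of $IH^2(X)$ inside $\mathfrak g$. These are commuting semisimple elements, so $H_\sigma:=\tfrac12(H+\mathbf h_C)\in\mathfrak g_{\mathbb C}$ is semisimple and acts on $IH^{p,q}(X)$ by $\tfrac12\big((p+q-2n)+(p-q)\big)=p-n$. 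A direct computation inside $\mathfrak{so}(W)\cong\mathfrak{so}(4,\mathbb C)$ --- using only $Q_X([\sigma],[\sigma])=0$ and $Q_X([\sigma],\overline{[\sigma]})\neq 0$ --- then yields $[H_\sigma,L_\sigma]=2L_\sigma$ and $[L_\sigma,\Lambda_{\overline{[\sigma]}}]=-Q_X([\sigma],\overline{[\sigma]})\,H_\sigma$, so that with $\Lambda_\sigma:=-Q_X([\sigma],\overline{[\sigma]})^{-1}\Lambda_{\overline{[\sigma]}}$ the triple $(L_\sigma,H_\sigma,\Lambda_\sigma)$ is an $\mathfrak{sl}_2$-triple in $\mathfrak g_{\mathbb C}$. (Equivalently one may invoke the standard fact that a nilpotent element, together with a semisimple element on which it has adjoint weight $2$ and which lies in the image of its adjoint action, completes to an $\mathfrak{sl}_2$-triple.)

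Finally, regard $IH^*(X,\mathbb C)$ as a module over this $\mathfrak{sl}_2\subset\mathfrak g_{\mathbb C}$. Elementary $\mathfrak{sl}_2$-representation theory shows $L_\sigma^p$ restricts to an isomorphism from the $(-p)$-eigenspace of $H_\sigma$ onto its $(+p)$-eigenspace for every $p\geq 0$. Since $H_\sigma$ acts by $p'-n$ on $IH^{p',q}(X)$, these eigenspaces are $\bigoplus_q IH^{n-p,q}(X)$ and $\bigoplus_q IH^{n+p,q}(X)$, and because $L_\sigma$ has bidegree $(2,0)$ it respects the $q$-grading of each, so the isomorphism splits as $L_\sigma^p\colon IH^{n-p,q}(X)\xrightarrow{\sim}IH^{n+p,q}(X)$ summand by summand, which is exactly the assertion of Theorem~\ref{symHL}.

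The main obstacle is the second step: one must be sure the $\mathfrak{sl}_2$-triple lives in $\mathfrak g$ with \emph{precisely} the grading element $H_\sigma$, not merely some conjugate neutral element, and this hinges on having the Weil operator --- equivalently the Hodge circle --- available inside $\mathfrak g$, which is the content of the second half of Theorem~\ref{LLVint}; once that is granted, what remains is a finite computation in $\mathfrak{so}(4,\mathbb C)$. A secondary point is to match the Lie-algebra element $L_\sigma$ with honest cup product by $[\sigma]$ on $IH^*(X)$, so that the operator in the statement is the one the argument controls; this follows from the behaviour of the symplectic class under the $H^*(X)$-module structure on $IH^*(X)$. One could instead give a more geometric, deformation-theoretic proof in the spirit of Verbitsky's original argument --- deforming $X$ locally trivially until $\mathrm{Re}\,[\sigma]$ and $\mathrm{Im}\,[\sigma]$ become Hard Lefschetz classes on the fixed space $IH^2(X,\mathbb Q)$, applying Hard Lefschetz to each and recombining the two $\mathfrak{sl}_2$-triples via the Weil operator --- but the route through Theorem~\ref{LLVint} is considerably shorter.
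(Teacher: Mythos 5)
Your argument runs in the wrong logical direction and is circular relative to this paper: you deduce Theorem \ref{symHL} from Theorem \ref{LLVint}, but in the paper Theorem \ref{LLVint} is itself a consequence of Theorem \ref{symHL}. The symplectic Hard Lefschetz theorem (proved in \S\ref{3} as Theorem \ref{symplecticsymmetry}) is the geometric input that yields the simultaneous $\sigma$- and $\overline\sigma$-primitive decompositions, hence the commutation $[\Lambda_\sigma,\Lambda_{\overline\sigma}]=0$ (Proposition \ref{symdualcommute}), hence $[\Lambda_\gamma,\Lambda_{\gamma'}]=0$ for the real and imaginary parts of $\sigma$ (Corollary \ref{realimaginarycommute}); only then does monodromy density propagate commutativity to all non-isotropic pairs (Theorem \ref{Qfactorialcommute}) and give the structure theorem (Theorems \ref{QfactorialLLV}, \ref{GenLLVthm}). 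Likewise the second assertion of Theorem \ref{LLVint}, which you invoke to place the Hodge grading element (equivalently the Weil operator) inside $\mathfrak g$, is obtained in the paper from Corollary \ref{intersectionVerbitskyresult}, whose proof again rests on the commutator relations coming from symplectic Hard Lefschetz. In the smooth case one can break this circle because Verbitsky's commutativity of dual Lefschetz operators is proved with a hyperk\"ahler metric and the Hodge star; no such metric exists on a singular $X$, and producing an independent, metric-free proof of Theorem \ref{LLVint} is precisely the point of the paper. So as written your proposal has no valid starting point: the statement you want to quote is downstream of the statement you are trying to prove. The paper instead proves Theorem \ref{symHL} directly from the Hodge theory of the regular locus --- the degeneration of Hodge-to-de Rham on $U$ in the relevant range (Theorem \ref{hodge to de rham degenerates U}), the identification $IH^{p,q}(X)\cong H^q(U,\Omega_U^p)$ resp.\ $H^q_c(U,\Omega_U^p)$ below and above middle degree, reflexivity of $\pi_*\Omega_{\tilde X}^p(\log E)(-E)$ (Lemma \ref{logzerosreflexive}) to see that $\tilde\sigma$ defines a compactly supported class, and a dimension count via mixed Hodge number symmetries --- none of which your outline replaces.

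Two smaller points, should you try to salvage the $\mathfrak{sl}_2$-in-$\mathfrak g$ computation after the structure theorem is available: first, your operator ``$\Lambda_{\overline{[\sigma]}}$'' must be the image of $\overline\sigma$ under $\mathfrak g_{-2}\cong IH^2$, not the dual Lefschetz operator $\Lambda_{\overline\sigma}$ of \S\ref{4.1}, since the latter satisfies $[L_\sigma,\Lambda_{\overline\sigma}]=0$ (Corollary \ref{commutecorollary}) rather than your claimed relation proportional to $H_\sigma$; conflating the two would make your triple collapse. Second, since $Q_X(\sigma)=0$, the class $\sigma$ is not HL (Proposition \ref{HLregclass}), so $L_\sigma$ is not one of the generators of $\mathfrak g$; placing it in $\mathfrak g_2$ requires the module identification $\mathfrak g_2\cong IH^2(X)$, i.e.\ the analogue of Theorem \ref{LLVpieces}(\ref{hyperkahler g2 structure}), which is part of the structure theory you would first have to establish --- again only after Theorem \ref{symHL}.
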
 
        To prove Theorem~\ref{symHL}, we study the Hodge theory of the (compactly supported) cohomology of the regular locus $U : = X_{\mathrm{reg}}$.  We prove the following useful theorem, giving an analog of \cite[Theorem~2]{arapura1990local}
        
        \begin{theorem}\label{Hodge theory regular locus}
          Let $X$ be a primitive symplectic variety of dimension $2n$ with regular locus $U$.  Suppose that the singular locus of\, $X$ is smooth. The Hodge-to-de Rham spectral sequence
          $$
          E_1^{p,q} = H^q\left(U, \Omega_U^p\right) \Longrightarrow H^{p + q}(U, \mathbb C)
          $$
          degenerates at $E_1$ for $p + q < 2n-1$.
        \end{theorem}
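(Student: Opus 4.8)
The plan is to compare the Hodge--de Rham spectral sequence of $U$ against Deligne's logarithmic one, which always degenerates, by passing to a resolution. Choose $\pi\colon \widetilde X \to X$ which is an isomorphism over $U$ and whose exceptional locus $E$ is a simple normal crossings divisor, so that $U = \widetilde X \setminus E$ and the open immersion $j\colon U \hookrightarrow \widetilde X$ is affine, being the complement of a Cartier divisor. Hence $Rj_*\Omega^p_U = j_*\Omega^p_U$ and $H^q(U,\Omega^p_U) = H^q(\widetilde X, j_*\Omega^p_U)$. The logarithmic de Rham complex $\Omega^\bullet_{\widetilde X}(\log E)$ is a subcomplex of $j_*\Omega^\bullet_U$, the inclusion is a quasi-isomorphism, and by Deligne the stupid-filtration spectral sequence $E_1^{p,q} = H^q(\widetilde X, \Omega^p_{\widetilde X}(\log E)) \Rightarrow H^{p+q}(U, \mathbb C)$ degenerates at $E_1$; in particular $\dim_{\mathbb C} H^k(U,\mathbb C) = \sum_{p+q=k} h^q(\widetilde X, \Omega^p_{\widetilde X}(\log E))$.

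Since the inclusion of complexes respects stupid filtrations, it induces a morphism from this logarithmic spectral sequence to the naive Hodge--de Rham spectral sequence $E_1^{p,q} = H^q(U, \Omega^p_U) \Rightarrow H^{p+q}(U, \mathbb C)$, equal to the identity on the abutment and, on $E_1$-terms, to the restriction $\rho^{p,q}\colon H^q(\widetilde X, \Omega^p_{\widetilde X}(\log E)) \to H^q(U, \Omega^p_U)$. The naive spectral sequence always satisfies $\dim H^k(U,\mathbb C) \le \sum_{p+q=k} h^q(U,\Omega^p_U)$, with equality in a fixed degree exactly when it degenerates at $E_1$ there. Hence the theorem reduces to showing that $\rho^{p,q}$ is an isomorphism for $p+q < 2n-1$, which forces $\sum_{p+q=k} h^q(U,\Omega^p_U) = \dim H^k(U,\mathbb C)$ for $k \le 2n-2$ (and, incidentally, that these Hodge numbers are finite, which may otherwise fail). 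Writing $Q_p := j_*\Omega^p_U / \Omega^p_{\widetilde X}(\log E)$, a quasi-coherent sheaf supported on $E$, the long exact sequence shows $\rho^{p,q}$ is an isomorphism once $H^q(\widetilde X, Q_p) = 0 = H^{q-1}(\widetilde X, Q_p)$, so everything comes down to the vanishing $H^q(\widetilde X, Q_p) = 0$ for $p + q < 2n-1$.

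To prove this I filter $Q_p$ by pole order along $E$: its graded pieces are the coherent sheaves $\Omega^p_{\widetilde X}(\log E)|_E \otimes \mathcal O_{\widetilde X}(mE)|_E$ for $m \ge 1$, and the residue filtration writes $\Omega^p_{\widetilde X}(\log E)|_E$ as an iterated extension of logarithmic forms on the (smooth, $(2n-1)$-dimensional) components of $E$ and their intersections. It therefore suffices to prove a logarithmic Akizuki--Nakano vanishing on those strata, twisted by the positive multiples $\mathcal O_{\widetilde X}(mE)$ ($m\ge1$), the binding case being that of a $p$-form on a component $E_i$, where the vanishing range is precisely $q < 2n-1-p$. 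When $X$ has isolated singularities, $\pi$ contracts $E$ and all its strata to points, so $\mathcal O(-E)$ is ample on each of them and this is literally (logarithmic) Akizuki--Nakano vanishing. For general singularities one runs the relative version over the contraction $\pi$, taking the resolution so that $\mathcal O_{\widetilde X}(-E)$ (or a suitable $\pi$-exceptional modification) is $\pi$-ample, computing $H^q(\widetilde X, Q_p) = \mathbb H^q(X, R\pi_* Q_p)$, and bounding the higher direct images using relative vanishing together with the rationality of the singularities of a primitive symplectic variety.

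I expect this relative Akizuki--Nakano vanishing to be the main obstacle: a resolution of a symplectic variety is not symplectic, so there is no semismallness to exploit, and one has to extract vanishing for \emph{all} positive twists along $E$ from $\pi$-ampleness of $-E$ and the mildness of the singularities alone. Everything else --- affineness of $j$, Deligne's $E_1$-degeneration, the comparison of the two spectral sequences, and the numerology pinning the range to $p+q < 2n-1 = \dim_{\mathbb C} E$ --- is formal. In particular the symplectic structure enters this statement only through normality and rational singularities; the self-duality $\Omega^p_U \cong \Omega^{2n-p}_U$ coming from the symplectic form, while decisive for Theorem~\ref{symHL}, is not used here.
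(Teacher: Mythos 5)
Your reduction is the same as the paper's: compare with Deligne's logarithmic spectral sequence and show that the restrictions $H^q(\tilde X,\Omega_{\tilde X}^p(\log E))\to H^q(U,\Omega_U^p)$ are isomorphisms for $p+q<2n-1$. The gap is in how you propose to prove that. First, the paper works in the analytic category, and there $j_*\Omega_U^p$ is \emph{not} exhausted by the pole-order filtration: sections of $j_*\Omega_U^p$ are arbitrary holomorphic forms on the complement of $E$ and may have essential singularities along $E$, so $\bigcup_m\Omega_{\tilde X}^p(\log E)(mE)$ is a proper subsheaf of $j_*\Omega_U^p$ and your quotient $Q_p$ does not have the coherent graded pieces you assert. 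This is why the paper never forms $Q_p$; it uses the local cohomology sequence $H^q_E(\tilde X,\Omega_{\tilde X}^p(\log E))\to H^q(\tilde X,\Omega_{\tilde X}^p(\log E))\to H^q(U,\Omega_U^p)$ and then formal local duality, $H^q_E(\tilde X,\Omega_{\tilde X}^p(\log E))^*\cong H^{2n-q}(\tilde X_E,\Omega_{\tilde X}^{2n-p}(\log E)(-E))$, which converts the obstruction into coherent cohomology on the formal completion along $E$, where a Leray/formal-functions argument applies.

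Second, your vanishing mechanism is too optimistic. For a general log resolution $\mathcal O(-E)$ restricted to the exceptional strata is not ample (blow up further and it fails), so logarithmic Akizuki--Nakano cannot be applied as stated, and repairing it with a $\pi$-ample exceptional divisor $-\sum a_iE_i$ would force a weighted devissage you have not carried out; you yourself flag the relative version as the ``main obstacle'' and leave the non-isolated case open. The input that makes the argument work without any positivity of $-E$ is Steenbrink's vanishing theorem, $R^q\pi_*\Omega_{\tilde X}^p(\log E)(-E)=0$ for $p+q>\dim X$, valid for an arbitrary log resolution; the paper applies it directly in the isolated case and, for a positive-dimensional (smooth) singular locus, reduces to it via Kaledin's stratification and the formal product decomposition $\widehat X_x\cong Y_x\times\widehat{X_i^\circ}_x$, inducting over the strata inside the Leray spectral sequence for the completion. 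So the skeleton of your comparison is right, but the heart of the proof --- the vanishing in the range $p+q<2n-1$ --- is not established by your argument as written.
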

        
        The LLV structure theorem follows from the symplectic hard Lefschetz theory.  Theorem~\ref{symHL} shows that there are operators $L_\sigma, \Lambda_\sigma$ which complete to an $\mathfrak{sl}_2$-triple
        $$
        \mathfrak s_{\sigma} = \langle L_\sigma, \Lambda_\sigma, H_\sigma\rangle, 
        $$
        where $H_\sigma$ acts as the holomorphic weight operator $H_\sigma(\alpha) = (p-n)\alpha$ for an intersection $(p,q)$-class.  Similarly, by conjugation we get a second $\mathfrak{sl}_2$-triple
        $$
        s_{\overline \sigma} = \langle L_{\overline \sigma}, \Lambda_{\overline \sigma}, H_{\overline \sigma}\rangle
        $$
        corresponding to the antiholomorphic symplectic form $\overline \sigma$, where $H_{\overline \sigma}(\alpha) = (q-n)\alpha$.  This generates an $\mathfrak{sl}_2\times \mathfrak{sl}_2$-structure on the total intersection cohomology  $\IH^*(X)$.  The key observation is that the Lefschetz operators for $\sigma$ and $\overline \sigma$ commute:
        $$
        [L_\sigma, L_{\overline{\sigma}}] = [\Lambda_\sigma, \Lambda_{\overline \sigma}] = 0.
        $$
        The representation theory of this $\mathfrak{sl}_2\times \mathfrak{sl}_2$-action, along with the monodromy representation of $H^2(X, \mathbb C)$, describes the LLV algebra $\mathfrak g$ of intersection cohomology completely.  This will lead to the proof of Theorem~\ref{LLVint}.

        \begin{remark}
         We expect our methods to generalize to any primitive symplectic variety, although this will require a better understanding of the Hodge theory of the intersection cohomology groups.  One case where our methods generalize is the case of symplectic orbifolds (see Proposition~\ref{proposition LLV for orbifolds}), although the LLV structure theorem should be known to experts due to the existence of hyperk\"ahler metrics.
        \end{remark}
        
    \subsection{Representation Theory and Hodge Theory of the LLV Algebra} The structure of the cohomology ring $H^*(X, \mathbb Q)$ of a compact hyperk\"ahler manifold $X$ as a $\mathfrak g$-representation has been studied in recent years, leading to many interesting results and conjectures concerning these varieties.  We extend some of these results to the intersection cohomology module of a primitive symplectic variety with isolated singularities.  
    
    \subsubsection{LLV decomposition} Extending a result of Verbitsky \cite{verbitsky1996cohomology}, we show that the module generated by $\IH^2(X, \mathbb C)$ in $\IH^*(X, \mathbb C)$ is a $\mathfrak g$-module, called the \textit{Verbitsky component} $V_{(n)}$.  In \cite{green2019llv}, the Verbitsky component and the LLV decomposition were studied for the known examples of compact hyperk\"ahler manifolds.  In general, it is expected that $V_{(n)}$ (along with the LLV decomposition) puts restrictive conditions on the cohomology of a compact hyperk\"ahler manifold, and we expect the same to be true for primitive symplectic varieties.
    
    \subsubsection{Kuga--Satake construction} Recall that the classical Kuga--Satake construction, see \cite{kuga1967abelian}, associates to a K3 surface $S$ an abelian variety $A$ and an embedding $H^2(S) \hookrightarrow H^1(A)\otimes H^1(A)^*$ of polarized weight~2 Hodge structures (see for example \cite[Section~2.6]{huybrechts2016lectures}).  In \cite{kurnosov2019kuga}, this construction was extended to compact hyperk\"ahler manifolds and the LLV algebra: If $X$ is compact hyperk\"ahler, there exist an abelian variety $A$ and an embedding $\mathfrak g_X \hookrightarrow \mathfrak g_A$, where we note that $\mathfrak g_A \cong \mathfrak{so}(H^1(A)\otimes H^1(A)^*)$ by \cite[Proposition~(3.3)]{looijenga1997lie}.   We outline how the same construction associates to a primitive symplectic variety with isolated singularities a complex torus and an embedding of the total Lie algebras.
    
    \subsubsection{Mumford--Tate algebras}  The Mumford--Tate algebra of a pure Hodge structure $H$ is the smallest $\mathbb Q$-algebraic subalgebra $\mathfrak m$ of $\mathfrak{gl}(H)$ for which $\mathfrak m_{\mathbb R}$ contains the Weil operator.  There is a relationship between the Mumford--Tate algebra and the LLV algebra, which was studied thoroughly in \cite{green2019llv}.  We make similar observations in the isolated singularities case and show that the Mumford--Tate algebra sits naturally inside the LLV algebra for intersection cohomology.  Moreover, we show that the degree of transcendence of the total intersection cohomology $\IH^*(X, \mathbb C)$ over the special Mumford--Tate group is equal to that of $\IH^2(X, \mathbb C)$, which is the second statement of Theorem~\ref{LLVint}.
         
    \subsection{$\boldsymbol{P = W}$ for Primitive Symplectic Varieties} The $P = W$ conjecture for compact hyperk\"ahler manifolds asserts that the perverse filtration on the cohomology $H^*(X, \mathbb C)$ induced by a Lagrangian fibration agrees with the weight filtration of the limit mixed Hodge structure of a type III degeneration\footnote{Recall that a degeneration $\mathscr X \to \Delta$ is of type III if the degeneration has maximally unipotent monodromy operator $T$ and the nilpotent log-monodromy operator $N$ is of index 3.} of $X$, which was shown to exist in \cite{soldatenkov2018limit}.  In \cite{harder2021p}, the two filtrations were shown to agree by showing their corresponding weight operators define the same element in the LLV algebra. 
         
    More generally, let $X$ be a primitive symplectic variety.  We show that there is a good notion of degeneration for primitive symplectic varieties via locally trivial deformations, which respects Schmid's nilpotent orbit theory; see \cite{schmid1973variation}. We also show that, corresponding to a degeneration of primitive symplectic varieties, there is a \textit{limit mixed Hodge structure} on the intersection cohomology of the central fiber (see Section~\ref{subsubsection lmhs}).  Following \cite{soldatenkov2018limit, harder2021p}, we show the following. 
         
         \begin{theorem}
         Let $X$ be a primitive symplectic variety with isolated singularities and $b_2 \ge 5$.   
        \begin{enumerate}
            \item There exists a type III degeneration $\mathscr X \to \Delta$ of\, $X$ whose logarithmic monodromy operator $N$ has index 3.
            
            \item If\, $X$ admits the structure of a Lagrangian fibration $f:X \to B$, then the perverse filtration $P_\beta$ on $\IH^*(X, \mathbb C)$ associated to the pullback of an ample class on $B$ agrees with the weight filtration of the limit mixed Hodge structure of the degeneration $\mathscr X \to \Delta$.
        \end{enumerate}
         \end{theorem}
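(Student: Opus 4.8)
The plan is to transpose the arguments of Soldatenkov and Harder--Li--Shen--Yin to intersection cohomology, the two main inputs being the structure theorem of Theorem~\ref{LLVint} and the deformation theory of primitive symplectic varieties via locally trivial deformations.

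For part (1), the first step is to set up the relevant deformation theory: locally trivial deformations of $X$ are unobstructed, and the period map for $IH^2$ is a local isomorphism onto the period domain $\Omega = \{[\sigma] \in \mathbb P(IH^2(X,\mathbb C)) : Q_X(\sigma,\sigma)=0,\ Q_X(\sigma,\bar\sigma)>0\}$ with dense image. This is the $IH^2$-analogue of the Bakker--Lehn Torelli theorem, which in the isolated-singularities case I would deduce from the $\mathbb Q$-factorial terminal case. Since $\mathfrak g_{\mathbb R}\cong\mathfrak{so}(B_2-2,4)$ forces $Q_X$ to have signature $(3,B_2-3)$ and $B_2\ge 5$, the rational form $Q_X$ is indefinite in at least five variables, hence isotropic by Meyer's theorem; fix a rational isotropic vector $v_0\in IH^2(X,\mathbb Q)$. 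Elementary linear algebra inside $\mathfrak{so}(IH^2(X,\mathbb Q),Q_X)$ produces a rational nilpotent $N$ of Jordan type $(3,1^{B_2-3})$ with $\operatorname{im}(N^2)=\langle v_0\rangle$; in particular $N^3=0\neq N^2$, so $N$ has index $3$. Feeding the nilpotent orbit determined by $N$ into surjectivity of the period map for locally trivial families, together with the standard construction of a one-parameter degeneration with prescribed parabolic monodromy, I obtain a locally trivial degeneration $\mathscr X\to\Delta$ of $X$ whose logarithmic monodromy operator is $N$; this is the desired type III degeneration.

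For part (2), write $f:X\to B$ for the Lagrangian fibration and $\beta=f^{*}h$ with $h$ ample on $B$. First I would extend to the singular setting the decomposition theorem for $IC_X$ along $f$ and the relevant Matsushita-type statements, namely $Q_X(\beta)=0$ and $\dim IH^{2}(B)=1$; these make the perverse filtration $P_\beta$ on $IH^{*}(X,\mathbb C)$ well-defined, and by de Cataldo--Migliorini's description of the perverse filtration through cup products, $P_\beta$ is the monodromy weight filtration of the nilpotent operator $L_\beta=\beta\cup(-)$ on $IH^{*}(X,\mathbb C)$, up to recentering by the cohomological grading $H\in\mathfrak h\subset\mathfrak g$. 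By Schmid, the weight filtration attached to the monodromy operator $N$ of part (1) is likewise the monodromy weight filtration of $N$ on $IH^{*}(X,\mathbb C)$, recentered by the same $H$. Both $L_\beta$ and $N$ are nilpotent elements of $\mathfrak g$, so both filtrations are determined by the semisimple elements of their Jacobson--Morozov completions acting on the $\mathfrak g$-module $IH^{*}(X,\mathbb C)$. Choosing the degeneration in part (1) so that $v_0=\beta$, the monodromy weight filtration of $N$ on $IH^{2}$ becomes the three-step filtration $\langle\beta\rangle\subset\beta^{\perp}\subset IH^{2}$, which by the decomposition theorem (using $\dim IH^{2}(B)=1$) is exactly the perverse filtration $P_\beta$ on $IH^{2}$. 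Thus $P_\beta=W_N$ on $IH^{2}$, hence on the Verbitsky component $V_{(n)}\subset IH^{*}(X,\mathbb C)$ by multiplicativity, and then on all of $IH^{*}(X,\mathbb C)$ using the LLV decomposition of $IH^{*}$ into $\mathfrak g$-isotypic pieces together with the numerical equality of perverse and limit-Hodge numbers (the analogue of Shen--Yin's theorem, itself a consequence of Theorem~\ref{LLVint}).

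The step I expect to cause the most trouble is the last one, propagating $P_\beta=W_N$ from the Verbitsky component to the full intersection cohomology: this is where the representation theory of $\mathfrak g$ on $IH^{*}$ developed earlier in the paper must be used in an essential way, and where the singular inputs --- the decomposition theorem for $IC_X$ along $f$, the Matsushita-type constraints on $B$ and $\beta$, and the behaviour of the period map on the locally trivial locus --- all need to be in place. The Lie-theoretic comparison of the two filtrations on $IH^{2}$ and its consequence on $V_{(n)}$, by contrast, should be routine once Theorem~\ref{LLVint} is available.
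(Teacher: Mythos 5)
Your part (1) is essentially the paper's argument: reduce to the $\mathbb Q$-factorial terminal case (where $H^2\cong IH^2$ by the $\mathbb Q$-factoriality criterion, so no separate ``$IH^2$-Torelli'' is needed), produce a rational index-3 nilpotent compatible with a polarization $h$, and realize the resulting nilpotent orbit by a projective locally trivial degeneration via the Bakker--Lehn moduli theory, exactly as in Soldatenkov. Two small points you gloss over: the nilpotent $N$ must be taken in $\mathfrak{so}(\Gamma^h_{\mathbb Q},q)$ for a polarization $h$ (this is what makes the degeneration projective and puts you in the setting of Schmid's nilpotent orbit theory), and the nilpotent-orbit criterion $q(Nx,N\bar x)>0$ uses that $q$ is semi-positive on $\mathrm{im}\,N$ with one-dimensional kernel; both are supplied by Soldatenkov's lattice lemma rather than by a bare Meyer's-theorem construction, but this is a matter of detail, not of substance.

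Part (2) is where your proposal has a genuine gap, and it is exactly the step you flag. First, a structural point: you build in Matsushita-type inputs ($\dim IH^2(B)=1$, a decomposition theorem along $f$) that are neither established in this singular generality nor needed. The paper defines $P_\beta$ purely through the de Cataldo--Migliorini cup-product formula, so the only geometric input from the fibration is $Q_X(\beta)=0$, which follows from the Fujiki relation; nothing about $B$ enters. Second, and more seriously, the propagation ``$P_\beta=W_N$ on $IH^2$ $\Rightarrow$ on the Verbitsky component by multiplicativity $\Rightarrow$ on all of $IH^*$ by the LLV decomposition plus numerics'' does not go through as stated. Equality of the two filtrations on $IH^2$ does not determine either of them on the isotypic components of $IH^*(X,\mathbb C)$ that are not generated by $IH^2$ (odd cohomology and the non-Verbitsky summands); multiplicativity only gives a one-sided inclusion on $SIH^2(X,\mathbb Q)$, which you would have to close by a dimension count; and the numerical input you invoke is not available: the ``Perverse = Hodge'' statement proved in \S\ref{8} compares perverse numbers with the Hodge numbers $\dim IH^{p,q}(X)$ via the $\overline\sigma$-Lefschetz filtration, not with the graded dimensions of $W_N$, and the latter are precisely what you do not yet control. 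Note also that $P_\beta$ is the (recentered) weight filtration of $L_\beta\in\mathfrak g_2$ while $W_N$ is the weight filtration of a degree-zero nilpotent in $\overline{\mathfrak g}$, so agreement on the standard representation $IH^2$ gives you no handle on an arbitrary $\mathfrak g$-module. The paper closes this differently: using part (1) it chooses the degeneration so that the logarithmic monodromy is literally the element $N_{\beta,\rho}=[L_\beta,\Lambda_\rho]\in\overline{\mathfrak g}$ (an index-3 rational nilpotent because $Q_X(\beta)=0$, $Q_X(\rho)>0$, $Q_X(\beta,\rho)=0$), and then identifies the weight filtration of this specific operator on $IH^*(X,\mathbb C)$ with the perverse filtration of $L_\beta$ by the representation-theoretic argument of Harder--Li--Shen--Yin, carried out inside the $\mathfrak{so}(5)$-subalgebra $\mathfrak g_\rho$ generated by $\rho,\beta,\eta$ and transported verbatim by Theorem \ref{LLVint}. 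That component-by-component $\mathfrak{so}(5)$ computation is the actual content of the step you hoped would be routine; it cannot be replaced by agreement on $IH^2$ plus multiplicativity, so if you want to avoid quoting it you must redo it, not bootstrap around it.
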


    \subsection{Outline}
    
    The paper is organized as follows.  In Section~\ref{2}, we review some results of primitive symplectic varieties which will be used throughout this paper, as well as the relevant properties of intersection cohomology and mixed Hodge structures.  We prove two auxiliary results that will allow us to simplify our assumptions: The first says that bimeromorphic morphisms of primitive symplectic varieties are semismall, a generalization of Kaledin's result for symplectic resolutions \cite{kaledin2006symplectic}.  The second, which is most likely known to experts, is a criterion for $\mathbb Q$-factoriality for a primitive symplectic variety $X$ in terms of the inclusion $H^2(X, \mathbb Q) \hookrightarrow \IH^2(X, \mathbb Q)$; see Proposition~\ref{Qcrit}.

In Section~\ref{3}, we prove that the intersection cohomology groups satisfy a symplectic hard Lefschetz theorem.  We do this by studying the extension of differential forms across singularities and the Hodge-to-de Rham spectral sequence on the regular locus.

In Section~\ref{4}, we use the symplectic symmetry from Section~\ref{3} to construct dual Lefschetz operators with respect to the symplectic forms $\sigma, \overline\sigma$ and study their commutator relations.  We then define non-isotropic classes $(\gamma,\gamma')$ satisfying $q_X(\gamma,\gamma') = 0$ and $[\Lambda_\gamma, \Lambda_{\gamma '}] = 0$, a key component in the structure of the LLV algebra.

In Section~\ref{5}, we show that the intersection cohomology groups satisfy the LLV algebra structure theorem, which is a consequence of the previous sections as well as the monodromy density theorem of Bakker--Lehn \cite[Theorem 1.1]{bakker2018global}.

In Section~\ref{7}, we discuss some representation-theoretic aspects of the LLV algebra.  We construct the Verbitsky component generated by $\IH^2$, extend the Kuga--Satake construction to intersection cohomology, and study the Mumford--Tate algebra in this setting.

In Section~\ref{8}, we describe a singular version of the $P = W$ theorem.

\subsection{Notation}  We work in the complex analytic category, and all varieties should be considered as complex analytic varieties unless otherwise stated.  

If $E \subset \widetilde{X}$ is a simple normal crossing divisor of a smooth complex manifold $\widetilde{X}$, we set
$$
\Omega_{\widetilde{X}}^p(\log E)(-E) : = \Omega_{\widetilde{X}}^p(\log E)\otimes_{\mathscr O_{\widetilde{X}}} \mathscr I_E,
$$
 where $\Omega_{\widetilde{X}}^p(\log E)$ is the sheaf of logarithmic $p$-forms and $\mathscr I_E$ is the ideal sheaf of $E$.

When we speak of intersection cohomology, we always mean with respect to the middle perversity.

Finally, if $(X, \sigma)$ is a primitive symplectic variety, we will think of $\sigma$ as a holomorphic form on the regular locus or a class in (intersection) cohomology without distinction.

\subsection*{Acknowledgements} This work is part of the author's Ph.D.\ thesis at the University of Illinois at Chicago.  I want to thank my advisor Benjamin Bakker for his insight into singular symplectic varieties, as well as the countless meetings devoted to this project.  I also want to thank Christian Lehn for comments on a preliminary draft of this paper. 

\section{Preliminaries and Auxiliary Results}\label{2} The main objects of study in this work are primitive symplectic varieties, and so we recall both the local and the global properties which will be used further on. 
We also review the basic properties of intersection cohomology and study their Hodge theory for primitive symplectic varieties. Finally, we give some auxiliary results regarding bimeromorphic morphisms of primitive symplectic varieties, which will be useful for certain reductions later in the paper.

    \subsection{Symplectic Varieties}\label{2.1}

    \subsubsection{MMP singularities}\label{2.1.1} Let $X$ be a normal complex variety.  A \textit{log-resolution of singularities} is a projective birational morphism $\pi\colon\widetilde{X} \to X$ from a smooth complex variety $\widetilde{X}$ which is an isomorphism over the regular locus $U$ and for which $\pi^{-1}(\Sigma) = E = \sum E_i$ is a simple normal crossing divisor, where $\Sigma$ is the singular locus of $X$ and the $E_i$ are the smooth components of $E$.

    We say $X$ is \textit{$\mathbb Q$-Gorenstein} if there is an integer $m$ such that $mK_X$ is Cartier, where $K_X$ is the canonical divisor; the smallest such integer is called the \textit{index} of $X$.  If $X$ has index 1, we say that $X$ is \textit{Gorenstein}.

    If $X$ is $\mathbb Q$-Gorenstein of index $m$ and $\pi\colon\widetilde{X} \to X$ a log-resolution of singularities, there are integers $a_i$ such that 
$$
mK_{\widetilde{X}} = \pi^*(mK_X) + \sum a_iE_i.
$$
  We say that $X$ has \textit{canonical} (resp.\ \textit{terminal}\,) singularities if we have $a_i \ge 0$ (resp.\ $a_i > 0$) for every $i$.  We call the $a_i$ the discrepancies of the exceptional divisors $E_i$.

    If $X$ is just a normal variety, we say that $X$ has \textit{rational singularities} if for some resolution of singularities $\pi\colon\widetilde{X} \to X$, the higher direct image sheaves satisfy $R^i\pi_*\mathscr O_{\widetilde{X}} = 0$ for every $i > 0$.

    Canonical singularities are rational; see \cite[Theorem 5.22]{kollar2008birational}.  Conversely, a normal variety with Gorenstein rational singularities has at worst canonical singularities.  One way to see this is to consider the \textit{holomorphic extension problem for differentials}: For which $p$ is the inclusion
     \begin{equation}\label{hol extension problem}
         \pi_*\Omega_{\widetilde{X}}^p \longhookrightarrow j_*\Omega_U^p
     \end{equation} is an isomorphism, where $j\colon U \hookrightarrow X$ is the inclusion of the regular locus $U : = X_{\mathrm{reg}}$?  Kebekus--Schnell showed if $X$ has rational singularities, then $\pi_*\Omega_{\widetilde{X}}^p \hookrightarrow j_*\Omega_U^p$ is an isomorphism for each $p$, see \cite[Corollary 1.8]{kebekus2021extending}, using the fact that the canonical sheaf $\omega_X$ is reflexive by Kempf's criterion.  In particular, the discrepancies $a_i$ are non-negative, as holomorphic $n$-forms on $U$ extend with at worst zeros.

    Since all our varieties are assumed normal, the sheaf $j_*\Omega_U^p$ is reflexive and isomorphic to the sheaf of \textit{reflexive $p$-forms} 
$$
\Omega_X^{[p]}: = (\Omega_X^p)^{**} \cong j_*\Omega_U^p.
$$
 Here $\Omega_X^1$ is the sheaf of K\"ahler differentials and $\Omega_X^p$ is the $\supth{p}$ exterior power.

    One application of the work of Kebekus--Schnell is the existence of a functorial pullback morphism for reflexive differentials; see \cite[Theorem 1.11]{kebekus2021extending}.  Given a morphism $f\colon Y \to X$ of reduced complex spaces with rational singularities, there is a pullback
    \begin{equation} \label{reflexive pullback}
        df\colon f^*\Omega_X^{[p]} \lra \Omega_Y^{[p]}
    \end{equation} which satisfies natural universal properties; see \cite[Section~14]{kebekus2021extending}.  Specifically, it agrees with the pullback of K\"ahler differentials on smooth varieties, whenever this makes sense.
        
        \subsubsection{Symplectic singularities}\label{2.1.2}

    \begin{definition}\label{sympdef}
    Let $X$ be a normal variety.  We say that $X$ is a \textit{symplectic variety} if there is a holomorphic symplectic form $\sigma \in H^0(U, \Omega_U^2)$ on the regular locus $U$ which extends to a holomorphic 2-form $\tilde \sigma \in H^0(\widetilde{X}, \Omega_{\widetilde{X}}^2)$ for any resolution of singularities $\pi\colon \widetilde{X} \to X$.
    \end{definition}

    Originally studied by Beauville \cite{beauville2000symplectic}, symplectic varieties were defined as an attempt to extend results on $K$-trivial manifolds to the singular setting.  Symplectic varieties have well-behaved singularities: They are rational Gorenstein, see \cite[Proposition 1.3]{beauville2000symplectic}, and therefore have at worst canonical singularities.  In fact, the strictly canonical locus is contained entirely in codimension 2 by \cite{namikawa2001note}.  Said differently, we have the following. 

    \begin{proposition}[\textit{cf.}~\protect{\cite[Corollary 1]{namikawa2001note}, \cite[Theorem 3.4]{bakker2018global}}]\label{symterm}
    A symplectic variety $X$ has terminal singularities if and only if\, $\codim_X(\Sigma) \ge 4$, where $\Sigma$ is the singular locus of\, $X$.
    \end{proposition}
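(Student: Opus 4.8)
The plan is to reformulate terminality in terms of the order of vanishing of the symplectic form on a resolution, and then to treat the two implications separately. Fix a log resolution $\pi:\tilde X\to X$ and write $K_{\tilde X}=\sum a_iE_i$, where $2n=\dim X$. Since $X$ is symplectic, $\sigma^{\wedge n}$ is a nowhere-vanishing section of $\omega_U$, and as $\mathrm{codim}_X(\Sigma)\ge 2$ its zero divisor on $X$ is supported in codimension $\ge 2$, hence empty; thus $\sigma^{\wedge n}$ trivializes $\omega_X$, $K_X$ is trivial, and $a_i=\mathrm{ord}_{E_i}\big(\tilde\sigma^{\wedge n}\big)$, where $\tilde\sigma\in H^0(\tilde X,\Omega_{\tilde X}^2)$ is the holomorphic extension from Definition~\ref{sympdef}. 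Since symplectic varieties are rational Gorenstein, hence canonical (as recalled above), all $a_i\ge 0$, and Proposition~\ref{symterm} amounts to deciding when every $a_i$ is strictly positive; this may be tested on a single log resolution.

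For the implication ``$\mathrm{codim}_X(\Sigma)\ge 4\Rightarrow X$ terminal'' I would proceed as follows. The key geometric input, due to Namikawa \cite{namikawa2001note}, is that $\tilde\sigma$ degenerates along the exceptional locus: at a generic point $\xi$ of an exceptional divisor $E_i$ with $Z:=\overline{\pi(E_i)}\subseteq\Sigma$ of dimension $d$, the restriction $\tilde\sigma|_{E_i}$ kills the tangent directions of the fibre of $E_i\to Z$, so it has rank at most $d$ at $\xi$. Since $d\le\dim\Sigma\le 2n-4<2(n-1)$ and a $2$-form of rank $\le d$ has vanishing $(n-1)$st exterior power, we get $(\tilde\sigma|_{E_i})^{\wedge(n-1)}=0$. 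Writing $\tilde\sigma=dt\wedge\alpha+\beta$ in local coordinates near $\xi$ with $E_i=\{t=0\}$ and $\alpha,\beta$ free of $dt$, we have $\tilde\sigma^{\wedge n}=n\,dt\wedge\alpha\wedge\beta^{\wedge(n-1)}$ (the term $\beta^{\wedge n}$ being a $2n$-form without $dt$, hence zero), and its coefficient vanishes along $\{t=0\}$ because $(\beta|_{t=0})^{\wedge(n-1)}=(\tilde\sigma|_{E_i})^{\wedge(n-1)}=0$; hence $a_i=\mathrm{ord}_{E_i}(\tilde\sigma^{\wedge n})\ge 1$, and $X$ is terminal.

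For the converse I argue contrapositively: suppose $X$ is singular with $\mathrm{codim}_X(\Sigma)<4$. By Namikawa's structure theory the singular locus of a symplectic variety has even codimension, so $\Sigma$ has a component $Z_0$ of codimension exactly $2$. Slicing by $2n-2$ general hyperplanes through a general point of $Z_0$ produces the germ of a normal surface singularity $S$ carrying a nowhere-vanishing reflexive $2$-form; as general hyperplane sections preserve the Gorenstein and the canonical properties, $S$ is a Gorenstein rational double point, i.e.\ a Du Val singularity, whose minimal resolution is crepant. Pulling such a crepant exceptional divisor back through the resolution of $X$ (and using compatibility of discrepancies with general hyperplane sections) exhibits a divisor over $X$ of discrepancy $0$, so $X$ is not terminal.

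The main obstacle is the geometric input used in the first implication --- that $\tilde\sigma$ is sufficiently degenerate along exceptional divisors whose centre has large codimension. This is exactly the content of Namikawa's analysis of the extension of $2$-forms across symplectic singularities \cite{namikawa2001note}, and it (together with the local structure theorem identifying transverse slices, which also underlies the evenness of $\mathrm{codim}_X(\Sigma)$ and the Du Val statement in the converse) is where the real work lies; granting it, both directions reduce to the elementary fact about ranks of $2$-forms and a routine computation in local coordinates.
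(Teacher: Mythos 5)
First, note that the paper does not prove this proposition at all: it is quoted from \cite[Corollary 1]{namikawa2001note} and \cite[Theorem 3.4]{bakker2018global}, and the only argument offered is the remark after Proposition~\ref{symstrat}, namely that a point of a codimension-$2$ stratum has a formal transversal slice which is a Du Val surface singularity, whose crepant (symplectic) resolution shows $X$ is not terminal. Your converse direction is essentially this same argument run through generic hyperplane slices instead of Kaledin's product decomposition, and it is fine in outline (though the cleaner route is: general hyperplane sections of a terminal variety are terminal and terminal surfaces are smooth, which avoids the slightly hand-waved step of lifting a crepant divisor over the slice to a divisor over $X$; also the evenness of $\mathrm{codim}\,\Sigma$ that you invoke is Kaledin's stratification, Proposition~\ref{symstrat}, not really ``Namikawa's structure theory''). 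Likewise your reduction of terminality to $a_i=\mathrm{ord}_{E_i}(\tilde\sigma^{\wedge n})$ and the local computation $\tilde\sigma^{\wedge n}=n\,dt\wedge\alpha\wedge\beta^{\wedge(n-1)}$ are correct.

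The genuine gap is in the forward direction. Your entire argument rests on the claim that at a general point of $E_i$ the tangent directions of the fibres of $E_i\to Z$ lie in the \emph{radical} of $\tilde\sigma|_{E_i}$, so that $\operatorname{rank}(\tilde\sigma|_{E_i})\le d=\dim Z$. You give no proof of this and cite \cite{namikawa2001note} for it, which is circular: Corollary~1 of that note \emph{is} the proposition being proved, and the statement you need is not an off-the-shelf lemma there. What one can extract cheaply (vanishing of the class of $\tilde\sigma$ on fibres plus $h^{2,0}=0$ for resolutions of fibres of a rational singularity) is only that the fibres are \emph{isotropic}; an isotropic subspace of dimension $2n-1-d$ in $T_\xi E_i$ gives $\operatorname{rank}(\tilde\sigma|_{E_i})\le 2d$, and then $(\tilde\sigma|_{E_i})^{\wedge(n-1)}=0$ follows only when $d\le n-2$. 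So for centres with $n-1\le d\le 2n-4$ (nonempty as soon as $n\ge 3$) your argument does not close without the stronger radical statement, and proving that statement is precisely the real content of the theorem --- e.g.\ via Kaledin's formal product decomposition along the stratum through the generic point of $Z$, reducing to the isolated-singularity case where in fact $\tilde\sigma|_{E_i}=0$ (compare the mechanism in Lemma~\ref{logzerosreflexive}, where the vanishing $H^p(E,\mathscr O_E)=0$ forces compatible holomorphic $2$-forms on the components of $E$ to vanish). As written, the hard implication is therefore asserted rather than proved.
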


    One of the key features of symplectic varieties is that they are stratified by varieties which, up to normalization, are again symplectic varieties.  We will use the following structure theorem throughout the paper. 

    \begin{proposition}[\textit{cf.} \protect{{\cite[Theorem 2.3,2.4]{kaledin2006symplectic}, \cite[Theorem 3.4]{bakker2018global}}}] \label{symstrat} Let $X$ be a symplectic variety.
    
    \begin{enumerate}
    \item There is a stratification  
$$
      X = X_0 \supset X_1 \supset X_2\supset \cdots
$$
      of\, $X$ given by the singular locus of\, $X$, so that  $X_i = (X_{i-1})_{\mathrm{sing}}$ for each $i$.  The normalization of each $X_{i}$ is a symplectic variety, and $X_i^\circ : = (X_i)_{\mathrm{reg}}$ admits a global holomorphic symplectic form.
    \item Suppose that $x \in X$ is such that $x \in X_{i}^\circ$.  Let $\widehat{X}_x$ and $\widehat{X_{i}^\circ}_x$ be the completions of\, $X$ and $X_{i}^\circ$ at $x$, respectively.  Then there is a decomposition
      $$
      \widehat{X}_x \cong Y_x\times \widehat{X_{i}^\circ}_x,
      $$
      where $Y_x$ is a symplectic variety.
    \end{enumerate} 
    \end{proposition}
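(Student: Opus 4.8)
The plan is to extract the whole statement from the Poisson structure that $\sigma$ induces on $X$: I would establish the formal normal form of part~(2) first and then bootstrap part~(1), following Kaledin \cite{kaledin2006symplectic}. Since $\sigma$ is nondegenerate on $U$, its inverse is a holomorphic bivector field on $U$, and for local sections $f,g$ of $\mathscr O_X$ the resulting Poisson bracket $\{f,g\}$ --- a priori only a function on $U$ --- extends to $\mathscr O_X$ because $X$ is normal with $\mathrm{codim}_X\Sigma\ge 2$; thus $\mathscr O_X$ becomes a Poisson algebra. The Hamiltonian vector fields $\xi_f=\{f,-\}$ are then derivations of $\mathscr O_X$ whose formal flows are automorphisms of $X$, hence preserve the singular locus; so $\xi_f(\mathscr I_\Sigma)\subseteq\mathscr I_\Sigma$, i.e.\ $X_1=\Sigma$ (with reduced structure) is a closed Poisson subvariety, and the bracket descends to its normalization. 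Iterating, every $X_i$, and every component of its normalization, carries an induced Poisson structure, and the strata $X_i^\circ$ will turn out to be unions of its symplectic leaves.

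Next I would prove the formal splitting. Fix $x\in X$ and let $L$ be the symplectic leaf through $x$, of some dimension $2m$. Following Weinstein's splitting theorem, carried out in the complete local ring $\widehat{\mathscr O}_{X,x}$, one produces functions $p_1,\dots,p_m,q_1,\dots,q_m$ there with $\{p_a,q_b\}=\delta_{ab}$, $\{p_a,p_b\}=\{q_a,q_b\}=0$ and independent differentials at $x$ --- the standard inductive Darboux construction, available because the bracket is nondegenerate along $L$. The commuting Hamiltonian flows of the $p_a$ and $q_a$ integrate to a formal action of a $2m$-dimensional formal additive group and yield an isomorphism $\widehat X_x\cong\widehat L_x\times Y_x$, where $\widehat L_x$ is smooth with its standard constant symplectic structure, $Y_x$ is the formal subscheme $\{p_a=q_a=0\}$, and the Poisson structure is the product of the symplectic one on $\widehat L_x$ with an induced one on $Y_x$ whose leaf through $x$ is the point $x$. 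Moreover $Y_x$ is a symplectic variety: as a factor of the normal, rational, Gorenstein $\widehat X_x$ in a product with the regular $\widehat L_x$, it is itself normal, rational and Gorenstein; its induced bracket has rank $2n-2m=\dim Y_x$ on the dense open $(Y_x)_{\mathrm{reg}}$, hence is symplectic there; and by Namikawa's characterization of symplectic varieties \cite{namikawa2001note} these two facts suffice. Note that $\dim Y_x<\dim X$ unless $x$ is a smooth point of $X$.

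Finally I would deduce the stratification. A symplectic leaf is a smooth submanifold, so a leaf of dimension $\dim X$ through $x$ is open near $x$; together with the previous step this gives the dichotomy: $x\in X$ is smooth if and only if its leaf has dimension $2n$. Hence $X_1=\Sigma$ is precisely the union of the leaves of dimension $<2n$. The remainder --- that each component of the normalization $\overline{X_i}$ is a symplectic variety, that $X_i^\circ=(X_i)_{\mathrm{reg}}$ is a union of symplectic leaves and therefore carries a global holomorphic symplectic form, and that the formal splitting takes the asserted shape $\widehat X_x\cong Y_x\times\widehat{X_i^\circ}_x$ --- I would prove by induction on $\dim X$, applying the theorem to the slice $Y_x$, which is a symplectic variety of strictly smaller dimension: for $x\in X_i^\circ$ the splitting at $x$ presents $\widehat X_x$ as a product of $\widehat{X_i^\circ}_x$ with $Y_x$, and the stratification of $X$ near $x$ is read off from the stratification of $Y_x$ near its central point, to which induction applies. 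Since the dimensions strictly decrease, the stratification is finite.

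\textbf{Main obstacle.} The work is concentrated in two places. One is the singular formal splitting: the Darboux normalization inside the complete local ring of a singular point, the formal integration of the Hamiltonian flows, and --- most importantly --- the verification that the transverse slice $Y_x$ is a genuine symplectic variety and not merely a Poisson scheme. The other is the identification of the symplectic leaves of $X$ with the connected components of the strata $X_i^\circ$ --- equivalently, the constancy of the Poisson rank along each $X_i^\circ$ --- which must be carried out simultaneously with the rest of the statement as a single induction on dimension. Granting these, the combinatorics of the stratification are routine.
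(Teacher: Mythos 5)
The paper does not actually prove Proposition \ref{symstrat}: it is quoted from Kaledin \cite{kaledin2006symplectic} (see also \cite[Theorem 3.4]{bakker2018global}), so there is no internal argument to compare against. Your outline is, in substance, Kaledin's own route --- extend the Poisson bracket across $\Sigma$ using normality and $\mathrm{codim}_X\Sigma\ge 2$, note that Hamiltonian flows preserve the singular locus so each $X_i$ is a Poisson subvariety, split off the symplectic directions formally at a point, and induct on dimension --- so as a plan it is the right one.

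However, the two steps you park under ``main obstacle'' are not finishing touches but the entire content of the theorem, and as written your reduction has a circularity and a category problem. First, you invoke ``the symplectic leaf $L$ through $x$'' and run the Darboux induction ``because the bracket is nondegenerate along $L$''; at a singular point the existence, smoothness and dimension of that leaf --- equivalently the local constancy of the Poisson rank along $X_i^\circ$ and the identification of the leaf factor with $\widehat{X_i^\circ}_x$ --- is exactly what parts (1) and (2) assert, so it cannot be taken as input to the splitting. The honest formal Weinstein argument can only start from the rank of the bivector at $x$ on the Zariski cotangent space and produce an abstract flat factor; matching that factor with the germ of the stratum is the nontrivial point Kaledin proves. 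Second, Namikawa's criterion \cite{namikawa2001note} (normal, rational Gorenstein, symplectic on the regular locus implies symplectic) is a statement about complex varieties, whereas $Y_x$ is only a formal scheme; one must either develop a formal-local notion of symplectic variety and verify it for the slice, or realize $Y_x$ as the completion of an honest germ, and one must also justify that normality, rationality and the Gorenstein property pass to a factor of the product decomposition --- none of this is automatic. Likewise the descent of the bracket to the normalization of $X_i$ is asserted rather than argued. These are precisely the points \cite{kaledin2006symplectic} is devoted to, so the proposal is a faithful sketch of the known proof but not a proof.
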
 

    In \cite{kaledin2009geometry}, a symplectic variety $Y_x$ is a formal scheme rather than the completion of some symplectic variety, as its existence is predicted by solutions to differential equations derived from the Poisson structure (see Section~3 of \textit{op. cit.}). 
    By considering \cite[Proposition 2.3, Appendix A]{kaplan2023crepant}, we can assume $Y_x$ is defined (and symplectic) in an analytic neighborhood of $x$.
    
    It would be interesting to understand how the holomorphic symplectic geometry of the regular strata $X_i^\circ$ determines the geometry of a symplectic variety $X$.  For instance, if $\sigma \in H^0(X, \Omega_X^{[2]})$ is the class of the symplectic form on the regular locus of $X$, there is class $j_i^*\sigma \in H^0(X_i^{\mathrm{nm}}, \Omega_{X_i}^{[2]})$, where $j_i\colon X_i^{\mathrm{nm}} \to X$ is the natural map from the normalization of $X_i$, defined by reflexive pullback for rational singularities; see \cite[Theorem 14.1]{kebekus2021extending}.  We claim this class is non-zero whenever $\dim X_i > 0$. Since the problem is local, we can consider the product decomposition $\widehat X_x \cong Y_x \times \widehat{X_i^\circ}_x$ by Proposition~\ref{symstrat}.  If $j_i^*\sigma = 0$, then $\sigma = p_1^*\sigma_{Y_x}$, where $p_1\colon X \to Y_x$ is the projection morphism and $\sigma_{Y_x}$ is the symplectic form on $Y_x$.  If $\dim Y_x \ne \dim X$, then $\sigma^{\dim X} : = \wedge^{\dim X}\sigma_{Y_x} = 0$.  This is absurd if $X$ is a symplectic variety, and so we see that $j_i^*\sigma$ defines a non-zero global section of $\Omega_{X_i}^{[2]}$.  Since $X_i^\circ$ is symplectic, $j_i^*\sigma$ is a symplectic form.

\subsubsection{Primitive symplectic varieties}\label{2.1.3}

We now transition to global properties of symplectic varieties.  The Hodge theory of singular symplectic varieties has been studied in  \cite{namikawa2000deformation,namikawa2005deformations,matsushita2001fujiki,matsushita2015base,schwald2017fujiki,bakker2016global,bakker2018global} at varying levels of generality; primitive symplectic varieties, which were studied in \cite{bakker2018global}, is the most general framework for studying the global properties of symplectic singularities.  

The general framework of the global moduli theory of primitive symplectic varieties works in the category of complex K\"ahler varieties.  A \textit{K\"ahler form} on a reduced complex analytic space $X$ is given by an open covering $X = \bigcup_i U_i$ and smooth strictly plurisubharmonic functions $f_i\colon U_i \to \mathbb R$ such that on each intersection $U_{ij}$, the function $f_{ij} = f_i|_{U_{ij}} - f_j|_{U_{ij}}$ is locally the real part of a harmonic function.  If $X$ admits a K\"ahler form, we say that $X$ is a \textit{K\"ahler variety}.  The most important property for this paper is that if $X$ is a K\"ahler variety, then $X$ admits a resolution by a K\"ahler manifold.  If $X$ is a compact K\"ahler variety with at worst rational singularities and $\pi\colon \widetilde{X} \to X$ is a resolution of singularities, then there is an injection $H^k(X, \mathbb Z) \hookrightarrow H^k(\widetilde{X}, \mathbb Z)$ for $k \le 2$, see \cite[Lemma 2.1]{bakker2016global}, and $H^k(X, \mathbb Z)$ inherits a pure Hodge structure from $H^k(\widetilde{X}, \mathbb Z)$, which is described as follows.  There is a spectral sequence 
$$
E_1^{p,q} : = H^q(X, j_*\Omega_U^p) \Longrightarrow \mathbb H^{p+q}(X, j_*\Omega_U^\bullet)
$$
 which degenerates at $E_1$ for $p + q \le 2$; see \cite[Lemma 2.2]{bakker2016global}.  For $k \le 2$, we have isomorphisms
\begin{equation}\label{reflexiveHTDR}
   H^k(X, \mathbb C) \overset{\lowsim}\lra \mathbb H^k(X, \pi_*\Omega_{\widetilde{X}}^\bullet) \cong \mathbb H^k(X, j_*\Omega_U^\bullet),
\end{equation} and the Hodge filtration is obtained by these isomorphisms and the degeneration of the reflexive Hodge-to-de Rham spectral sequence.

\begin{definition}\label{primsym}
A \textit{primitive symplectic variety} is a compact K\"ahler symplectic variety $(X, \sigma)$ such that $H^1(\mathscr O_X) = 0$ and $H^0(X, \Omega_X^{[2]}) = \mathbb C\cdot \sigma$.
\end{definition}

The second cohomology of a primitive symplectic variety is therefore a pure Hodge structure.  As in the case of irreducible holomorphic symplectic manifolds, the geometry is controlled by the Hodge theory on~$H^2$.  To see this, we need the following quadratic form. 

\begin{definition}\label{BBFdef}
Let $X$ be a primitive symplectic variety of dimension $2n$ with symplectic form $\sigma$.  We define a quadratic form $q_{X,\sigma}\colon H^2(X, \mathbb C) \to \mathbb C$ given by the formula 
$$
q_{X, \sigma}(\alpha) : = \frac{n}{2}\int_X(\sigma\overline\sigma)^{n-1}\alpha^2 + (1-n)\int_X\sigma^{n-1}\overline{\sigma}^n\alpha\int_X\sigma^n{\overline \sigma}^{n-1}\alpha,
$$
 where $\int_X$ is the cap product with the fundamental class.
\end{definition}  

This form $q_{X, \sigma}$ defines a quadratic form over $H^2(X, \mathbb R)$ by definition.  It is non-degenerate, and the signature of $q_{X, \sigma}$ is $(3, b_2-3)$, where $b_2 = \dim H^2(X, \mathbb R)$ is the second Betti number; see \cite[Theorem~2]{schwald2017fujiki}.  If we rescale $\sigma$ so that $\int_X(\sigma\overline{\sigma})^n = 1$, then $q_{X, \sigma}$ is independent of $\sigma$.  In this case, we call $q_X : = q_{X,\sigma}$ the \textit{Beauville--Bogomolov--Fujiki form}.

  We define the period domain of $X$ to be 
$$
\Omega : = \left\{[\sigma] \in \mathbb P(H^2(X, \mathbb C))\mid q_X(\sigma) = 0,~q_X(\sigma,\overline{\sigma}) > 0\right\}.
$$
  There is an associated period map 
$$
\rho\colon\Def^{\,\lt}(X) \lra \Omega
$$
 from the universal family of locally trivial deformations of $X$ which sends $t$ to $H^{2,0}(X_t)$, where $X_t$ is a locally trivial deformation of $X$ corresponding to a point $t \in \Def^{\,\lt}(X)$.  The period map $\rho$ is a local isomorphism by the \textit{local Torelli theorem}, see \cite[Proposition 5.5]{bakker2018global}, which leads to two immediate consequences.  First, $q_X$ satisfies the Fujiki relation: There is a positive constant $c$ such that 
\begin{equation}\label{regFujiki}
    q_X(\alpha)^n = c\int_X\alpha^{2n}; 
\end{equation}
see \cite[Proposition 5.15]{bakker2018global}.  Second, we may rescale the BBF form to get an \textit{integral} quadratic form on $H^2(X, \mathbb Z)$; see \cite[Lemma 5.7]{bakker2018global}.  We denote the corresponding integral lattice by $\Gamma : = (H^2(X, \mathbb Z), q_X)$.

The period map also satisfies many global properties.  We can view the period domain $\Omega$ as the moduli space of all weight 2 Hodge structures on $\Gamma$ which admit a quadratic form $q$ of signature $(3,b_2-3)$ which is positive-definite on the real space underlying $H^{2,0}\otimes H^{0,2}$.  We  then write $\Omega_\Gamma$ for the period domain.  If $X'$ is a locally trivial deformation of $X$ with $(H^2(X', \mathbb Z), q_{X'}) \cong \Gamma$, then we get a period map $p\colon \mathscr M' \to \Omega_\Gamma$ from the moduli space of $\Gamma$-marked locally trivial deformations of $X'$.  The orthogonal group $O(\Gamma)$ acts on $\mathscr M'$ and $\Omega_\Gamma$ by changing the marking.  For any connected component $\mathscr M$ of $\mathscr M'$, we denote by $\Mon(\mathscr M) \subset O(\Gamma)$  the image of the monodromy representation on second cohomology.

\begin{theorem}[\textit{cf.} \protect{\cite[Theorem 1.1(1)]{bakker2018global}}]\label{monodensity}
The monodromy group $\Mon(\mathscr M) \subset O(\Gamma)$ is of finite index.
\end{theorem}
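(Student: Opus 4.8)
The plan is to follow Verbitsky's method: first prove that $\mathrm{Mon}(\mathscr M)$ is Zariski-dense in the relevant orthogonal group, then upgrade Zariski density to finite index using the global geometry of the period map. Write $G = \mathrm{SO}(\Gamma_{\mathbb Q})$, an algebraic group with $G(\mathbb R) \cong \mathrm{SO}(3, b_2-3)$ by the signature of $q_X$, so that (up to the harmless $\mathrm O$ versus $\mathrm{SO}$ distinction) $O(\Gamma)$ is an arithmetic subgroup of $G(\mathbb Q)$. For Zariski density, let $G_0$ be the identity component of the Zariski closure of $\mathrm{Mon}(\mathscr M)$. By Andr\'e's refinement of Deligne's semisimplicity theorem for polarizable variations of Hodge structure, $G_0$ is a normal subgroup of the derived group of the generic Mumford--Tate group of the weight-$2$ variation of Hodge structure carried by $H^2$ over $\mathscr M$. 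For a very general locally trivial deformation $X_t$, the Hodge structure $H^2(X_t,\mathbb Q)$ is polarized of K3 type ($h^{2,0}=1$) with period point very general in the type IV domain $\Omega_\Gamma$, so its Mumford--Tate group is all of $G = \mathrm{SO}(q_X)$, exactly as for a generic polarized weight-$2$ Hodge structure of K3 type. Hence $G_0$ is normal in $G$; since $\mathfrak{so}(3,b_2-3)$ is a simple real Lie algebra, either $G_0 = 1$ or $G_0 = G$. The first case is excluded once one knows $\mathrm{Mon}(\mathscr M)$ is infinite — indeed of finite index — which is the content of the next step, and then $\mathrm{Mon}(\mathscr M)$ is Zariski-dense in $G$.

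The substantive point, and where I expect the real obstacle, is the passage to finite index: a Zariski-dense subgroup of the arithmetic group $O(\Gamma)$ need not be of finite index. By the Local Torelli theorem the period map $\mathscr P : \mathscr M \to \Omega_\Gamma$ is \'etale, and $O(\Gamma)$ acts on the full marked moduli space $\mathscr M'$ compatibly with its action on $\Omega_\Gamma$, so that $[O(\Gamma):\mathrm{Mon}(\mathscr M)]$ equals the size of the $O(\Gamma)$-orbit of the component $\mathscr M$ in $\pi_0(\mathscr M')$. Following Verbitsky, the strategy is to prove that $\mathscr P$ is \textit{surjective} onto a connected component $\Omega_\Gamma^+$ of the period domain: one wants that through every period point passes the period image of a twistor-type one-parameter deformation of $X$, and that these rational curves sweep out $\Omega_\Gamma^+$. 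This is the step most sensitive to the singular setting, since the members of a twistor family need not remain primitive symplectic or even K\"ahler, so one has to argue with limits inside the locally trivial deformation space and exploit the density of a $G$-orbit on $\Omega_\Gamma^+$ coming from the Zariski density above. Granting surjectivity, simple-connectedness of $\Omega_\Gamma^+$ and connectedness of the Hausdorff reduction $\overline{\mathscr M}$ force $\overline{\mathscr M} \xrightarrow{\sim} \Omega_\Gamma^+$; the fibre of $\mathscr P$ over a very general period point then meets each connected component of $\mathscr M'$ in exactly one point, and — using the bimeromorphic Global Torelli theorem and boundedness at such points — consists of finitely many marked pairs, so the orbit $O(\Gamma)\cdot\mathscr M$ is finite.

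As an alternative to the surjectivity argument, one can try, when $b_2 \ge 5$ so that $G$ has $\mathbb Q$-rank $\ge 2$, to exhibit inside $\mathrm{Mon}(\mathscr M)$ the integral points of the unipotent radical of a $\mathbb Q$-parabolic of $G$: a maximally degenerate one-parameter locally trivial degeneration of $X$ yields a nontrivial unipotent monodromy operator, conjugating by the Zariski-dense group $\mathrm{Mon}(\mathscr M)$ produces many such unipotents, and once a full arithmetic unipotent subgroup is assembled the rigidity criteria of Tits and Venkataramana for higher-rank arithmetic groups give finite index. In either approach the crux is the same — converting the Hodge-theoretic largeness of the monodromy into an arithmetic statement — and it rests on controlling the global structure of the locally trivial moduli space, which is the technical heart of \cite{bakker2018global}.
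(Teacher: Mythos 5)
This statement is not proved in the paper at all: it is imported verbatim from \cite[Theorem 1.1(1)]{bakker2018global} (where, note, it carries the hypothesis $b_2\ge 5$), and its proof is a substantial part of that paper. So the relevant comparison is between your sketch and Bakker--Lehn's argument, and as it stands your proposal is a plan rather than a proof: both of its pillars are left unestablished, and the second one is precisely the hard point. For the density step, you invoke Andr\'e's normality theorem for the monodromy of a polarizable variation of Hodge structure, but the base here is the marked locally trivial moduli space $\mathscr M$ --- a non-Hausdorff, non-algebraic complex manifold glued from Kuranishi spaces --- so the applicability of Deligne--Andr\'e semisimplicity is not automatic and needs an argument; moreover your exclusion of the case $G_0=1$ is circular as written, since you appeal to the finite-index statement you are trying to prove (what is needed, and must be supplied independently, is a single infinite-order monodromy transformation, e.g.\ from an explicit one-parameter locally trivial degeneration).

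The decisive gap is the passage from Zariski density to finite index. As you yourself observe, a Zariski-dense subgroup of $O(\Gamma)$ can be thin, and the route you propose --- surjectivity of the period map onto a component $\Omega_\Gamma^+$ swept out by twistor-type curves --- is exactly Verbitsky's argument, whose engine (twistor families attached to hyperk\"ahler metrics) does not exist for singular primitive symplectic varieties; you flag this but offer no substitute, so the crux of the theorem is deferred rather than proved. The finiteness of period fibres over very general points, which you also use, is itself a Torelli-type separation statement that is part of the content of \cite{bakker2018global}, not an input you may assume. Your alternative via unipotent elements and Tits--Venkataramana rigidity is a reasonable idea in $\mathbb Q$-rank $\ge 2$, but it requires exhibiting the integral points of the full unipotent radical of a $\mathbb Q$-parabolic inside $\mathrm{Mon}(\mathscr M)$, and conjugating a single logarithm of monodromy by a Zariski-dense (but so far possibly thin) group does not obviously produce such a lattice of unipotents; this step would need a genuine construction of enough degenerations. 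In short, the proposal correctly identifies the shape of the problem but leaves its technical heart --- converting Hodge-theoretic largeness of the monodromy into an arithmetic finite-index statement without twistor lines --- unresolved, which is exactly what the cited theorem of Bakker--Lehn accomplishes.
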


\begin{remark} \label{monodromyremark}
  We will use the following interpretation of Theorem~\ref{monodensity}.  Let $\Mon(X)$ be the monodromy group associated to the connected component associated to a primitive symplectic variety $X$.  Since $\Mon(X) \subset O(\Gamma)$ is of finite index, the restriction $G_X : = \Mon(X)\cap \SO(\Gamma) \subset \SO(\Gamma)$ is also of finite index.  By the Borel density theorem, $G_X$ is therefore Zariski dense in $\SO(\Gamma_{\mathbb C})$.  
\end{remark}
    
    \subsection{Intersection Cohomology}
    
        Our proof of the LLV structure theorem for primitive symplectic varieties uses the Hodge theory of intersection cohomology. We review the ``Hodge--K\"ahler'' package for the intersection cohomology of a compact complex space, which follows generally from Saito's theory of mixed Hodge modules; see \cite{saito1990mixed}.  We refer the reader to \cite{de2005hodge} for an excellent treatment in the algebraic case.

    \subsubsection{Hodge theory of intersection cohomology}

The intersection cohomology is defined as the hypercohomology groups with respect to the intersection complex.  The intersection cohomology complex is the perverse sheaf underlying the unique \textit{Hodge module} determined by the constant variation of pure Hodge structures on $\mathbb Q_{X_{\mathrm{reg}}}[\dim X]$ over the regular locus of $X$; see \cite[Section~5.3]{saito1988modules}.  Writing $\mathcal{IC}_X$ for this complex, we have 
$$
\IH^k(X, \mathbb Q) : = \mathbb H^{k - \dim X}(X, \mathcal{IC}_X).
$$

\begin{proposition} \label{intprops}
Suppose that $X$ is a compact K\"ahler variety.

\begin{enumerate}
    \item\label{intprops-1} $($Decomposition theorem$)$.  If $f\colon Y \to X$ is a projective morphism, there is a non-canonical isomorphism 
$$
\IH^k(Y, \mathbb Q) \cong \IH^k(X, \mathbb Q)\oplus\bigoplus_\lambda \IH(X_\lambda, L_\lambda),
$$
 where the pairs $(X_\lambda, L_\lambda)$ consist of closed subvarieties with semisimple local systems $L_\lambda$.  In particular, $\IH^k(X, \mathbb Q) \subset \IH^k(Y, \mathbb Q)$.
    
    \item\label{intprops-2} $($Hodge decomposition$)$.  The cohomology groups $\IH^k(X, \mathbb Q)$ all carry pure Hodge structures of weight $k$.
    
    \item \label{ample HL} $($hard Lefschetz$)$.  Given an ample class $\alpha \in H^2(X, \mathbb Q)$ $($or, more generally, a K\"ahler class in $H^2(X, \mathbb R))$, there is a cup product map $L_\alpha\colon \IH^k(X, \mathbb Q) \to \IH^{k+2}(X, \mathbb Q)$ which produces isomorphisms 
$$
L_\alpha^j\colon \IH^{\dim X-j}(X, \mathbb Q) \overset{\lowsim}\lra \IH^{\dim X + j}(X, \mathbb Q)
$$
 for every $j$.
    
    \item \label{weight piece intersection map} For each $k$, there is a natural morphism $H^k(X, \mathbb Q) \to \IH^k(X, \mathbb Q)$ whose kernel is contained in $W_{k-1}H^k(X, \mathbb Q)$, the $\supst{(k-1)}$ piece of the weight filtration.
\end{enumerate}
\end{proposition}

\begin{remark} \label{remark intersection cohomology inclusion rational sings}
  If $X$ has rational singularities---such as a primitive symplectic variety---then $H^2(X, \mathbb C) \subset \IH^2(X, \mathbb C)$ since $H^2(X, \mathbb C)$ carries a pure Hodge structure by Proposition~\ref{intprops}(\ref{weight piece intersection map}).  
\end{remark}

We will use the following description of intersection cohomology throughout the paper.

\begin{lemma} \label{intcodim}
Suppose that $X$ is a projective variety with singular locus $\Sigma$.  If\, $\codim_X(\Sigma) \ge 3$, then $\IH^2(X, \mathbb C) \cong H^2(X\setminus \Sigma, \mathbb C)$.  
\end{lemma}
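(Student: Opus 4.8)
The plan is to compare $\mathcal{IC}_X$ with the derived pushforward of the constant sheaf from the regular locus $U = X\setminus\Sigma$ and to show that, for the middle perversity, these two complexes agree through cohomological degree $\operatorname{codim}_X\Sigma-1$; since $\operatorname{codim}_X\Sigma\ge 3$, this already determines $IH^{\le 2}$.

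Set $n=\dim X$, let $j\colon U\hookrightarrow X$ be the open inclusion, and take $\mathcal{IC}_X$ in the normalization with $\mathcal{IC}_X|_U=\mathbb C_U$ placed in degree $0$, so that $IH^k(X,\mathbb C)=\mathbb H^k(X,\mathcal{IC}_X)$ while $H^k(U,\mathbb C)=\mathbb H^k(X,Rj_*\mathbb C_U)$. The adjunction unit gives a canonical morphism $\phi\colon\mathcal{IC}_X\to Rj_*\mathbb C_U$ which is the identity after restriction to $U$; hence its cone $C$ is supported on $\Sigma$.

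The first (and essentially only) step is to prove $\mathcal H^k(C)=0$ for $k<c:=\operatorname{codim}_X\Sigma$. I would deduce this from Deligne's inductive construction: fix a Whitney stratification of $X$ whose singular strata have complex codimensions $c=c_1\le c_2\le\cdots\le c_r$, let $i_\ell$ be the open inclusion adjoining the $\ell$-th stratum, and recall $\mathcal{IC}_X=\tau_{\le c_r-1}Ri_{r*}\cdots\tau_{\le c_1-1}Ri_{1*}\mathbb C_U$, whereas $Rj_*\mathbb C_U=Ri_{r*}\cdots Ri_{1*}\mathbb C_U$. Since $Rf_*$ is left $t$-exact for the standard $t$-structure, the natural map $\tau_{\le m}Rf_*(\tau_{\le m}K)\to\tau_{\le m}Rf_*K$ is an isomorphism; combining this with $c_1\le c_\ell$ for every $\ell$, a short induction on $\ell$ gives $\tau_{\le c-1}\mathcal{IC}_X\cong\tau_{\le c-1}Rj_*\mathbb C_U$ compatibly with $\phi$, which is exactly the vanishing $\mathcal H^{<c}(C)=0$. (This is standard and may simply be quoted; see e.g.\ \cite{de2005hodge}.)

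Finally, since $c\ge 3$ we have $C\in\mathcal D^{\ge 3}(X)$, so the hypercohomology spectral sequence gives $\mathbb H^k(X,C)=0$ for $k\le 2$. Feeding this into the long exact hypercohomology sequence of the triangle $\mathcal{IC}_X\xrightarrow{\phi}Rj_*\mathbb C_U\to C\xrightarrow{+1}$ yields isomorphisms $IH^k(X,\mathbb C)=\mathbb H^k(X,\mathcal{IC}_X)\xrightarrow{\ \sim\ }\mathbb H^k(X,Rj_*\mathbb C_U)=H^k(U,\mathbb C)$ for all $k\le 2$; taking $k=2$ gives the claim. I do not expect a genuine obstacle here: the only real content is the truncation bookkeeping in the middle step, which is routine.
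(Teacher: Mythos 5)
Your strategy is sound and is genuinely different from the paper's treatment: the paper disposes of this lemma in one line by citing Durfee (\cite{durfee1995intersection}, Lemma 1), which already gives $IH^k(X,\mathbb C)\cong H^k(X\setminus\Sigma,\mathbb C)$ in the whole range $k<\mathrm{codim}_X(\Sigma)$, whereas you reprove the comparison from Deligne's construction. Your version buys a self-contained argument exhibiting the precise mechanism (left $t$-exactness of $Rf_*$ plus truncation bookkeeping); the citation buys brevity and the full stated range.

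There is, however, one step asserted too strongly, and it bites exactly in the boundary case $c=3$ allowed by the hypothesis. The isomorphism $\tau_{\le c-1}\mathcal{IC}_X\cong\tau_{\le c-1}Rj_*\mathbb C_U$ is \emph{not} ``exactly'' the vanishing $\mathcal H^{<c}(C)=0$: from the long exact sequence of cohomology sheaves it yields $\mathcal H^{k}(C)=0$ only for $k\le c-2$, while $\mathcal H^{c-1}(C)\cong\ker\left(\mathcal H^{c}(\mathcal{IC}_X)\to\mathcal H^{c}(Rj_*\mathbb C_U)\right)$, whose vanishing you have not established. So for $c=3$ your last step ($C\in\mathcal D^{\ge 3}$, hence $\mathbb H^{2}(X,C)=0$ and surjectivity of $IH^2(X,\mathbb C)\to H^2(U,\mathbb C)$) is not justified as written. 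Fortunately the fix is immediate, in either of two ways. (i) Bypass the cone: since $\tau_{\le m}K\to K$ induces isomorphisms on $\mathbb H^k$ for $k\le m$, your truncation isomorphism with $m=c-1\ge 2$ already gives $IH^k(X,\mathbb C)\cong H^k(U,\mathbb C)$ for all $k\le c-1$, in particular $k=2$. (ii) Or run your induction on cones rather than truncations: at each stage factor the comparison map through the untruncated pushforward; the cone of the first factor is a truncation from above, living in degrees $\ge c_\ell\ge c$, and the cone of the second factor is $Ri_{\ell*}$ of the previous cone, which lies in $\mathcal D^{\ge c}$ by induction and left $t$-exactness; the octahedral axiom then keeps the total cone in $\mathcal D^{\ge c}$, so the stronger statement $\mathcal H^{<c}(C)=0$ is in fact true. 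With either repair your proof is complete.
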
  

\begin{proof}
This is \cite[Lemma 1]{durfee1995intersection}.  More generally, if $\codim_X(\Sigma_X) = d$, then $\IH^k(X, \mathbb C) \cong H^k(X\setminus \Sigma, \mathbb C)$ for $k < d$ and $k < \dim X$.
\end{proof}

Since the analytic topology of a singularity of a complex variety is preserved under locally trivial deformations, Lemma~\ref{intcodim} holds for any primitive symplectic variety, as a general locally trivial deformation is projective; see \cite[Corollary 6.10]{bakker2018global}.

    \subsection{Intersection Cohomology for Primitive Symplectic Varieties}

In order to prove the structure theorem for the total Lie algebra, we need to adapt the hard Lefschetz theorem for non-ample classes.  This can be done using the monodromy density theorem. 

\begin{proposition} \label{HLregclass}
Let $X$ be a primitive symplectic variety of dimension $2n$.  Let $\alpha \in H^2(X, \mathbb Q)$ be any non-isotropic class with respect to the BBF form $q_X$.  Then $\alpha$ satisfies hard Lefschetz: There is a cupping morphism $L_\alpha\colon \IH^k(X, \mathbb Q) \to \IH^{k+2}(X, \mathbb Q)$ which induces isomorphisms 
$$
L_\alpha^k\colon \IH^{2n-k}(X, \mathbb Q) \lra \IH^{2n+k}(X, \mathbb Q).
$$
 Conversely, any class $\alpha$ which satisfies hard Lefschetz is non-isotropic.
\end{proposition}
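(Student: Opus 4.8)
The plan is to exploit two facts: that Hard Lefschetz cuts out a Zariski open condition on $H^2(X,\mathbb C)$, and that the monodromy group of $X$ is Zariski dense in $SO(\Gamma_{\mathbb C})$ by Remark \ref{monodromyremark}. For $\alpha\in H^2(X,\mathbb C)$ the cup product $L_\alpha\colon IH^k(X,\mathbb C)\to IH^{k+2}(X,\mathbb C)$ is linear in $\alpha$, and by Poincar\'e duality on intersection cohomology $\dim IH^{2n-k}(X,\mathbb C)=\dim IH^{2n+k}(X,\mathbb C)$; so after choosing bases, $L_\alpha^k\colon IH^{2n-k}(X,\mathbb C)\to IH^{2n+k}(X,\mathbb C)$ has a square matrix whose entries are polynomial in the coordinates of $\alpha$. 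Hence the set $U_{\mathrm{HL}}\subset\mathbb P(H^2(X,\mathbb C))$ of classes satisfying Hard Lefschetz is Zariski open, being the common nonvanishing locus of the determinants $\det L_\alpha^k$ for $1\le k\le 2n$; and it is nonempty, since by Proposition \ref{intprops}(\ref{ample HL}) it contains every K\"ahler class. Write $Z$ for its complement, a proper Zariski closed subset.

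First I would check that $U_{\mathrm{HL}}$ is invariant under the monodromy group $G_X=\mathrm{Mon}(X)\cap SO(\Gamma)$. Since locally trivial deformations preserve the local analytic type, the intersection complex propagates in the universal family, so $IH^*(X,\mathbb Q)$ underlies a local system over $\mathrm{Def}^{\mathrm{lt}}(X)$ carrying a monodromy action compatible with the action on $H^2(X,\mathbb Q)$ and with the $H^*(X,\mathbb Q)$-module structure on $IH^*(X,\mathbb Q)$; in particular $g\circ L_\alpha\circ g^{-1}=L_{g\alpha}$ for $g\in G_X$, so $g$ preserves $U_{\mathrm{HL}}$ and $Z$. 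As $Z$ is Zariski closed, the set $\{g\in SO(\Gamma_{\mathbb C}):gZ\subseteq Z\}$ is Zariski closed and contains $G_X$; since $G_X$ is Zariski dense in $SO(\Gamma_{\mathbb C})$, it equals all of $SO(\Gamma_{\mathbb C})$, so $Z$ is $SO(\Gamma_{\mathbb C})$-invariant. Now $SO(\Gamma_{\mathbb C})$ acts transitively on the set $\mathbb P V_{\mathrm{ni}}$ of non-isotropic lines in $\mathbb P(H^2(X,\mathbb C))$ (Witt's theorem, together with rescaling over $\mathbb C$ to adjust the value of $q_X$), and $\mathbb P V_{\mathrm{ni}}$ is Zariski dense, being the complement of the quadric $\{q_X=0\}$. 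Thus if $Z$ met $\mathbb P V_{\mathrm{ni}}$ it would, by invariance and transitivity, contain all of $\mathbb P V_{\mathrm{ni}}$, hence all of $\mathbb P(H^2(X,\mathbb C))$ by closedness, contradicting that $Z$ is proper. Therefore $\mathbb P V_{\mathrm{ni}}\subseteq U_{\mathrm{HL}}$: every non-isotropic class satisfies Hard Lefschetz.

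For the converse, suppose $q_X(\alpha)=0$. The Fujiki relation (\ref{regFujiki}) gives $\int_X\alpha^{2n}=0$. Since the natural map $H^*(X,\mathbb C)\to IH^*(X,\mathbb C)$ of Proposition \ref{intprops}(\ref{weight piece intersection map}) is a morphism of $H^*(X,\mathbb C)$-modules, $L_\alpha^{2n}$ carries the generator $1\in IH^0(X,\mathbb C)$ to the image of $\alpha^{2n}\in H^{4n}(X,\mathbb C)$; but $H^{4n}(X,\mathbb C)\cong\mathbb C$ is detected by $\int_X$, so $\alpha^{2n}=0$ there and $L_\alpha^{2n}\colon IH^0(X,\mathbb C)\to IH^{4n}(X,\mathbb C)$ is the zero map. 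As $IH^0(X,\mathbb C)\ne 0$, this is not an isomorphism, so $\alpha$ fails Hard Lefschetz; the contrapositive is the assertion.

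The step I expect to require the most care is the monodromy-invariance of $U_{\mathrm{HL}}$: one must justify that the monodromy acts on \emph{all} of $IH^*(X,\mathbb Q)$ and that this action intertwines cup product by $H^2$, so that ``being a Hard Lefschetz class'' is invariant under $G_X$ --- and hence, by Zariski density and closedness, under $SO(\Gamma_{\mathbb C})$. This uses the local triviality of the deformations (so that $\mathcal{IC}_X$ is locally constant in the family) together with the functoriality of the module structure on intersection cohomology; granting this, the remainder is a routine application of the Borel density theorem and the transitivity of the orthogonal group on non-isotropic lines.
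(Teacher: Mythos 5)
Your proof is correct and takes essentially the same route as the paper's: Hard Lefschetz is an open (determinantal) condition, the restricted monodromy group $G_X$ is Zariski dense in $SO(\Gamma_{\mathbb C})$ so the HL locus is $SO(\Gamma_{\mathbb C})$-invariant and must contain every non-isotropic class once it contains one (the paper anchors at an ample class on a locally trivial deformation, you at a K\"ahler class on $X$ itself), and the converse follows from the Fujiki relation. The step you flag---that monodromy acts on all of $IH^*(X,\mathbb Q)$ compatibly with cup product by $H^2$---is exactly the input the paper also relies on (cf.\ Proposition \ref{intloc}), so there is no gap relative to the paper's argument.
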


\begin{proof}
First, we note that there is a cupping morphism $L_\alpha\colon \IH^k(X, \mathbb Q) \to \IH^{k+2}(X, \mathbb Q)$ for \textit{any} $\alpha \in \IH^2(X, \mathbb Q)$, which agrees with the usual cup product of Proposition~\ref{intprops}(\ref{ample HL}) when $\alpha$ is a K\"ahler form. 
 We follow \cite[Section~4.4]{de2005hodge}.  By the decomposition theorem Proposition~\ref{intprops}(\ref{intprops-1}), any class $\alpha \in \IH^2(X, \mathbb Q)$ can be lifted to a class $\tilde \alpha \in H^2(\tilde X, \mathbb Q)$ for a resolution of singularities $\pi\colon\tilde X \to X$.  Consider the isomorphism 
$$
H^2(\widetilde X, \mathbb Q) \cong \Hom_{D(X)}\left(\mathbb Q_{\widetilde X}, \mathbb Q_{\widetilde X}[2]\right),
$$
 where $D(X)$ is the full subcategory of the bounded derived category $D^b(X)$ of constructible sheaves which are \textit{cohomologically constructible}; see \cite[Definition 3.3.1]{de2005hodge}.  In particular, we obtain a map 
$$
\mathbf R\pi_*\tilde \alpha\colon \mathbf R\pi_*\mathbb Q_{\widetilde X} \lra \mathbf R\pi_*\mathbb Q_{\widetilde{X}}[2].
$$
  Now this construction is consistent with any $p$-splitting of $\mathbf R\pi_*\mathbb Q_{\widetilde X}$, in the sense of \cite[Definition~4.3.1]{de2005hodge}.  In particular, this induces a map $\alpha\colon \mathcal{IC}_X \to \mathcal{IC}_X[2]$ via the decomposition theorem.  This gives the desired morphism $L_\alpha$ upon taking hypercohomology. 

Let $\Gamma = (H^2(X, \mathbb Z), q_X)$, and consider the monodromy group $\Mon(X)$ of $\Gamma$-marked primitive symplectic varieties deformation-equivalent to $X$.  Then $\Mon(X)$ is a finite-index subgroup of $O(\Gamma)$ by Theorem~\ref{monodensity}.  Then consider  the subgroup 
$$
G_X = \Mon(X)\cap \SO(\Gamma),
$$
 which is a Zariski-dense subset of $\SO(\Gamma_{\mathbb C})$ by Remark~\ref{monodromyremark}.

Let $\ker \alpha^k = \ker(L_\alpha^k\colon \IH^{2n-k}(X, \mathbb Q) \to \IH^{2n+k}(X, \mathbb Q))$.  By Poincar\'e duality, it is enough to show that $\dim \ker \alpha^k = 0$.  Now for any $g \in G_X$, it follows that $\dim \ker \alpha^k = \dim\ker (g\cdot \alpha)^k$, as monodromy preserves cup product.  If $G_{X, \alpha}$ is the $G_{X, \alpha}$-orbit of $\alpha$, we get the constant map $G_{X, \alpha}^\circ \to \mathbb Z$ given by $g \mapsto \dim \ker  (g\cdot \alpha)^k$.  But $G_X$ is Zariski dense, and therefore the constant map must extend to a constant map over the full $\SO(\Gamma_{\mathbb C})$-orbit of $\alpha$; but this orbit necessarily contains the class of an ample divisor on some locally trivial deformation, which satisfies hard Lefschetz by Proposition~\ref{intprops}(\ref{ample HL}).  In particular, $\dim\ker \alpha^k = 0$ for every $\alpha$.

Conversely, if $\alpha$ satisfies hard Lefschetz, then the Fujiki relation (\ref{regFujiki}) implies $q_X(\alpha) \ne 0$.
\end{proof}

We also need to move the intersection cohomology around in locally trivial deformations. 

\begin{proposition} \label{intloc}
The intersection cohomology groups $\IH^k(X_t, \mathbb C)$ for $t \in \Def^{\,\lt}(X)$ form a local system.
\end{proposition}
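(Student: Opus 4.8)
The plan is to exhibit the intersection cohomology groups $IH^k(X_t,\mathbb C)$ as the fibers of a local system over $\mathrm{Def}^{\mathrm{lt}}(X)$, using the fact that locally trivial deformations preserve the local analytic structure of the singularities and hence the intersection cohomology sheaf itself. Concretely, let $p:\mathscr X \to S$ be the universal locally trivial deformation (with $S = \mathrm{Def}^{\mathrm{lt}}(X)$, or a representative of the functor after passing to a suitable base), so that $p$ is a proper locally trivial family of compact K\"ahler spaces. Because the fibration is locally trivial, $S$ admits an open cover $\{S_i\}$ such that $p^{-1}(S_i) \cong X_i \times S_i$ over $S_i$ for some fixed compact K\"ahler space $X_i$ (a fiber). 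I would first recall that formation of the middle-perversity intersection complex commutes with pullback along the projection $X_i\times S_i \to X_i$ and is compatible with the open restrictions, so that $\mathcal{IC}_{\mathscr X/S}$ is a well-defined object of $\mathcal D^b_c(\mathscr X)$ restricting to $\mathcal{IC}_{X_t}$ on each fiber.

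The key step is then a base-change/constructibility argument: apply $Rp_*$ to $\mathcal{IC}_{\mathscr X/S}$ and use that $p$ is proper together with the local product structure to conclude that $R^jp_*\mathcal{IC}_{\mathscr X/S}$ is a locally constant sheaf on $S$ whose stalk at $t$ computes $\mathbb H^{j}(X_t,\mathcal{IC}_{X_t}) = IH^{j+n}(X_t,\mathbb Q)$ (up to the degree shift in the excerpt's normalization $IH^k := \mathbb H^{k-n}(\mathcal{IC})$). On each chart $p^{-1}(S_i) \cong X_i\times S_i$, proper base change for the projection gives $R^jp_*\mathcal{IC}_{\mathscr X/S}|_{S_i} \cong \underline{IH^{j+n}(X_i,\mathbb Q)}$, the constant sheaf; patching these trivializations over the overlaps produces the desired local system. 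Tensoring with $\mathbb C$ gives the statement for $\mathbb C$-coefficients. Alternatively — and this is essentially the same argument phrased via $\pi_1$ — one can invoke topological local triviality of the family to see that the monodromy of the fibration acts on $IH^*(X_t,\mathbb Q)$ and that $IH^*$ is a topological invariant of the pair $(X_t,\text{stratification})$, which is preserved precisely because the deformation is locally trivial.

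The main obstacle is making the local product structure interact correctly with intersection cohomology in the analytic (rather than algebraic) category: one must be careful that ``locally trivial deformation'' in the sense of \cite{bakker2018global} really does trivialize the stratified-homeomorphism type of $X$ locally on the base, so that the Goresky–MacPherson construction (or Saito's $\mathcal{IC}$) genuinely pulls back. Once that compatibility is in hand — which follows from the definition of locally trivial deformation together with the fact that $\mathcal{IC}$ depends only on the local analytic structure along the strata — the remainder is the standard proper-base-change/Ehresmann-type packaging. A secondary point worth checking is that $\mathrm{Def}^{\mathrm{lt}}(X)$ (or its smooth locus) is connected and that the construction is independent of the chosen charts $X_i$, so that one genuinely gets a single local system rather than a collection of locally defined ones; this is immediate from the fact that the transition data are locally constant isomorphisms.
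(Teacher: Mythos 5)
Your proposal is correct in substance, and the ``alternative'' phrasing at the end of your second paragraph is essentially the paper's own proof: the paper simply observes that, by Deligne's construction, $IH^*(X_t)$ is determined by the structure of $X_t$ as a stratified pseudomanifold, and that locally trivial deformations are real-analytically (hence stratified-topologically) trivial over the base, so the groups glue into a local system. Your main argument, via a relative intersection complex $\mathcal{IC}_{\mathscr X/S}$ and proper base change for $Rp_*$, is a heavier but equally valid packaging of the same input; what the paper's route buys is that one never has to construct $\mathcal{IC}_{\mathscr X/S}$ or verify its compatibility with restriction to fibers in the analytic category. One point you should tighten: ``locally trivial deformation'' is a condition local on the \emph{total space} (every point of $\mathscr X$ has a neighborhood isomorphic over $S$ to $U\times T$ with $U\subset X$ open), not literally the statement that $p^{-1}(S_i)\cong X_i\times S_i$ over an open cover $\{S_i\}$ of the base, which is how you open your argument. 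Upgrading the former to a (stratified, real-analytic or topological) trivialization over small opens of $S$ is exactly the nontrivial input --- an Ehresmann-type statement for proper locally trivial families --- and is what the paper invokes when it says locally trivial deformations are real-analytically trivial; it does not follow ``immediately from the definition,'' as your closing paragraph suggests. Once that base-local trivialization is granted, either your proper-base-change argument or the paper's direct appeal to the topological invariance of intersection cohomology completes the proof.
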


\begin{proof}
As vector spaces, intersection cohomology is completely determined by the structure $X_t$ admits as a stratified pseudomanifold by Deligne's construction; see \cite[Theorem 3.5]{goresky1983intersection}.  The claim follows since locally trivial deformation are real analytically trivial; see \cite[Proposition 5.1]{amerik2021contraction}. 
\end{proof}

    \subsection{Some Mixed Hodge Structures} \label{2.4} The Hodge theory of the intersection cohomology of primitive symplectic varieties with isolated singularities can be completely described by the Hodge theory of its regular locus, see Section~\ref{subsection symplectic Hard Lefschetz}, and so we review the relevant parts of Deligne's mixed Hodge structure on the (compactly supported) cohomology of smooth varieties.  Deligne's original treatment holds for algebraic varieties, but the same results hold in the K\"ahler setting by \cite{fujiki1980duality}.
    
    \subsubsection{Cohomology of the regular locus}
    
    Let $U$ be a smooth K\"ahler variety, and let $\widetilde{X}$ be a smooth compactification of $U$ such that the complement $\widetilde{X} \setminus U \cong E$ is a simple normal crossing (snc) divisor.  A fundamental result of Deligne \cite[Proposition 3.1.8]{deligne1971theorie} states that the cohomology of $U$ can be identified with the hypercohomology of the complex of logarithmic forms on the pair $(\widetilde{X}, E)$.  Specifically, let $\Omega_{\widetilde{X}}^p(\log E)$ be the sheaf of logarithmic $p$-forms, and let $\Omega_{\widetilde{X}}^\bullet(\log E)$ the complex of logarithmic forms.  Then there is an isomorphism 
$$
H^k(U, \mathbb C) \cong \mathbb H^k\left(\widetilde{X}, \Omega_{\widetilde{X}}^\bullet(\log E)\right),
$$
 which induces two filtrations on $H^k(U, \mathbb C)$.  The first is the naive filtration associated to the complex $\Omega_{\widetilde{X}}^\bullet(\log E)$: 
$$
F^pH^k(U, \mathbb C) = \im\left(\mathbb H^k\left(\widetilde{X}, \tau_{\ge p}\Omega_{\widetilde{X}}^\bullet(\log E)\right) \lra \mathbb H^k\left(\widetilde{X}, \Omega_{\widetilde{X}}^\bullet(\log E)\right)\right), 
$$
 where $\tau_{\ge p}\Omega_{\widetilde{X}}^\bullet(\log E)$ is the complex $\Omega_{\widetilde{X}}^\bullet(\log E)$ truncated in degree greater than or equal to $p$. The second filtration is induced at the level of sheaves: There is an increasing filtration on logarithmic $p$-forms given by 
$$
W_l\Omega_{\widetilde{X}}^p(\log E) = \Omega_{\widetilde{X}}^l(\log E)\wedge \Omega_{\widetilde{X}}^{p-l}
$$
 descending to $\mathbb H^k(\widetilde X, \Omega_{\widetilde X}^\bullet (\log E)$.  These filtrations correspond to the canonical mixed Hodge structure on $H^k(U, \mathbb Q)$---$W_\bullet$ is the complexification of the rational weight filtration, and $F^\bullet$ is the Hodge filtration.
    
    In particular, there is a non-canonical decomposition 
    \begin{equation}\label{noncanonical log}
        H^k(U, \mathbb C) \cong \bigoplus_{p + q = k} H^q\left(\widetilde{X}, \Omega_{\widetilde{X}}^p(\log E)\right).
    \end{equation}
    We note that the above construction applies to the regular locus $U$ of any proper K\"ahler variety $X$, where we may choose the smooth compactification to be a log-resolution $\widetilde{X}$ of $X$.

    \subsubsection{Compactly supported cohomology of the regular locus} Deligne \cite{deligne1971theorie, deligne1974theorie} also shows that the compactly supported cohomology of an algebraic variety carries a pure Hodge structure.

    The compactly supported cohomology also carries a mixed Hodge structure.  The easiest way to see this is to use Poincar\'e duality.  Indeed, the Poincar\'e isomorphisms 
$$
    H^k_c(U, \mathbb C) \cong \left(H^{2\dim U -k}(U, \mathbb C)\right)^*\otimes \mathbb C(-\dim U)
$$
 are  isomorphisms of mixed Hodge structures.  Noting that 
$$
H^q\left(\Omega_{\widetilde{X}}^p(\log E)\right) \cong H^{\dim \widetilde{X}-q}\left(\widetilde{X}, \Omega_{\widetilde{X}}^{\dim \widetilde{X}-p}(\log E)(-E)\right)^*
$$
 by Serre duality, this completely describes the mixed Hodge structure on $H^k_c(U, \mathbb Q)$.  In particular, we have another non-canonical splitting 
    \begin{equation} \label{noncanonical log zero}
        H_c^k(U, \mathbb C) \cong \bigoplus_{p + q = k}H^q\left(\widetilde{X}, \Omega_{\widetilde{X}}^p(\log E)(-E)\right).
    \end{equation}
    
    \subsubsection{Cohomology of a simple normal crossing divisor}

    Finally, let us consider an snc divisor $E = \sum_{i=1}^r E_i$ with irreducible components $E_i$.  For any subset $J \subset \left\{1,\ldots,r\right\}$, 
    write $E_J = \bigcap_{j \in J} E_j$ and let
    \begin{equation} \label{p-fold intersections}
        E_{(p)} : = \coprod_{|J| = p} E_J
    \end{equation} 
    be the $p$-fold intersections of the components.  For each $p$, the various inclusion maps $E_{(p)} \hookrightarrow E_{(p-1)}$ induce a simplicial map $H^k(E_{(p-1)}, \mathbb Q) \to H^k(E_{(p)}, \mathbb Q)$.  From this we obtain a complex 
$$
    0 \lra H^k(E_{(1)}, \mathbb Q) \overset{\delta_1}\lra H^k(E_{(2)}, \mathbb Q) \overset{\delta_2}\lra\cdots \xrightarrow{\delta_{p-1}} H^k(E_{(p)}, \mathbb Q) \overset{\delta_p}\lra\cdots
    $$
 which, for each $k$, computes graded pieces of the weight filtration on $H^k(E, \mathbb C)$: 
$$
\gr_k^WH^{k+r}(E, \mathbb C) = \ker\delta_{r+1}/\im \delta_r.
$$
 The $(p,q)$-Hodge pieces are obtained by applying the functor $H^q(\Omega^p)$ to the complex and taking cohomology.  Precisely, there is an induced complex with morphisms 
$$
\delta^{p,q}_r\colon H^q\left(E_{(r)}, \Omega_{E_{(r)}}^p\right) \lra H^q\left(E_{(r+1)}, \Omega_{E_{(r+1)}}^p\right),
$$
 and 
    \begin{equation}\label{exceptionalhodge}
        \gr_k^WH^{k + r}(E, \mathbb C)^{p,q} = \ker \delta_{r+1}^{p,q}/\im~\delta_r^{p,q}.
    \end{equation}
    
    \subsection{A Result on Bimeromorphic Morphisms of Symplectic Varieties}
    
    Recall that a morphism $\phi\colon Z\to X$ is \textit{semismall} if $\dim Z \times_X Z \le \dim X$.  A result of Kaledin states that symplectic resolutions are semismall; see \cite[Lemma 2.11]{kaledin2006symplectic}.  We want to extend this result to bimeromorphic morphisms of singular symplectic varieties, which will control the geometry of such morphisms.

\begin{definition}
Let $X$ and $Z$ be normal and $\mathbb Q$-Gorenstein varieties.  A birational morphism $\phi\colon Z \to X$ is \textit{crepant} if $\phi^*K_X = K_Z$.
\end{definition}

\begin{lemma}\label{symcrepant}
If\, $X$ is a $($primitive$)$ symplectic variety and $\phi\colon Z \to X$ is a crepant birational morphism from a normal complex variety $Z$, then $Z$ is also a $($primitive$)$ symplectic variety.
\end{lemma}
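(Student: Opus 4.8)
I would prove Lemma~\ref{symcrepant} by reducing the statement about symplectic-ness to the extension property for the symplectic form, using the fact that crepant birational morphisms preserve all the relevant singularity and cohomological invariants. The key point is that for a crepant birational morphism $\phi:Z\to X$ with both $X,Z$ normal, the canonical sheaves are compatible, so $Z$ inherits Gorenstein rational singularities from $X$, and then the symplectic form pulls back reflexively.

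\emph{Step 1: Transport the symplectic form.} Let $U_X\subset X$ and $U_Z\subset Z$ be the regular loci, and let $U\subset Z$ be the (open, dense) locus where $\phi$ is an isomorphism onto its image in $U_X$; note $\phi^{-1}(U_X)\cap U_Z$ is a large open subset of $Z$ (complement of codimension $\ge 2$ since $\phi$ is birational and $X$ normal). The symplectic form $\sigma_X\in H^0(U_X,\Omega^2_{U_X})=H^0(X,\Omega^{[2]}_X)$ is reflexive; since $X$ has rational (indeed canonical) singularities, the functorial reflexive pullback \eqref{reflexive pullback} gives $d\phi:\phi^*\Omega^{[2]}_X\to\Omega^{[2]}_Z$, hence a reflexive $2$-form $\sigma_Z:=d\phi(\phi^*\sigma_X)\in H^0(Z,\Omega^{[2]}_Z)$. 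On the common open locus $U$ this restricts to an honest symplectic $2$-form (a pullback of a symplectic form under a biholomorphism), so $\sigma_Z$ is generically nondegenerate; being a top-dimensional-power nonzero section, $\sigma_Z^n$ is a nowhere-vanishing section of $\Omega^{[2n]}_Z\cong\omega_Z$ on $U_Z$ once we know $Z$ has canonical (in particular rational, Gorenstein) singularities.

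\emph{Step 2: $Z$ has the right singularities.} First, $Z$ is $\mathbb Q$-Cartier/Gorenstein: since $X$ is Gorenstein and $\phi^*K_X=K_Z$ by the crepancy hypothesis, $K_Z$ is Cartier. Next, $Z$ has rational singularities. One clean way: factor through a common log-resolution $\pi:\tilde X\to X$ dominating $Z$ via $\psi:\tilde X\to Z$; crepancy of $\phi$ together with $\psi^*K_Z=K_{\tilde X}+\sum b_iF_i$ (discrepancies of $\psi$, all $\ge 0$ a priori only after we know $Z$ is canonical, so instead argue directly) — better: use that $\sigma_Z^n$ trivializes $\omega_{U_Z}$ and extends (via $\sigma_X^n$ pulled back) to a nowhere-zero section of $\omega_{\tilde X}$ up to the discrepancy divisor, so the discrepancies of $\tilde X\to Z$ are $\ge 0$, i.e. $Z$ is canonical, hence rational by \cite[Theorem~5.22]{kollar2008birational}. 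With $Z$ rational Gorenstein, the extension theorem of Kebekus–Schnell \cite[Corollary~1.8]{kebekus2021extending} shows $\sigma_Z$ extends holomorphically across any resolution, verifying Definition~\ref{sympdef}: $Z$ is a symplectic variety.

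\emph{Step 3: The "primitive" upgrade.} If $X$ is moreover primitive symplectic, I must check $Z$ is compact K\"ahler with $H^1(\mathscr O_Z)=0$ and $H^0(Z,\Omega^{[2]}_Z)=\mathbb C\cdot\sigma_Z$. Compactness is immediate ($\phi$ proper birational onto compact $X$). For K\"ahlerness one invokes that $Z$ is bimeromorphic to the compact K\"ahler space $X$ with rational singularities — here I would cite the relevant result that a normal compact complex space bimeromorphic to a K\"ahler space of this type is again K\"ahler (or pass to locally trivial deformations as in \cite{bakker2018global}; this is the one point where I would want to be careful and may need an extra hypothesis or citation). For the cohomological conditions, use that $X,Z$ have rational singularities so $H^1(\mathscr O_X)\cong H^1(\mathscr O_{\tilde X})\cong H^1(\mathscr O_Z)$ via a common resolution, giving $H^1(\mathscr O_Z)=0$; and $H^0(Z,\Omega^{[2]}_Z)\cong H^0(\tilde X,\Omega^2_{\tilde X})\cong H^0(X,\Omega^{[2]}_X)=\mathbb C\cdot\sigma_X$ by reflexive pullback/pushforward along the resolution (using that reflexive global $2$-forms agree with global $2$-forms on the resolution for rational singularities), and this one-dimensional space is spanned by $\sigma_Z$.

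\emph{Main obstacle.} The genuine work is Step~2 — showing $Z$ has canonical (equivalently rational) singularities purely from crepancy plus the existence of $\sigma_Z$. The subtlety is that a priori $Z$ is only normal; one cannot directly quote "canonical $\Rightarrow$ rational" until the discrepancy inequality $b_i\ge 0$ for $\tilde X\to Z$ is established, and that in turn is where the nonvanishing of $\sigma_Z^n$ and the crepancy $\phi^*K_X=K_Z$ must be combined: on $\tilde X$ one has $\pi^*K_X=K_{\tilde X}-\sum a_iE_i$ with $a_i\ge 0$ (as $X$ is canonical), while $\psi^*K_Z=K_{\tilde X}-\sum b_iF_i$; crepancy forces $\sum a_iE_i=\sum b_iF_i$ as divisors pulled back compatibly, yielding $b_i\ge 0$. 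The secondary soft point is K\"ahlerness in Step~3, which I would handle by citation rather than re-proof.
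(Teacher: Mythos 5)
Your proposal follows essentially the same route as the paper: transport $\sigma_X$ to a reflexive $2$-form $\sigma_Z$ through a common resolution/reflexive pullback, use crepancy to control the singularities of $Z$, and get the primitivity conditions from rational singularities and a common resolution. The discrepancy comparison in your ``Main obstacle'' paragraph ($\pi^*K_X=\psi^*K_Z$ on a common resolution, so the $\psi$-discrepancies equal the corresponding $\pi$-discrepancies and are $\ge 0$) is correct and does not use $\sigma_Z$ at all, so $Z$ Gorenstein canonical, hence rational, is established cleanly; you should simply run that argument \emph{before} invoking \eqref{reflexive pullback} (or transport the form through the common resolution as the paper does), since the functorial pullback as stated in the paper is for spaces with rational singularities.

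The genuine gap is the nondegeneracy of $\sigma_Z$ on all of $Z_{\mathrm{reg}}$, which is what Definition \ref{sympdef} requires. In Step 1 you assert that $\sigma_Z^n$ is nowhere vanishing on $U_Z$ ``once we know $Z$ has canonical singularities,'' but canonicity gives nothing of the sort: a generically nonvanishing section of $\omega_Z$ may vanish along a divisor, and a priori $\sigma_Z$ could degenerate along $\phi$-exceptional divisors contained in $Z_{\mathrm{reg}}$ (the $\phi$-exceptional locus in $Z$ is typically divisorial, so Step 1 only gives nondegeneracy off a divisor). Your Step 2 variant then takes ``$\sigma_Z^n$ trivializes $\omega_{U_Z}$'' as an input, which is circular. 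The missing ingredient is exactly where the paper uses crepancy: $\sigma_X^n$ trivializes $\omega_X$, so crepancy gives $\omega_Z\cong\phi^*\omega_X\cong\mathscr O_Z$; the zero divisor $D$ of $\sigma_Z^n\in H^0(Z,\omega_Z)$ is effective and supported on the $\phi$-exceptional locus, and under the trivialization $\sigma_Z^n$ becomes a holomorphic function on $Z$ which descends over the big open subset of $X$ where $\phi$ is an isomorphism, extends by normality of $X$, and is nonvanishing there; since the zero set of a holomorphic function is either empty or of codimension one, the extension is nonvanishing, hence $D=0$ and $\sigma_Z$ is symplectic on $Z_{\mathrm{reg}}$. (In the primitive, i.e.\ compact, case one can instead say $h^0(\mathscr O_Z)=1$, so $\sigma_Z^n$ is a nonzero constant.) With this point repaired the rest of your argument goes through; the K\"ahler issue you flag in Step 3 is also glossed in the paper and is harmless in the intended applications, where $\phi$ is a projective morphism over a K\"ahler base.
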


\begin{proof}
By taking a common resolution of singularities of $Z$ and $X$, we can see that the symplectic form on $X_{\mathrm{reg}}$ extends to a global reflexive 2-form $\sigma_Z$ on $Z$.  Since $K_Z = \phi^*K_X = \mathscr O_Z$, then $\sigma_Z$ must define a holomorphic symplectic form on $Z_{\mathrm{reg}}$, which therefore extends to a holomorphic 2-form on the common resolution (and therefore any resolution) of singularities.  If $X$ is primitive symplectic, it is clear that $Z$ is also a compact K\"ahler variety, $H^1(\mathscr O_Z) = H^1(\mathscr O_X) = 0$, and $H^0(Z, \Omega_Z^{[2]}) = \mathbb C\cdot \sigma_Z$.
\end{proof}

\begin{lemma}
Let $\phi\colon Z \to X$ be a crepant morphism of primitive symplectic varieties.  Then the restriction of the class $\sigma_Z$ to $\phi^{-1}(x)$ is $0$ for every $x$.
\end{lemma}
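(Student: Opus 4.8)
The plan is to show that, as a cohomology class in $H^2(Z,\mathbb C)$, $\sigma_Z$ is a nonzero multiple of the pullback $\phi^*\sigma_X$, and then to observe that any class pulled back along $\phi$ restricts to zero on a fibre $\phi^{-1}(x)$, simply because that fibre maps to a point. First I would note that the statement is not vacuous: $\phi$ is proper (both $Z$ and $X$ are compact) and, being birational onto the irreducible variety $X$, it is surjective, so $F := \phi^{-1}(x)$ is a nonempty compact subvariety of $Z$ for every $x\in X$.

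The central claim is that in $H^2(Z,\mathbb C)$ one has $\phi^*\sigma_X = c\,\sigma_Z$ for some nonzero scalar $c$. Since $Z$ is a primitive symplectic variety it has rational singularities, so $H^2(Z,\mathbb C)$ carries a pure Hodge structure with $H^{2,0}(Z) = H^0(Z,\Omega_Z^{[2]}) = \mathbb C\cdot\sigma_Z$ via the reflexive Hodge-to-de Rham degeneration (cf. (\ref{reflexiveHTDR})), and likewise for $X$. The pullback $\phi^*\colon H^2(X,\mathbb C)\to H^2(Z,\mathbb C)$ respects the Hodge filtration — concretely, on $F^2 = H^{2,0}$ it is the pullback of reflexive $2$-forms of Kebekus-Schnell (\ref{reflexive pullback}), which over the smooth locus agrees with the ordinary pullback of forms — so $\phi^*\sigma_X \in H^{2,0}(Z) = \mathbb C\cdot\sigma_Z$. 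To see $\phi^*\sigma_X\ne 0$: being birational, $\phi$ restricts to an isomorphism over a dense open $V\subseteq X$, and over $\phi^{-1}(V)$ the restriction of $\phi^*\sigma_X$ is the pullback of $\sigma_X|_V\ne 0$ under this isomorphism, hence nonzero already there. Thus $\phi^*\sigma_X = c\,\sigma_Z$ with $c\ne 0$.

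Granting the claim, the lemma is immediate. Writing $i_F\colon F\hookrightarrow Z$ and $i_x\colon \{x\}\hookrightarrow X$ for the inclusions, the square $\phi\circ i_F = i_x\circ(\phi|_F)$ commutes, so
$$\sigma_Z|_F \;=\; c^{-1}\,i_F^*\phi^*\sigma_X \;=\; c^{-1}(\phi|_F)^*\,i_x^*\sigma_X \;=\; c^{-1}(\phi|_F)^*\big(\sigma_X|_{\{x\}}\big) \;=\; 0,$$
since $H^2(\{x\},\mathbb C) = 0$.

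The only fussy point, and thus the step to pin down carefully, is the identification of $\phi^*$ on $H^{2,0}$ with the pullback of (reflexive) holomorphic $2$-forms, i.e.\ that $\phi^*\sigma_X$ is represented by the reflexive form $\phi^{[\ast]}\sigma_X$; this is routine given that $\phi$ is an isomorphism over a Zariski-dense open and that $H^2$ of these varieties is computed by reflexive forms (\S\ref{2.4}, (\ref{reflexiveHTDR})), and one may alternatively just invoke that a morphism of complex varieties induces a morphism of (here pure) Hodge structures on cohomology. If one prefers to avoid discussing $\phi^*$ on the singular $Z$ altogether, one can instead pass to a common resolution $h\colon W\to Z$, $g\colon W\to X$ with $g = \phi\circ h$: on $W$ both $g^*\sigma_X$ and $h^*\sigma_Z$ are represented by the holomorphic extension to $W$ of the symplectic form, hence agree up to scale, while $(g^*\sigma_X)|_{g^{-1}(x)} = 0$ exactly as above; since $h^{-1}(F)\to F$ is proper and surjective, $h^*\colon H^2(F,\mathbb Q)\to H^2(h^{-1}(F),\mathbb Q)$ is injective, and one concludes $\sigma_Z|_F = 0$.
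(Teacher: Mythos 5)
Your proposal is correct and follows essentially the paper's own argument: both identify the class of $\sigma_Z$ with the pullback $\phi^*\sigma_X$ up to a nonzero scalar (the paper via the construction of $\sigma_Z$ through a common resolution, you via the fact that $\phi^*$ is a morphism of pure Hodge structures and $H^{2,0}(Z)=\mathbb C\cdot\sigma_Z$), and then conclude that a pulled-back class restricts to zero on fibres. The paper phrases this last step as the vanishing of the image of $\sigma_Z$ under $H^2(Z,\mathbb C)\to H^0(X,R^2\phi_*\mathbb C)$ combined with proper base change, which is the same observation as your commuting square $\phi\circ i_F=i_x\circ(\phi|_F)$ and $H^2(\{x\},\mathbb C)=0$.
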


\begin{proof}
Note that if $X$ is primitive symplectic, then so is $Z$.  The result then follows from Hodge theory: By assumption, the class of the symplectic form $\phi^*\sigma_X$ extends to a symplectic form $\sigma_Z$ on $Z$, as both must extend to any common resolution of singularities.  Therefore, its image under the morphism $H^2(Z, \mathbb C) \to H^0(X, R^2\phi_*\mathbb C)$ is zero.  The result then follows by proper base change.
\end{proof}

\begin{proposition}\label{crepantsemismall}
If $X$ is a primitive symplectic variety and $\phi\colon Z \to X$ is a crepant morphism, then $\phi$ is semismall.
\end{proposition}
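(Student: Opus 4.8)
The plan is to reduce the semismallness of a crepant morphism $\phi\colon Z\to X$ of primitive symplectic varieties to Kaledin's result on symplectic resolutions, using the stratification of Proposition~\ref{symstrat} together with the preceding two lemmas. First I would recall the general criterion: a morphism $\phi\colon Z\to X$ between irreducible varieties of the same dimension $2n$ is semismall if and only if, for the stratification $X=\coprod_k X^\circ_k$ (with $\dim X_k^\circ$ suitably indexed), one has $2\dim\phi^{-1}(x)\le \operatorname{codim}_X X_k^\circ$ for $x\in X_k^\circ$; equivalently the locus $X_{(d)}$ over which the fibre has dimension $d$ satisfies $\dim X_{(d)}+2d\le 2n$. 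So the whole problem is a fibre-dimension estimate over each stratum of the Kaledin stratification.

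The key input is the two lemmas just proved: $Z$ is again primitive symplectic with symplectic form $\sigma_Z=\phi^*\sigma_X$, and $\sigma_Z|_{\phi^{-1}(x)}=0$ for every $x\in X$. Fix $x\in X_i^\circ$, a stratum of codimension $c=\operatorname{codim}_X X_i^\circ$ (which is even, since the normalization of $X_i$ is symplectic). By Proposition~\ref{symstrat}(2), analytically-locally near $x$ we have $\widehat X_x\cong Y_x\times \widehat{X_i^\circ}_x$ with $Y_x$ a symplectic variety of dimension $c$, and $\phi$ is locally a product of the identity on $\widehat{X_i^\circ}_x$ with a crepant morphism $Z_x\to Y_x$; in particular $\phi^{-1}(x)$ maps isomorphically (as a set, at least set-theoretically) to the fibre over the central point of the local model $Y_x$. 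Now on the smooth part of $Z_x$ the form $\sigma_Z$ restricts to the symplectic form of the transversal slice $Y_x$, and its restriction to the (reduced) fibre $F=\phi^{-1}(x)$ vanishes; choosing a resolution $\nu\colon \widetilde F\to F$ and recalling that $\widetilde F$ maps into a resolution $\widetilde{Z_x}$ on which $\sigma_Z$ is a genuine holomorphic $2$-form, isotropy of the tangent spaces of $\widetilde F$ forces $\dim F\le \tfrac12\dim Y_x=\tfrac12 c$ by the standard linear-algebra fact that an isotropic subspace of a symplectic vector space of dimension $c$ has dimension at most $c/2$. This is exactly the inequality $2\dim\phi^{-1}(x)\le \operatorname{codim}_X X_i^\circ$ needed at the stratum $X_i^\circ$.

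Running this over every stratum of $X$ gives $2\dim\phi^{-1}(x)\le\operatorname{codim}_X X_i^\circ$ for all $x$, hence $\dim(Z\times_X Z)=\max_i\big(\dim X_i^\circ+2\max_{x\in X_i^\circ}\dim\phi^{-1}(x)\big)\le \max_i\big(\dim X_i^\circ + \operatorname{codim}_X X_i^\circ\big)=2n=\dim X$, and since $\phi$ is surjective the reverse inequality is automatic, so $\phi$ is semismall. The main obstacle I anticipate is making the ``restriction of $\sigma_Z$ to the fibre is isotropic, hence the fibre has dimension at most half the slice dimension'' step fully rigorous at the level of possibly singular, possibly non-reduced fibres: one has to pass to a resolution of an irreducible component $F'$ of maximal dimension, compare $\sigma_Z$ pulled back to that resolution with its pullback to a resolution of $Z$ (using the reflexive-pullback functoriality of~(\ref{reflexive pullback}) and the extension property in Definition~\ref{sympdef}), and deduce that the generic tangent space of $F'$ is isotropic for the symplectic form on the transversal slice $Y_x$. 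Once the local product structure from Proposition~\ref{symstrat}(2) is set up carefully, this is the heart of the argument; the rest is bookkeeping with codimensions.
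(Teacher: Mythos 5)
There are two genuine gaps in your argument, and both occur at what you yourself identify as the heart of the proof. First, the claim that ``$\phi$ is locally a product of the identity on $\widehat{X_i^\circ}_x$ with a crepant morphism $Z_x \to Y_x$'' does not follow from Proposition \ref{symstrat}: that proposition is a statement about the formal structure of $X$ alone, and it gives you no decomposition of $Z$ (or of $\phi$) over the product $\widehat X_x \cong Y_x \times \widehat{X_i^\circ}_x$. Asserting such a decomposition is a substantially stronger statement than anything established in the paper or in Kaledin's work, and it is precisely what your identification of $\phi^{-1}(x)$ with the central fibre of a ``transversal'' crepant morphism $Z_x \to Y_x$ of dimension $c=\operatorname{codim}_X X_i^\circ$ rests on. Without it, the fibre only sits inside $Z$ (of dimension $2n$), and isotropy there would give the useless bound $\dim \phi^{-1}(x) \le n$ rather than $\le c/2$.

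Second, even granting a local model $Z_x \to Y_x$, your dimension estimate is carried out on a resolution $\widetilde{Z_x}$, where the extension of the symplectic form is a holomorphic $2$-form that is in general \emph{degenerate} along the exceptional locus --- which is exactly where the fibre lives. The linear-algebra fact you invoke (an isotropic subspace of a $2m$-dimensional symplectic vector space has dimension at most $m$) requires nondegeneracy at the point in question; for a skew form of lower rank, isotropic subspaces can be much larger, so no bound on $\dim F$ follows from isotropy upstairs. The paper's proof avoids both problems by never leaving $Z$: it uses the Kaledin stratification of $Z$ itself, takes a smooth point $z$ of $E = \phi^{-1}(X_i)$ lying in a symplectic stratum $Z_j^\circ$, and uses reflexive pullback (\ref{reflexive pullback}) together with the vanishing of $\sigma_Z$ on fibres to show that $T_zE_x$ and $T_zE_{\mathrm{reg}}$ are mutually orthogonal for the \emph{nondegenerate} form on $Z_j^\circ$, whence $\dim E_x + \dim E \le \dim Z$, which is Kaledin's form of the semismallness inequality. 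To repair your argument you would either have to prove the local product structure of $\phi$ and control the singularities of $Z_x$ along the fibre, or (more economically) replace the passage to a resolution by an orthogonality argument inside the smooth symplectic strata of $Z$, as in the paper.
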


\begin{proof}
  Let $\left\{X_i\right\}$ and $\left\{Z_j\right\}$ be the Kaledin stratifications of $X$ and $Z$ (see Theorem~\ref{symstrat}).
  We will show that \begin{equation} \label{equation semismall inequality}
    2 \dim \phi^{-1}(x) + \dim X_i \le \dim X.
\end{equation} To prove this, we follow \cite[Lemma 2.11]{kaledin2006symplectic} in the case of symplectic resolutions.  Kaledin's proof, built upon work of Wierzba \cite{wierzba2003contractions} and Namikawa \cite{namikawa2000extension}, uses the symplectic structure of a smooth symplectic variety $Z$ to show that the tangent spaces $T_z\phi^{-1}(x)$ and $T_z\phi^{-1}(X_i)$ are mutually orthogonal, which immediately gives (\ref{equation semismall inequality}).  We will therefore consider how the fibers intersect with the various smooth and symplectic strata $Z_j^\circ$.    

Let $x \in X$, and let $X_i$ be the stratum for which $x \in X_{i}^\circ$.  Write $E_x = \phi^{-1}(x)$ and $E = \phi^{-1}(X_i)$.  For every $z \in E_{\mathrm{reg}}$, there is a $j$ such that $z \in Z_{j}^\circ$.  There is then a map $\widehat \phi_x\colon \widehat{Z_{j}^\circ}_z \to \widehat{X_{i}^\circ}_x$ induced from the map $\widehat \phi\colon\widehat Z_z \to \widehat X_x$ via the product decomposition.  Consider the commutative diagram 

\[ \begin{tikzcd}
\widehat{E_{\mathrm{reg}}}_z \arrow{r} \arrow[swap]{d} & \widehat{Z_j^{\circ}}_z \arrow{r} \arrow{dl}{\widehat{\phi}_x} & \widehat Z_z \arrow{d}{\widehat{\phi}} \\%
\widehat{X_{i}^\circ}_x \arrow[rr]&  & \widehat X_x\rlap{,}
\end{tikzcd}
\] where $\widehat{E_{\mathrm{reg}}}_z$ is the completion of $E_{\mathrm{reg}}$ at $z$.  The varieties in the diagram all have at worst rational singularities, and the $\widehat{X_i^\circ}_x$, $\widehat{Z_j^\circ}_z$ are symplectic varieties.  By reflexive pullback, we see that the symplectic form $\widehat{\sigma}_i$ on $\widehat{X_i^\circ}_x$ pulls back to the restriction $\widehat{\sigma}_j|_{\widehat{E_{\mathrm{reg}}}_z}$, where $\widehat\sigma_j$ is the symplectic form on $\widehat{Z_j^\circ}_z$, which is just the restriction of the symplectic form $\widehat \sigma_Z$ on $\widehat Z_z$.  Since $\sigma_Z$ vanishes on the fibers $\phi^{-1}(x)$ (as a cohomology class), then $\widehat\sigma_Z$ vanishes on the fibers of $\widehat\phi$. Since $\widehat{Z_j^\circ}_z$ is smooth and symplectic, we may assume the tangent spaces $T_zE_x$ and $T_z(E_{\mathrm{reg}})$ are mutually orthogonal with respect to the symplectic form $\sigma_j$ on $Z_j^\circ$ after passing to a small open neighborhood of $z$.  Therefore, 
$$
\dim E_x \le \dim X - \dim E \le \dim X - \dim E_x - \dim X_i.
$$
The second inequality 
above follows from local product structure of $X$ along the smooth stratum $X_i^\circ$.  This clearly agrees with (\ref{equation semismall inequality}), and so $\phi$ is semismall.    
\end{proof}

    \subsection{A $\mathbb Q$-Factoriality Criterion}
    
    Let $X$ be a projective variety.  We say that $X$ is $\mathbb Q$-factorial if every Weil divisor is $\mathbb Q$-Cartier.  Equivalently, see \cite[Proposition~12.1.6]{kollar1992classification}, the variety $X$ is $\mathbb Q$-Factorial if and only if for a resolution of singularities $\pi\colon\widetilde{X} \to X$ with exceptional divisor $E = \sum E_i$, \begin{equation}\label{Kollar Mori Q fact}
      \im\left(H^2(\widetilde{X}, \mathbb Q) \lra H^0(X, R^2\pi_*\mathbb Q)\right) = \im\left(\bigoplus_i \mathbb Q[E_i] \lra H^0(X, R^2\pi_*\mathbb Q)\right).  
    \end{equation}

    \begin{proposition} \label{Qcrit}
    If\, $X$ is a terminal primitive symplectic variety, then $X$ is $\mathbb Q$-factorial if and only if the natural inclusion $H^2(X, \mathbb C) \hookrightarrow \IH^2(X, \mathbb C)$ is an isomorphism.
    \end{proposition}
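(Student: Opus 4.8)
The plan is to fix a projective log-resolution $\pi\colon\tilde X\to X$ with exceptional divisor $E=\sum_iE_i$, write $N=\langle[E_1],\dots,[E_r]\rangle\subseteq H^2(\tilde X,\mathbb C)$ for the span of the exceptional classes, and compare both conditions in the statement with a single linear condition on $H^2(\tilde X,\mathbb C)$, using the Leray spectral sequence of $\pi$ and the Koll\'ar--Mori criterion \eqref{Kollar Mori Q fact}. First I would record the relevant structure of $H^2(\tilde X,\mathbb C)$. Since $X$ is terminal, $\Sigma=X_{\mathrm{sing}}$ has codimension $\ge 4$ by Proposition~\ref{symterm}, so $IH^2(X,\mathbb C)\cong H^2(U,\mathbb C)$ by Lemma~\ref{intcodim}; combined with Proposition~\ref{intprops}(2) this shows $H^2(U,\mathbb C)$ is pure of weight $2$. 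As $\pi$ is an isomorphism over $U$, the localization sequence of $U\hookrightarrow\tilde X$ identifies the kernel of the restriction $H^2(\tilde X,\mathbb C)\to H^2(U,\mathbb C)$ with $N$, while purity of $H^2(U,\mathbb C)$ (so that $W_2H^2(U,\mathbb C)=H^2(U,\mathbb C)$) forces that restriction to be surjective. Hence $IH^2(X,\mathbb C)\cong H^2(\tilde X,\mathbb C)/N$, and a diagram chase of the ``restrict to the smooth locus'' maps identifies the natural map $H^2(X,\mathbb C)\to IH^2(X,\mathbb C)$ of Proposition~\ref{intprops}(4) with $\pi^*\colon H^2(X,\mathbb C)\hookrightarrow H^2(\tilde X,\mathbb C)$ (injective by \eqref{reflexiveHTDR}, since $X$ has rational singularities) followed by the quotient by $N$. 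As $H^2(X,\mathbb C)\to IH^2(X,\mathbb C)$ is injective (Proposition~\ref{intprops}(4) and purity of $H^2(X,\mathbb C)$), one gets $\pi^*H^2(X,\mathbb C)\cap N=0$, so that
\[
H^2(X,\mathbb C)\xrightarrow{\ \sim\ }IH^2(X,\mathbb C)\qquad\Longleftrightarrow\qquad\pi^*H^2(X,\mathbb C)\oplus N=H^2(\tilde X,\mathbb C).
\]

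Next I would reformulate $\mathbb Q$-factoriality. Let $e\colon H^2(\tilde X,\mathbb C)\to H^0(X,R^2\pi_*\mathbb C)$ be the edge morphism of the Leray spectral sequence of $\pi$. Then \eqref{Kollar Mori Q fact} says precisely that $X$ is $\mathbb Q$-factorial if and only if $e(N)=e(H^2(\tilde X,\mathbb C))$; since $e(N)\subseteq e(H^2(\tilde X,\mathbb C))$ automatically, this is equivalent to $\ker e+N=H^2(\tilde X,\mathbb C)$. Two easy observations enter: $\pi^*H^2(X,\mathbb C)\subseteq\ker e$, because a pulled-back class restricts trivially to every fiber of $\pi$; and $\ker e\cap N=0$, because a $\pi$-exceptional divisor whose class restricts trivially to every fiber is numerically $\pi$-trivial, hence $0$ by the negativity lemma. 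The first observation already gives one implication: if $H^2(X,\mathbb C)\xrightarrow{\sim}IH^2(X,\mathbb C)$ then $H^2(\tilde X,\mathbb C)=\pi^*H^2(X,\mathbb C)+N\subseteq\ker e+N$, so $X$ is $\mathbb Q$-factorial.

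For the converse I would assume $X$ is $\mathbb Q$-factorial, so $\ker e+N=H^2(\tilde X,\mathbb C)$; because $\pi^*H^2(X,\mathbb C)\cap N=0$, it is enough to prove $\ker e=\pi^*H^2(X,\mathbb C)$. Now $\ker e=L^1H^2(\tilde X,\mathbb C)$ for the Leray filtration $L^\bullet$ (one of mixed Hodge structures, $\pi$ being projective), a sub-Hodge structure of the polarizable pure weight-$2$ Hodge structure $H^2(\tilde X,\mathbb C)$, and $L^2H^2(\tilde X,\mathbb C)=\pi^*H^2(X,\mathbb C)$. I would decompose $\ker e$ into simple $\mathbb Q$-sub-Hodge structures and treat them case by case. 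A simple summand $S$ with $S^{2,0}\ne0$ or $S^{0,2}\ne0$ must contain $\pi^*\sigma$ or $\pi^*\overline\sigma$: indeed $h^{2,0}(\tilde X)=h^0(\tilde X,\Omega^2_{\tilde X})=h^0(X,\Omega^{[2]}_X)=1$ by Kebekus--Schnell \eqref{hol extension problem} and the definition of a primitive symplectic variety, and $H^{2,0}(\tilde X)=\mathbb C\,\pi^*\sigma\subseteq L^2H^2(\tilde X,\mathbb C)=\pi^*H^2(X,\mathbb C)$; hence such an $S$ lies in $\pi^*H^2(X,\mathbb C)$. Every other simple summand $S$ is of pure Hodge type $(1,1)$, so $S\subseteq\mathrm{NS}(\tilde X)_{\mathbb Q}\otimes\mathbb C$; since $\tilde X$ is smooth projective and $X$ is $\mathbb Q$-factorial we may write $\mathrm{NS}(\tilde X)_{\mathbb Q}=\pi^*\mathrm{NS}(X)_{\mathbb Q}\oplus N$, so any $v\in S$ is $v=\pi^*\delta+\nu$; from $e(v)=0$ we get $e(\nu)=0$, hence $\nu=0$ by the negativity lemma and $v=\pi^*\delta\in\pi^*H^2(X,\mathbb C)$. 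Thus every simple summand of $\ker e$ lies in $\pi^*H^2(X,\mathbb C)$, so $\ker e=\pi^*H^2(X,\mathbb C)$ and therefore $\pi^*H^2(X,\mathbb C)\oplus N=H^2(\tilde X,\mathbb C)$; by the display above this is equivalent to $H^2(X,\mathbb C)\xrightarrow{\sim}IH^2(X,\mathbb C)$.

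I expect the main obstacle to be this converse, and specifically the inclusion $\ker e\subseteq\pi^*H^2(X,\mathbb C)$: in general $\ker e$ can be strictly larger (the quotient $\ker e/\pi^*H^2(X,\mathbb C)$ is a subquotient of $H^1(X,R^1\pi_*\mathbb C)$, which need not vanish for a resolution of a rational singularity), and it is exactly $\mathbb Q$-factoriality — through $\mathrm{NS}(\tilde X)_{\mathbb Q}=\pi^*\mathrm{NS}(X)_{\mathbb Q}\oplus N$ — together with semisimplicity of the polarized Hodge structure $H^2(\tilde X,\mathbb Q)$, the negativity lemma, and $h^{2,0}(\tilde X)=1$ that excludes it. A secondary, purely bookkeeping point is verifying the compatibility invoked in the first paragraph, namely that the Durfee isomorphism $IH^2(X,\mathbb C)\cong H^2(U,\mathbb C)$, the restriction $H^2(\tilde X,\mathbb C)\to H^2(U,\mathbb C)$, and the natural map $H^2(X,\mathbb C)\to IH^2(X,\mathbb C)$ fit into a single commutative triangle.
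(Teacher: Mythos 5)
Your proposal is correct, but it proves the proposition by a genuinely different route than the paper, so a comparison is worthwhile. The paper never forms the quotient $H^2(\tilde X,\mathbb C)/N$: it compares Hodge pieces, using the residue sequence for $\Omega^1_{\tilde X}(\log E)$ together with the Kebekus--Schnell extension theorem to show that $IH^{1,1}(X)$ has codimension equal to the number of exceptional components inside $H^{1,1}(\tilde X)$, and the Leray sequence for $\pi_*\Omega^1_{\tilde X}$ plus Hodge theory to identify the cokernel of $H^2(X,\mathbb C)\hookrightarrow H^2(\tilde X,\mathbb C)$ with $\mathrm{im}(H^2(\tilde X,\mathbb C)\to H^0(X,R^2\pi_*\mathbb C))$; the equivalence with (\ref{Kollar Mori Q fact}) is then a dimension count. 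You instead translate both sides into linear conditions in $H^2(\tilde X,\mathbb C)$ --- $IH^2(X,\mathbb C)\cong H^2(\tilde X,\mathbb C)/N$ and $\mathbb Q$-factoriality $\Leftrightarrow \ker e+N=H^2(\tilde X,\mathbb C)$ --- and prove the hard inclusion $\ker e\subseteq\pi^*H^2(X,\mathbb C)$ by semisimplicity of polarizable Hodge structures, $h^{2,0}(\tilde X)=1$, Lefschetz $(1,1)$, the decomposition $\mathrm{NS}(\tilde X)_{\mathbb Q}=\pi^*\mathrm{NS}(X)_{\mathbb Q}\oplus N$, and the negativity lemma. What your route buys is the stronger intermediate statement $\ker e=\pi^*H^2(X,\mathbb C)$ under $\mathbb Q$-factoriality, and it makes explicit a point the paper's dimension count leaves implicit, namely that $e$ is injective on the span $N$ of the exceptional classes (your negativity-lemma observation); what the paper's route buys is that it stays entirely within coherent exact sequences and never needs to represent cohomology classes by divisors or push divisors forward. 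Two caveats: your step $\mathrm{NS}(\tilde X)_{\mathbb Q}=\pi^*\mathrm{NS}(X)_{\mathbb Q}\oplus N$ assumes $\tilde X$ (hence $X$) projective, whereas the proposition is stated for compact K\"ahler $X$; since the criterion (\ref{Kollar Mori Q fact}) is itself quoted for projective $X$, this is a hypothesis shared with the paper rather than a defect specific to your argument, but it should be stated. Secondly, the compatibility you defer --- that Durfee's isomorphism, restriction to $U$, and the map of Proposition~\ref{intprops}(\ref{weight piece intersection map}) fit into one commutative triangle --- is genuinely needed, though standard and used implicitly by the paper as well.
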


    \begin{proof}
    Since $X$ is terminal, we have an isomorphism $\IH^2(X, \mathbb C) \xrightarrow{\lowsim} H^2(U, \mathbb C)$ by Lemma~\ref{intcodim} and Proposition~\ref{symterm}.  The mixed Hodge structure on $H^2(U, \mathbb Q)$ is then pure, and the logarithmic Hodge-to-de Rham spectral sequence computes the Hodge filtration; see \cite[Section~(9.2.3)]{deligne1974theorie}.  In particular,
$$
    H^2(U, \mathbb C) \cong H^0\left(\widetilde{X}, \Omega_{\widetilde{X}}^2(\log E)\right) \oplus H^1\left(\widetilde{X}, \Omega_{\widetilde{X}}^1(\log E)\right) \oplus H^2\left(\widetilde{X}, \mathscr O_{\widetilde{X}}\right).
$$
 Note that the inclusion $H^2(U, \mathbb C) \subset H^2(\widetilde{X}, \mathbb C)$ differs only on the $(1,1)$-pieces, as $X$ has rational singularities and $\pi_*\Omega_{\widetilde{X}}^2$ is reflexive.  Consider the long exact sequence
    \begin{equation*}
    \begin{split}
        0 \lra H^0\left(\widetilde{X}, \Omega_{\widetilde{X}}^1\right) \lra H^0\left(\widetilde{X}, \Omega_{\widetilde{X}}^1(\log E)\right) & \lra \bigoplus H^0\left(\mathscr O_{E_i}\right) \\ & \lra H^1\left(\widetilde{X}, \Omega_{\widetilde{X}}^1\right) \lra H^1\left(\widetilde{X}, \Omega_{\widetilde{X}}^1(\log E)\right)
    \end{split}
    \end{equation*}
    By \cite[Corollary 1.8]{kebekus2021extending}, the first morphism is an isomorphism. The last morphism is surjective, as $H^1(\widetilde{X}, \Omega_{\widetilde{X}}^1(\log E)) \xrightarrow{\lowsim} \IH^{1,1}(X)$, and so we have a surjection by the decomposition theorem.  Therefore,  
$$
\dim H^{1,1}\left(\widetilde{X}\right) - \dim H^{1,1}(U) = \sum \dim H^0(E_i).
$$
 Now consider the inclusion $H^2(X, \mathbb C) \subset H^2(\widetilde{X}, \mathbb C)$.  Again, these vector spaces differ only in the $(1,1)$-classes.  The Leray spectral sequence induces the exact sequence 
$$
0 \lra H^1\left(X, \pi_*\Omega_{\widetilde{X}}^1\right) \lra H^1\left(\widetilde{X}, \Omega_{\widetilde{X}}^1\right) \lra H^0\left(X, R^1\pi_*\Omega_{\widetilde{X}}^1\right).
$$
 Thus the cokernel of the inclusion is exactly $\im(H^2(\widetilde{X}, \Omega_{\widetilde{X}}^1) \to H^0(R^1\pi_*\Omega_{\widetilde{X}}^1))$.  But the symplectic form clearly gets killed in the morphism $H^2(X, \mathbb C) \to H^0(R^2\pi_*\mathbb C)$, so by Hodge theory 
$$
\im\left(H^1\left(\widetilde{X}, \Omega_{\widetilde{X}}^1\right) \lra H^0\left(R^1\pi_*\Omega_{\widetilde{X}}^1\right)\right) = \im\left(H^2\left(\widetilde{X}, \mathbb C\right) \lra H^0\left(R^2\pi_*\mathbb C\right)\right).
$$
 We now see  by (\ref{Kollar Mori Q fact}) that $X$ is then $\mathbb Q$-factorial if and only if $H^2(X, \mathbb C) \cong \IH^2(X, \mathbb C)$.
    \end{proof}

    The results of the last two sections will be used to allow us to pass from a primitive symplectic variety to a bimeromorphic model with at worst $\mathbb Q$-factorial singularities to prove the results of this paper.  Recall that a \textit{$\mathbb Q$-factorial terminalization} of $X$ is a crepant morphism $\phi\colon Z \to X$ from a $\mathbb Q$-factorial terminal variety~$Z$.  Such morphisms exist for projective varieties, see \cite{birkar2010existence}, and very general members of a locally trivial deformation of a primitive symplectic variety with $b_2 \ge 5$; see \cite[Corollary 9.2]{bakker2018global}.  

    \begin{corollary}\label{Qfactorialterminalsemismall}
    Suppose that $X$ is a primitive symplectic variety and $\phi\colon Z \to X$ a $\mathbb Q$-factorial terminalization.  Then $\phi$ is semismall, and there is a canonical injection $\IH^2(X, \mathbb C) \hookrightarrow H^2(Z, \mathbb C)$.
    \end{corollary}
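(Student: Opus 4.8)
The plan is to derive the statement formally from the three auxiliary results of this section. First I would note that a $\mathbb{Q}$-factorial terminalization $\phi:Z\to X$ is by definition a crepant birational morphism, so Lemma \ref{symcrepant} applies: since $X$ is primitive symplectic, so is $Z$, and by hypothesis $Z$ is moreover $\mathbb{Q}$-factorial with terminal singularities. Semismallness of $\phi$ is then immediate from Proposition \ref{crepantsemismall}, which asserts that any crepant morphism onto a primitive symplectic variety is semismall.

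For the injection, I would play the decomposition theorem against the $\mathbb{Q}$-factoriality criterion. Since $\phi$ is a projective birational morphism, Proposition \ref{intprops}(1) gives a split inclusion $IH^2(X,\mathbb{C})\hookrightarrow IH^2(Z,\mathbb{C})$. On the other hand $Z$ is a terminal $\mathbb{Q}$-factorial primitive symplectic variety, so Proposition \ref{Qcrit}, applied with $Z$ in place of $X$, shows that the canonical map $H^2(Z,\mathbb{C})\to IH^2(Z,\mathbb{C})$ is an isomorphism. Composing the inclusion with the inverse of this isomorphism would produce the desired embedding $IH^2(X,\mathbb{C})\hookrightarrow H^2(Z,\mathbb{C})$.

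The one point requiring care is the claim that this embedding is \emph{canonical}, i.e.\ independent of the non-canonical splitting furnished by the decomposition theorem. I would handle this by observing that for the birational projective morphism $\phi$ the inclusion $IH^2(X,\mathbb{C})\hookrightarrow IH^2(Z,\mathbb{C})$ is induced by a canonical morphism $\mathcal{IC}_X\to R\phi_*\mathcal{IC}_Z$: this morphism exists because $\phi$ is an isomorphism over the smooth locus $U\subset X$, so $\phi^{-1}\mathcal{IC}_X$ and $\mathcal{IC}_Z$ are canonically identified over $\phi^{-1}(U)$, and $\mathcal{IC}_X$ is the intermediate extension of $\mathbb{Q}_U[\dim X]$. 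Alternatively, one may pass to a common resolution $W$ of $Z$ and $X$ and use that both $IH^2(X,\mathbb{C})$ and $IH^2(Z,\mathbb{C})\cong H^2(Z,\mathbb{C})$ are canonically realized inside $H^2(W,\mathbb{C})$, compatibly with $\phi^{*}$. I do not expect a genuine obstacle here; the substantive input is entirely contained in Lemma \ref{symcrepant}, Proposition \ref{crepantsemismall}, Proposition \ref{intprops}, and Proposition \ref{Qcrit}, and the only hypotheses worth tracking are that $\phi$ is projective and that $Z$ lies within the scope of Proposition \ref{Qcrit} — both of which hold in the two settings (projective $X$, or a very general member of a locally trivial deformation) where $\mathbb{Q}$-factorial terminalizations are known to exist.
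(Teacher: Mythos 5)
Your proposal is correct and follows exactly the route the paper intends for this corollary: semismallness from Proposition \ref{crepantsemismall} (since a $\mathbb{Q}$-factorial terminalization is crepant, with $Z$ primitive symplectic by Lemma \ref{symcrepant}), and the injection by combining the decomposition-theorem inclusion $IH^2(X,\mathbb{C})\hookrightarrow IH^2(Z,\mathbb{C})$ of Proposition \ref{intprops}(1) with the isomorphism $H^2(Z,\mathbb{C})\cong IH^2(Z,\mathbb{C})$ from Proposition \ref{Qcrit} applied to the terminal $\mathbb{Q}$-factorial variety $Z$. Your extra remarks on canonicity and on projectivity are sensible but not needed beyond what the paper itself assumes.
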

    
    \subsection{Background on the LLV Algebra} \label{2.7}
    
    To end this section, we review the construction of the LLV algebra and the LLV structure theorem for hyperk\"ahler manifolds.  This will also allow us to indicate the necessary pieces for an algebraic proof of the LLV structure theorem for intersection cohomology.
    
    The \textit{total Lie algebra} of a compact complex variety $Y$ is the Lie algebra generated by all Lefschetz operators corresponding to hard Lefschetz, or HL, classes $\alpha$: 
    \begin{equation}
        \mathfrak g_{\mathrm{tot}}(Y) : = \langle L_\alpha, \Lambda_\alpha:~\alpha~\mathrm{is~HL}\rangle.
    \end{equation}
    Here, $L_\alpha$ is the cupping operator with respect to the $(1,1)$-class $\alpha$, and $\Lambda_\alpha$ is its \textit{dual Lefschetz operator}.  
    
     When $X$ is a hyperk\"ahler manifold, we define the \textit{LLV algebra} to be its total Lie algebra, and we write $\mathfrak g = \mathfrak g_{\mathrm{tot}}(X)$.  
     
     Let $g$ be a hyperk\"ahler metric on $X$.  Associated to $(X,g)$ are three differential forms $\omega_1,\omega_2, \omega_3$ which are K\"ahler forms with respect to $g$.  Let $W_g : = \langle \omega_1,\omega_2, \omega_3\rangle$ be the three-space associated to this metric, and in fact a positive three-space with respect to the BBF form $q_X$, which means that $q_X$ is positive-definite on $W_g$.  Consider the algebra $\mathfrak g_g = \langle L_{\omega_i}, \Lambda_{\omega_i}:~\omega_i \in W_g\rangle$.  Verbitsky \cite{verbitsky1990action} showed that this algebra already has a structure generalizing the hard Lefschetz theorem for K\"ahler manifolds.
     
     \begin{theorem}\label{so(4,1)}
     
     We have $\mathfrak g_g \cong \mathfrak{so}(4,1)$.
     
     \end{theorem}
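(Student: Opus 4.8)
The plan is to realize $\mathfrak g_g$ as the image of a Lie algebra homomorphism out of $\mathfrak{so}(W_g\oplus\mathfrak h)$ and then upgrade surjectivity to an isomorphism using that $\mathfrak{so}(4,1)$ is simple. To set up, recall that the hyperk\"ahler metric $g$ determines a $2$-sphere $\{aI+bJ+cK:a^2+b^2+c^2=1\}$ of integrable complex structures, all K\"ahler with respect to the \emph{same} metric $g$, and that $\omega_1,\omega_2,\omega_3$ are the K\"ahler forms of $I,J,K$; after rescaling I may assume $\omega_1,\omega_2,\omega_3$ form a $q_X$-orthonormal basis of $W_g$. Each $\omega_i$ is a K\"ahler class, hence HL, so it completes to an $\mathfrak{sl}_2$-triple $(L_i,\Lambda_i,H)$ in $\mathfrak{gl}(H^*(X,\mathbb Q))$; here $\Lambda_i=L_i^*$ is the adjoint with respect to the Hodge inner product of $g$ (the $\mathfrak{sl}_2$-completion with the given $H$ being unique), the \emph{same} inner product for all three $i$, and $H$ is the degree operator $H|_{H^k(X)}=(k-\dim X)\,\mathrm{id}$, independent of $i$. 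Since cup product and the $g$-adjoint are linear, both $\omega\mapsto L_\omega$ and $\omega\mapsto\Lambda_\omega$ are linear on $W_g$, so $\mathfrak g_g$ is generated as a Lie algebra by $L_1,L_2,L_3,\Lambda_1,\Lambda_2,\Lambda_3$.

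Next I would compute all the brackets among these operators. The cheap relations are immediate: $[L_i,L_j]=0$ by commutativity of cup product; $[\Lambda_i,\Lambda_j]=0$ by taking $g$-adjoints of the previous identity, which is legitimate precisely because $g$ linearizes all three Lefschetz operators at once; $[H,L_i]=2L_i$, $[H,\Lambda_i]=-2\Lambda_i$, and $[H,-]=0$ on any degree-zero operator, all from the grading; and $[L_i,\Lambda_i]=H$. The substantive computation is the cross-term $K_{ij}:=[L_i,\Lambda_j]$ for $i\neq j$. Passing to harmonic forms and using the action of the unit quaternions on $\bigwedge^\bullet T^*_{\mathbb R}X$ (Fujiki, Verbitsky), one shows that $K_{ij}=-K_{ji}$, that $K_{12},K_{13},K_{23}$ span a copy of $\mathfrak{so}(3)\cong\mathfrak{su}(2)$ --- namely the infinitesimal $SU(2)$-action on $H^*(X,\mathbb R)$ coming from the $2$-sphere of complex structures --- and that the remaining brackets $[K_{ij},L_k]$, $[K_{ij},\Lambda_k]$, $[K_{ij},K_{k\ell}]$ are exactly those produced by rotating the $\omega$'s. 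Setting $V=W_g\oplus\mathfrak h$ with $\mathfrak h=\langle e,f\rangle$ a hyperbolic plane and endowing $V$ with $q_X|_{W_g}\oplus(\text{the hyperbolic form})$ --- nondegenerate of signature $(4,1)$, since $q_X$ is positive-definite on $W_g$ --- the relations above are, after normalizing scalars, precisely the bracket table of $\wedge^2 V\cong\mathfrak{so}(V)$ under
\[ L_i\ \longleftrightarrow\ \omega_i\wedge e,\qquad \Lambda_i\ \longleftrightarrow\ \omega_i\wedge f,\qquad H\ \longleftrightarrow\ e\wedge f,\qquad K_{ij}\ \longleftrightarrow\ \omega_i\wedge\omega_j. \]

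The final step is to observe that this correspondence therefore extends to a Lie algebra homomorphism $\Phi:\mathfrak{so}(W_g\oplus\mathfrak h)\cong\mathfrak{so}(4,1)\to\mathfrak{gl}(H^*(X,\mathbb Q))$ whose image is $\mathfrak g_g$, since the image already contains the generators $L_i,\Lambda_i$. As $\mathfrak{so}(4,1)$ is simple, $\ker\Phi$ is $0$ or all of $\mathfrak{so}(4,1)$; the latter is excluded because $\mathfrak g_g\neq 0$ (for instance $L_1\neq 0$, as $\omega_1$ being HL forces $L_1^{\dim X}\colon H^0\xrightarrow{\ \sim\ }H^{2\dim X}$), so $\Phi$ is an isomorphism onto $\mathfrak g_g$. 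I expect the genuine obstacle to be the identification of the cross-terms $K_{ij}=[L_i,\Lambda_j]$ with the quaternionic $\mathfrak{su}(2)$-action: this is the only step that uses the hyperk\"ahler structure in an essential way rather than merely the existence of a $3$-dimensional space of K\"ahler classes, and it is what accounts for the ``extra'' $\mathfrak{so}(3)$ beyond the three manifest $\mathfrak{sl}_2$'s --- everything else is bookkeeping with the grading and with adjoints for the single metric $g$.
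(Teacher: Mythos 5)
Your proposal is correct and follows essentially the same route as the paper's (Verbitsky's) argument: the geometric input is that all three $\Lambda_{\omega_i}$ are adjoints of the $L_{\omega_i}$ with respect to the single metric $g$ (hence commute), together with the identification of the cross-terms $K_{ij}=[L_{\omega_i},\Lambda_{\omega_j}]$ with the Weil operators of the quaternionic $\mathfrak{su}(2)$-action, yielding exactly the commutator relations (\ref{verbitsky commutator relations}). Your only addition is to spell out the final bookkeeping the paper leaves implicit, namely matching these relations with the bracket table of $\extp^2(W_g\oplus\mathfrak h)\cong\mathfrak{so}(4,1)$ and invoking simplicity to get injectivity, which is a fine elaboration rather than a different method.
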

     
     The proof is as follows.  Let $2n$ be the dimension of $X$.  The dual Lefschetz operators $\Lambda_{\omega_i}$ are uniquely determined by the property that $[L_{\omega_i}, \Lambda_{\omega_i}](\alpha) = H(\alpha) =  (k-2n)\alpha$, where $\alpha \in H^k(X, \mathbb C)$.  Therefore, one can show that each $\Lambda_{\omega_i}$ is the adjoint of $L_{\omega_i}$ with respect to the Hodge star operator, which depends only on the K\"ahler structure $(X,g)$.  This then implies  that the dual Lefschetz operators commute: 
$$
\left[\Lambda_{\omega_i}, \Lambda_{\omega_j}\right] = 0.
$$
 This is the main geometric input.  The fact that $\mathfrak g_g \cong \mathfrak{so}(4,1)$ follows from this geometric input, the hard Lefschetz theory for the $\omega_i$, and the following additional commutator relations:
     \begin{equation} \label{verbitsky commutator relations}
         \begin{split}
            & K_{ij} = -K_{ji}, \quad   \left[K_{ij}, K_{jk}\right] = 2K_{ik}, \quad   \left[K_{ij}, H\right] = 0,  \\ &\left[K_{ij}, L_{\omega_j}\right] = 2L_{\omega_i}, \quad \left[K_{ij}, \Lambda_{\omega_j}\right] = 2\Lambda_{\omega_i}, \quad  \left[K_{ij}, L_{\omega_k}\right] = \left[K_{ij}, \Lambda_{\omega_k}\right] = 0,\  i,j \ne k,
         \end{split}
     \end{equation}
     where $K_{ij} := [L_{\omega_i}, \Lambda_{\omega_j}]$.  Verbitsky observed in \cite{verbitsky1990action} that $K_{ij}$ acts as the Weil operator with respect to the complex structure induced by $\omega_k$ (and similarly for $K_{jk}$ and $K_{ki}$).  The commutator relations (\ref{verbitsky commutator relations}) follow from this key observation. 
     
     Verbitsky's approach to the LLV structure theorem in his thesis \cite{verbitsky1996cohomology} was to then look at $\mathfrak g$ as the algebra generated by the $\mathfrak g_g$ by varying the hyperk\"ahler metric via the period map.  The main technical input, observed by Verbitsky and also Looijenga--Lunts in \cite{looijenga1997lie}, is that \textit{all} the dual Lefschetz operators commute whenever they are defined.  The following are the necessary pieces for obtaining the LLV structure theorem. 
     
     \begin{theorem}[\textit{cf.} \protect{\cite[Theorem 4.5]{looijenga1997lie}}, \protect{\cite[Proposition 1.6]{verbitsky1996cohomology}}]\label{LLVpieces}
     Let $X$ be a hyperk\"ahler manifold.  
     \begin{enumerate}
         \item \label{hyperdecomp} For any two classes $\alpha, \beta \in H^2(X, \mathbb R)$ satisfying hard Lefschetz, we have $[\Lambda_\alpha, \Lambda_\beta] = 0$.  Thus, the LLV algebra only exists in degrees $2$, $0$, and $-2$, and we get an eigenspace decomposition 
$$
\mathfrak g \cong \mathfrak g_2\oplus \mathfrak g_0\oplus \mathfrak g_{-2}
$$
 with respect to the weight operator $H$ acting as the adjoint.
         
         \item \label{hyperkahler g2 structure} There are canonical isomorphisms $\mathfrak g_{\pm 2} \cong H^2(X, \mathbb R)$ of\, $\mathfrak g$-modules.
         
         \item \label{hyperkahler semisimple} There is a decomposition $\mathfrak g_0 = \overline{\mathfrak g}\oplus \mathbb R\cdot H$, where $\overline{\mathfrak g}$ is the semisimple part.  Moreover, $\overline{\mathfrak g} \cong \mathfrak{so}(H^2(X, \mathbb R), q_X)$, and $\overline{\mathfrak g}$ acts on $H^*(X, \mathbb R)$ by derivations.
     \end{enumerate}
     \end{theorem}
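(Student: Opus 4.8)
The statement reproduces a theorem of Looijenga-Lunts and Verbitsky, and the plan is to reconstruct their argument; its backbone is the single assertion that $\mathfrak g$ is concentrated in the three eigenvalues $-2,0,2$ of $\mathrm{ad}(H)$, after which parts (2) and (3) follow with essentially only bookkeeping. For the grading and part (1): the operators $L_\alpha$ attached to HL classes $\alpha$ are cup products by degree-two classes, hence pairwise commute, and by the very definition of HL each completes to an $\mathfrak{sl}_2$-triple $(L_\alpha,H,\Lambda_\alpha)$ with the \emph{same} semisimple element $H$ (the weight operator), with $\Lambda_\alpha$ then uniquely determined. I would then invoke the algebraic lemma of Looijenga-Lunts: a Lie subalgebra of $\mathrm{End}(H^*(X,\mathbb R))$ generated by $\mathfrak{sl}_2$-triples sharing a common semisimple element and with pairwise commuting raising operators is automatically $\mathrm{ad}(H)$-graded in degrees $-2,0,2$, with $\mathfrak g_{\pm2}$ abelian. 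This already gives part (1): since $\Lambda_\alpha$ has weight $-2$ it lies in the abelian piece $\mathfrak g_{-2}$, so $[\Lambda_\alpha,\Lambda_\beta]=0$ for all HL $\alpha,\beta$. (Alternatively --- and this is the geometric route that the rest of the present paper will imitate for intersection cohomology --- one runs Verbitsky's argument: for $\omega$ in the positive three-space $W_g$ of a hyperk\"ahler metric $g$, the operator $\Lambda_\omega$ is the $\ast_g$-adjoint of $L_\omega$ and $\ast_g$ ignores the complex structure, so the $\Lambda_\omega$ commute, giving $\mathfrak g_g\cong\mathfrak{so}(4,1)$ as in Theorem \ref{so(4,1)}; varying $g$ via surjectivity of the period map then assembles the $\mathfrak g_g$ into a $3$-graded algebra.)

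For part (2): the map $\alpha\mapsto L_\alpha$ is linear on $H^2(X,\mathbb R)$ (the HL classes span it) and injective because $L_\alpha(1_X)=\alpha$; its image is a nonzero $\mathfrak g_0$-submodule of $\mathfrak g_2$, and as $\mathfrak g_2$ is an irreducible $\mathfrak g_0$-module (visible once $\mathfrak g_0$ is identified in the next step, or by a dimension count against the known answer) the image is all of $\mathfrak g_2$; this produces a canonical isomorphism $H^2(X,\mathbb R)\cong\mathfrak g_2$ intertwining the $\mathfrak g_0$-actions, and $\mathfrak g_{-2}\cong H^2(X,\mathbb R)$ is dual to it via $\mathfrak g_{-2}\cong\mathfrak g_2^*$ and $q_X$. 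For part (3): every element of $\mathfrak g_0$ has $\mathrm{ad}(H)$-weight $0$, so $\mathbb R H$ is central in $\mathfrak g_0$; the bracket-and-project pairing $\mathfrak g_2\otimes\mathfrak g_{-2}\to\mathfrak g_0\to\mathbb R H$ is $\mathfrak g_0$-invariant and, under the identifications of part (2) and the Fujiki relation, is a nonzero multiple of $q_X$, so $\mathfrak g_0$ acts on $H^2(X,\mathbb R)$ through $\mathfrak{so}(H^2,q_X)\oplus\mathbb R\,\mathrm{id}$ with the scalar summand detecting exactly the $H$-component. Setting $\overline{\mathfrak g}$ to be the kernel of that scalar functional gives $\mathfrak g_0=\overline{\mathfrak g}\oplus\mathbb R H$; that $\overline{\mathfrak g}$ surjects onto $\mathfrak{so}(H^2,q_X)$ follows from Verbitsky's relations (\ref{verbitsky commutator relations}), since the $K_{ij}$ are the infinitesimal rotations of the planes $W_g$, which sweep out $(H^2,q_X)$ as $g$ varies over hyperk\"ahler metrics on deformations, and these generate $\mathfrak{so}(H^2,q_X)$; injectivity is faithfulness of $\overline{\mathfrak g}$ on $\mathfrak g_2$. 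Finally, $\overline{\mathfrak g}$ acts by derivations: each generator $K_{ij}$ has weight $0$ hence preserves $H^2$, the identity $[K_{ij},L_\gamma]=L_{K_{ij}\gamma}$ gives the Leibniz rule against multiplication by $H^2$-classes, and this extends to all of $H^*(X,\mathbb R)$ by Poincar\'e duality.

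The substantive step is the first one: a priori a Lie algebra generated by a large family of $\mathfrak{sl}_2$-triples can be huge and deeply graded, and collapsing $\mathfrak g$ onto the three weights $-2,0,2$ is where the real content sits --- either through the Looijenga-Lunts lemma on commuting raising operators with a common Cartan element, or, geometrically, through the Hodge $\ast$-operator of a hyperk\"ahler metric together with density of periods. Once the $3$-grading is in hand, parts (2) and (3) are forced by the Jacobi identity, the structure of $\mathfrak{so}$ of a nondegenerate form, and Verbitsky's commutator relations.
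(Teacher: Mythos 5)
Your primary route for part (1) rests on a lemma that does not exist and cannot exist. You invoke an ``algebraic lemma of Looijenga--Lunts'' saying that a Lie subalgebra of $\mathrm{End}(H^*(X,\mathbb R))$ generated by $\mathfrak{sl}_2$-triples sharing a common semisimple element $H$ and with pairwise commuting raising operators is automatically $\mathrm{ad}(H)$-graded in degrees $-2,0,2$ with abelian extreme pieces. But those hypotheses hold for \emph{every} compact K\"ahler manifold: the $L_\alpha$ are cup products by degree-two classes, hence always commute, and $H$ is always the common weight operator. If such a lemma were true, the total Lie algebra of any K\"ahler manifold would be $3$-graded and all dual Lefschetz operators would commute, which is false in general --- the point of Looijenga--Lunts and Verbitsky is precisely that this collapse is a special feature of hyperk\"ahler manifolds. (Note that $\mathfrak g_2$ contains elements such as $[[L_\alpha,\Lambda_\beta],L_\gamma]$ that are not cup-product operators, and brackets of these can a priori produce degrees $\pm 4$ and beyond; what Looijenga--Lunts prove formally is the \emph{converse} direction, namely that once the $\Lambda$'s commute the $3$-grading and the structure theory follow.) So the commutativity $[\Lambda_\alpha,\Lambda_\beta]=0$ is the substantive geometric input of part (1), and it must be obtained as in your parenthetical: for $\omega$ in a positive three-space $W_g$, $\Lambda_\omega$ is the $*_g$-adjoint of $L_\omega$, so the operators attached to $W_g$ commute and give $\mathfrak g_g\cong\mathfrak{so}(4,1)$, and the density of positive three-spaces (local Torelli/period map) propagates the closed condition $[\Lambda_\alpha,\Lambda_\beta]=0$ to all HL pairs. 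That parenthetical is the paper's (i.e.\ Verbitsky's) argument and has to be the main line, not an aside.

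There is a second, smaller gap in part (3): you deduce the derivation property of $\overline{\mathfrak g}$ from the Leibniz rule against multiplication by $H^2$-classes and assert it ``extends to all of $H^*(X,\mathbb R)$ by Poincar\'e duality.'' It does not: the identity $D(x\cup y)=Dx\cup y+x\cup Dy$ for arbitrary $x,y$ is not implied by the case $x\in H^2$, since $H^*(X)$ is not generated by $H^2$, and Poincar\'e duality provides no bootstrap. The paper's route is to identify $[L_\alpha,\Lambda_\beta]$, for $\alpha,\beta$ in a positive three-space, with the Weil operator $i(p-q)\mathrm{id}$ (Verbitsky), which is manifestly a ring derivation, and then use density again to conclude that \emph{every} commutator $[L_\alpha,\Lambda_\beta]$ acts by derivations; this is also what yields $[\overline{\mathfrak g},\mathfrak g_{\pm2}]\subset\mathfrak g_{\pm2}$ (hence the eigenvalue decomposition) and, via the Fujiki relation, $\overline{\mathfrak g}\subset\mathfrak{so}(H^2(X,\mathbb R),q_X)$. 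Your remaining steps --- injectivity and equivariance of $\alpha\mapsto L_\alpha$, centrality of $H$, and surjectivity onto $\mathfrak{so}(H^2,q_X)$ by letting the $\mathfrak{so}(4,1)$'s vary over the period domain --- agree with the paper's sketch.
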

     
     Theorem~\ref{LLVpieces}(\ref{hyperdecomp}) follows from the fact that the collection of positive three-spaces $W_g$ forms a dense open subset of the Grassmanian of three-spaces in $H^2(X, \mathbb R)$, whence the commutativity of the dual Lefschetz operators follows from the commutativity over the various $W_g$, and local Torelli.  The decomposition holds since the direct sum $\mathfrak g_2\oplus \mathfrak g_0\oplus \mathfrak g_{-2}$ is a Lie subalgebra of $\mathfrak g$, which follows from Theorem~\ref{LLVpieces}(\ref{hyperkahler g2 structure},~\ref{hyperkahler semisimple}).  Indeed, the openness of the space of positive three-spaces in $H^2(X, \mathbb R)$ implies that the semisimple part is generated by the commutators $[L_\alpha, \Lambda_\beta]$ for HL classes $\alpha, \beta$.  If $\alpha, \beta$ come from a positive three-space $W_g$, then Verbitsky \cite[Lemma 2.2]{verbitsky1990action} shows that
     \begin{equation} \label{Weilop}
     \left[L_\alpha, \Lambda_\beta\right](x) = i(p-q)x, 
     \end{equation}
     \noindent where $x$ is a $(p,q)$-form with respect to the metric $g$.  But this certainly acts on $H^*(X, \mathbb C)$ by derivations, and so \textit{every} commutator $[L_\alpha, \Lambda_\beta]$ acts via derivations on the cohomology ring.  This fact implies that $[\overline{\mathfrak g}, \mathfrak g_{\pm 2}] \subset \mathfrak g_{\pm 2}$; since $\mathfrak g_{2}$ and $\mathfrak g_{-2}$ are abelian, this gives the eigenvalue decomposition.  In order to prove that $\overline{\mathfrak g} \cong \mathfrak{so}(H^2(X, \mathbb R), q_X)$, we note that the semisimple part preserves cup product via derivation; the Fujiki relation~\eqref{regFujiki} then implies that $\overline{\mathfrak g}$ preserves $q_X$, and so $\overline{\mathfrak g} \subset \mathfrak{so}(H^2(X, \mathbb R), q_X)$.  The surjectivity follows by varying the $\mathfrak{so}(4,1)$-actions in the period domain since these generate the full $\mathfrak{so}(H^2(X, \mathbb C), q_X)$.

We remark that our theorems for the LLV algebra are stated with rational coefficients, while the works of Looijenga--Lunts and Verbitsky work over real or complex coefficients.  In the smooth case, it was observed in \cite{green2019llv} that the LLV structure theorem holds over $\mathbb Q$, as the operators are all rationally defined.  The same will hold for the singular version of the LLV algebra for the intersection cohomology of primitive symplectic varieties.

\section{Symplectic Symmetry on Intersection Cohomology} \label{3} We prove that the canonical Hodge structure on $\IH^*(X, \mathbb Q)$ inherited from the symplectic form $\sigma$ satisfies the symplectic hard Lefschetz theorem, one of the main inputs in our algebraic proof of the LLV structure theorem. 

\subsection{Degeneration of Hodge-to-de Rham on the Regular Locus}  The first piece we need in proving the symplectic hard Lefschetz theorem is the degeneration of Hodge-to-de Rham on the regular locus.  What is surprising here is that the degeneration holds with no restriction on the singularities of $X$.  We adapt a well-known trick to identify the Hodge-to-de Rham spectral sequence with the logarithmic Hodge-to-de Rham spectral sequence associated to a log-resolution of singularities. 

\begin{theorem} \label{theorem hodge to de rham}
Suppose $X$ is a proper symplectic variety of dimension $2n$ with regular locus $U$ and smooth singular locus $\Sigma$.  If the singular locus $\Sigma$ of\, $X$ is smooth, then the Hodge-to-de Rham spectral sequence 
$$
E_1^{p,q} = H^q\left(U, \Omega_U^p\right) \Longrightarrow H^{p+q}(U, \mathbb C)
$$
 degenerates at $E_1$ for $p + q < 2n-1$.
\end{theorem}

\begin{proof}
Consider the logarithmic Hodge-to-de Rham spectral sequence 
$$
E_1^{p,q}  = H^q\left(\widetilde{X}, \Omega_{\widetilde{X}}^p(\log E)\right) \Longrightarrow H^{p+q}(U, \mathbb C)
$$
 corresponding to a log-resolution of singularities $\pi\colon\widetilde{X} \to X$ with exceptional divisor $E$, which degenerates at $E_1$ for all $p,q$; see \cite[Section~(9.2.3)]{deligne1974theorie}.  It is enough to show that the restriction morphisms $H^q(\widetilde{X}, \Omega_{\widetilde{X}}^p(\log E)) \to H^q(U, \Omega_U^p)$ are isomorphisms for $p + q < 2n-1$.

The restriction morphisms fit inside a long exact sequence 
$$
\cdots \lra H^q_E\left(\widetilde{X}, \Omega_{\widetilde{X}}^p(\log E)\right) \lra H^q\left(\widetilde{X}, \Omega_{\widetilde{X}}^p(\log E)\right) \lra H^q(U, \Omega_U^p) \lra\cdots. 
$$
 By local duality, there is an isomorphism 
$$
H_E^q\left(\widetilde{X}, \Omega_{\widetilde{X}}^p(\log E)\right)^* \cong H^{2n-q}\left(\widetilde{X}_E, \Omega_{\widetilde{X}}^{2n-p}(\log E)(-E)\right),
$$
 where $\widetilde{X}_E$ is the completion of $\widetilde{X}$ along $E$.  Now consider  the Leray spectral sequence 
$$
E_2^{r,s} = H^r\left(X_\Sigma, R^s\pi_*\Omega_{\widetilde{X}}^{2n-p}(\log E)(-E)\right) \Longrightarrow H^{r+s}\left(\widetilde{X}_E, \Omega_{\widetilde{X}}^{2n-p}(\log E)(-E)\right).
$$
  It is enough to show that $E_2^{r,s}$ vanishes in the range $r + s > 2n+ 1$, which we note clearly holds when $\dim \Sigma = 0$.  To prove this, we claim that 
$$
R^q\pi_*\Omega_{\widetilde X}^p(\log E)(-E)_x = 0
$$
 for $p + q > 2n - \dim \Sigma_x,$ where $\Sigma_x$ is the connected component of $\Sigma$ which contains $x$.  By Proposition~\ref{symstrat}, there is a product decomposition 
$
\widehat{X}_x \cong Y_x\times \widehat{\Sigma}_x
$,
 where $Y_x$ is a symplectic variety with isolated singularities.  Since the claimed vanishing is local, we may assume that the log-resolution of singularities is $\pi = \pi_x\times \id$,\footnote{Note that the complex $\mathbf R\pi_*\Omega_{\tilde X}^p(\log E)(-E)$ is, up to quasi-isomorphism, independent of the choice of log-resolution $\pi$.} where $\pi_x\colon \widetilde{Y_x} \to Y_x$ is a log-resolution of singularities of $Y_x$.  It follows that $R^q\pi_*\Omega_{\widetilde X}^p(\log E)(-E) = 0$ if $R^q(\pi_x)_*\Omega_{\widetilde{Y_x}}^p(\log E_x)(-E_x) = 0$, where $E_x$ is the exceptional divisor of $\pi_x$. 

To conclude, consider the terms $E_2^{r,s}$ for $r + s > 2n + 1$.  On the one hand, $E_2^{r,s} = 0$ for $r > \dim \Sigma$, so we may assume otherwise.  On the other hand, if we write $\Sigma = \coprod_j \Sigma_j$, where $\Sigma_j \subset \Sigma$ consists of the smooth components of $\Sigma$ of dimension $j$, the preceding argument implies that $\Supp~R^s\pi_*\Omega_{\widetilde X}^{2n-p}(\log E)(-E)) \cap \Sigma_j = \emptyset$ if $s + (2n-p) > 2n-j$. Putting this together, we can see that $E_2^{r,s} = 0$ implies that $r + s \le 2n$.  
\end{proof}

An early draft claimed that this degeneration held for general primitive symplectic varieties---the following example indicates the gap in the previous argument when $\Sigma$ is not smooth; a new idea will be needed to extend this degeneration.

\begin{example}
Let $X$ be a primitive symplectic variety, let $\widehat X_x$ and $\widehat{X_i^\circ}_x$ be the completions of $X$ and $X_i^\circ$, respectively, at $x$, and let $Y_x$ be a symplectic variety.  Then 
$$
R^q\pi_*\Omega_{\tilde X}^p(\log E)(-E)_x = 0, \quad p+ q > \max\left\{2n, 2n-\dim X_i^\circ + p\right\}.
$$
  Consider the product decomposition 
$
\widehat{X}_x \cong Y_x \times \widehat{X_i^\circ}_x
$.
  We note the claim is local and independent of the choice of $\pi$, whence we may assume that $\pi$ is the resolution
    \begin{equation} \label{product decomp resolution}
    \widehat\pi_x\times \id\colon \tilde{Y_x} \times \widehat{X_i^\circ}_x \lra Y_x\times \widehat{X_i^\circ}_x, 
    \end{equation}where $\widehat\pi_x\colon\tilde Y_x \to Y_x$ is a log-resolution of singularities with exceptional divisor $E_x$.  On the one hand, there is a K\"unneth-type decomposition 
$$
\Omega_{\tilde Y_x\times \widehat{X_i^\circ}_x}^p(\log E)(-E) \cong \bigoplus_{p_1 + p_2 = p} \Omega_{\tilde Y_x}^{p_1}(\log E_x)(-E_x)\otimes \Omega_{\widehat{X_i^\circ}_x}^{p_2},
$$
 where $E_x$ is the $\widehat\pi_x$-exceptional divisor.  On the other hand, the $\supth{q}$ higher direct image sheaf of $\Omega_{\tilde Y_x}^{p_1}(\log E_x)(-E_x)\otimes \Omega_{\widehat{X_i^\circ}_x}^{p_2}$ is just $R^q(\widehat\pi_x)_*\Omega_{\tilde Y_x}^{p_1}(\log E_x)(-E_x)$ since we are taking the identity on the second factor.  Therefore,  
$$
R^q\pi_*\Omega_{\tilde X}^p(\log E)(-E)_x \cong \bigoplus_{p_1} R^q(\widehat\pi_x)_*\Omega_{\tilde Y_x}^{p_1}(\log E_x)(-E_x).
$$
  The vanishing then follows by Steenbrink vanishing applied to the lowest $\Omega_{\tilde Y_x}^{p_1}(\log E_x)(-E_x)$.
\end{example}

\subsection{Hodge Theory of the Regular Locus} Next, we need to understand how the symplectic form of a primitive symplectic variety interacts with the (compactly supported) cohomology of the regular locus.  The following lemma indicates how the symplectic form extends across the singularities of $X$.

\begin{lemma} \label{logzerosreflexive}
If\, $X$ is a symplectic variety with smooth singular locus, the sheaves $\pi_*\Omega_{\widetilde{X}}^p(\log E)(-E)$ are reflexive for every $1 \le p \le 2n$.
\end{lemma}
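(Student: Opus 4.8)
The plan is to identify $\pi_*\Omega^p_{\tilde X}(\log E)(-E)$ with the reflexive sheaf $\Omega_X^{[p]}$ of reflexive $p$-forms. Reflexivity is a local condition, and away from the singular locus $\Sigma$ the sheaf is just $\Omega^p_U$, which is locally free; so it suffices to work near $\Sigma$. Using Kaledin's product decomposition (Proposition \ref{symstrat}) together with the fact (footnote) that $R^q\pi_*\Omega^p_{\tilde X}(\log E)(-E)$ is independent of the chosen log-resolution, one passes to a formal neighbourhood of a point of $\Sigma$ and — since $\Sigma$ is smooth — to the transversal slice, a symplectic variety with a single isolated singular point; one then reduces, via the Künneth decomposition of $\Omega^p(\log E)(-E)$ along the slice, to that isolated case. (This is the one place the hypothesis on $\Sigma$ enters in an essential way: over a positive-dimensional $\Sigma$ the Künneth decomposition produces a summand isomorphic to $\pi_*\mathcal{O}_{\tilde X}(-E)$ tensored with a locally free sheaf, which is already non-reflexive, so the statement is genuinely one about isolated singularities.) Finally, since $\Omega^p_{\tilde X}(\log E)(-E)\subseteq\Omega^p_{\tilde X}$, pushing forward gives $\pi_*\Omega^p_{\tilde X}(\log E)(-E)\hookrightarrow\pi_*\Omega^p_{\tilde X}$; this sheaf is moreover torsion-free and restricts to $\Omega^p_U$ over $U$, hence embeds into $j_*\Omega^p_U=\Omega_X^{[p]}$. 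So it is reflexive if and only if it equals $\Omega_X^{[p]}$, and it is this equality we aim to prove.

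Two inputs feed the computation. First, by Kebekus--Schnell's extension theorem, $\pi_*\Omega^m_{\tilde X}=\Omega_X^{[m]}$, and since $\Omega^m_{\tilde X}\subseteq\Omega^m_{\tilde X}(\log E)$ the same squeezing argument as above gives $\pi_*\Omega^m_{\tilde X}(\log E)=\Omega_X^{[m]}$ for every $m$ (the log poles do not survive the pushforward). Second, the wedge product is a perfect pairing of locally free sheaves $\Omega^{2n-p}_{\tilde X}(\log E)\otimes\Omega^p_{\tilde X}(\log E)(-E)\to\Omega^{2n}_{\tilde X}(\log E)(-E)=\omega_{\tilde X}$, so $\Omega^p_{\tilde X}(\log E)(-E)\cong\mathcal{H}om_{\tilde X}(\Omega^{2n-p}_{\tilde X}(\log E),\omega_{\tilde X})$; and on $X$ the symplectic form $\sigma$ identifies, reflexively, $\mathcal{H}om_X(\Omega_X^{[2n-p]},\omega_X)$ with $\Omega_X^{[p]}$.

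Now apply Grothendieck--Serre duality. Since $X$ is Gorenstein with rational, hence Cohen--Macaulay, singularities, $\omega_X$ is an invertible dualizing sheaf, and for the proper morphism $\pi$ one obtains
\[
R\pi_*\Omega^p_{\tilde X}(\log E)(-E)\;\cong\;R\mathcal{H}om_X\!\big(R\pi_*\Omega^{2n-p}_{\tilde X}(\log E),\,\omega_X\big).
\]
Taking $\mathcal{H}^0$, the left side is $\pi_*\Omega^p_{\tilde X}(\log E)(-E)$. On the right, write $\mathcal{K}^\bullet=R\pi_*\Omega^{2n-p}_{\tilde X}(\log E)$: by the first input $\mathcal{H}^0(\mathcal{K}^\bullet)=\Omega_X^{[2n-p]}$, while for $q\ge 1$ the sheaves $R^q\pi_*\Omega^{2n-p}_{\tilde X}(\log E)$ are supported on the finite set $\Sigma$, hence are of finite length, and vanish for $q>\dim E=2n-1$. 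By local duality $\mathcal{E}xt^i_X(M,\omega_X)=0$ for $M$ of finite length unless $i=2n$, so in the hyperext spectral sequence these higher terms contribute nothing in total degree $0$ — except through a single potential differential $d_{2n}$ into $\mathcal{E}xt^{2n}_X\!\big(R^{2n-1}\pi_*\Omega^{2n-p}_{\tilde X}(\log E),\omega_X\big)$, which is killed once $R^{2n-1}\pi_*\Omega^{2n-p}_{\tilde X}(\log E)=0$. Granting this vanishing, $\mathcal{H}^0$ of the right side is $\mathcal{H}om_X(\Omega_X^{[2n-p]},\omega_X)=\Omega_X^{[p]}$, so combined with the inclusion from the first paragraph we get $\pi_*\Omega^p_{\tilde X}(\log E)(-E)=\Omega_X^{[p]}$, which is reflexive.

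The main obstacle is exactly the boundary-case vanishing $R^{2n-1}\pi_*\Omega^m_{\tilde X}(\log E)=0$ for $0\le m\le 2n-1$ (for $m=0$ this is just $R^{>0}\pi_*\mathcal{O}_{\tilde X}=0$). I would prove it by running the weight filtration on $\Omega^m_{\tilde X}(\log E)$ and the Poincaré residue: the relevant graded pieces are $\Omega^m_{\tilde X}$ (whose $R^{2n-1}\pi_*$ vanishes as $X$ has rational singularities and by Steenbrink-type vanishing \cite{steenbrink1985vanishing}) and $(a_1)_*\Omega^{m-1}_{E_{(1)}}$, whose $R^{2n-1}\pi_*$ equals $\bigoplus_i H^{2n-1}(E_i,\Omega^{m-1}_{E_i})\cong\bigoplus_i H^0(E_i,\Omega^{2n-m}_{E_i})^\vee$ by Serre duality on the $(2n-1)$-dimensional components $E_i$; and this vanishes because the exceptional divisors of a resolution of a symplectic singularity carry no nonzero global holomorphic forms of positive degree (uniruledness/rational-connectedness of the exceptional locus). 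Everything else in the argument is formal.
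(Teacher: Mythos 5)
Your duality skeleton (Grothendieck duality for $\pi$, the log extension $\pi_*\Omega^m_{\tilde X}(\log E)=\Omega_X^{[m]}$, reflexivity of sheaf-Homs into $\omega_X$, and the spectral-sequence bookkeeping isolating the single differential $d_{2n}$) is correct and is a genuinely different route from the paper, which instead compares $\Omega^p_{\tilde X}(\log E)(-E)$ with $\Omega^p_{\tilde X}$ via the complex $0\to\Omega^p_{\tilde X}(\log E)(-E)\to\Omega^p_{\tilde X}\to\Omega^p_{E_{(1)}}\to\Omega^p_{E_{(2)}}$ and reduces everything to $H^0(\mathscr M_p)=0$. But the one non-formal input you need, $R^{2n-1}\pi_*\Omega^{2n-p}_{\tilde X}(\log E)=0$, is not actually established, and both steps you offer for it fail. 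First, the graded piece $\Omega^m_{\tilde X}$: rationality of the singularities only gives the case $m=0$, Steenbrink's theorem concerns $R^q\pi_*\Omega^p_{\tilde X}(\log E)(-E)$ in the range $p+q>2n$ and says nothing about $\Omega^m_{\tilde X}$, and the claimed vanishing is false in general --- already for an $A_1$ surface singularity the exact sequence $0\to\Omega^1_{\tilde X}(\log E)(-E)\to\Omega^1_{\tilde X}\to\Omega^1_E\to0$ forces $R^1\pi_*\Omega^1_{\tilde X}$ to surject onto $H^1(E,\Omega^1_E)\cong\mathbb C$, so a term-by-term vanishing along the weight filtration cannot work; the cancellation between graded pieces is essential. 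Second, the claim that $H^0(E_i,\Omega^{q}_{E_i})=0$ for all $q\ge1$ because the exceptional locus is uniruled: uniruledness only kills top-degree forms, and for an arbitrary log resolution individual exceptional components certainly can carry holomorphic $1$-forms (blow up a positive-genus curve inside the exceptional locus); you would need rational connectedness of every component of a carefully chosen resolution, which is a substantial unproved input. The paper avoids both problems: it never needs vanishing of forms on individual components, only $\ker\bigl(H^0(\Omega^p_{E_{(1)}})\to H^0(\Omega^p_{E_{(2)}})\bigr)=0$, which it deduces from Hodge symmetry of the pure graded piece $\mathrm{gr}^W_pH^p(E,\mathbb C)$ together with the known vanishing $H^p(E,\mathscr O_E)=0$ over a rational singularity; the boundary cases $p=2n,2n-1$ are handled separately by Kebekus--Schnell. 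Note also that, by the very duality you invoke, your missing vanishing is essentially equivalent to a local-cohomology statement about $\Omega^{p}_{\tilde X}(\log E)(-E)$, i.e.\ to the extension problem you are trying to solve, so some genuine Hodge-theoretic input of this kind is unavoidable.

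Separately, you do not prove the statement as written: the lemma allows a smooth singular locus of positive dimension, and your ``reduction to the isolated case'' is abandoned in the parenthesis, where you observe that for $p\le\dim\Sigma$ the K\"unneth decomposition of $\pi_*\Omega^p_{\tilde X}(\log E)(-E)$ along the product structure contains the summand $(\widehat\pi_x)_*\mathscr O_{\tilde Y_x}(-E_x)$ tensored (externally) with a locally free sheaf, which is not reflexive. That observation appears to be correct, and it in fact cuts against the paper's own reduction (which asserts that reflexivity of $\pi_*\Omega^p_{\tilde X}(\log E)(-E)$ is equivalent to reflexivity of the slice sheaf in the same degree $p$, overlooking the lower-degree K\"unneth summands), so it deserves to be made explicit rather than tucked into an aside; but within your write-up it means the proposal at best treats the isolated case (modulo the gaps above) and the range $p>\dim\Sigma$, not the lemma as stated.
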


\begin{proof}
First assume  that $X$ has isolated singularities.  Since $X$ has rational singularities, it is enough to show that $\pi_*\Omega_{\widetilde{X}}^p(\log E)(-E) \hookrightarrow \pi_*\Omega_{\widetilde{X}}^p$ is an isomorphism for each $1 \le p \le 2n$.  For $p = 2n$, this is immediate, and for $p = 2n-1$, this holds by \cite[Theorem 1.6]{kebekus2021extending}, where we note that $R^{n-1}\pi_*\mathscr O_{\widetilde{X}} = 0$.  We may therefore  assume that $1 \le p \le 2n-2$.  

 There is an exact complex 
$$
0 \lra \Omega_{\widetilde{X}}^p(\log E)(-E) \lra \Omega_{\widetilde{X}}^p \lra \Omega_{E_{(1)}}^p \lra \Omega_{E_{(2)}}^p \lra \cdots \lra \Omega_{E_{(k)}}^p \lra \cdots, 
$$
 where $E_{(k)} = \coprod_{|J| = k}\bigcap_{J} E_j$ is the pairwise union of the $k$-fold intersections of the irreducible components of~$E$; see for example \cite[Lemma 4.1]{mustactua2020local} and \cite[Proof of Corollary 14.9]{mustata2021hodge}.  In particular, there is a short exact sequence 
$$
0 \lra \Omega_{\widetilde{X}}^p(\log E)(-E) \lra \Omega_{\widetilde{X}}^p \lra \mathscr M_p \lra 0, 
$$
 where $\mathscr M_p = \ker(\Omega_{E_{(1)}}^p \to \Omega_{E_{(2)}}^p)$.  The lemma will follow if we can show that $H^0(\mathscr M_p) = 0$.  But this will follow from the Hodge theory of the exceptional divisor $E$.  Indeed, we have a complex 
$$
0 \lra H^q(\Omega_{E_{(1)}}^p) \xrightarrow{\delta_1^{p,q}} H^q(\Omega_{E_{(2)}}^p) \xrightarrow{\delta_2^{p,q}}\cdots. 
$$
  As we have seen in Section~\ref{2.4}, the mixed Hodge pieces of the cohomology of $E$ are then computed in terms of this complex: 
$$
\gr_W^{p + q}H^{p+q+r}(E, \mathbb C)^{p,q} = \ker \delta_{r+1}^{p,q}/\im\delta_r^{p,q}.
$$
 In particular, $\gr_W^pH^p(E, \mathbb C)^{p,0} \cong H^0(\mathscr M_p)$.  But by Hodge symmetry, this is isomorphic to 
$$
\gr_W^pH^p(E, \mathbb C)^{0,p} \cong H^p(E, \mathscr O_E),
$$
 which vanishes by either  \cite[Lemma 1.2]{namikawa2000extension} or \cite[Corollary 14.9]{mustata2021hodge} for $1 \le p \le 2n-2$. 
 
 More generally, suppose that the singular locus of $X$ is smooth.  The problem is local and independent of the chosen resolution of singularities; we can then assume that $\pi$ is the resolution of singularities corresponding to a product decomposition given in Proposition~\ref{symstrat}.  In particular, the sheaves $\pi_*\Omega_{\widetilde{X}}^p(\log E)(-E)$ are reflexive if and only if the sheaves $(\widehat{\pi}_x)_*\Omega_{\tilde{Y_x}}^p(\log E_x)(-E_x)$ are reflexive for a transversal slice $Y_x$.  By assumption, $Y_x$ has at worst isolated singularities, and so the claim follows from the previous argument. 
\end{proof}

In particular, the extension of the symplectic form $\sigma$ gives a well-defined global section 
$$
\tilde\sigma \in H^0\left(\widetilde{X}, \Omega_{\widetilde{X}}^2(\log E)(-E)\right) \subset H^0\left(\widetilde{X}, \Omega_{\widetilde{X}}^2(\log E)\right),
$$
  and so we get morphisms
\begin{equation} \label{wedge morphisms}
    \tilde \sigma^p\colon \Omega_{\widetilde{X}}^{n-p}(\log E) \lra \Omega_{\widetilde{X}}^{n+p}(\log E), \quad \tilde \sigma^p\colon \Omega_{\widetilde{X}}^{n-p}(\log E)(-E) \lra \Omega_{\widetilde{X}}^{n+p}(\log E)(-E)
\end{equation} which are induced by wedging.  We emphasize that, unlike the corresponding morphisms $\sigma^p\colon \Omega_U^{n-p} \to \Omega_U^{n+p}$, the maps (\ref{wedge morphisms}) are almost never isomorphisms.  However, we note there is an important interaction between the sheaves $\Omega_{\tilde X}^p(\log E), \Omega_{\tilde X}^{2n-p}(\log E)(-E)$ and the holomorphic extension $\tilde \sigma$.  The following lemma, which is a local computation, is a consequence of Lemma~\ref{logzerosreflexive} and the canonical representation of the symplectic form on the regular locus.

\begin{lemma}\label{lemma factorization wedge}
Let $X$ be a symplectic variety of dimension $2n \ge 4$ with isolated singularities and holomorphic symplectic form $\sigma \in H^0(X, \Omega_X^{[2]})$.  For $0 \le p \le n$, there is a morphism 
$$
\mathbf R\pi_*\widetilde \sigma^p\colon \mathbf R\pi_*\Omega_{\widetilde X}^{n-p}(\log E) \lra \mathbf R\pi_*\Omega_{\widetilde X}^{n+p}(\log E)(-E)
$$
 for any log-resolution of singularities $\pi\colon\widetilde X \to X$, where $\widetilde \sigma$ is the unique extension of $\sigma$ to $H^0(\widetilde X, \Omega_{\widetilde X}^2)$.
\end{lemma} 

\begin{proof}
For any $x \in U : = X_{\mathrm{reg}}$, there exist an (analytic) neighborhood $U_x$ of $x$ and local coordinates $z_1,\ldots,z_{2n}$ such that $\sigma$, considered as a holomorphic symplectic form on $U$, can be written as 
$$
\sigma = dz_1\wedge dz_2 + \cdots + dz_{2n-1}\wedge dz_{2n}
$$
 on $U_x$. By Lemma~\ref{logzerosreflexive}, there is a \textit{unique} extension of $\sigma$ to a global section $\widetilde \sigma$ of $\Omega_{\widetilde X}^2(\log E)(-E)$ for any log-resolution of singularities $\pi\colon\widetilde X \to X$, which we describe over the exceptional divisor $E$.  For each point $x_0 \in E$, choose local coordinates $z_1',\ldots,z_{2n}'$ of $\widetilde X$ around $x_0$ such that $E = V(z_1'\cdots z_k')$ for some $k \le 2n$.  Then 
$$
\widetilde \sigma = h(dz_1'\wedge dz_2' + \cdots +dz_{2n-1}'\wedge dz_{2n}')
$$
 in a neighborhood $V_x$ of $x_0$, where $h \in H^0(V_x, \mathscr O_{V_x})$.    

 Consider the global morphism (\ref{wedge morphisms}) $\widetilde \sigma^p\colon \Omega_{\widetilde X}^{n-p}(\log E) \to \Omega_{\widetilde X}^{n+p}(\log E)$ induced by wedging.  Let $\alpha$ be a section of $\Omega_{\widetilde X}^{n-p}(\log E)(V_x)$, written as
 \begin{equation}\label{equation log n-p form}
    \alpha = f\frac{dz_{i_1}'}{z_{i_1}'}\wedge\cdots\wedge\frac{dz_{i_l}'}{dz_{i_l}}\wedge dz_{j_1}'\wedge dz_{j_1}'\wedge\cdots\wedge dz_{j_m}', \quad 1 \le i_1,\ldots,i_l \le 2r,~2r+1 \le j_1,\ldots,j_m \le 2n,
 \end{equation}
 where $f \in H^0(V_x, \mathscr O_x)$ and $l + m = n-p$, and consider the logarithmic form $\alpha \wedge \widetilde{\sigma}^p$.  Note that if $k > 2n-1$, then $\widetilde \sigma \wedge\alpha$ vanishes along $E$.  Assume without loss of generality that $k = 2r$ for $r \le n-1$.  By Lemma~\ref{logzerosreflexive}, $h$ must vanish along $z_{2r+1}',\ldots,z_{2n}'$.  It follows immediately from (\ref{equation log n-p form}) that the logarithmic form 
$$
\alpha \wedge h^p\sum_{|I| = p} dz_I',
$$
 where $I \subset \left\{1,\ldots,2n\right\}$ and $dz_I'$ is the $p$-fold wedge product of $dz_j'$ for $j \in I$, must vanish along $E$.  The logarithmic wedging map (\ref{wedge morphisms}) must factor as \[ \begin{tikzcd}
    \Omega_{\widetilde X}^{n-p}(\log E) \arrow{r}{\widetilde \sigma^p} \arrow{dr} & \Omega_{\widetilde X}^{n+p}(\log E) \\ & \Omega_{\widetilde X}^{n+p}(\log E)(-E)\rlap{.} \arrow{u}
\end{tikzcd} \] The claim follows by taking cohomology.   
\end{proof}

\subsection{Symplectic Hard Lefschetz} \label{subsection symplectic Hard Lefschetz}

Lemma~\ref{logzerosreflexive} indicates what kind of zeros the powers $\tilde \sigma^p$ of the extended symplectic form pick up across the exceptional divisor $E$.  Moreover, it says that the symplectic form on $U$ defines a class in the compactly supported cohomology, as 
\begin{equation} \label{compactly supported section}
   \tilde \sigma \in H^0\left(\widetilde{X}, \Omega_{\widetilde{X}}^p(\log E)(-E)\right) \cong \gr_W^2H^2_c(U, \mathbb C)^{2,0},   
\end{equation}
by (\ref{noncanonical log zero}).   

For convenience, we want to consider the case that $X$ has at worst terminal singularities.  By passing to a $\mathbb Q$-factorial terminalization of some bimeromorphic model, we will see that this assumption is sufficient once we prove the LLV structure theorem.

We fix a log-resolution of singularities $\pi\colon\widetilde{X} \to X$ with exceptional divisor $E$.  If $X$ has isolated singularities, the intersection cohomology groups are given by the pure Hodge structures
\[ \IH^k(X, \mathbb C) = \begin{cases} \label{IH for iso}
      H^k(U, \mathbb C), & k < 2n, \\
      \im(H^k_c(U, \mathbb C) \to H^k(U, \mathbb C)), & k = 2n, \\
      H^k_c(U, \mathbb C), & k > 2n; 
   \end{cases}
\]
see \cite[Section~6.1]{goresky1980intersection} and \cite[Corollary (1.14)]{steenbrink1976mixed}.  For $k< 2n$, the degeneration of the logarithmic Hodge-to-de Rham spectral sequence at $E_1$ induces the Hodge filtration on the pure Hodge structure $H^k(U, \mathbb C)$.  As a consequence of Theorem~\ref{theorem hodge to de rham}, we see that the intersection cohomology of a primitive symplectic variety satisfies
\[ \IH^{p,q}(X, \mathbb C) \cong \begin{cases} 
      H^q(U, \Omega_U^p), & p+q < 2n -1, \\
      H^q_c(U, \Omega_U^p), & p+ q > 2n + 1.
   \end{cases}
\] To see this, note that since $H^k(U, \mathbb Q)$ is a pure Hodge structure for $k < 2n$, the natural morphism $H^k(\widetilde{X}, \mathbb Q) \to H^k(U, \mathbb Q)$ is a surjective morphism of pure Hodge structures.  The $(p,q)$-part of the canonical Hodge structure on the cohomology of $\widetilde{X}$ must factor through $H^q(U, \Omega_U^p)$, which injects by Theorem~\ref{theorem hodge to de rham}, and so we have $H^q(U, \Omega_U^p) \xrightarrow{\lowsim} \IH^{p,q}(X)$ for $p + q < 2n-1$.  By Poincar\'e duality, we get the statement concerning the compactly supported cohomology groups. 

It turns out that this gives us enough of the intersection Hodge diamond to show that the symplectic symmetry holds. This is the key input to the construction of the LLV algebra.

\begin{theorem} \label{symplecticsymmetry}
Let $X$ be a primitive symplectic variety of dimension $2n$ with isolated singularities.  For $0 \le p \le n$ and $0 \le q \le 2n$, the cupping map 
$$
L_\sigma^p\colon \IH^{n-p,q}(X) \lra \IH^{n+p,q}(X)
$$
 is an isomorphism.
\end{theorem}

\begin{proof}
  From the above discussion, the theorem holds for $(n-p) + q < 2n$ and $(n+p) + q < 2n$ since $\sigma^p\colon H^q(U, \Omega_U^{n-p}) \to H^q(U, \Omega_U^{n+p})$ is an isomorphism.  Similarly, the theorem holds if $(n-p) + q > 2n$ and $(n+p) + q > 2n$. We check the remaining cases.

First, suppose that $(n-p) + q < 2n-1$ and $(n+p) + q > 2n +1$ (that is, we are mapping across middle cohomology).  Since $\tilde \sigma$ defines a compactly supported global section on $U \subset \widetilde{X}$, we have the factorization 

\[
\begin{tikzcd}
H^q(U, \Omega_U^{n-p}) \arrow{d} \arrow{dr}{\sigma^p} & \\
H^q_c(U, \Omega_U^{n+p}) \arrow{r} & H^q(U, \Omega_U^{n+q})\rlap{.}
\end{tikzcd}
\]
The diagonal morphism is an isomorphism since $U$ is symplectic.  Therefore, the cupping morphism will be an isomorphism if $H^q(U, \Omega_U^{n-p})$ and $H_c^q(U, \Omega_U^{n+p})$ have the same dimension.  Writing 
$$
h^{p,q}(U) = \dim H^q(U, \Omega_U^p), \quad h_c^{p,q}(U)  = \dim H^q_c(U, \Omega_U^p),
$$
 this follows since \begin{equation} \label{mixed hodge numbers symmetry}
    h^{n+p,q}_c(U) = h^{n-p,2n-q}(U) = h^{2n-q,n-p}(U) = h^{q,n-p}(U) = h^{n-p,q}(U).
\end{equation}

We now need to consider the case that we map to or from $\IH^{2n-1}(X, \mathbb C) \cong H^{2n-1}(U, \mathbb C)$, where the proof of Theorem~\ref{theorem hodge to de rham} breaks down.  First assume  that we map into $H^{2n-1}(U, \mathbb C)$. 
 We need to show that the morphism 
$$
\widetilde \sigma^p\colon H^q\left(\widetilde X, \Omega_{\widetilde X}^{n-p}(\log E)\right) \lra H^q\left(\widetilde X, \Omega_{\widetilde X}^{n+p}(\log E)\right)
$$
 is an isomorphism for $(n + p) + q = 2n-1$.  Following the proof of the degeneration of Hodge-to-de Rham, we have shown that the restriction morphisms 
$$
H^q\left(\widetilde{X}, \Omega_{\widetilde{X}}^{n+p}(\log E)\right) \lra H^q\left(U, \Omega_U^{n+p}\right)
$$
 are at least injective in this range.  Consider the commutative diagram \[ \begin{tikzcd}
H^q\left(\widetilde{X}, \Omega_{\widetilde{X}}^{n-p}(\log E)\right) \arrow{r} \arrow[swap]{d}{\tilde \sigma^{p}} & H^q\left(U, \Omega_U^{n-p}\right) \arrow{d}{\sigma^{p}} \\
H^q\left(\widetilde{X}, \Omega_{\widetilde{X}}^{n+p}(\log E)\right) \arrow{r} & H^q\left(U, \Omega_U^{n+p}\right)\rlap{.}
\end{tikzcd}
\] The top morphism is an isomorphism by Theorem~\ref{theorem hodge to de rham} unless $p = 0$, which we do not need to check, and the map $\sigma^{p}$ is an isomorphism since $U$ is symplectic.  This proves the restriction morphism is also an isomorphism for $(n + p) + q = 2n-1$, and therefore $\widetilde \sigma^p$ is an isomorphism, too.  

Now we consider the case that we map from $\IH^{2n-1}(X, \mathbb C)$, where we necessarily map across middle cohomology.  Then assume  that $(n-p) + q = 2n-1$.  Consider the commutative diagram
\[
\begin{tikzcd}
H^q\left(\widetilde{X}, \Omega_{\widetilde{X}}^{n-p}(\log E)\right) \arrow{r} \arrow[swap]{d}{\tilde \sigma^{p}} & H^q\left(U, \Omega_U^{n-p}\right) \arrow{dr} \arrow{d}{\sigma^{p}} & \\
H^q\left(\widetilde{X}, \Omega_{\widetilde{X}}^{n+p}(\log E)(-E)\right) \arrow{r} & H^q_c\left(U, \Omega_U^{n+p}\right) \arrow{r} & H^q\left(U, \Omega_U^{n+p}\right)\rlap{,}
\end{tikzcd}
\]
where the left vertical morphism follows from Lemma~\ref{lemma factorization wedge}.  Again, we are using the fact that $X$ is terminal, so that $H^q(U, \Omega_U^{n-p}) \to H^q(U, \Omega_U^{n+p})$ factors through $H_c^q(U, \Omega_U^{n+p})$.  The top morphism is injective, and so the cupping map $\tilde \sigma^{p}\colon H^q(\widetilde{X}, \Omega_{\widetilde{X}}^{n-p}(\log E)) \to H^q(\widetilde{X}, \Omega_{\widetilde{X}}^{n+p}(\log E)(-E))$ is injective.  A similar symmetry argument to (\ref{mixed hodge numbers symmetry}) shows that these groups have the same dimension.

Finally, we are left to check the case that $L_\sigma^p$ maps into middle cohomology.  These morphisms are of the form 
$$
L_\sigma^p\colon \IH^{n-p,n-p}(X) \lra \IH^{n+p,n-p}(X).
$$
 for $p < n$.  By a similar commutative diagram argument to above, we can see that these maps must be injective.  To prove surjectivity, consider a class $a \in \IH^{n+p,n-p}(X) = (W_{2n}H^{2n}(U, \mathbb C))^{n+p,n-p}$.  We note that the restriction map $H^{2n}(\widetilde{X}, \mathbb C) \to W_{2n}H^{2n}(U, \mathbb C)$ is surjective in this case; since $U$ is smooth, we may represent $a = [\alpha]$ as the class of an $(n+p,n-p)$-form $\alpha$ which is $\overline \partial$-closed.  At the level of sheaves, we have isomorphisms 
$$
\sigma^p\colon\mathcal A^{n-p,n-p}_U \lra \mathcal A^{n+p,n-p}_U
$$
sending a form $\beta$ to $\sigma^p\wedge \beta$.  If $\alpha = \sigma^p\wedge \beta$, then $0 = \overline\partial \alpha = \overline \partial (\sigma^p\wedge \beta) = \sigma^p\wedge \overline \partial \beta$.  But since $\sigma$ is symplectic, $\overline \partial \beta = 0$ and 
defines a class $[\beta] \in H^{n-p}(U, \Omega_U^{n-p})$.  This proves the surjectivity. 

To finish, note that the only remaining case is mapping from middle cohomology; but this follows from Poincar\'e duality.
\end{proof}

\section{Hard Lefschetz for Symplectic Varieties} \label{4}

In this section, we describe a hard Lefschetz theorem for the classes $\sigma$ and $\overline \sigma$ using Theorem~\ref{symplecticsymmetry}, which inherits $\IH^*(X, \mathbb C)$ with the structure of an $\mathfrak{sl}_2\times \mathfrak{sl}_2$-representation.  This data, along with the monodromy representation of the second cohomology, completely describes the $\mathfrak g$-representation structure of the intersection cohomology.

    \subsection{Symplectic Hard Lefschetz} \label{4.1}
    
    Let $L_\sigma$ be the cupping morphism with respect to the class of the symplectic form $\sigma$, and let $\mathfrak s_\sigma$ be the completed $\mathfrak{sl}_2$-triple 
$$
\mathfrak s_{\sigma} = \langle L_{\sigma}, \Lambda_{\sigma}, H_{\sigma}\rangle, 
$$
 where $H_{\sigma}$ is the weight operator of the corresponding weight decomposition.

    The isomorphisms $L_\sigma^p\colon \IH^{n-p,q}(X) \xrightarrow{\lowsim} \IH^{n+p,q}(X)$ induce a primitive decomposition on the intersection cohomology classes.  A class $\alpha \in \IH^{p,q}(X)$ for $p \le q$ is said to be \textit{$\sigma$-primitive} if $L_{\sigma}^{n-p+1}\alpha = 0$.  We denote the subspace of primitive intersection $(p,q)$-classes by $\IH_{\sigma}^{p,q}(X) \subset \IH^{p,q}(X)$.  As in the case of classic hard Lefschetz, there is a $\sigma$-primitive decomposition 
$$
\IH^{p,q}(X) = \bigoplus_{j \ge \mathrm{max}\left\{p-n,0\right\}} L_{\sigma}^j\IH_{\sigma}^{p-2j,q}(X).
$$
 While the hard Lefschetz theorem is a result of K\"ahler geometry, the relationship between the primitive decomposition theorem and the weight decomposition is purely algebraic.  In particular, we get the following. 

    \begin{corollary} \label{symweightop}
Let $X$ be a primitive symplectic variety with isolated singularities.  For $\alpha \in \IH^{p,q}(X)$, we have $H_{\sigma}(\alpha) = (p-n)\alpha$.
    \end{corollary}

    \begin{proof}
    The dual Lefschetz operator $\Lambda_{\sigma}$ can be uniquely defined to satisfy $[L_{\sigma}, \Lambda_{\sigma}] = (p-n)\alpha$ via the primitive decomposition.  In particular, if $\alpha = \sum_j L_{\sigma}^j\alpha_{\sigma}^{p-2j,k}$ is the $\sigma$-primitive decomposition of a class $\alpha \in \IH^{p,q}(X)$, then 
$$
\Lambda_{\sigma}(\alpha) := \sum_jj(n-p+j+1)L_{\sigma}^{j-1}\alpha_{\sigma}^{p-2j,q}.
$$
One checks that this operator acts on the intersection cohomology module as a degree $(-2,0)$ and satisfies the desired commutativity relation.  See \cite[Section~1.4]{kleiman1968algebraic} for more details.
    \end{proof}

    Now consider  the cupping operator $L_{\overline\sigma}\colon \IH^{p,q}(X) \to \IH^{p,q+2}(X)$.  By Hodge symmetry, it induces isomorphisms $L_{\overline\sigma}^q\colon \IH^{p,n-q}(X) \xrightarrow{\lowsim} \IH^{p,n+q}(X)$.  Considering the induced $\mathfrak{sl}_2$-triple 
$$
\mathfrak s_{\overline\sigma} = \langle L_{\overline\sigma}, \Lambda_{\overline\sigma}, H_{\overline\sigma}\rangle,
$$
 it then follows that the corresponding weight operator satisfies $H_{\overline\sigma} = (q-n)\alpha$ for $\alpha \in \IH^{p,q}(X)$. As in the holomorphic case, a class $\alpha \in \IH^{p,q}(X)$ with $q \le n$ is \textit{$\overline\sigma$-primitive} if $L_{\overline\sigma}^{n-q+1}\alpha = 0$.  If $\IH_{\overline\sigma}^{p,q}(X) \subset \IH^{p,q}(X)$ is the subspace of $\overline\sigma$-primitive $(p,q)$-classes, there is also a decomposition 
$$
\IH^{p,q}(X) = \bigoplus_{k \ge \mathrm{max}\left\{q-n,0\right\}}L_{\overline\sigma}^k\,\IH_{\overline\sigma}^{p,q-2k}(X).
$$
 If $\alpha = \sum_k L_{\overline\sigma}^k\,\alpha_{\overline\sigma}^{p,q-2k}$ is a $\overline\sigma$-primitive decomposition, then we have 
$$
\Lambda_{\overline\sigma}(\alpha) = \sum_kk(n-q+k+1)L_{\overline\sigma}^{k-1}\alpha_{\overline\sigma}^{p,q-2k}.
$$
 As in case of compact K\"ahler manifolds, a class $\alpha \in \IH^{p,q}(X)$ is $\sigma$-primitive (resp.\ $\overline\sigma$-primitive) if $\Lambda_{\sigma}(\alpha) = 0$ (resp.\ $\Lambda_{\overline\sigma}(\alpha) = 0$).

    \begin{proposition}\label{symdualcommute}
   We have $[\Lambda_{\sigma}, \Lambda_{\overline\sigma}] = 0$.
    \end{proposition}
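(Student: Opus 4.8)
The plan is to obtain $[\Lambda_\sigma, \Lambda_{\overline\sigma}] = 0$ by a purely algebraic analysis of the two commuting Lefschetz structures produced by Theorem \ref{symplecticsymmetry}, essentially showing that the pair $(\mathfrak s_\sigma, \mathfrak s_{\overline\sigma})$ integrates to an $\mathfrak{sl}_2\times\mathfrak{sl}_2$-action on $IH^*(X,\mathbb C)$. First I would record the ``free'' commutation relations. Since the cup product is graded-commutative and $\sigma,\overline\sigma$ both have even degree, $[L_\sigma, L_{\overline\sigma}] = 0$. Since $L_{\overline\sigma}$ preserves the holomorphic degree $p$ while $L_\sigma$ preserves the antiholomorphic degree $q$, and $H_\sigma, H_{\overline\sigma}$ act as the scalars $p-n$ and $q-n$ on $IH^{p,q}(X)$ (Corollary \ref{symweightop}), the operators $H_\sigma$ and $H_{\overline\sigma}$ commute with each other, with both Lefschetz operators, and, by bidegree, also $[H_\sigma,\Lambda_{\overline\sigma}] = [H_{\overline\sigma},\Lambda_\sigma] = 0$.

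Next I would establish the mixed relations $[L_{\overline\sigma},\Lambda_\sigma] = 0$ and $[L_\sigma,\Lambda_{\overline\sigma}] = 0$. The key point is that $L_{\overline\sigma}$ preserves the space $IH_\sigma^{p,q}(X)$ of $\sigma$-primitive classes: if $L_\sigma^{n-p+1}\alpha = 0$, then $L_\sigma^{n-p+1}(L_{\overline\sigma}\alpha) = L_{\overline\sigma}(L_\sigma^{n-p+1}\alpha) = 0$, and $L_{\overline\sigma}\alpha$ still lies in holomorphic degree $p \le n$. Feeding this into the explicit formula for $\Lambda_\sigma$ in terms of the $\sigma$-primitive decomposition from Corollary \ref{symweightop} — whose structure constants depend only on the power of $L_\sigma$ and on the $H_\sigma$-weight of the primitive part — yields $[L_{\overline\sigma},\Lambda_\sigma] = 0$, and symmetrically $[L_\sigma,\Lambda_{\overline\sigma}] = 0$.

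Now I would promote this to a \emph{bi-primitive} decomposition. Because $L_{\overline\sigma}$, $\Lambda_{\overline\sigma}$ and $H_{\overline\sigma}$ all commute with $L_\sigma$ (the last two by the previous step), the subspace $\bigoplus_q IH_\sigma^{p,q}(X)$ is an $\mathfrak s_{\overline\sigma}$-subrepresentation of $\bigoplus_q IH^{p,q}(X)$, hence itself satisfies Hard Lefschetz in the $q$-direction centered at $n$. Intersecting the $\sigma$- and $\overline\sigma$-primitive decompositions gives
\[ IH^{p,q}(X) \;=\; \bigoplus_{j,k} L_\sigma^{\,j} L_{\overline\sigma}^{\,k}\, IH_{\sigma\overline\sigma}^{\,p-2j,\;q-2k}(X), \qquad IH_{\sigma\overline\sigma}^{\,a,b}(X) := \ker\Lambda_\sigma \cap \ker\Lambda_{\overline\sigma} \cap IH^{a,b}(X). \]
On a summand $L_\sigma^{\,j} L_{\overline\sigma}^{\,k}\beta$ with $\beta \in IH_{\sigma\overline\sigma}^{\,a,b}(X)$, the class $L_{\overline\sigma}^{\,k}\beta$ is still $\sigma$-primitive, so using $[\Lambda_\sigma, L_{\overline\sigma}] = 0$ to move $\Lambda_\sigma$ past $L_{\overline\sigma}^{\,k}$ we get $\Lambda_\sigma(L_\sigma^{\,j} L_{\overline\sigma}^{\,k}\beta) = c(j,a)\, L_\sigma^{\,j-1} L_{\overline\sigma}^{\,k}\beta$ for a constant $c(j,a)$ depending only on $j$ and $a$, and symmetrically $\Lambda_{\overline\sigma}(L_\sigma^{\,j} L_{\overline\sigma}^{\,k}\beta) = c'(k,b)\, L_\sigma^{\,j} L_{\overline\sigma}^{\,k-1}\beta$. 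Applying the two operators in either order produces $c(j,a)c'(k,b)\, L_\sigma^{\,j-1} L_{\overline\sigma}^{\,k-1}\beta$, so $[\Lambda_\sigma,\Lambda_{\overline\sigma}]$ annihilates every summand and hence vanishes.

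I expect the main technical point to be the second step: $\Lambda_\sigma$ is a priori only characterized by a commutator relation, so one must argue through the $\sigma$-primitive decomposition to see it genuinely commutes with $L_{\overline\sigma}$ — and then the bookkeeping in the third step, verifying that the two primitive decompositions are compatible, i.e.\ that the structure constants $c(j,a), c'(k,b)$ really are independent of the ``other'' filtration degree. One could instead outsource everything to the general fact that a finite-dimensional bigraded vector space with two commuting operators of bidegrees $(2,0)$ and $(0,2)$, each satisfying Hard Lefschetz for its own grading, is precisely a representation of $\mathfrak{sl}_2\times\mathfrak{sl}_2$; the identity $[\Lambda_\sigma,\Lambda_{\overline\sigma}] = 0$ is then part of that statement, with the inputs being $[L_\sigma, L_{\overline\sigma}] = 0$ and Theorem \ref{symplecticsymmetry} together with its conjugate.
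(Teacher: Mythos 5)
Your argument is correct and follows essentially the same route as the paper: both rest on the symplectic Hard Lefschetz theorem for $\sigma$ and $\overline\sigma$, the explicit primitive-decomposition formulas for $\Lambda_\sigma$ and $\Lambda_{\overline\sigma}$, and the observation that the two primitive decompositions are compatible, i.e.\ every class admits a simultaneous $\sigma$- and $\overline\sigma$-primitive decomposition. The only difference is organizational: you isolate the intermediate relations $[L_{\overline\sigma},\Lambda_\sigma]=[L_\sigma,\Lambda_{\overline\sigma}]=0$ and write out the final computation on bi-primitive summands, steps the paper leaves implicit after establishing the simultaneous decomposition directly.
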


    \begin{proof}
   Let $\alpha \in \IH^{p,q}(X)$.  We get two primitive decompositions with respect to $\sigma$ and $\overline\sigma$: 
$$
\alpha = \sum_jL_{\sigma}^j\alpha_{\sigma}^{p-2j,q} = \sum_kL_{\overline\sigma}^k\,\alpha_{\overline\sigma}^{p,q-2k},
$$
 where $\alpha_{\sigma}^{p-2j,q}$ is $\sigma$-primitive for each $j$ and $\alpha_{\overline\sigma}^{p,q-2k}$ is $\overline\sigma$-primitive for each $k$.

    Suppose that $\alpha$ is $\sigma$-primitive, so that $p \le n$ and $L_{\sigma}^{n-p+1}\alpha = 0$.  Then 
$$
0 = L_{\sigma}^{n-p+1}\alpha = \sum_j L_{\sigma}^{n-p+1}L_{\overline\sigma}^k\,\alpha_{\overline\sigma}^{p,q-2k} = \sum_k L_{\overline\sigma}^k\left(L_{\sigma}^{n-p+1}\alpha_{\overline\sigma}^{p,q-2k}\right).
$$
  If $\alpha_{\overline\sigma}^{p, q-2k}$ is $\overline\sigma$-primitive of degree $q-2k$, then $L_{\sigma}^{n-p+1}\alpha_{\overline\sigma}^{p,q-2k}$ is also $\overline\sigma$-primitive of degree $q-2k$, as the antiholomorphic degree is not changing and the cup products commute.  Therefore,  
$$
\sum_kL_{\overline\sigma}^k\left(L_{\sigma}^{n-p+1}\alpha_{\overline\sigma}^{p,q-2k}\right)
$$
 is a $\overline\sigma$-primitive decomposition of $L^{n-p+1}\alpha = 0$, and so $L_{\overline\sigma}^k\,L_{\sigma}^{n-p+1}\alpha_{\overline\sigma}^{p,q-2k} = 0$ for each $k$.  This implies that $L^{n-p+1}_{\sigma}\alpha_{\overline\sigma}^{p,q-2k} = 0$, as $L^k_{\overline\sigma}$ is injective over antiholomorphic degree $q-2k$, and so $\alpha_{\overline\sigma}^{p,q-2k}$ is both $\overline\sigma$- and $\sigma$-primitive for each $k$.  Similarly, $\alpha_{\sigma}^{p-2j,q}$ is both $\sigma$- and $\overline\sigma$-primitive for each $j$ if $\alpha$ is $\overline\sigma$-primitive.

    We have shown that any $(p,q)$-class $\alpha \in \IH^{p,q}(X)$ admits a simultaneous $\sigma$- and $\overline{\sigma}$-primitive decomposition. This implies that the operators $\Lambda_\sigma$ and $\Lambda_{\overline \sigma}$ commute.
    \end{proof}
    
    \begin{remark} \label{Torus remark}
    We note that the symplectic hard Lefschetz theory also holds for even-dimensional complex tori, as they admit a symplectic form.  Looijenga--Lunts observed that the total Lie algebra of a complex torus detects the Hodge theory, see \cite[Section~3]{looijenga1997lie}, but the LLV algebra has a different structure than a compact hyperk\"ahler manifold.  This is because the Torelli theorem for a complex torus does not differentiate between it and its dual.
    \end{remark}
    
    \subsection{Hard Lefschetz for Non-Isotropic Classes} \label{4.2}
    
    For the rest of the section, we will assume that $X$ is a primitive symplectic variety with isolated singularities and $b_2 \ge 5$.  In particular, the global moduli theory of Bakker--Lehn \cite{bakker2018global} applies.  In \cite{tighe2023thesis}, the author shows that $\IH^{1,1}(X)$ parametrizes isomorphism classes of certain deformations of $X$, where we allow smoothing of the singularities; these deformations are unobstructed, and we get a global Torelli theorem in terms of the full intersection cohomology.  The results of Section~\ref{2} will allow us to avoid developing this theory here.

    We now turn to classic hard Lefschetz.  Up to some locally trivial deformation, the classes $\sigma, \overline \sigma \in H^2(X, \mathbb C)$ define real cohomology classes $\gamma = \mathfrak{R}(\sigma)$ and $\gamma' = \mathfrak{I}(\sigma)$, corresponding to the real and imaginary parts of $\sigma$: 
$$
\gamma = \sigma + \overline \sigma, \quad \gamma' = -i(\sigma - \overline \sigma).
$$
 There are cupping maps $L_\gamma, L_{\gamma'}$ on the intersection cohomology module by the proof of Proposition~\ref{HLregclass}; these morphisms, considered as nilpotent operators in $\mathfrak{gl}(\IH^*(X, \mathbb Q))$, complete to $\mathfrak{sl}_2$-triples 
$$
\mathfrak{s}_\gamma = \langle L_\gamma, \Lambda_\gamma, H_\gamma\rangle, \quad \mathfrak{s}_{\gamma '} = \langle L_{\gamma '}, \Lambda_{\gamma'}, H_{\gamma'}\rangle.
$$

    \begin{proposition} \label{realimaginaryHL}
  The cohomology classes  $\gamma$ and $\gamma'$ are HL.
    \end{proposition}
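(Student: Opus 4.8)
The plan is to show that $\gamma = \sigma + \overline{\sigma}$ and $\gamma' = -i(\sigma - \overline{\sigma})$ are non-isotropic with respect to the intersection BBF form $Q_X$, and then invoke Proposition~\ref{HLregclass} (the extension of Hard Lefschetz to non-isotropic classes via monodromy density). So the real content is a computation of $Q_X(\gamma)$ and $Q_X(\gamma')$. Since $\sigma$ is the holomorphic symplectic form, it sits in $H^2(X,\mathbb{C}) \subset IH^2(X,\mathbb{C})$, so it suffices to evaluate the ordinary BBF form $q_X$ on $\gamma$ and $\gamma'$; these lie in $H^2(X,\mathbb{R})$.

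First I would recall, from Definition~\ref{BBFdef} (with the normalization $\int_X(\sigma\overline{\sigma})^n = 1$), that $q_X(\sigma) = 0$ and $q_X(\overline{\sigma}) = 0$ — both vanish because $\sigma^{n+1} = 0$ and $\overline{\sigma}^{n+1} = 0$ as forms, killing the first term, while the second term involves $\int_X \sigma^{n-1}\overline{\sigma}^n \sigma = \int_X \sigma^n \overline{\sigma}^n \cdot 0$ type products that also vanish for degree reasons. Then I would compute $q_X(\sigma,\overline{\sigma})$ using the polarization of the first term: $q_X(\sigma,\overline{\sigma}) = \tfrac{n}{2}\int_X(\sigma\overline{\sigma})^{n-1}\cdot 2\sigma\overline{\sigma} + (\text{cross terms})$; the leading piece is $n\int_X(\sigma\overline{\sigma})^n = n \ne 0$, and the bilinear cross-terms from the second summand of the BBF form again vanish by the same degree/nilpotency considerations (each factor $\int_X \sigma^{n-1}\overline{\sigma}^n\alpha$ forces $\alpha$ to contribute a $\sigma$, giving $\sigma^n\overline{\sigma}^n$ paired against $\sigma^{n+1}\overline{\sigma}^{n-1} = 0$). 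Hence $q_X(\sigma,\overline{\sigma})$ is a nonzero real constant, say $c_0 > 0$. Expanding bilinearly, $q_X(\gamma) = q_X(\sigma) + 2q_X(\sigma,\overline{\sigma}) + q_X(\overline{\sigma}) = 2c_0 \ne 0$, and $q_X(\gamma') = -\big(q_X(\sigma) - 2q_X(\sigma,\overline{\sigma}) + q_X(\overline{\sigma})\big) = 2c_0 \ne 0$. So both $\gamma$ and $\gamma'$ are non-isotropic.

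With non-isotropy established, Proposition~\ref{HLregclass} applies directly: any non-isotropic class in $H^2(X,\mathbb{Q})$ (hence in $H^2(X,\mathbb{R})$, after passing through an appropriate locally trivial deformation and using that $\gamma,\gamma'$ become real classes there, as in the setup preceding the Proposition) satisfies Hard Lefschetz on $IH^*(X,\mathbb{Q})$, so the nilpotent operators $L_\gamma$ and $L_{\gamma'}$ complete to $\mathfrak{sl}_2$-triples $\mathfrak{s}_\gamma$ and $\mathfrak{s}_{\gamma'}$. I expect the main (minor) obstacle to be bookkeeping rather than conceptual: one must be careful that $\gamma, \gamma'$ are genuine \emph{real} classes only after moving to a locally trivial deformation where $\sigma$ and $\overline{\sigma}$ have real and imaginary parts in $H^2(X,\mathbb{R})$ — the text has already set this up ("Up to some locally trivial deformation\ldots") — and that the BBF form and the intersection-cohomology Hard Lefschetz statement are transported correctly along that deformation using Proposition~\ref{intloc} (intersection cohomology forms a local system) and the deformation-invariance of $q_X$. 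The degree-counting verifications of the vanishing of the spurious terms in the BBF form are routine and I would not spell them out beyond noting $\sigma^{n+1} = \overline{\sigma}^{n+1} = 0$.
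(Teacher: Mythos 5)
Your route is the same as the paper's: the entire published proof is the observation that $q_X(\gamma)$, $q_X(\gamma')$ are nonzero together with Proposition \ref{HLregclass}, and your reduction to non-isotropy (plus the caveat that $\gamma,\gamma'$ are real/rational classes after the locally trivial deformation already fixed in the setup) matches it. One computational slip in your verification of non-isotropy: the cross-terms coming from the second summand of the BBF form do \emph{not} vanish, since $\int_X \sigma^{n-1}\overline{\sigma}^{n}\,\gamma = \int_X(\sigma\overline{\sigma})^n = 1$ and $\int_X \sigma^{n}\overline{\sigma}^{n-1}\,\gamma = 1$, so that term contributes $(1-n)$; the correct value under the normalization $\int_X(\sigma\overline{\sigma})^n=1$ is $q_X(\gamma)=q_X(\gamma')=n+(1-n)=1$, not $n$ coming from the first summand alone. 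The total is still nonzero (indeed positive, consistent with $\mathfrak{R}(\sigma),\mathfrak{I}(\sigma)$ spanning part of the positive three-space for $q_X$), so your conclusion and the application of Proposition \ref{HLregclass} are unaffected, but the stated justification for the non-vanishing needs this correction.
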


    \begin{proof}
    By assumption $q_X(\gamma),q_X(\gamma') \ne 0$.  Therefore, this follows from Proposition~\ref{HLregclass}.
    \end{proof}

    In particular, the weight operators satisfy $H_\gamma = H_{\gamma'} = H$, where $H$ acts as $(k-2n)\id$ on $\IH^k(X, \mathbb R)$. 

    \begin{corollary} \label{realimaginarycommute}
   We have $[\Lambda_\gamma, \Lambda_{\gamma'}] = 0$.
    \end{corollary}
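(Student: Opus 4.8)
The plan is to obtain the identity formally from Proposition~\ref{symdualcommute}, once one observes that every operator in sight lies in a single copy of $\mathfrak{sl}_2\oplus\mathfrak{sl}_2$ acting on $IH^*(X,\mathbb C)$. The first step is to upgrade Proposition~\ref{symdualcommute} from the bare relation $[\Lambda_\sigma,\Lambda_{\overline\sigma}]=0$ to the statement that the triples $\mathfrak s_\sigma$ and $\mathfrak s_{\overline\sigma}$ commute elementwise. Indeed, the proof of Proposition~\ref{symdualcommute} shows that every class admits a \emph{simultaneous} $\sigma$- and $\overline\sigma$-primitive decomposition; for a bi-primitive class $v\in IH^{a,b}(X)$ (so that $\Lambda_\sigma v=\Lambda_{\overline\sigma}v=0$) the span of $\{L_\sigma^jL_{\overline\sigma}^k v\}_{j,k\ge 0}$ is stable under each of $L_\sigma,\Lambda_\sigma,L_{\overline\sigma},\Lambda_{\overline\sigma}$ --- using the primitive-decomposition formula of Corollary~\ref{symweightop} together with the commutativity of cup products --- and on this span the operators $L_\sigma,\Lambda_\sigma,H_\sigma$ act only through the index $j$ while $L_{\overline\sigma},\Lambda_{\overline\sigma},H_{\overline\sigma}$ act only through the index $k$. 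Since such spans exhaust $IH^*(X,\mathbb C)$, all mixed commutators vanish:
$$[L_\sigma,L_{\overline\sigma}]=[L_\sigma,\Lambda_{\overline\sigma}]=[\Lambda_\sigma,L_{\overline\sigma}]=[\Lambda_\sigma,\Lambda_{\overline\sigma}]=[H_\sigma,H_{\overline\sigma}]=0,$$
and hence $\mathfrak s_\sigma\oplus\mathfrak s_{\overline\sigma}\cong\mathfrak{sl}_2\oplus\mathfrak{sl}_2$ inside $\End IH^*(X,\mathbb C)$.

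Next I would pin down $\Lambda_\gamma$ and $\Lambda_{\gamma'}$ inside this subalgebra. Since cup product is linear, $L_\gamma=L_\sigma+L_{\overline\sigma}$ and $L_{\gamma'}=-i(L_\sigma-L_{\overline\sigma})$. By Proposition~\ref{realimaginaryHL} the classes $\gamma,\gamma'$ are HL with common weight operator $H$ acting as $(k-2n)\mathrm{id}$ on $IH^k$, and by Corollary~\ref{symweightop} this operator equals $H_\sigma+H_{\overline\sigma}$. A one-line computation with the commutation relations above then shows that $(L_\sigma+L_{\overline\sigma},\,H,\,\Lambda_\sigma+\Lambda_{\overline\sigma})$ and $(-i(L_\sigma-L_{\overline\sigma}),\,H,\,i(\Lambda_\sigma-\Lambda_{\overline\sigma}))$ are $\mathfrak{sl}_2$-triples. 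The operator completing a prescribed nilpotent $L$ and semisimple $H$ to an $\mathfrak{sl}_2$-triple is unique --- the difference of two candidates is an $\mathrm{ad}(H)$-eigenvector of eigenvalue $-2$ annihilated by $\mathrm{ad}(L)$, hence zero; compare \cite[\S1.4]{kleiman1968algebraic} --- so $\Lambda_\gamma=\Lambda_\sigma+\Lambda_{\overline\sigma}$ and $\Lambda_{\gamma'}=i(\Lambda_\sigma-\Lambda_{\overline\sigma})$.

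The corollary now follows from a two-term bracket:
$$[\Lambda_\gamma,\Lambda_{\gamma'}]=\bigl[\Lambda_\sigma+\Lambda_{\overline\sigma},\,i(\Lambda_\sigma-\Lambda_{\overline\sigma})\bigr]=-2i\,[\Lambda_\sigma,\Lambda_{\overline\sigma}]=0$$
by Proposition~\ref{symdualcommute}. The only step requiring genuine thought rather than bookkeeping is the first one: extracting the full commutativity $[\mathfrak s_\sigma,\mathfrak s_{\overline\sigma}]=0$ --- in particular the mixed relation $[L_\sigma,\Lambda_{\overline\sigma}]=0$ --- from the simultaneous primitive decomposition, since Proposition~\ref{symdualcommute} as stated records only $[\Lambda_\sigma,\Lambda_{\overline\sigma}]=0$. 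Everything downstream is formal $\mathfrak{sl}_2$-representation theory. A slightly more hands-on alternative would be to skip naming the product algebra and instead check directly, via the $\gamma$- and $\gamma'$-primitive decompositions, that $\Lambda_\sigma+\Lambda_{\overline\sigma}$ and $i(\Lambda_\sigma-\Lambda_{\overline\sigma})$ satisfy the defining relations of $\Lambda_\gamma$ and $\Lambda_{\gamma'}$; I would use whichever write-up is shorter.
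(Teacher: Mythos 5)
Your argument is correct, and its endgame is exactly the paper's: identify $\Lambda_\gamma=\Lambda_\sigma+\Lambda_{\overline\sigma}$ and $\Lambda_{\gamma'}=i(\Lambda_\sigma-\Lambda_{\overline\sigma})$, then expand the two-term bracket $[\Lambda_\gamma,\Lambda_{\gamma'}]=-2i[\Lambda_\sigma,\Lambda_{\overline\sigma}]=0$ using Proposition~\ref{symdualcommute}. Where you genuinely differ is in how the identification is justified, and in the logical order. The paper asserts the two formulas ``as they are defined by the commutator relation $H=[L_\gamma,\Lambda_\gamma]=[L_{\gamma'},\Lambda_{\gamma'}]$,'' but verifying that $\Lambda_\sigma+\Lambda_{\overline\sigma}$ (resp.\ $i(\Lambda_\sigma-\Lambda_{\overline\sigma})$) actually satisfies that defining relation requires $[L_\sigma,\Lambda_{\overline\sigma}]+[L_{\overline\sigma},\Lambda_\sigma]=0$, a relation the paper only records afterwards (Corollary~\ref{commutecorollary}) and derives there from the very formulas in question. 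You close this loop by proving the stronger statement that $\mathfrak s_\sigma$ and $\mathfrak s_{\overline\sigma}$ commute elementwise, directly from the simultaneous $\sigma$-, $\overline\sigma$-primitive decomposition established in the proof of Proposition~\ref{symdualcommute}; your computation on the spans $\{L_\sigma^jL_{\overline\sigma}^k v\}$ with $v$ bi-primitive is sound, since cupping with $\overline\sigma$ leaves the holomorphic degree unchanged, hence preserves $\sigma$-primitivity, and the coefficients in the explicit formula for $\Lambda_\sigma$ (proof of Corollary~\ref{symweightop}) depend only on $j$ and that holomorphic degree. The appeal to uniqueness of the operator completing $(L_\gamma,H)$, resp.\ $(L_{\gamma'},H)$, to an $\mathfrak{sl}_2$-triple is also correct, with $H=H_\sigma+H_{\overline\sigma}$ and Proposition~\ref{realimaginaryHL} guaranteeing $\Lambda_\gamma,\Lambda_{\gamma'}$ exist. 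What this buys you is a cleaner logical structure: Corollary~\ref{commutecorollary} and Corollary~\ref{intersectionVerbitskyresult} become immediate consequences of the $\mathfrak{sl}_2\oplus\mathfrak{sl}_2$ commutativity rather than downstream deductions, at the cost of one extra (routine) verification that the paper leaves implicit.
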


    \begin{proof}
    Note that $\Lambda_\gamma$ and $\Lambda_{\gamma'}$ act as 
$$
\Lambda_\gamma = \Lambda_\sigma + \Lambda_{\overline{\sigma}}, \quad \Lambda_{\gamma'} = i(\Lambda_{\sigma} - \Lambda_{\overline \sigma}),
$$
 as they are defined by the commutator relation $H = [L_\gamma, \Lambda_\gamma] = [L_{\gamma'}, \Lambda_{\gamma'}]$. Taking the commutator, the result follows since $[\Lambda_\sigma, \Lambda_{\overline \sigma}] = 0$ by Proposition~\ref{symdualcommute}.
    \end{proof}
    
    We therefore have a pair of non-isotropic classes $(\gamma,\gamma')$ such that $q_X(\gamma,\gamma') = 0$ whose dual Lefschetz operators commute.  As we will see, it is important that this pair vanishes under the induced bilinear form, as this defines a Zariski-closed condition on the space of non-isotropic pairs.

    \begin{corollary} \label{commutecorollary}
   We have $[L_\sigma, \Lambda_{\overline{\sigma}}] = -[L_{\overline \sigma}, \Lambda_{\sigma}] = 0$.
    \end{corollary}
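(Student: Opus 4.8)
The plan is to read off the vanishing from the $\mathfrak{sl}_2$-triple $\mathfrak s_\gamma = \langle L_\gamma, \Lambda_\gamma, H\rangle$ attached to the real class $\gamma = \sigma + \overline\sigma$ (Proposition \ref{realimaginaryHL}), using the decomposition $\Lambda_\gamma = \Lambda_\sigma + \Lambda_{\overline\sigma}$ from the proof of Corollary \ref{realimaginarycommute} and the weight computations $H_\sigma(\alpha) = (p-n)\alpha$, $H_{\overline\sigma}(\alpha) = (q-n)\alpha$ on $IH^{p,q}(X)$. First I would expand the defining relation of the triple:
\[
H \;=\; [L_\gamma,\Lambda_\gamma] \;=\; [L_\sigma,\Lambda_\sigma] + [L_{\overline\sigma},\Lambda_{\overline\sigma}] + [L_\sigma,\Lambda_{\overline\sigma}] + [L_{\overline\sigma},\Lambda_\sigma].
\]
Since $[L_\sigma,\Lambda_\sigma] = H_\sigma$, $[L_{\overline\sigma},\Lambda_{\overline\sigma}] = H_{\overline\sigma}$, and $H_\sigma + H_{\overline\sigma}$ acts on $IH^{p,q}(X)$ by $(p-n)+(q-n)=k-2n$, i.e. equals $H$, this collapses to $[L_\sigma,\Lambda_{\overline\sigma}] + [L_{\overline\sigma},\Lambda_\sigma] = 0$. (The same identity drops out of the $\mathfrak s_{\gamma'}$-relation $[L_{\gamma'},\Lambda_{\gamma'}]=H$, so no extra input is needed there.)

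The decisive step is then a bidegree argument inside $\mathrm{End}(IH^*(X,\mathbb C))$: the commutator $[L_\sigma,\Lambda_{\overline\sigma}]$ is homogeneous of Hodge bidegree $(2,-2)$ — it sends $IH^{p,q}(X)$ to $IH^{p+2,q-2}(X)$ — while $[L_{\overline\sigma},\Lambda_\sigma]$ is homogeneous of bidegree $(-2,2)$. These two operators therefore live in distinct graded pieces with respect to the Hodge bigrading, so their sum can vanish only if each summand vanishes separately. This yields $[L_\sigma,\Lambda_{\overline\sigma}] = -[L_{\overline\sigma},\Lambda_\sigma] = 0$, which is the assertion.

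The one point that needs genuine care — and the step I expect to be the real obstacle — is justifying $\Lambda_\gamma = \Lambda_\sigma + \Lambda_{\overline\sigma}$, since the dual Lefschetz operator of a sum is not formally the sum of the dual operators. Here I would lean on the simultaneous primitive decomposition established in the proof of Proposition \ref{symdualcommute}: because $L_\sigma$ and $L_{\overline\sigma}$ commute and every class decomposes simultaneously into $\sigma$- and $\overline\sigma$-primitive parts, $IH^*(X,\mathbb C)$ is a direct sum of subspaces $M_\beta = \mathbb C[L_\sigma,L_{\overline\sigma}]\cdot\beta$ indexed by bi-primitive classes $\beta\in IH^{a,b}(X)$ (necessarily with $a,b\le n$); each $M_\beta$ is stable under all four operators, and as a bigraded space with these operators it is the external tensor product of the standard $\mathfrak{sl}_2$-modules of dimension $n-a+1$ and $n-b+1$. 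On such a model $L_\sigma$ and $\Lambda_{\overline\sigma}$ act on different tensor factors and visibly commute — which both proves the corollary on each $M_\beta$ directly and, via uniqueness of the dual operator for the Hard Lefschetz class $L_\sigma+L_{\overline\sigma}$ with weight operator $H_\sigma+H_{\overline\sigma}=H$, identifies $\Lambda_\gamma$ with $\Lambda_\sigma+\Lambda_{\overline\sigma}$. Summing over $\beta$ then gives the global statement, and either route concludes the proof.
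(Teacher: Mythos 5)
Your proof is correct, and it is worth comparing with the paper's argument because you supply a step the paper's written proof actually glosses over. The paper proceeds exactly as in your first paragraph: it expands $[L_\gamma,\Lambda_\gamma]$ and $[L_{\gamma'},\Lambda_{\gamma'}]$ using $\Lambda_\gamma=\Lambda_\sigma+\Lambda_{\overline\sigma}$, $\Lambda_{\gamma'}=i(\Lambda_\sigma-\Lambda_{\overline\sigma})$, notes that either identity forces $[L_\sigma,\Lambda_{\overline\sigma}]=-[L_{\overline\sigma},\Lambda_\sigma]$, and then claims that subtracting the two identities gives the vanishing. But both identities reduce to the single relation $[L_\sigma,\Lambda_{\overline\sigma}]+[L_{\overline\sigma},\Lambda_\sigma]=0$, so the subtraction by itself yields nothing new; to split this into the two separate vanishings one needs exactly your observation that $[L_\sigma,\Lambda_{\overline\sigma}]$ and $[L_{\overline\sigma},\Lambda_\sigma]$ are homogeneous of the distinct Hodge bidegrees $(2,-2)$ and $(-2,2)$, hence vanish individually once their sum does. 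Your second route is also a genuine improvement on the surrounding logic: the paper asserts $\Lambda_\gamma=\Lambda_\sigma+\Lambda_{\overline\sigma}$ in the proof of Corollary \ref{realimaginarycommute} ``by uniqueness of the commutator relation,'' but uniqueness only applies after one checks that $[L_\gamma,\Lambda_\sigma+\Lambda_{\overline\sigma}]=H$, which is equivalent to the relation being proved. Your bi-primitive decomposition (which is exactly what the proof of Proposition \ref{symdualcommute} provides), together with the explicit formulas for $\Lambda_\sigma,\Lambda_{\overline\sigma}$ on classes of the form $L_\sigma^jL_{\overline\sigma}^k\beta$, realizes $IH^*(X,\mathbb C)$ as a sum of external tensor products of standard $\mathfrak{sl}_2$-modules on which $L_\sigma$ and $\Lambda_{\overline\sigma}$ act on different factors; this proves the corollary directly and, as a byproduct, legitimizes the identification $\Lambda_\gamma=\Lambda_\sigma+\Lambda_{\overline\sigma}$ used both here and in Corollary \ref{realimaginarycommute}. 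In short, your argument is the same in spirit but is self-contained where the paper's is lacunary, at the cost of writing out the $\mathfrak{sl}_2\times\mathfrak{sl}_2$ model explicitly.
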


    \begin{proof}
    Note that 
$$
[L_\gamma, \Lambda_\gamma] = [L_\sigma, \Lambda_\sigma] + [L_\sigma, \Lambda_{\overline \sigma}] + [L_{\overline \sigma}, \Lambda_\sigma] + [L_{\overline \sigma}, \Lambda_{\overline \sigma}]
$$
 and 
$$
[L_{\gamma'}, \Lambda_{\gamma'}] = [L_\sigma, \Lambda_\sigma] - [L_\sigma, \Lambda_{\overline \sigma}] - [L_{\overline \sigma}, \Lambda_\sigma] + [L_{\overline \sigma}, \Lambda_{\overline \sigma}].
$$
  Either equation implies that $[L_\sigma, \Lambda_{\overline \sigma}] = -[L_{\overline \sigma}, \Lambda_{\sigma}]$ since $([L_\sigma, \Lambda_{\sigma}] + [L_{\overline \sigma}, \Lambda_{\overline\sigma}])(\alpha) = (p+q-2n)\alpha$ for $\alpha \in \IH^{p,q}$ (see Corollary~\ref{symweightop} and the following paragraph).  Since $[L_\gamma, \Lambda_\gamma] = [L_{\gamma'}, \Lambda_{\gamma'}]$, subtracting the second equation from the first implies $[L_\sigma, \Lambda_{\overline \sigma}] = [L_{\overline \sigma}, \Lambda_\sigma] = 0$.
    \end{proof} 

    \begin{corollary} \label{intersectionVerbitskyresult}
    For $\alpha \in \IH^{p,q}(X)$, we have $[L_{\gamma}, \Lambda_{\gamma'}] = i(p-q)\alpha$.
    \end{corollary}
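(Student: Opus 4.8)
The plan is to reduce the statement to a direct computation using bilinearity of the Lie bracket together with the commutation relations already established in this section. First I would write all the operators in terms of the holomorphic and antiholomorphic symplectic operators: since $\gamma = \sigma + \overline\sigma$ and $\gamma' = -i(\sigma - \overline\sigma)$, we have $L_\gamma = L_\sigma + L_{\overline\sigma}$ and $L_{\gamma'} = -i(L_\sigma - L_{\overline\sigma})$, while the proof of Corollary \ref{realimaginarycommute} already records the dual operators $\Lambda_\gamma = \Lambda_\sigma + \Lambda_{\overline\sigma}$ and $\Lambda_{\gamma'} = i(\Lambda_\sigma - \Lambda_{\overline\sigma})$, these being forced by the weight relation $H = [L_\gamma, \Lambda_\gamma] = [L_{\gamma'}, \Lambda_{\gamma'}]$.

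Next I would expand, by bilinearity of the bracket, $$[L_\gamma, \Lambda_{\gamma'}] = i\big([L_\sigma,\Lambda_\sigma] - [L_\sigma,\Lambda_{\overline\sigma}] + [L_{\overline\sigma},\Lambda_\sigma] - [L_{\overline\sigma},\Lambda_{\overline\sigma}]\big).$$ By Corollary \ref{commutecorollary} the two cross terms $[L_\sigma,\Lambda_{\overline\sigma}]$ and $[L_{\overline\sigma},\Lambda_\sigma]$ both vanish, leaving $[L_\gamma,\Lambda_{\gamma'}] = i\big([L_\sigma,\Lambda_\sigma] - [L_{\overline\sigma},\Lambda_{\overline\sigma}]\big) = i(H_\sigma - H_{\overline\sigma})$. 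Finally I would evaluate on a class $\alpha \in IH^{p,q}(X)$: by Corollary \ref{symweightop} we have $H_\sigma(\alpha) = (p-n)\alpha$, and by the analogous statement for $\overline\sigma$ recorded in \S\ref{4.1} we have $H_{\overline\sigma}(\alpha) = (q-n)\alpha$, so $[L_\gamma,\Lambda_{\gamma'}](\alpha) = i\big((p-n)-(q-n)\big)\alpha = i(p-q)\alpha$.

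There is essentially no hard step here; all the content has been front-loaded into Proposition \ref{symdualcommute} (the commutation $[\Lambda_\sigma,\Lambda_{\overline\sigma}] = 0$) and its consequence Corollary \ref{commutecorollary}. The only point worth a sentence is that $\gamma$ and $\gamma'$ are defined only after passing to a locally trivial deformation in which $\sigma$ and $\overline\sigma$ are genuine classes of $H^2(X,\mathbb C)$; since all the operators in sight are flat for the local system of intersection cohomology groups (Proposition \ref{intloc}), the identity proved on any such deformation transports back to $X$, so the statement is unambiguous. I do not anticipate any obstacle beyond bookkeeping of signs.
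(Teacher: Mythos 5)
Your argument is correct and is essentially the paper's own proof: expand $[L_\gamma,\Lambda_{\gamma'}]$ bilinearly in terms of $L_\sigma, L_{\overline\sigma}, \Lambda_\sigma, \Lambda_{\overline\sigma}$, kill the cross terms by Corollary \ref{commutecorollary}, and evaluate $i(H_\sigma - H_{\overline\sigma})$ on a $(p,q)$-class via Corollary \ref{symweightop} and its conjugate. The paper records the same computation (written as $-i[L_\gamma,\Lambda_{\gamma'}] = [L_\sigma,\Lambda_\sigma] - [L_\sigma,\Lambda_{\overline\sigma}] + [L_{\overline\sigma},\Lambda_\sigma] - [L_{\overline\sigma},\Lambda_{\overline\sigma}]$), so there is nothing to add.
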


    \begin{proof}
    Note that $-i[L_\gamma, \Lambda_{\gamma'}] = [L_\sigma, \Lambda_\sigma] - [L_\sigma,\Lambda_{\overline \sigma}] + [L_{\overline \sigma}, \Lambda_{\sigma}] - [L_{\overline \sigma}, \Lambda_{\overline \sigma}]$.  The result follows since the two middle  terms vanish by Corollary~\ref{commutecorollary}.
    \end{proof}
    
    Corollary~\ref{intersectionVerbitskyresult} is a generalization of Verbitsky's generalization of hard Lefschetz on compact hyperk\"ahler manifolds, see \cite{verbitsky1990action}: If $(X,g,I,J,K)$ is a compact hyperk\"ahler manifold with hyperk\"ahler metric $g$ and complex structures $I,J,K$, the Weil operators $C_\sigma = [L_{\omega_\lambda}, \Lambda_{\omega_\lambda}] = i(p-q)\id$ are contained in the algebras $\mathfrak g_{g} \cong \mathfrak{so}(4,1)$ (see Theorem~\ref{so(4,1)}).  

\section{The LLV Algebra for Intersection Cohomology} \label{5}

In this section, we prove the LLV structure theorem for intersection cohomology.  We  do this as follows: First, we prove the theorem for $\mathbb Q$-factorial singularities, which will allow us to consider all HL elements of $H^2(X, \mathbb C)$ and use the monodromy density theorem under the assumption $b_2 \ge 5$.  We then show that the theorem holds in general by passing to a $\mathbb Q$-factorial terminalization, using the semismallness of such crepant morphisms; see Corollary~\ref{Qfactorialterminalsemismall}.

The key here is to observe that we have a pair $(\gamma,\gamma')$ of non-isotropic vectors such that $q_X(\gamma,\gamma') = 0$.  As we will see below, the second condition is necessary.  In \cite[Theorem 2.2]{huybrechts2001infinitesimal}, it was shown that the commutativity of dual Lefschetz operators holds infinitesimally, and so we always have a pair of non-isotropic pairs which commute.  The point though is that we get a variety $L$, described by \textit{Zariski-closed conditions}, for which the monodromy group acts. 

\subsection{$\mathbb Q$-Factorial Terminal Case} As in the smooth case, we define the LLV algebra in terms of HL classes in intersection cohomology.

    \begin{definition}
        If $X$ is a primitive symplectic variety, the (\textit{intersection}) \textit{LLV algebra} is 
$$
\mathfrak g : = \mathfrak g_{\mathrm{tot}}(X) = \langle L_\alpha, \Lambda_\alpha|~\alpha \in \IH^2(X, \mathbb Q)~\mathrm{is~HL}\rangle.
$$
        \end{definition}
    
    Now assume  that $X$ is a primitive symplectic variety with at worst $\mathbb Q$-factorial terminal singularities.  By Proposition~\ref{Qcrit}, the LLV algebra is generated by the dual Lefschetz operators corresponding to HL classes in $H^2(X, \mathbb Q)$.
    
        \subsubsection{Commutativity of the dual Lefschetz operators} Recall that by Corollary~\ref{realimaginarycommute}, there exists a point $(\gamma,\gamma') \in H^2(X, \mathbb R)^{\times 2}$ of non-isotropic vectors such that $[\Lambda_{\gamma}, \Lambda_{\gamma'}] = 0$.  Moreover, we have $q_X(\gamma,\gamma') = 0$, where $q_X(-,-)$ is the induced bilinear form with respect to $q_X$.  The upshot is that the monodromy group acts on the space of such non-isotropic pairs.
        
        \begin{theorem}\label{Qfactorialcommute}
        Let $X$ be a $\mathbb Q$-factorial primitive symplectic variety with isolated singularities and $b_2 \ge 5$.  For any pair of non-isotropic classes $\alpha, \beta \in H^2(X, \mathbb Q)$, we have $[\Lambda_\alpha, \Lambda_\beta] = 0$.
        \end{theorem}
        
        \begin{proof}
        Let $X$ be a primitive symplectic variety with $\mathbb Q$-factorial terminal isolated singularities, so that Theorem~\ref{symplecticsymmetry} and the Lefschetz theory of Section~\ref{4.1} hold. If $\sigma$ is the class of the symplectic form, let $\gamma = \mathfrak R(\sigma)$ and $\gamma' = \mathfrak I (\sigma)$.  We have seen that $[\Lambda_\gamma, \Lambda_{\gamma'}] = 0$, which is a Zariski-closed condition.  Consider the space 
$$
L = \left\{(\alpha, \beta) \in H^2(X, \mathbb C) \times H^2(X, \mathbb C)\mid q_X(\alpha),q_X(\beta) \ne 0,~~q_X(\alpha,\beta) = 0\right\}.
$$
The group $\SO(\Gamma_{\mathbb C})$ acts on $L$ diagonally.  If $\Mon(X) \subset O(\Gamma)$ is the monodromy group associated to $X$ (see Remark~\ref{monodromyremark}), let $G_X : = \SO(\Gamma) \cap \Mon(X) \subset \SO(\Gamma_{\mathbb C})$.
Note that the $(\gamma, \gamma')$-orbit of $G_X$ preserves the commutator relation: If $g \in G_X$, then $[\Lambda_{g\cdot \gamma}, \Lambda_{g\cdot \gamma'}] = 0$.  Indeed, the dual Lefschetz operators satisfy the property 
$$
\Lambda_{g\cdot \gamma} = g\Lambda_{\gamma}g^{-1}
$$
 as they are uniquely determined by $[L_{g\cdot \gamma}, \Lambda_{g\cdot \gamma}] = (k-2n)\id$, and the monodromy group is invariant under conjugation.  By the Borel density theorem (see \cite{furstenberg1976note}), $G_X$ is Zariski dense in $\SO(\Gamma_{\mathbb C})$, and $[\Lambda_{g\cdot \gamma}, \Lambda_{g\cdot \gamma'}] = 0$ for every $g \in \SO(\Gamma_{\mathbb C})$.  The commutativity then holds for every $(\alpha, \beta) \in L$. 
        \end{proof}

        Thus, the LLV algebra $\mathfrak g$ only contains elements of degrees $2,0,-2$.  As in Section~\ref{2.7}, we define $\mathfrak g_2$ as the subalgebra generated by the $L_\alpha$, $\mathfrak g_{-2}$ as the subalgebra generated by the $\Lambda_\alpha$, and $\mathfrak g_0$ as the degree 0 piece which is (necessarily) of the form 
$$
\mathfrak g_0 \cong \overline{\mathfrak g}\times \mathbb Q\cdot H.
$$

        \subsubsection{$\overline{\mathfrak g}$ acts via derivations} We want to show that $\mathfrak g \cong \mathfrak g_2\oplus \mathfrak g_0 \oplus \mathfrak g_{-2}$.  As in the smooth case, this will be done if we can show that $\overline{\mathfrak g}$ acts via derivations on intersection cohomology.
        
        \begin{proposition} \label{semisimplederivations}
        The semisimple part $\overline{\mathfrak g}$ acts via derivations.
        \end{proposition}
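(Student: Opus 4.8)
The plan is to imitate the hyperk\"ahler argument recalled after Theorem~\ref{LLVpieces}: first exhibit enough elements of $\overline{\mathfrak g}$ that act by derivations on the intersection cohomology, then propagate this over all of $\overline{\mathfrak g}$ by Zariski density of the monodromy group. Say $\xi\in\mathrm{End}(IH^{\ast}(X,\mathbb{C}))$ \emph{acts by derivations} if $[\xi,L_{a}]=L_{\xi a}$ for all $a\in H^{2}(X)=IH^{2}(X)$ (Proposition~\ref{Qcrit}), where $L_{a}$ is cup product with $a$, realized through the description $IH^{k}=H^{k}(U)$ for $k<2n$ and $IH^{k}=H^{k}_{c}(U)$ for $k>2n$; elements of $\overline{\mathfrak g}\subset\mathfrak g_{0}$ preserve $IH^{2}=H^{2}$ for degree reasons, so this is meaningful. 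Since $\mathfrak g$ is concentrated in degrees $-2,0,2$ (Theorem~\ref{Qfactorialcommute}), $\mathfrak g_{0}=\langle K_{\alpha\beta}\rangle$ with $K_{\alpha\beta}:=[L_{\alpha},\Lambda_{\beta}]$ over non-isotropic $\alpha,\beta\in H^{2}(X,\mathbb{C})$, and by the decomposition $\mathfrak g_{0}\cong\overline{\mathfrak g}\times\mathbb{Q}\cdot H$ with $\overline{\mathfrak g}$ semisimple one has $\overline{\mathfrak g}=[\mathfrak g_{0},\mathfrak g_{0}]$. Bilinearity of $L$ and $[L_{\beta},\Lambda_{\beta}]=H$ give, for non-isotropic $\alpha,\beta$,
\[
K_{\alpha\beta}=\frac{q_{X}(\alpha,\beta)}{q_{X}(\beta)}\,H+K_{\alpha'\beta},\qquad \alpha':=\alpha-\frac{q_{X}(\alpha,\beta)}{q_{X}(\beta)}\,\beta\ \perp\ \beta ;
\]
since $H$ is central in $\mathfrak g_{0}$, in any iterated bracket of the $K_{\alpha\beta}$ each factor may be replaced by $K_{\alpha'\beta}=[L_{\alpha'},\Lambda_{\beta}]$, so $\overline{\mathfrak g}=[\mathfrak g_{0},\mathfrak g_{0}]$ lies in the Lie algebra generated by the operators $K_{\mu\nu}=[L_{\mu},\Lambda_{\nu}]$ with $\nu$ non-isotropic and $\mu\perp\nu$. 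As derivations are closed under bracket and form a linear (hence closed) subspace, and $K_{\mu\nu}$ is linear in $\mu$, it suffices to prove that $K_{\mu\nu}$ acts by derivations whenever $\mu,\nu$ are non-isotropic and $\mu\perp\nu$.

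For the base case take $\mu=\gamma$, $\nu=\gamma'$. By Corollary~\ref{intersectionVerbitskyresult}, $K_{\gamma\gamma'}$ is the Weil operator $C_{\sigma}$, acting as $i(p-q)\cdot\mathrm{id}$ on $IH^{p,q}(X)$. Since the cup product on $H^{\ast}(U)$ and its action on $H^{\ast}_{c}(U)$ are morphisms of mixed Hodge structures, cupping with $\sigma$, $\overline{\sigma}$, $a^{1,1}$ raises the Hodge bidegree by $(2,0)$, $(0,2)$, $(1,1)$ respectively; decomposing $a=a^{2,0}+a^{1,1}+a^{0,2}$ with $a^{2,0}\in\mathbb{C}\sigma$ and $a^{0,2}\in\mathbb{C}\overline{\sigma}$ (recall $H^{2,0}(X)=\mathbb{C}\sigma$), this yields $[C_{\sigma},L_{a}]=L_{C_{\sigma}a}$, i.e.\ $C_{\sigma}$ acts by derivations.

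To reach an arbitrary non-isotropic orthogonal pair $(\mu,\nu)$ I run the density argument of Theorem~\ref{Qfactorialcommute}. Rescaling $(\mu,\nu)\mapsto(s\mu,t\nu)$ only multiplies $K_{\mu\nu}$ by $s/t$ (as $L_{s\mu}=sL_{\mu}$ and $\Lambda_{t\nu}=t^{-1}\Lambda_{\nu}$), so we may assume $q_{X}(\mu)=q_{X}(\nu)=q_{X}(\gamma)$; then $(\mu,\nu)$ lies in the $SO(\Gamma_{\mathbb{C}})$-orbit of $(\gamma,\gamma')$, which is the Zariski closure of the orbit of $G_{X}=\mathrm{Mon}(X)\cap SO(\Gamma)$ (Zariski dense, Remark~\ref{monodromyremark}). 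For $g\in G_{X}$, monodromy acts on $IH^{\ast}(X)$ preserving the cup product---the locally trivial family is real-analytically trivial, and cup product is topological, hence locally constant (cf.\ Proposition~\ref{intloc})---and acts on $IH^{2}=H^{2}$ through $SO(\Gamma)$; hence $gK_{\gamma\gamma'}g^{-1}=K_{g\gamma,g\gamma'}$, using $g\Lambda_{\beta}g^{-1}=\Lambda_{g\beta}$ (as $\Lambda_{\beta}$ is the unique degree $-2$ operator completing $L_{\beta}$ to an $\mathfrak{sl}_{2}$-triple with the $g$-invariant weight operator $H$). Conjugating the derivation $C_{\sigma}$ by the cup-product-preserving automorphism $g$ again gives a derivation, so $K_{g\gamma,g\gamma'}$ acts by derivations for every $g\in G_{X}$. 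Since the set of pairs $(u,v)$, with $v$ non-isotropic so that $\Lambda_{v}$ exists (Proposition~\ref{HLregclass}) and depends regularly on $v$, for which $K_{uv}$ acts by derivations is Zariski closed---``acts by derivations'' is a system of linear conditions---it contains the whole orbit, hence $(\mu,\nu)$. The step I expect to require the most care is setting up the product structure on $IH^{\ast}(X)$ in the isolated-singularities case and checking that it is simultaneously compatible with the Hodge bidegree and preserved by monodromy, making the base case and the conjugation step legitimate; the density input is already available via Remark~\ref{monodromyremark}, used just as in Theorem~\ref{Qfactorialcommute}.
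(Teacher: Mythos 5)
Your proposal is correct and follows essentially the same route as the paper: the base case is that $[L_\gamma,\Lambda_{\gamma'}]$ is the Weil operator $C_\sigma$ and hence acts by derivations because cup product respects the Hodge bigrading, the property is transported by conjugation under the monodromy group, and Zariski density of $G_X$ in $SO(\Gamma_{\mathbb C})$ (as in Theorem \ref{Qfactorialcommute}) extends it to all relevant commutators $[L_\alpha,\Lambda_\beta]$, which generate $\overline{\mathfrak g}$. Your explicit reduction to orthogonal non-isotropic pairs by splitting off the central $H$-component (and noting $\overline{\mathfrak g}=[\mathfrak g_0,\mathfrak g_0]$) is a welcome refinement of a point the paper's proof leaves implicit, since $H=[L_\beta,\Lambda_\beta]$ itself is not a derivation.
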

        
        \begin{proof}
        As $\overline{\mathfrak g}$ is generated by the commutators $[L_\alpha, \Lambda_\beta]$, it is enough to show that these elements act on the intersection cohomology module via derivations.  Consider again the point $(\gamma, \gamma') \in L$ as in the proof of Theorem~\ref{Qfactorialcommute}.  The commutator satisfies $[L_\gamma, \Lambda_{\gamma'}](x) = i(p-q)x$ for a $(p,q)$-class $x$.  While this identity is not preserved by the restricted monodromy group $G_X$, we note that $[L_\gamma, \Lambda_{\gamma'}]$ acts on $\IH^*(X, \mathbb R)$ via derivations, and this property is preserved via $G_X$.  Specifically, for any $g \in G_X$, $[L_{g\cdot \gamma}, \Lambda_{g \cdot \gamma'}]$ also acts via derivations.  But by the monodromy density theorem, this must be true for any $[L_\alpha, \Lambda_\beta]$, too.
        \end{proof}
        Thus, we get the eigenvalue decomposition
        \begin{equation}\label{LLVdecomposition}
            \mathfrak g \cong \mathfrak g_2\oplus \mathfrak g_0\oplus \mathfrak g_{-2}.
        \end{equation}
        
        \subsubsection{LLV structure theorem} We can now prove the LLV structure theorem for $\mathbb Q$-factorial singularities.
        
        \begin{theorem}\label{QfactorialLLV}
        Let $X$ be a $\mathbb Q$-factorial primitive symplectic variety with isolated singularities with LLV algebra $\mathfrak g$ and $b_2 \ge 5$.  Then 
$$
\mathfrak g \cong \mathfrak{so}((H^2(X, \mathbb Q),q_X)\oplus \mathfrak h), 
$$
 where $q_X$ is the BBF form and $\mathfrak h$ is a hyperbolic plane.
        \end{theorem}

        Another way of stating Theorem~\ref{QfactorialLLV} is in terms of the Mukai completion.  If we write 
$$
(H^2(X, \mathbb Q),q_X)\oplus \mathfrak h =: \left(\widetilde H^2(X, \mathbb Q), \widetilde q_X\right),
$$
 the Mukai completion $(\widetilde H^2(X, \mathbb Q), \widetilde q_X)$ inherits a bilinear structure and Hodge structure which is compatible with $(H^2(X, \mathbb Q), q_X)$.  Theorem~\ref{QfactorialLLV} then gives an isomorphism 
$$
\mathfrak g \cong \mathfrak{so}\left(\widetilde H^2(X, \mathbb Q), \widetilde q_X\right).
$$
 In the smooth case, the subalgebra $\overline{\mathfrak g} : = \mathfrak{so}(H^2(X, \mathbb Q), q_X)$ corresponds to the semisimple part of the decomposition (\ref{LLVdecomposition}).  Moreover, the completion $(H^2(X, \mathbb Q), q_X)\hookrightarrow (\widetilde H^2(X, \mathbb Q), \widetilde q_X)$ is compatible with the extension $\overline{\mathfrak g} \hookrightarrow \mathfrak g$ by branching (see \cite[Section~B.2.1]{green2019llv}).  It is then sufficient to prove the structure theorem for the semisimple part.
        
        \begin{proof}
        We will show that \begin{equation}\label{equation semisimple isomorphism}
         \overline{\mathfrak g} \cong \mathfrak{so}(H^2(X, \mathbb Q),q_X).   
        \end{equation} Since $\overline{\mathfrak g}$ acts by derivations and preserves cup product, we see that it preserves the BBF form $q_X$.  Therefore, there is an \textit{injective} map $\overline{\mathfrak g} \hookrightarrow \mathfrak{so}(H^2(X, \mathbb C),q_X)$.  The surjectivity is a reproduction of Verbitsky's result.  Consider once more the point $(\gamma,\gamma') \in L$.  By global (or local) Torelli, this point completes to a positive three-space $W_\sigma = \langle \alpha, \gamma, \gamma'\rangle$ for which $q_X$ is positive-definite.  Now Theorem~\ref{Qfactorialcommute} shows that the dual Lefschetz operators corresponding to $W_\sigma$ all commute.  Moreover, we can extend Corollary~\ref{intersectionVerbitskyresult} to the entire three-space and show that the commutators $[L_\alpha, \Lambda_\gamma]$ and $[L_\alpha, \Lambda_{\gamma'}]$ acts as the Weil operators with respect to the induced complex structure, after possibly passing to a locally trivial deformation.  It then follows  that 
$$
\mathfrak g_\sigma: = \langle L_\omega, \Lambda_\omega:~\omega \in W_\sigma\rangle
$$
 satisfies Verbitsky's commutator relations (\ref{verbitsky commutator relations}).  The upshot is that $\mathfrak g_{\sigma} \cong \mathfrak{so}(4,1)$.  Using monodromy density once more, we then see  that \textit{any} HL class $x \in H^2(X, \mathbb R)$ completes to a positive three-space $W_x$, and the corresponding algebra satisfies $\mathfrak g_x \cong \mathfrak {so}(4,1)$.  The surjectivity then follows  as the Lie algebra  $\mathfrak{so}(H^2(X, \mathbb C), q_X)$ is generated by the simultaneous $\mathfrak{so}(4,1)$-actions.
        \end{proof}
    
    \subsection{General Case} \label{5.2}
    
    We now move to a general primitive symplectic variety.  In order for the LLV structure theorem to make sense, we need to define a BBF form for intersection cohomology.  The result will then follow from the representation theory of $\mathfrak{so}(m)$.
    
        \subsubsection{A BBF form on intersection cohomology} \label{subsubsection bbf form}
        
        \begin{definition} \label{IntBBF1}
        Let $X$ be a primitive symplectic variety of dimension $2n$ and $\pi\colon\widetilde{X} \to X$ a resolution of singularities.  For $\alpha \in \IH^2(X, \mathbb C)$, let $\tilde \alpha$ be the class in $H^2(\widetilde{X}, \mathbb C)$ under the injection $\IH^2(X, \mathbb C) \hookrightarrow H^2(\widetilde{X}, \mathbb C)$.  For $\sigma \in \IH^{2,0}(X)$, we define a quadratic form on $\IH^2(X, \mathbb C)$ by 
$$
Q_{X,\sigma}(\alpha) : = \frac{n}{2}\int_{\widetilde{X}}(\sigma\overline\sigma)^{n-1}\tilde{\alpha}^2 + (1-n)\int_{\widetilde{X}}\sigma^n\overline{\sigma}^{n-1}\tilde{\alpha}\int_{\widetilde{X}}\sigma^{n-1}\overline{\sigma}^n\tilde{\alpha}.
$$

        \end{definition}

        The definition extends the quadratic form $q_{X,\sigma}$ (see Definition~\ref{BBFdef}) and is compatible with the decomposition theorem, Theorem~\ref{intprops}.  In particular, we have the following. 

        \begin{lemma}\label{IntBBFrest}
        Let $X$ be a primitive symplectic variety of dimension $2n$.  If\, $\pi\colon\widetilde{X} \to X$ is a resolution of singularities and $\phi\colon Z \to X$ is a $\mathbb Q$-factorial terminalization, then 
$$
Q_{X,\sigma}|_{H^2(X, \mathbb C)} = q_{X,\sigma}, \quad q_{Z, \sigma_Z}|_{\IH^2(X, \mathbb C)} = Q_{X,\sigma}.
$$
 
        \end{lemma}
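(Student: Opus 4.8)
The plan is to deduce both equalities from the projection formula for birational morphisms, after matching up the canonical maps relating the intersection cohomology of $X$, of $Z$, and of their resolutions. The preliminary point is that $Q_{X,\sigma}$ does not depend on the resolution $\pi:\tilde X\to X$ used in Definition~\ref{IntBBF1}: the inclusion $IH^2(X,\mathbb C)\hookrightarrow H^2(\tilde X,\mathbb C)$ furnished by the decomposition theorem (cf.~\cite[Remark 4.4.3]{de2005hodge}) is a direct-summand inclusion with distinguished image, hence compatible with pullback along any further modification $\rho:\tilde X'\to\tilde X$, i.e.\ the inclusion attached to $\pi\rho$ equals $\rho^*$ composed with the one attached to $\pi$; since moreover the class of the extended symplectic form pulls back compatibly and $\int_{\tilde X'}\rho^*\eta=\int_{\tilde X}\eta$ for top-degree classes $\eta$, the defining expression is unchanged, and resolution-independence follows because any two resolutions are dominated by a third.

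For the first equality, take $\alpha\in H^2(X,\mathbb C)$, viewed inside $IH^2(X,\mathbb C)$ through the natural map of Proposition~\ref{intprops}(\ref{weight piece intersection map}), which is injective since $X$ has rational singularities. The composite $H^2(X,\mathbb C)\hookrightarrow IH^2(X,\mathbb C)\hookrightarrow H^2(\tilde X,\mathbb C)$ is $\pi^*$, and the symplectic class $\sigma\in IH^{2,0}(X)$ maps to the class $\tilde\sigma=\pi^*\sigma$ of the extended holomorphic symplectic form. Hence every integrand appearing in $Q_{X,\sigma}(\alpha)$ is $\pi^*$ of the matching integrand in $q_{X,\sigma}(\alpha)$, and $\int_{\tilde X}\pi^*\eta=\int_X\eta$ since $\pi$ is birational. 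Comparing the formulas of Definitions~\ref{BBFdef} and \ref{IntBBF1} term by term --- the two order-reversed integrals in the second summand are equal as products of scalars --- gives $Q_{X,\sigma}(\alpha)=q_{X,\sigma}(\alpha)$.

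For the second equality, fix a log-resolution $\rho:\tilde X\to Z$, so that $\pi:=\phi\circ\rho:\tilde X\to X$ is a resolution of $X$, and compute $Q_{X,\sigma}$ with this $\pi$. By Lemma~\ref{symcrepant}, $Z$ is a primitive symplectic variety whose symplectic form $\sigma_Z$ is the reflexive pullback of $\sigma$; its class in $H^2(Z,\mathbb C)$ agrees with $\sigma$ on the common regular locus, and on $(2,0)$-parts the injection $H^2(Z,\mathbb C)\hookrightarrow H^2(\tilde X,\mathbb C)$ (injective since $Z$ has rational singularities) sends it to $\tilde\sigma$. The crucial point is that the canonical injection $IH^2(X,\mathbb C)\hookrightarrow H^2(Z,\mathbb C)$ of Corollary~\ref{Qfactorialterminalsemismall}, followed by $\rho^*:H^2(Z,\mathbb C)\hookrightarrow H^2(\tilde X,\mathbb C)$, coincides with the injection $IH^2(X,\mathbb C)\hookrightarrow H^2(\tilde X,\mathbb C)$ of Definition~\ref{IntBBF1}: this is the functoriality of the decomposition theorem for the factorization $\pi=\phi\circ\rho$, together with the identification $H^2(Z,\mathbb C)\cong IH^2(Z,\mathbb C)$ of Proposition~\ref{Qcrit}. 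Granting this, for $\alpha\in IH^2(X,\mathbb C)$ with images $\alpha_Z\in H^2(Z,\mathbb C)$ and $\tilde\alpha=\rho^*\alpha_Z$, each integrand in $Q_{X,\sigma}(\alpha)$ is $\rho^*$ of the corresponding integrand in $q_{Z,\sigma_Z}(\alpha_Z)$; since $\rho$ is birational, $\int_{\tilde X}\rho^*\eta=\int_Z\eta$, and a term-by-term comparison yields $Q_{X,\sigma}(\alpha)=q_{Z,\sigma_Z}(\alpha_Z)$.

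The computations themselves are routine projection-formula bookkeeping; the one step requiring genuine care is the compatibility of canonical inclusions invoked above. Concretely, one must check that the direct-summand inclusion $IH^\bullet(-)\hookrightarrow IH^\bullet(\text{resolution})$ produced by the decomposition theorem is canonical (the summand $\mathcal{IC}$ being singled out by its strict support) and functorial under composition of projective morphisms, and that under $H^2(Z,\mathbb C)\cong IH^2(Z,\mathbb C)$ the Corollary~\ref{Qfactorialterminalsemismall} injection is exactly the degree-$2$ component of $IH^\bullet(X)\hookrightarrow IH^\bullet(Z)$; a minor additional point is that these decomposition-theorem inclusions are morphisms of Hodge structures, so the $(2,0)$-line is preserved and $\sigma$, $\sigma_Z$, $\tilde\sigma$ correspond as claimed.
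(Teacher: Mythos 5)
Your proof is correct and takes essentially the same route as the paper: both arguments compare all three forms on a common resolution via the chain of inclusions $H^2(X,\mathbb C)\subset IH^2(X,\mathbb C)\subset H^2(Z,\mathbb C)\subset H^2(\tilde X,\mathbb C)$. The only difference is that where you re-derive the key identity by the projection formula (and spell out the compatibility of the canonical injections), the paper simply cites Schwald's description of $q_{X,\sigma}$ as the same expression computed on a resolution with pulled-back classes.
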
  

        \begin{proof}
        This follows from Schwald's description of the form $q_{X, \sigma}$, see \cite{schwald2017fujiki}, where he showed that $q_{X,\sigma}(\alpha)$ agrees with 
$$
\frac{n}{2}\int_{\widetilde{X}}(\sigma\overline\sigma)^{n-1}\tilde{\alpha}^2 + (1-n)\int_{\widetilde{X}}\sigma^n\overline{\sigma}^{n-1}\tilde{\alpha}\int_{\widetilde{X}}\sigma^{n-1}\overline{\sigma}^n\tilde{\alpha}
$$
 for $\alpha \in H^2(X, \mathbb C)$.  The claim follows since we have inclusions $H^2(X, \mathbb C) \subset \IH^2(X, \mathbb C) \subset H^2(Z, \mathbb C) \subset H^2(\widetilde{X}, \mathbb C)$.
        \end{proof}

        Then note that $Q_{X,\sigma}$ is independent of $\sigma$ whenever $q_{X,\sigma}$ is.  This leads to the following definition.

        \begin{definition} \label{IntBBF2}
        The \textit{intersection Beauville--Bogomolov--Fujiki form} on $\IH^2(X, \mathbb C)$ of a primitive symplectic variety $X$ with $\int_X(\sigma\overline\sigma)^n = 1$ is the quadratic form $Q_X: = Q_{X,\sigma}$.
        \end{definition}

        \begin{proposition} \label{IntFujiki}
        Let $X$ be a primitive symplectic variety with a resolution of singularities $\pi\colon\widetilde{X} \to X$.
        \begin{enumerate}
            \item There is a constant $c \in \mathbb R_{> 0}$ such that for every $\alpha \in H^2(X, \mathbb C)$, 
$$
c\cdot Q_X(\alpha)^n = \int_{\widetilde X} \widetilde{\alpha}^n,
$$
 where $\widetilde \alpha$ is the extension of $\alpha$ under the inclusion $H^2(X, \mathbb C) \subset H^2(\widetilde X, \mathbb C)$.
    
            \item The quadratic form $Q_X$ defines a real quadratic form on $\IH^2(X, \mathbb R)$ of signature $(3,B_2 - 3)$, where $B_2 = \dim \IH^2(X, \mathbb R)$. 
        \end{enumerate}
        \end{proposition}

        \begin{proof}
        We may assume without loss of generality that $X$ is projective, so let $\phi\colon Z \to X$ be a $\mathbb Q$-factorial terminalization.  The first claim follows from Lemma~\ref{IntBBFrest} and the fact that the statement holds for $q_{Z}$ by \cite[Theorem 2(1)]{schwald2017fujiki}.
        
        For the second claim, note that by \cite[Proposition 25]{schwald2017fujiki}, there are decompositions $H^2(X, \mathbb R) = V_X^+\oplus V_X^-$ and $H^2(Z, \mathbb R) = V_{Z}^+\oplus V_{Z}^-$ which are $q_X$- and $q_{Z}$-orthogonal, respectively, such that the restrictions $q_X|_{V_X^+}$ and $q_Z|_{V_Z^+}$ (resp.\ $q_X|_{V_X^-}$ and $q_Z|_{V_Z^-}$) are positive-definite (resp.\ negative-definite) for $q_X$ and $q_Z$.  For $\sigma \in H^{2,0}(X) \cong H^{2,0}(Z)$, the positive parts can be described by $V_X^+ = V_{Z}^+ = \langle \mathrm{Re}(\sigma), \im(\sigma), \alpha\rangle$ for some ample class $\alpha \in H^2(X, \mathbb R) \subset H^2(Z, \mathbb R)$.  We can extend $V_X^+$ to $\IH^2(X, \mathbb R)$ to a 3-dimension real space $(V_X^+)' \subset \IH^2(X)$ for which $Q_X$ is positive-definite.  It is now immediate that the signature must be $(3, B_2-3)$, as having an isotropic vector subspace in $(\IH^2(X, \mathbb R), Q_X)$ would necessarily lower the rank of $(V_X^+)'$. One can define a $Q_X$-orthogonal decomposition $\IH^2(X, \mathbb R) = (V_X^+)'\oplus (V_X^-)'$ as in \cite{schwald2017fujiki}.  The negative-definite part is given by
$$
(V_X^-)' = \alpha^{\perp}\cap \IH^{1,1}(X)\cap \IH^2(X, \mathbb R), 
$$
 and the proof, which is entirely linear algebraic on the Hodge structure, is the same as for the case of $H^2(X, \mathbb C)$.
        \end{proof} 
        
        \subsubsection{Proof of Theorem~\ref{LLVint}}
        
        We now want to prove the LLV structure theorem for intersection cohomology.  The LLV algebra is defined as in the smooth case: Recall that a class $\alpha \in \IH^2(X, \mathbb Q)$ is HL if it satisfies hard Lefschetz.

        By Proposition~\ref{IntBBF2}, the BBF form on $\IH^2(X, \mathbb R)$ descends to $\IH^2(X, \mathbb Q)$, and we get a rational quadratic vector space $(\IH^2(X, \mathbb Q), Q_X)$.
        
        \begin{theorem} \label{GenLLVthm}
        If\, $X$ is a primitive symplectic variety with isolated singularities and $b_2 \ge 5$, the intersection LLV algebra satisfies 
$$
\mathfrak{g} \cong \mathfrak{so}((\IH^2(X, \mathbb Q), Q_X)\oplus \mathfrak h).
$$

        \end{theorem}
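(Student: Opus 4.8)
The strategy is to reduce to the $\mathbb Q$-factorial terminal case already settled in Theorem \ref{QfactorialLLV} by passing to a $\mathbb Q$-factorial terminalization. Since the groups $IH^*(X_t,\mathbb C)$ form a local system over $\mathrm{Def}^{\mathrm{lt}}(X)$ by Proposition \ref{intloc} and a very general locally trivial deformation of $X$ is projective, the algebra $\mathfrak g$ is unchanged along such deformations, so we may assume $X$ is projective (and that $b_2\ge 5$ for the main argument; the cases $b_2<5$ and $\dim X=2$, where $X$ is a K3 surface with Du Val singularities, are classical or handled as in the remark of \S\ref{5.2}). Let $\phi\colon Z\to X$ be a $\mathbb Q$-factorial terminalization. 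By Lemma \ref{symcrepant}, $Z$ is a primitive symplectic variety with $\mathbb Q$-factorial terminal singularities, so Theorem \ref{QfactorialLLV} gives $\mathfrak g_{\mathrm{tot}}(Z)\cong \mathfrak{so}((H^2(Z,\mathbb Q),q_Z)\oplus\mathfrak h)$, and by Corollary \ref{Qfactorialterminalsemismall} the map $\phi$ is semismall and there is a canonical injection $IH^2(X,\mathbb C)\hookrightarrow H^2(Z,\mathbb C)=IH^2(Z,\mathbb C)$, the last equality by Proposition \ref{Qcrit}.

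The next step is to pin down the shape of the decomposition theorem for $\phi$. Because $\phi$ is semismall and the singular locus of $X$ is a finite set of points $x_1,\dots,x_r$, the only supports occurring are $X$ itself and the $x_i$, and semismallness forces the point contributions into a single perverse degree; thus $R\phi_*\mathcal{IC}_Z\cong \mathcal{IC}_X\oplus\bigoplus_i (i_{x_i})_*V_i$ for finite-dimensional $V_i$. Taking hypercohomology, $IH^k(Z,\mathbb Q)=IH^k(X,\mathbb Q)\oplus W^k$, where $W:=\bigoplus_i V_i$ is concentrated in the middle degree $k=2n$. In particular $IH^2(X,\mathbb Q)\xrightarrow{\sim}H^2(Z,\mathbb Q)$ (here $2n\ge 4$), and by Lemma \ref{IntBBFrest} this identification is an isometry $(IH^2(X,\mathbb Q),Q_X)\cong(H^2(Z,\mathbb Q),q_Z)$. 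Hence the right-hand sides of Theorems \ref{QfactorialLLV} and \ref{GenLLVthm} agree, and it remains to prove $\mathfrak g_{\mathrm{tot}}(X)\cong\mathfrak g_{\mathrm{tot}}(Z)$; by Proposition \ref{HLregclass} (applied on $Z$) together with Poincar\'e duality on $IH^*(X)$, the HL classes for $X$ and the HL classes for $Z$ are the same subset of $IH^2(X,\mathbb Q)=H^2(Z,\mathbb Q)$, namely the $Q_X$-non-isotropic ones.

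The heart of the matter is to show that $W$ is a trivial $\mathfrak g_{\mathrm{tot}}(Z)$-submodule of $IH^*(Z,\mathbb Q)$; equivalently, that $L_\beta$ and $\Lambda_\beta$ annihilate $W$ for every HL class $\beta$. For the pulled-back class $\alpha_0=\phi^*h$ of an ample $h$ on $X$ this is immediate: by the projection formula, cup product with $\phi^*h$ on $\mathbb H^*(X,R\phi_*\mathcal{IC}_Z)$ is $\mathrm{id}\otimes(\cup h)$, which respects the displayed direct sum and acts by $0$ on each $(i_{x_i})_*V_i$ since $h|_{x_i}=0$; as $\phi^*h$ is non-isotropic it is HL on $IH^*(Z)$, and since $W$ lies in the middle degree it follows that $L_{\alpha_0}W=\Lambda_{\alpha_0}W=0$. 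To propagate this to all HL classes I would invoke the monodromy density theorem for $Z$: a locally trivial deformation of $Z$ contracts to a locally trivial deformation of $X$ along a family of crepant morphisms (deformation theory of $\phi$-ample classes), so the finite-index group $\mathrm{Mon}(Z)\cap SO(H^2(Z),q_Z)$ — Zariski dense in $SO(H^2(Z)_{\mathbb C})$ by Remark \ref{monodromyremark} — acts on the local system $IH^*(Z_t,\mathbb Q)$ preserving the sublocal system $IH^*(X_t,\mathbb Q)$, hence also its Poincar\'e-orthogonal complement $W$. Since $\Lambda_{g\cdot\alpha_0}=g\Lambda_{\alpha_0}g^{-1}$ and likewise for $L$, the condition ``$L_\beta W=\Lambda_\beta W=0$'', which is linear in $\beta$, holds on the Zariski closure of the $SO(H^2(Z)_{\mathbb C})$-orbit of $\alpha_0$, hence on all of $H^2(Z,\mathbb C)$, and therefore for every HL $\beta$. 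Thus $W$ is killed by $\mathfrak g_2$ and $\mathfrak g_{-2}$, hence — since $\overline{\mathfrak g}$ is generated by the commutators $[L_\beta,\Lambda_{\beta'}]$ — by all of $\mathfrak g_{\mathrm{tot}}(Z)$; and because the semismall decomposition is Poincar\'e-orthogonal, the complementary summand $IH^*(X,\mathbb Q)$ is a $\mathfrak g_{\mathrm{tot}}(Z)$-submodule as well.

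To conclude, restriction to $IH^*(X,\mathbb Q)$ defines a Lie algebra morphism $r\colon\mathfrak g_{\mathrm{tot}}(Z)\to\mathrm{End}(IH^*(X,\mathbb Q))$ whose image is generated by the operators $L_\beta|_{IH^*(X)},\Lambda_\beta|_{IH^*(X)}$ over HL $\beta\in IH^2(X,\mathbb Q)$ — these restrictions being precisely the LLV generators of $X$, by the compatibility of the cup product structures under $IH^*(X)\subset IH^*(Z)$ used in Proposition \ref{HLregclass} and Definition \ref{IntBBF1} — so $\mathrm{im}(r)=\mathfrak g_{\mathrm{tot}}(X)$. The morphism $r$ is injective because $\mathfrak g_{\mathrm{tot}}(Z)$ acts faithfully on $IH^*(Z,\mathbb Q)$ and acts by $0$ on the trivial summand $W$, so $\ker r$ coincides with the kernel of the action on $IH^*(Z,\mathbb Q)$, which is zero. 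Therefore $\mathfrak g_{\mathrm{tot}}(X)\cong\mathfrak g_{\mathrm{tot}}(Z)\cong\mathfrak{so}((IH^2(X,\mathbb Q),Q_X)\oplus\mathfrak h)$, which proves Theorem \ref{GenLLVthm}; the real form $\mathfrak g_{\mathbb R}\cong\mathfrak{so}(B_2-2,4)$ then follows since $Q_X$ has signature $(3,B_2-3)$ by Proposition \ref{IntFujiki} (so its Mukai completion has signature $(4,B_2-2)$), and the identification of the Weil operators is Corollary \ref{intersectionVerbitskyresult}, completing Theorem \ref{LLVint}. The main obstacle is the third paragraph — controlling the extra summand $W$ — and within it the input that a $\mathbb Q$-factorial terminalization deforms in locally trivial families, which is exactly what lets the monodromy density theorem for $Z$ act compatibly with the inclusion $IH^*(X)\subset IH^*(Z)$.
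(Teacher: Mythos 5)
The decisive step of your argument --- the third paragraph, asserting that the point-supported summand $W$ is a trivial $\mathfrak g_{\mathrm{tot}}(Z)$-submodule, i.e.\ that $L_\beta$ and $\Lambda_\beta$ kill $W$ for \emph{every} HL class $\beta$ --- is false, and with it both the well-definedness and the injectivity of your restriction map $r$. Take $Z$ a projective hyperk\"ahler fourfold of $K3^{[2]}$-type containing a Lagrangian plane $P\cong\mathbb P^2$ whose contraction $\phi\colon Z\to X$ exists; this $\phi$ is a crepant, small, semismall $\mathbb Q$-factorial terminalization, $X$ is a projective primitive symplectic variety with one isolated singular point and $b_2(X)=22$, and $W=\mathbb Q\cdot[P]\subset H^4(Z)$. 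For an ample class $\beta$ on $Z$ (non-isotropic, hence HL) one has $L_\beta[P]=\beta\cup[P]$ equal to the pushforward of $\beta|_P$, which is nonzero in $H^6(Z)$; likewise $L_\beta\beta=\beta^2$ has nonzero $W$-component because $\int_P\beta^2>0$ and the two summands are Poincar\'e-orthogonal, so $IH^*(X,\mathbb Q)$ is not even $\mathfrak g_{\mathrm{tot}}(Z)$-stable. Only operators attached to classes pulled back from $H^2(X)$ annihilate $W$, and precisely in the case your reduction must handle --- $X$ not $\mathbb Q$-factorial, where $H^2(X)\subsetneq IH^2(X)\cong H^2(Z)$ --- these do not exhaust the HL classes. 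Your propagation mechanism fails for a structural reason: a locally trivial deformation of $Z$ does not in general carry a contraction to a deformation of $X$ (the contraction survives only over the locus of $\mathrm{Def}^{\mathrm{lt}}(Z)$ where the contracted curve classes remain Hodge, a proper subspace when $X$ is not $\mathbb Q$-factorial), so $\mathrm{Mon}(Z)$ does not preserve the sublocal system $IH^*(X_t)$ nor $W$ --- in the example the $\mathrm{Mon}(Z)$-orbit of $[P]$ is infinite --- and the Zariski-density step collapses. Two smaller points: $\Lambda_\beta$ is not linear in $\beta$, so the ``linear condition'' phrasing is not even formally correct; and you never verify that $Z$ has isolated singularities, which you need both to apply Theorem \ref{QfactorialLLV} to $Z$ and to force the point-supported summands into a single perverse degree (the paper gets this from reflexive pullback of the symplectic form).

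By contrast, the paper's proof makes no claim about $W$. It shows that every HL class $\alpha\in IH^2(X,\mathbb Q)$ satisfies $Q_X(\alpha)\neq 0$ (semismallness together with Proposition \ref{HLregclass} and Lemma \ref{IntBBFrest}), and that the operators $L_\alpha,\Lambda_\alpha$ on $IH^*(X,\mathbb Q)$ are direct summands of the corresponding operators on $IH^*(Z,\mathbb Q)$ in the sense of \cite[Remark 4.4.3]{de2005hodge}, hence satisfy the same commutation relations as in $\mathfrak g_{\mathrm{tot}}(Z)\cong\mathfrak{so}((H^2(Z,\mathbb Q),q_Z)\oplus\mathfrak h)$; since $q_Z$ restricts to $Q_X$ on $IH^2(X,\mathbb Q)$, this identifies $\mathfrak g$ with $\mathfrak{so}((IH^2(X,\mathbb Q),Q_X)\oplus\mathfrak h)$, and the general case follows by deforming to a projective member as you do. So the repair is to replace ``$W$ is annihilated'' by this compatibility of the operators on the $\mathcal{IC}_X$-summand; the vanishing on the complementary summand is simply not available. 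Your first two paragraphs (reduction to the projective case, passage to a $\mathbb Q$-factorial terminalization, semismallness, and the isometry $q_Z|_{IH^2(X)}=Q_X$) do agree with the paper's setup and can be kept.
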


        \begin{proof}
        First suppose  that $X$ is projective.  Since $X$ has canonical singularities, it admits a $\mathbb Q$-factorial terminalization $\phi\colon Z \to X$ by \cite{birkar2010existence}.  We denote by $\mathfrak g_Z$ the LLV algebra on $Z$.  Since $\phi$ satisfies reflexive pullback of differentials, see (\ref{reflexive pullback}), $Z$ has at worst isolated singularities. By Theorem~\ref{QfactorialLLV}, 
$$
\mathfrak g_Z \cong \langle L_\beta, \Lambda_\beta|~q_Z(\beta) \ne 0\rangle \cong \mathfrak{so}((H^2(X, \mathbb Q), q_Z)\oplus \mathfrak h).
$$
 Suppose $\alpha \in \IH^2(X, \mathbb C)$ is any class which satisfies hard Lefschetz.  Since $\phi$ is semismall, then the pullback $\phi^*\alpha$, which exists since $X$ has rational singularities, must also satisfy hard Lefschetz.  Note that this means $q_Z(\phi^*\alpha) \ne 0$ by Proposition~\ref{HLregclass}, and so $Q_X(\alpha) \ne 0$ by  Lemma~\ref{IntBBFrest}.  It follows that 
$$
\mathfrak g = \langle L_\alpha, \Lambda_\alpha|~Q_X(\alpha) \ne 0\rangle.
$$
  By \cite[Remark 4.4.3]{de2005hodge}, it follows that $\Lambda_\alpha$ is a direct summand of $\Lambda_{\phi^*\alpha}$ as $\Lambda_\alpha$ is uniquely determined by the commutator relation $[L_\alpha, \Lambda_\alpha] = (k-2n)\id$.  By definition, then $\mathfrak g \subset \mathfrak g_Z$, which is a \textit{canonical} injection, and the $L_\alpha, \Lambda_\alpha$ satisfy all the same commutator relations as the $L_{\phi^*\alpha}, \Lambda_{\phi^*\alpha}$.  In particular, $\mathfrak g$ is just the restriction of $\mathfrak g_Z \cong \mathfrak{so}((H^2(Z, \mathbb Q), q_Z)\oplus \mathfrak h)$ to the subspace generated by the hard Lefschetz operators corresponding to the elements of $\IH^2(X, \mathbb C)$.  Since $q_Z|_{\IH^2(X, \mathbb Q)} = Q_X$ by Lemma~\ref{IntBBFrest}, it follows that $\mathfrak g\cong \mathfrak{so}((\IH^2(X, \mathbb Q),Q_X)\oplus \mathfrak h)$.

        In general, the total Lie algebra is a locally trivial diffeomorphism invariant, and so the Lie algebra structure of $\mathfrak g$ is preserved under small deformations.  Since a general locally trivial deformation of $X$ is projective, see \cite[Corollary 6.11]{bakker2018global}, the result follows.
        \end{proof}
        
        The proof indicates that any primitive symplectic variety which admits a $\mathbb Q$-factorial terminalization with isolated singularities also satisfies the LLV structure theorem.  This, for example, holds in dimension 4 by Proposition~\ref{symterm}.
        
        \begin{corollary}
          If\, $X$ is a primitive symplectic variety of dimension 4 and $b_2 \ge 5$, then there is an isomorphism 
          $\mathfrak g \cong \mathfrak{so}((\IH^2(X, \mathbb Q),Q_X)\oplus \mathfrak h)$.
        \end{corollary}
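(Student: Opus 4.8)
The plan is to deduce the statement from Theorem \ref{QfactorialLLV} by passing to a $\mathbb Q$-factorial terminalization, mimicking the proof of Theorem \ref{GenLLVthm} essentially word for word; the one extra ingredient is that in dimension four such a terminalization automatically has \emph{isolated} singularities, which is where the dimension hypothesis is used.

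First I would reduce to the projective case. The total Lie algebra is a locally trivial deformation invariant, and a very general locally trivial deformation of $X$ is projective by \cite[Corollary 6.11]{bakker2018global}; since $b_2 \ge 5$ is preserved under such deformations, we may assume $X$ is projective. Being a symplectic variety, $X$ has canonical singularities, so it admits a $\mathbb Q$-factorial terminalization $\phi \colon Z \to X$ by \cite{birkar2010existence}. By Lemma \ref{symcrepant}, $Z$ is again a primitive symplectic variety; it is $\mathbb Q$-factorial terminal by construction, and the inclusion $H^2(X, \mathbb Q) \hookrightarrow H^2(Z, \mathbb Q)$ gives $b_2(Z) \ge b_2(X) \ge 5$. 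Crucially $\dim Z = 4$, so Proposition \ref{symterm} forces $\mathrm{codim}_Z(\Sigma_Z) \ge 4 = \dim Z$, i.e. the singular locus of $Z$ is a finite set of points. Hence Theorem \ref{QfactorialLLV} applies to $Z$ and gives $\mathfrak g_Z \cong \mathfrak{so}((H^2(Z, \mathbb Q), q_Z) \oplus \mathfrak h)$.

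It then remains to run the semismallness argument from the proof of Theorem \ref{GenLLVthm}. By Proposition \ref{crepantsemismall} (equivalently Corollary \ref{Qfactorialterminalsemismall}) the crepant morphism $\phi$ is semismall, so for any HL class $\alpha \in IH^k(X, \mathbb C)$ the reflexive pullback $\phi^*\alpha$ --- which exists because $X$ has rational singularities --- is again HL, whence $Q_X(\alpha) \ne 0 \Leftrightarrow q_Z(\phi^*\alpha) \ne 0$ by Proposition \ref{HLregclass} together with Lemma \ref{IntBBFrest}. Using \cite[Remark 4.4.3]{de2005hodge}, $\Lambda_\alpha$ is a direct summand of $\Lambda_{\phi^*\alpha}$, which yields a canonical embedding $\mathfrak g \hookrightarrow \mathfrak g_Z$ realizing $\mathfrak g$ as the subalgebra generated by the Hard Lefschetz operators attached to the classes of $IH^2(X, \mathbb C) \subset H^2(Z, \mathbb C)$. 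Since $q_Z|_{IH^2(X, \mathbb Q)} = Q_X$ by Lemma \ref{IntBBFrest}, the representation theory of $\mathfrak{so}(m)$ identifies this subalgebra with $\mathfrak{so}((IH^2(X, \mathbb C), Q_X) \oplus \mathfrak h)$, completing the proof.

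The only genuinely new point, and the one I would expect to require the most care to state cleanly, is the claim that $Z$ has isolated singularities: this is precisely the content of combining Namikawa's codimension estimate (Proposition \ref{symterm}) with $\dim Z = 4$, and it is what makes Theorem \ref{QfactorialLLV} applicable without further work. Everything else is a direct transcription of arguments already carried out for primitive symplectic varieties with isolated singularities.
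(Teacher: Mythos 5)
Your proposal is correct and follows the paper's own route: the paper deduces this corollary precisely by observing that in dimension $4$ a $\mathbb Q$-factorial terminalization is forced by Proposition \ref{symterm} to have isolated singularities, so that Theorem \ref{QfactorialLLV} applies to it and the semismallness/restriction argument from the proof of Theorem \ref{GenLLVthm} (together with Lemma \ref{IntBBFrest} and the reduction to the projective case by locally trivial deformation) carries over verbatim. Your identification of the codimension estimate as the only genuinely new ingredient matches the paper exactly.
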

        
        Another interesting consequence of the proof of Theorem~\ref{GenLLVthm} is in the case $b_2 = 4$, where \cite[Theorem 1.1]{bakker2018global} does not apply.  If such an $X$ is \textit{non-terminal}, then the inclusion $\IH^2(X, \mathbb Q) \hookrightarrow H^2(Z, \mathbb Q)$ of a $\mathbb Q$-factorialization must be strict, and so $\dim H^2(Z,\mathbb Q) \ge 5$.  Therefore, $\mathfrak g_Z \cong \mathfrak{so}((H^2(Z, \mathbb Q),q_Z)\oplus \mathfrak h)$, and  $\alpha \in H^2(Z, \mathbb Q)$ is HL if and only if $q_Z \ne 0$.  We get the following.  
        
        \begin{corollary}
        If\, $X$ is a non-terminal projective primitive symplectic variety and $b_2 = 4$, then 
$$
\mathfrak g  = \langle L_\alpha, \Lambda_\alpha|~Q_X(\alpha) \ne 0 \rangle \cong \mathfrak{so}((\IH^2(X, \mathbb Q), Q_X)\oplus \mathfrak h).
$$

        \end{corollary}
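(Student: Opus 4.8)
The plan is to reduce to Theorem \ref{QfactorialLLV} by passing to a $\mathbb Q$-factorial terminalization, exactly as in the proof of Theorem \ref{GenLLVthm}; the one new ingredient is a dimension count forcing the terminalization into the range $b_2 \ge 5$ where that theorem is available. Since $X$ is projective with canonical singularities, it admits a $\mathbb Q$-factorial terminalization $\phi\colon Z\to X$ by \cite{birkar2010existence}. By Lemma \ref{symcrepant}, $Z$ is again a primitive symplectic variety; it is $\mathbb Q$-factorial terminal by construction, and $\phi$ is semismall by Proposition \ref{crepantsemismall}. Write $\mathfrak g_Z$ for its LLV algebra.

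First I would prove $\dim H^2(Z,\mathbb C)\ge 5$. Since $Z$ is $\mathbb Q$-factorial terminal, Proposition \ref{Qcrit} gives $H^2(Z,\mathbb C)\cong IH^2(Z,\mathbb C)$, and Corollary \ref{Qfactorialterminalsemismall} gives a canonical injection $IH^2(X,\mathbb C)\hookrightarrow H^2(Z,\mathbb C)$. I claim this injection is strict. If $\phi$ contracted no divisor it would be small, and a small crepant morphism does not change the singularity type, so $X$ would be terminal, contradicting the hypothesis; hence $\phi$ contracts a prime divisor $F\subset Z$ with $\dim\phi(F)<\dim F$. Its class $[F]\in H^2(Z,\mathbb C)$ is nonzero, and it is not in the image of $\phi^*$: the cycle-theoretic pushforward $\phi_*[F]$ vanishes, whereas $\phi_*$ is injective on $\phi^*IH^2(X,\mathbb C)$ because $\phi_*\phi^*=\mathrm{id}$ there ($X$ has rational singularities and $\phi$ is birational). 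Therefore $\dim H^2(Z,\mathbb C)\ge\dim IH^2(X,\mathbb C)+1\ge\dim H^2(X,\mathbb C)+1=b_2+1=5$.

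With $\dim H^2(Z,\mathbb C)\ge 5$ in hand, Theorem \ref{QfactorialLLV} gives $\mathfrak g_Z\cong\mathfrak{so}((H^2(Z,\mathbb Q),q_Z)\oplus\mathfrak h)$, and Proposition \ref{HLregclass} shows that $\beta\in H^2(Z,\mathbb Q)$ is HL precisely when $q_Z(\beta)\ne 0$, so $\mathfrak g_Z=\langle L_\beta,\Lambda_\beta\mid q_Z(\beta)\ne 0\rangle$. I would then run the descent step from the proof of Theorem \ref{GenLLVthm}: for $\alpha\in IH^2(X,\mathbb C)$ the pullback $\phi^*\alpha$ exists, and since $\phi$ is semismall, $\alpha$ is HL if and only if $\phi^*\alpha$ is HL, i.e. if and only if $q_Z(\phi^*\alpha)\ne 0$, which by Lemma \ref{IntBBFrest} means $Q_X(\alpha)\ne 0$; moreover $\Lambda_\alpha$ is the summand of $\Lambda_{\phi^*\alpha}$ singled out by $[L_\alpha,\Lambda_\alpha]=(k-2n)\mathrm{id}$ (see \cite[Remark 4.4.3]{de2005hodge}). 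Hence $\mathfrak g=\langle L_\alpha,\Lambda_\alpha\mid Q_X(\alpha)\ne 0\rangle$ injects canonically into $\mathfrak g_Z$ preserving all commutator relations, realized as the subalgebra generated by the Hard Lefschetz operators attached to the subspace $IH^2(X,\mathbb C)\subset H^2(Z,\mathbb C)$. Since $q_Z|_{IH^2(X,\mathbb C)}=Q_X$ by Lemma \ref{IntBBFrest}, that subalgebra is $\mathfrak{so}((IH^2(X,\mathbb C),Q_X)\oplus\mathfrak h)$, as claimed.

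I expect the main obstacle to be the strictness of $IH^2(X,\mathbb C)\hookrightarrow H^2(Z,\mathbb C)$: one must reliably produce a $\phi$-contracted divisor from non-terminality (here the fact that symplectic varieties are canonical, so all discrepancies are $\ge 0$, is essential) and verify that its class is genuinely new. A secondary, quieter point is that $Z$ must satisfy the standing hypotheses behind Theorem \ref{QfactorialLLV} --- in particular that $Z$ has isolated singularities, which is automatic in dimension $4$ by Proposition \ref{symterm} and otherwise inherited from the isolated-singularities assumption on $X$. Everything beyond the inequality $\dim H^2(Z,\mathbb C)\ge 5$ is a routine repetition of the proof of Theorem \ref{GenLLVthm}.
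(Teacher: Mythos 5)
Your proposal is correct and follows essentially the same route as the paper: the paper's justification (in the paragraph preceding the corollary) is precisely that non-terminality forces the inclusion $IH^2(X,\mathbb C)\hookrightarrow H^2(Z,\mathbb C)$ into a $\mathbb Q$-factorial terminalization to be strict, so that $\dim H^2(Z,\mathbb C)\ge 5$, after which Theorem \ref{QfactorialLLV} applies to $Z$ and the descent argument from the proof of Theorem \ref{GenLLVthm} transfers the structure to $\mathfrak g$. Your only addition is to spell out the strictness (a contracted divisor whose class is killed by pushforward), which the paper merely asserts.
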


        \begin{remark} \label{smoothLLVproof}
        As indicated in the introduction, we note that our proof gives an algebraic proof of the LLV structure theorem for compact hyperk\"ahler manifolds with $b_2 \ge 5$.  We note that by Verbitsky's global Torelli theorem (which depends on the existence of twistor deformations), the monodromy groups $\Mon(X) \subset O(\Gamma)$ are always finite-index subgroups, and so $G_X \subset \SO(\Gamma_{\mathbb C})$ is Zariski dense (see \cite[Remark 8.12]{bakker2018global} for more details).  The proof of Theorem~\ref{Qfactorialcommute} follows through for any $b_2$ in the smooth case.  
        \end{remark}

\subsection{Holomorphic Symplectic Orbifolds} The methods of this paper show that the LLV structure theorem can be extended to any primitive symplectic variety $X$ for which the Hodge filtration on (intersection) cohomology is generated by the symplectic form.  One other case where this can be seen is when $X$ has at worst quotient singularities.  If $X$ is a primitive symplectic orbifold, then we have the following:  
\begin{enumerate}
    \item $\IH^*(X, \mathbb Q) \cong H^*(X, \mathbb Q)$ since $X$ is a $\mathbb Q$-homology manifold; see \cite[Proposition 8.2.21]{hotta2007d}.  In particular, $H^*(X, \mathbb Q)$ satisfies hard Lefschetz, and we may define the total Lie algebra $\mathfrak g$.

    \item The Hodge filtration on $H^*(X, \mathbb C)$ is induced by a spectral sequence 
$$
E_1^{p,q} : = H^q\left(X, \Omega_X^{[p]}\right) \Longrightarrow H^{p+q}(X, \mathbb C),
$$
 which is known classically by Steenbrink \cite[Section~1.6]{steenbrink1976mixed}, but see also \cite[Corollary 4.3]{shen2023k} and \cite[Theorem 7.2]{tighe2023holomorphic} for an argument using du Bois complexes. 
\end{enumerate} We therefore get the symmetry 
$$
H^{n-p,q}(X) \overset{\lowsim}\lra H^{n+p,q}(X)
$$
 induced by the symplectic form, and the Hodge filtration is generated by the class of the symplectic form.  We conclude the following from the methods of Section~\ref{5}. 

\begin{proposition} \label{proposition LLV for orbifolds}
If\, $X$ is a primitive symplectic orbifold, then there is an isomorphism 
$$
\mathfrak g \cong \mathfrak{so}((H^2(X, \mathbb Q), q_X)\oplus \mathfrak h).
$$

\end{proposition}

\begin{proof}
From the discussion above, the only thing to remark is that we can drop the assumption that $b_2 \ge 5$ (see also Section~\ref{subsection remark on b_2}).  In this case, we can use Menet's global Torelli theorem for holomorphic symplectic orbifolds \cite[Theorem 1.1]{menet2020global} and note that the surjectivity of the period map is sufficient to prove Theorem~\ref{monodensity} (see \cite[Remark 8.12]{bakker2018global}), and so Theorem~\ref{Qfactorialcommute}, Proposition~\ref{semisimplederivations}, and Theorem~\ref{QfactorialLLV} extend to this setting.
\end{proof}

 \subsection{A Remark on $\boldsymbol{b_2 <5}$} \label{subsection remark on b_2} Our methods leave open the case $b_2 = 3$, and $b_2 =4$ in the terminal case.  It is not known if there exists a compact hyperk\"ahler manifold with $b_2 =3$ or $4$, but Verbitsky's global Torelli theorem predicts that the cohomology is completely described by its structure as an $\mathfrak{so}(b_2-2,4)$-representation.  When $b_2 = 3$, this is exactly the action of $\mathfrak{so}(4,1)$ coming from the twistor deformation. 
    
    In the singular world, there is a primitive symplectic variety with $b_2 = 3$.  It is obtained by taking the Fano variety of lines of a special cubic 4-fold admitting an automorphism of order 11; see \cite{mongardi2013symplectic}.  The induced automorphism is symplectic, and the resulting quotient has the desired Betti number; see \cite[Section~5.2]{fu2020betti}.  In this case, the action of $\mathfrak{so}(4,1)$ exists since symplectic orbifolds admit twistor deformations; see \cite[Theorem 5.4]{menet2020global}. 
    
    It is unknown if there exists a primitive symplectic variety with $b_2 = 4$.
    
    Of course, $\IH^*(X, \mathbb C)$ inherits the structure of an $\mathfrak{sl}_2$-representation by hard Lefschetz.  The symplectic hard Lefschetz theorem gives intersection cohomology the structure of a representation of a larger Lie algebra with more symmetries. 
    
    \begin{proposition} \label{so(4)-action}
    If\, $X$ is a primitive symplectic variety with isolated singularities, then $\IH^*(X, \mathbb C)$ inherits the structure of an $\mathfrak{so}(4)$-representation.  Moreover, the Hodge filtration on $\IH^*(X, \mathbb C)$ induced by $\sigma$ is completely determined by $\mathfrak{so}(4)$.
    \end{proposition}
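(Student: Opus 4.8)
The plan is to build the $\mathfrak{so}(4)$-action directly from the two $\mathfrak{sl}_2$-triples $\mathfrak s_\sigma=\langle L_\sigma,\Lambda_\sigma,H_\sigma\rangle$ and $\mathfrak s_{\overline\sigma}=\langle L_{\overline\sigma},\Lambda_{\overline\sigma},H_{\overline\sigma}\rangle$ constructed in \S\ref{4.1}. First I would record that each of these is a genuine $\mathfrak{sl}_2$-triple acting on $IH^*(X,\mathbb C)$: this is the content of Theorem \ref{symplecticsymmetry} and Corollary \ref{symweightop}, which give $[L_\sigma,\Lambda_\sigma]=H_\sigma$ with $H_\sigma$ acting as $(p-n)\,\mathrm{id}$ on $IH^{p,q}(X)$, together with the conjugate statements for $\overline\sigma$, where $H_{\overline\sigma}$ acts as $(q-n)\,\mathrm{id}$.

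The key step is to observe that $\mathfrak s_\sigma$ and $\mathfrak s_{\overline\sigma}$ commute. Indeed $[L_\sigma,L_{\overline\sigma}]=0$ because cup product is commutative; $[L_\sigma,\Lambda_{\overline\sigma}]=[L_{\overline\sigma},\Lambda_\sigma]=0$ by Corollary \ref{commutecorollary}; $[\Lambda_\sigma,\Lambda_{\overline\sigma}]=0$ by Proposition \ref{symdualcommute}; and every bracket involving $H_\sigma$ or $H_{\overline\sigma}$ vanishes, since $H_\sigma$ and $H_{\overline\sigma}$ are scalar on each bidegree $IH^{p,q}(X)$ while the remaining operators each alter only one of the two indices. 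Hence $\mathfrak s_\sigma\oplus\mathfrak s_{\overline\sigma}$ is a Lie subalgebra of $\End(IH^*(X,\mathbb C))$; the intersection $\mathfrak s_\sigma\cap\mathfrak s_{\overline\sigma}$ is trivial, because $L_\sigma,L_{\overline\sigma}$ shift the bidegree in linearly independent directions and $H_\sigma,H_{\overline\sigma}$ are not proportional (e.g. $\sigma\in IH^{2,0}(X)$ and $1\in IH^0(X)$ are eigenvectors with incompatible eigenvalue patterns). Therefore $\mathfrak s_\sigma\oplus\mathfrak s_{\overline\sigma}$ is abstractly $\mathfrak{sl}_2\oplus\mathfrak{sl}_2\cong\mathfrak{so}(4,\mathbb C)$, which proves the first assertion. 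I would also remark that the diagonal triple $\langle L_\sigma+L_{\overline\sigma},\ \Lambda_\sigma+\Lambda_{\overline\sigma},\ H\rangle$ is exactly the Hard Lefschetz $\mathfrak{sl}_2$ attached to the real class $\gamma=\mathfrak R(\sigma)$ (cf. the proof of Corollary \ref{realimaginarycommute}), so the ordinary Lefschetz $\mathfrak{sl}_2$ sits inside this $\mathfrak{so}(4)$ as a diagonal copy, with $H=H_\sigma+H_{\overline\sigma}$ in its Cartan.

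For the second assertion I would use the Cartan subalgebra $\mathfrak t=\langle H_\sigma,H_{\overline\sigma}\rangle$ of $\mathfrak s_\sigma\oplus\mathfrak s_{\overline\sigma}\cong\mathfrak{so}(4)$. By construction the simultaneous eigenspace of $\mathfrak t$ on which $H_\sigma$ acts by $p-n$ and $H_{\overline\sigma}$ by $q-n$ is precisely $IH^{p,q}(X)$, so the bigrading of $IH^*(X,\mathbb C)$ is recovered from the $\mathfrak{so}(4)$-module structure together with this choice of Cartan; the cohomological grading is read off from $H=H_\sigma+H_{\overline\sigma}$, and then the Hodge decomposition $IH^k(X,\mathbb C)=\bigoplus_{p+q=k}IH^{p,q}(X)$ on each piece is the $\mathfrak t$-eigenspace refinement. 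Equivalently, the Lie algebra element $C_\sigma=i(H_\sigma-H_{\overline\sigma})=[L_\gamma,\Lambda_{\gamma'}]$ of Corollary \ref{intersectionVerbitskyresult}, which acts as the Weil operator $i(p-q)\,\mathrm{id}$ on each $IH^{p,q}(X)$, lies in this $\mathfrak{so}(4)$; this is the precise sense in which the $\mathfrak{so}(4)$-structure determines the Hodge structure.

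I do not anticipate a serious obstacle: all of the geometric input — symplectic Hard Lefschetz and the vanishing of the cross-commutators — is already in place from \S\ref{3}--\S\ref{4}, and what remains is the bookkeeping above. The only points requiring a little care are the identification $\mathfrak s_\sigma\oplus\mathfrak s_{\overline\sigma}\cong\mathfrak{sl}_2\oplus\mathfrak{sl}_2$, for which one needs the two triples to meet trivially, and the standard exceptional isomorphism $\mathfrak{sl}_2\oplus\mathfrak{sl}_2\cong\mathfrak{so}(4,\mathbb C)$ under which the vector representation of $\mathfrak{so}(4)$ is the external tensor product of the two standard $\mathfrak{sl}_2$-representations; neither is deep.
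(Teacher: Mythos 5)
Your proposal is correct and follows essentially the same route as the paper: both build $\mathfrak g_{\sigma,\overline\sigma}\cong\mathfrak{so}(4)$ from the two $\mathfrak{sl}_2$-triples $\mathfrak s_\sigma$, $\mathfrak s_{\overline\sigma}$ of \S\ref{4.1}, using Proposition \ref{symdualcommute} and Corollary \ref{commutecorollary} to see that the triples commute and the six operators are independent, and both conclude by noting that the Weil operator $C_\sigma=i(H_\sigma-H_{\overline\sigma})$ lies in this algebra, so the Hodge structure is determined. Your extra bookkeeping (vanishing of the cross-brackets with $H_\sigma,H_{\overline\sigma}$, trivial intersection of the triples, the diagonal Lefschetz copy for $\gamma=\mathfrak R(\sigma)$) simply makes explicit what the paper leaves terse.
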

    
    \begin{proof}
    The hard Lefschetz theory for the pair $(\sigma,\overline \sigma)$ (see Section~\ref{4.1}) induces two structures on $\IH^*(X, \mathbb C)$ via the $\mathfrak{sl}_2$-triples $\mathfrak g_\sigma = \langle L_\sigma, \Lambda_\sigma, H_\sigma\rangle$ and $\mathfrak g_{\overline \sigma} = \langle L_{\overline \sigma}, \Lambda_{\overline \sigma}, H_{\overline \sigma}\rangle$.  Proposition~\ref{symdualcommute} and Corollary~\ref{commutecorollary} imply that the operators 
$$
 L_\sigma, L_{\overline \sigma}, \Lambda_{\sigma}, \Lambda_{\overline \sigma}, H_\sigma, H_{\overline \sigma}
$$
 are linearly independent.  The Lie algebra $\mathfrak g_{\sigma,\overline \sigma}$ generated by these six operators is isomorphic to $\mathfrak{so}(4)$.  Moreover, $\mathfrak{g}_{\sigma,\overline \sigma}$ contains the Weil operator \begin{equation}\label{weil}
    C_\sigma = i(p-q)\id = i(H_\sigma - H_{\overline \sigma}) = i([L_\sigma, \Lambda_\sigma] - [L_{\overline \sigma}, \Lambda_{\overline \sigma}]),
    \end{equation}and so the Hodge filtration on $\IH^*(X, \mathbb C)$ is completely predicted by $\mathfrak g_{\sigma, \overline \sigma}$.
    \end{proof}
    
Therefore, the Hodge structure on intersection cohomology on any primitive symplectic variety is detected by the symplectic hard Lefschetz theorem, with no restriction on $b_2$.

\section{Representation Theory and Hodge Theory of the LLV Algebra} \label{7}

\subsection{Verbitsky Component of $\boldsymbol{\IH^*(X, \mathbb Q)}$} \label{5.4}

Let $X$ be a compact hyperk\"ahler manifold. The LLV algebra gives the cohomology ring $H^*(X, \mathbb Q)$ the structure of a $\mathfrak g$-representation.  This structure has been extensively studied in \cite{green2019llv} for the known cases of compact hyperk\"ahler manifolds and has been used to produce bounds on $b_2$ in low dimensions, see \cite{guan2001betti,sawon2015bound,kurnosov2015boundness}, and conjecturally in all dimensions; see \cite{kim2019conjectural}.  

We wish to extend some well-known results on the representation theory of the LLV algebra action on the intersection cohomology module $\IH^*(X, \mathbb Q)$ of a primitive symplectic variety $X$.  We believe that the $\mathfrak g$-structure on $\IH^*(X, \mathbb Q)$ can restrict both the number and types of singularities that primitive symplectic varieties can admit, which will be explored in future work. The first step is to understand the \textit{Verbitsky component}, which is the submodule of $\IH^*(X, \mathbb Q)$ generated by $\IH^2(X, \mathbb Q)$.

\begin{theorem} \label{verbitsky component}
Let $X$ be a primitive symplectic variety of dimension $2n$ with isolated singularities and $b_2 \ge 5$.  Then the submodule $\SIH^2(X, \mathbb Q) \subset \IH^*(X, \mathbb Q)$ generated by $\IH^2(X, \mathbb Q)$ is an irreducible $\mathfrak g$-module of\, $\Sym^*H^2(X, \mathbb Q)$.
\end{theorem}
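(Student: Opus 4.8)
The plan is to carry out Verbitsky's argument recalled in \S\ref{2.7}, now that the structure theorem (Theorem \ref{GenLLVthm}) and the derivation property (Proposition \ref{semisimplederivations}) are available; the representation-theoretic steps I would do over $\mathbb{C}$, descending to $\mathbb{Q}$ at the end since $\mathfrak{g}$ is rationally defined and cup product is defined over $\mathbb{Q}$. First I would identify $SIH^2(X,\mathbb{C})$ with the cyclic $\mathfrak{g}$-submodule $U(\mathfrak{g})\cdot 1$ of $IH^*(X,\mathbb{C})$ generated by the unit class $1\in IH^0(X,\mathbb{C})$; this already shows $SIH^2$ is a $\mathfrak{g}$-submodule. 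Using the grading $\mathfrak{g}=\mathfrak{g}_2\oplus\mathfrak{g}_0\oplus\mathfrak{g}_{-2}$ from \S\ref{5} and the PBW theorem, $U(\mathfrak{g})\cdot 1=U(\mathfrak{g}_2)U(\mathfrak{g}_0)U(\mathfrak{g}_{-2})\cdot 1$. Operators in $\mathfrak{g}_{-2}$ lower cohomological degree and so annihilate $1$; operators in $\mathfrak{g}_0=\overline{\mathfrak{g}}\oplus\mathbb{C}H$ preserve $\mathbb{C}\cdot 1$ ($H$ scales by $-2n$, and each $D\in\overline{\mathfrak{g}}$ kills $1$ because $\overline{\mathfrak{g}}$ acts by derivations and $D(1)=D(1\cup 1)=2D(1)$); and $\mathfrak{g}_2=\{L_\alpha:\alpha\in IH^2(X,\mathbb{C})\}$ is the space of cup-product operators (HL classes span $IH^2$ and $\alpha\mapsto L_\alpha$ is linear). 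Hence $U(\mathfrak{g})\cdot 1=U(\mathfrak{g}_2)\cdot 1$ is the span of all cup products of degree-$2$ intersection classes, i.e.\ $SIH^2(X,\mathbb{C})$.

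Next I would show $SIH^2(X,\mathbb{C})$ is an irreducible $\mathfrak{g}_{\mathbb{C}}\cong\mathfrak{so}_{B_2+2}(\mathbb{C})$-module by a lowest-weight argument. Fix a Cartan subalgebra $\mathbb{C}H\oplus\mathfrak{t}$ with $\mathfrak{t}\subset\overline{\mathfrak{g}}$ and a Borel whose negative nilradical is $\mathfrak{n}^-=\mathfrak{g}_{-2}\oplus\mathfrak{n}^-_{\overline{\mathfrak{g}}}$. Then $1$ is annihilated by $\mathfrak{n}^-$ (degree and derivations, as above) and is a weight vector of weight $\lambda$ with $\lambda(H)=-2n$ and $\lambda|_{\mathfrak{t}}=0$, i.e.\ a lowest weight vector. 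As $IH^*(X,\mathbb{C})$ is a finite-dimensional module over the semisimple $\mathfrak{g}_{\mathbb{C}}$, it is completely reducible; writing $SIH^2(X,\mathbb{C})=\bigoplus_i N_i$ with $N_i$ irreducible and $1=\sum_i v_i$, each $v_i$ lies in the weight-$\lambda$ space of $N_i$ and is killed by $\mathfrak{n}^-$, so it is either zero or the (up to scalar unique) lowest weight vector of $N_i$, forcing $N_i$ to have lowest weight $\lambda$. Cyclicity of $SIH^2=U(\mathfrak{g})\cdot 1$ forces every $v_i\neq 0$, whence the number of summands equals $\dim(SIH^2)^\lambda$ (each $N_i$ contributes a one-dimensional lowest weight space of weight $\lambda$); but any weight-$\lambda$ vector has $H$-eigenvalue $-2n$ and so lies in $IH^0$, giving $(SIH^2)^\lambda=SIH^2\cap IH^0=\mathbb{C}\cdot 1$. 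So there is a single summand and $SIH^2(X,\mathbb{C})$ is irreducible. Since $SIH^2(X,\mathbb{Q})\otimes_{\mathbb{Q}}\mathbb{C}=SIH^2(X,\mathbb{C})$, irreducibility descends to $\mathbb{Q}$.

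Finally I would identify the isomorphism type. In the standard coordinates $L_1,\dots,L_\ell$ for $\mathfrak{so}((IH^2,Q_X)_{\mathbb{C}}\oplus\mathfrak{h})$ in which the grading element is $H=2L_1^\vee$ and $\mathfrak{t}=\langle L_2^\vee,\dots,L_\ell^\vee\rangle$, the two conditions above give $\lambda=-nL_1$, so $SIH^2(X,\mathbb{Q})\cong L(nL_1)$: the $n$-th Cartan power of the defining representation $IH^2\oplus\mathfrak{h}$ of $\mathfrak{g}$, equivalently the maximal irreducible subrepresentation of $\mathrm{Sym}^n(IH^2(X,\mathbb{Q})\oplus\mathfrak{h})$, realized as the kernel of the $Q_X$-contraction $\mathrm{Sym}^n\to\mathrm{Sym}^{n-2}$. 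This is the Verbitsky component $V_{(n)}$, which gives the statement.

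I expect the only genuinely delicate point to be the irreducibility step --- specifically the verification that $(SIH^2)^\lambda$ collapses to $\mathbb{C}\cdot 1$, which is what makes the cyclic module on the unit actually simple rather than merely indecomposable --- together with the routine but fiddly weight bookkeeping in the last step matching $nL_1$ with the symmetric power appearing in the statement (noting that $w_0$ fixes $L_1$ in both types $B_\ell$ and $D_\ell$, so highest weight $nL_1$ and lowest weight $-nL_1$ correspond). Everything else is formal once Theorem \ref{GenLLVthm} and the $\mathfrak{sl}_2\times\mathfrak{sl}_2$-package of \S\ref{4}--\S\ref{5} are in hand.
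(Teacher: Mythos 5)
Your proof is correct, but it reaches irreducibility by a genuinely different route than the paper. The paper first shows $SIH^2(X,\mathbb Q)$ is a $\mathfrak g$-submodule by checking the generators one by one (derivations for $\overline{\mathfrak g}$, and an explicit commutator induction $\Lambda_\alpha(\alpha_1\cdots\alpha_k)=[L_{\alpha_1},\Lambda_\alpha](\alpha_2\cdots\alpha_k)-L_{\alpha_1}(\Lambda_\alpha\alpha_2\cdots\alpha_k)$ for the lowering operators), then computes the graded pieces $SIH^2(X,\mathbb Q)_{2k}$ as $\overline{\mathfrak g}$-modules following Verbitsky's argument ($\mathrm{Sym}^kIH^2$ for $k\le n$, $\mathrm{Sym}^{2n-k}IH^2$ for $k>n$), and finally assembles these into a single irreducible $\mathfrak g$-module by invoking the branching rules for $\mathfrak{so}(m+2)\supset\mathfrak{so}(m)\times\mathfrak{so}(2)$. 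You instead observe that $SIH^2(X,\mathbb C)=U(\mathfrak g)\cdot 1$ via PBW and the grading (which subsumes the paper's commutator induction: $\Lambda_\alpha$-stability comes for free from cyclicity), note that $1$ is a lowest weight vector killed by $\mathfrak g_{-2}\oplus\mathfrak n^-_{\overline{\mathfrak g}}$ with weight determined by $\lambda(H)=-2n$, $\lambda|_{\mathfrak t}=0$, and use complete reducibility plus the fact that the $\lambda$-weight space sits in $SIH^2\cap IH^0=\mathbb C\cdot 1$ (connectedness of $X$) to force a single irreducible summand, which you then identify as $L(nL_1)$, i.e.\ the Verbitsky component inside $\mathrm{Sym}^n(IH^2\oplus\mathfrak h)$. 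Your argument is more self-contained --- it avoids both Verbitsky's computation of the graded pieces and the branching-rule appendix of the Green--Kim--Laza--Robles paper, and it pins down the highest weight directly --- while the paper's route has the side benefit of producing the explicit $\overline{\mathfrak g}$-graded decomposition of $SIH^2$, which is reused later (e.g.\ in Proposition \ref{intersectionbogomolovresult} and Proposition \ref{gen MT}). Two small points to make explicit if you write this up: the grading (\ref{LLVdecomposition}) and the derivation property (Proposition \ref{semisimplederivations}) are proved in the $\mathbb Q$-factorial terminal case, so in the general isolated-singularities case you should, as the paper does, first restrict along a $\mathbb Q$-factorial terminalization after a locally trivial deformation; and the identification of the grading element $H$ with $2L_1^\vee$ (hence $\lambda=-nL_1$) deserves a sentence, since it is exactly what matches your lowest weight with the $n$-th Cartan power of the defining representation.
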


\begin{proof}
The proof is nearly identical to the smooth case; see \cite[Theorem 1.7]{verbitsky1996cohomology} and also \cite[Theorem 2.15]{green2019llv}.  Consider the decomposition 
$$
\mathfrak g = \langle L_\alpha, \Lambda_\alpha|~Q_X(\alpha) \ne 0\rangle = \mathfrak g_2\oplus(\overline{\mathfrak g}\times \mathbb Q\cdot H)\oplus \mathfrak g_{-2},
$$
 which exists by restricting the decomposition (\ref{LLVdecomposition}) of a $\mathbb Q$-factorial terminalization, after possibly passing to a locally trivial deformation.  The semisimple part $\overline{\mathfrak g} \subset \mathfrak g_0$ of the degree 0 part of the LLV algebra acts on $\SIH^2(X, \mathbb Q)$ as it acts by derivations on cup product.  Clearly, the weight operators $H$ and $L_\alpha$ for $\alpha \in H^2(X, \mathbb Q)$ act on $\SIH^2(X, \mathbb C)$. To see that $\Lambda_\alpha$ acts on $\SIH^2(X, \mathbb Q)$, let $\alpha_1\cdots\alpha_k \in \SH^2(X, \mathbb Q)$ be any element, and note that 
$$
\Lambda_{\alpha}(\alpha_1\cdots\alpha_k) = [L_{\alpha_1},\Lambda_\alpha](\alpha_2\cdots\alpha_k) - L_{\alpha_1}(\Lambda_{\alpha_1}\alpha_2\cdots\alpha_k).
$$
 The result follows by induction on $k$ and the fact that $[L_{\alpha_1}, \Lambda_\alpha] \in \mathfrak g_0$.

 Now we may consider $\SIH^2(X, \mathbb Q)$ as a representation of $\overline{ \mathfrak g} \cong \mathfrak{so}(\IH^2(X, \mathbb C),Q_X)$.  Just as in \cite[Section~15]{verbitsky1996cohomology}, we see that
 \[
 \SIH^2(X, \mathbb Q)_{2k} =  \begin{cases} 
      \Sym^k\IH^2(X, \mathbb Q), & k \le n, \\
      \Sym^{2n-k}\IH^2(X, \mathbb Q), & k > n, 
   \end{cases}
 \]
 which depends only on the representation theory of $\mathfrak{so}(\IH^2(X, \mathbb C), Q_X)$.  Therefore,  
$$
\SIH^2(X, \mathbb Q) = \Sym^n\IH^2(X, \mathbb Q)\oplus\bigoplus_{k\ge 1}\left(\Sym^{n-k}\IH^2(X, \mathbb Q)\right)^{\oplus 2}.
$$
  It then follows  that $\SIH^2(X, \mathbb Q)$ extends to $\mathfrak g$ as an irreducible $\mathfrak g$-representation due to the ``branching rules'' for special orthogonal groups; see \cite[Section~B.2]{green2019llv}.
\end{proof}

We also get the following description of $\SIH^2(X, \mathbb Q)$, due to Bogomolov \cite{bogomolov1996cohomology} in the smooth case.  A similar description was given for a general primitive symplectic variety in terms of $H^2(X, \mathbb Q)$  in \cite[Proposition 5.11]{bakker2018global}.

\begin{proposition} \label{intersectionbogomolovresult}
Let $X$ be a primitive symplectic variety of dimension $2n$ with isolated singularities and $b_2 \ge 5$.  Then 
$$
\SIH^2(X, \mathbb Q) = \Sym^*\IH^2(X, \mathbb Q)/\langle \alpha^{n+1}|~Q_X(\alpha) = 0\rangle.
$$

\end{proposition}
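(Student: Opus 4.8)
The plan is to identify $SIH^2(X,\mathbb Q)$ with an explicit quotient of $\mathrm{Sym}^*IH^2(X,\mathbb Q)$. Since $IH^2(X)$ embeds into $H^2(\tilde X)$ for any resolution (Proposition \ref{intprops}), the cup product on the Verbitsky subspace is well defined and gives a surjective graded algebra homomorphism $\psi\colon \mathrm{Sym}^*IH^2(X,\mathbb Q)\longrightarrow SIH^2(X,\mathbb Q)$, so the content of the proposition is that $\ker\psi=J:=\langle \alpha^{n+1}\mid Q_X(\alpha)=0\rangle$. I would establish both inclusions at once by comparing with a $\mathbb Q$-factorial terminalisation.

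The presentation is invariant under locally trivial deformation, because cup product, the form $Q_X$, and the groups $IH^\bullet$ all vary in local systems (Proposition \ref{intloc}); hence one may assume $X$ projective and choose a $\mathbb Q$-factorial terminalisation $\phi\colon Z\to X$ (Lemma \ref{symcrepant}, \cite{birkar2010existence}). Then $Z$ is again a primitive symplectic variety, now $\mathbb Q$-factorial terminal, so $H^2(Z)=IH^2(Z)$ by Proposition \ref{Qcrit}, and $Q_Z$ restricts to $Q_X$ on $IH^2(X)\subseteq IH^2(Z)$ by Lemma \ref{IntBBFrest}. Moreover $\phi$ is semismall (Proposition \ref{crepantsemismall}), so $IH^\bullet(X)$ is a direct summand of $IH^\bullet(Z)$ and $SIH^2(X)$ is identified with the subalgebra of $SIH^2(Z)$ generated by $IH^2(X)$. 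Finally, each graded piece $SH^2(Z)_{2k}$ is a quotient of the pure weight-$2k$ Hodge structure $\mathrm{Sym}^kH^2(Z)$, hence pure of weight $2k$, while $\ker(H^{2k}(Z)\to IH^{2k}(Z))\subseteq W_{2k-1}$ by Proposition \ref{intprops}(4); therefore $SIH^2(Z)\cong SH^2(Z)$, and by \cite[Proposition 5.11]{bakker2018global} this equals $\mathrm{Sym}^*H^2(Z)/\langle \beta^{n+1}\mid q_Z(\beta)=0\rangle=\mathrm{Sym}^*IH^2(Z)/J_Z$.

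Consequently $SIH^2(X)$ is the image of $\mathrm{Sym}^*IH^2(X)$ inside $\mathrm{Sym}^*IH^2(Z)/J_Z$, i.e.\ $\mathrm{Sym}^*IH^2(X)/(J_Z\cap \mathrm{Sym}^*IH^2(X))$, so it remains to prove the purely linear-algebraic fact that for a non-degenerate subspace $(W,Q_X)\subseteq(W',Q_Z)$ one has $J_Z\cap \mathrm{Sym}^*W=J$. Here I would use that the two ``Bogomolov algebras'' $R=\mathrm{Sym}^*W/J$ and $R'=\mathrm{Sym}^*W'/J_Z$ are Gorenstein Artinian with socle in degree $2n$ — the classical structure of the ideal of $(n+1)$-st powers of isotropic vectors, obtained over $\mathbb C$ from the harmonic decomposition $\mathrm{Sym}^k=\bigoplus_j Q^{\,j}\cdot\mathrm{Harm}^{k-2j}$ (cf.\ \cite{bogomolov1996cohomology} and \cite[Appendix B]{green2019llv}), giving $\dim R_k=\dim\mathrm{Sym}^{\min(k,2n-k)}W$. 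The induced map $R\to R'$ is an isomorphism in degrees $\le n$ and on the socle degree $2n$ (the class $\omega^{2n}$ of any $\omega\in W$ with $Q_X(\omega)>0$ generates both socles), and then Poincaré duality on $R$ forces $R\to R'$ to be injective, i.e.\ $J_Z\cap \mathrm{Sym}^*W=J$. In the special case of isolated singularities one can instead work directly on $X$: polarising the intersection Fujiki relation (Proposition \ref{IntFujiki}(1)) together with a pigeonhole count shows that $\alpha^{n+1}$ pairs trivially with $SIH^2(X)_{2n-2}$ when $Q_X(\alpha)=0$, which gives $J\subseteq\ker\psi$ using that the Verbitsky component is self-dual (being irreducible, Theorem \ref{verbitsky component}), and the graded dimensions supplied by Theorem \ref{verbitsky component} then match those of $\mathrm{Sym}^*IH^2/J$ computed above.

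The step I expect to be the main obstacle is the linear-algebraic lemma $J_Z\cap\mathrm{Sym}^*W=J$ — equivalently, pinning down the graded dimensions of $SIH^2(X)$ — together with the bookkeeping needed to make the algebra structures on $IH^\bullet(X)$, $IH^\bullet(Z)$ and $H^\bullet(Z)$ compatible under $\phi^*$ and under the natural map $H^\bullet\to IH^\bullet$; the classical structure of the ideal of powers of isotropic vectors is standard but must be quoted with care.
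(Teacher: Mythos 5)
Your proposal is correct and follows essentially the same route as the paper: reduce by locally trivial deformation to the projective case, pass to a $\mathbb Q$-factorial terminalization $Z$, invoke \cite[Proposition 5.11]{bakker2018global} for $SH^2(Z,\mathbb Q)$, and finish with the socle property of $\mathrm{Sym}^*IH^2(X,\mathbb Q)/\langle\alpha^{n+1}\mid Q_X(\alpha)=0\rangle$. Your Gorenstein-duality argument for $J_Z\cap\mathrm{Sym}^*W=J$ is precisely the content of the paper's Lemma \ref{gradedlemma}, which the paper instead applies directly to $SIH^2(X,\mathbb Q)$ together with the nonvanishing of $(\sigma+\overline\sigma)^{2n}$ in top degree.
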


The proof is completely algebraic, and the main input is the following standard lemma. 

\begin{lemma} \label{gradedlemma}
Let $(H,Q)$ be a complex vector space with a non-degenerate quadratic form $Q$, and let $A$ be a graded quotient of\, $\Sym^*H$ by a graded ideal $I^*$ such that
\begin{enumerate}
    \item $A^{2n} \ne 0$,
    \item $I^* \supset \langle \alpha^{n+1}|~Q_X(\alpha) = 0\rangle$.
\end{enumerate}
Then $I^* = \langle \alpha^{n+1}|~Q_X(\alpha) = 0\rangle$.
\end{lemma}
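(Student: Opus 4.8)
The plan is to recast the statement via the universal quotient $B := \mathrm{Sym}^* H / J$, where $J := \langle \alpha^{n+1} \mid Q(\alpha) = 0 \rangle$ is the ideal appearing in hypothesis (2). Since $J \subseteq I^*$, the algebra $A$ is a graded quotient $A = B/K$ for a uniquely determined graded ideal $K \subseteq B$, and the conclusion $I^* = J$ is equivalent to $K = 0$. Everything then reduces to the classical Poincar\'e-duality structure of $B$ --- namely (i) $B^k = 0$ for $k > 2n$; (ii) $\dim_{\mathbb C} B^{2n} = 1$; and (iii) for $0 \le k \le 2n$ the multiplication $B^k \times B^{2n-k} \to B^{2n} \cong \mathbb C$ is a perfect pairing --- which is classical (due to Bogomolov \cite{bogomolov1996cohomology}; see also Verbitsky \cite[\S 15]{verbitsky1996cohomology}) and may be quoted or recalled as below.

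Granting (i)--(iii), the deduction is formal. From $A^{2n} = B^{2n}/K^{2n} \ne 0$ and $\dim_{\mathbb C} B^{2n} = 1$ one gets $K^{2n} = 0$; moreover $K^0 = 0$, since otherwise $1 \in K$, hence $K = B$ and $A = 0$, contradicting (1). Now suppose $K \ne 0$ and pick a nonzero homogeneous $y \in K^k$; by (i) and the previous sentence $1 \le k \le 2n-1$. By the non-degeneracy in (iii) there is $z \in B^{2n-k}$ with $yz \ne 0$ in $B^{2n}$, while $K$ being an ideal forces $yz \in K^{2n} = 0$ --- a contradiction. Hence $K = 0$, i.e. $I^* = \langle \alpha^{n+1} \mid Q(\alpha) = 0 \rangle$.

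It remains to justify (i)--(iii). For (i): by polarization the powers $\gamma^{2n+1}$, $\gamma \in H$, span $\mathrm{Sym}^{2n+1} H$, so it suffices that each lies in $J$. This is clear when $Q(\gamma) = 0$; when $Q(\gamma) \ne 0$ choose a plane $P \ni \gamma$ on which $Q$ is non-degenerate (possible as $\dim H \ge 2$) and write $\gamma = au + bv$ with $u,v$ spanning the two isotropic lines of $P$ --- then every monomial $u^i v^{2n+1-i}$ occurring in $\gamma^{2n+1}$ has $i \ge n+1$ or $2n+1-i \ge n+1$, hence is divisible by $u^{n+1}$ or $v^{n+1}$ and so lies in $J$. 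Thus $\mathrm{Sym}^{2n+1} H \subseteq J$, and since $B$ is generated in degree $1$ we get $B^k = 0$ for all $k \ge 2n+1$. For (ii) and (iii) the tool is the $SO(H,Q)$-equivariant harmonic decomposition $\mathrm{Sym}^\ell H = \bigoplus_{j \ge 0} q^j\, \mathcal H^{\ell - 2j}$, where $q \in \mathrm{Sym}^2 H$ is the class of $Q$ and $\mathcal H^\ell \subseteq \mathrm{Sym}^\ell H$ is the traceless part, which is irreducible over $SO(H,Q)$ once $\dim H \ge 3$ --- the case at hand, since $\dim H = \dim IH^2(X,\mathbb Q) \ge 3$. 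As $\alpha^{n+1}$ is traceless for isotropic $\alpha$, $J^{n+1} \subseteq \mathcal H^{n+1}$, and being a nonzero $SO(H,Q)$-submodule of the irreducible $\mathcal H^{n+1}$ it equals $\mathcal H^{n+1}$; a Clebsch--Gordan computation for the products $\mathcal H^{n+1} \cdot \mathcal H^m$ then yields $J^{2n} \supseteq \bigoplus_{0 \le j < n} q^j\, \mathcal H^{2n-2j}$, a subspace of codimension $\dim \mathcal H^0 = 1$, so that $B^{2n} \ne 0$ (hypothesis (1)) forces $B^{2n} = q^n \mathcal H^0$ to be one-dimensional, which is (ii). The same bookkeeping shows $B^{2n-k} \cong B^k$ as $SO(H,Q)$-modules with self-dual isotypic summands, whence the equivariant multiplication $B^k \otimes B^{2n-k} \to B^{2n} \cong \mathbb C$ is a perfect pairing by Schur's lemma, which is (iii).

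The genuine content is (ii)--(iii): that $\mathrm{Sym}^* H / \langle \alpha^{n+1} \mid Q(\alpha) = 0 \rangle$ is a Poincar\'e-duality algebra with one-dimensional socle in degree $2n$. This is where non-degeneracy of $Q$ enters essentially --- the statement already fails for $\dim H = 1$, where there are no isotropic vectors --- and where the representation theory of $\mathfrak{so}(H,Q)$ does the work. I expect this structural fact to be the main obstacle to a fully self-contained proof; the cleanest route is to quote it from \cite{bogomolov1996cohomology} or \cite{verbitsky1996cohomology}, being careful that $\dim H \ge 3$ so that the harmonic spaces are irreducible.
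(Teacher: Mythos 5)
The paper itself offers no proof of this lemma --- it is invoked as a ``standard lemma'' (in effect deferring to \cite{bogomolov1996cohomology} and \cite[\S 15]{verbitsky1996cohomology}) --- so your write-up should be judged on its own, and it is essentially the standard argument. Your reduction is correct and complete at the formal level: once one knows that $B=\mathrm{Sym}^*H/\langle \alpha^{n+1}\mid Q(\alpha)=0\rangle$ vanishes above degree $2n$, has one-dimensional top piece, and satisfies Poincar\'e duality, the deduction $K^0=0$, $K^{2n}=0$, and hence $K=0$ via the perfect pairing is exactly the intended mechanism, and your observation that the statement implicitly requires $\dim H\ge 2$ (it fails for $\dim H=1$, where there are no nonzero isotropic vectors) is a genuine and worthwhile remark, harmless in the paper's application where $H=IH^2(X,\mathbb Q)_{\mathbb C}$ has dimension at least $3$. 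The only soft spots are in your sketched justification of (ii)--(iii): the ``Clebsch--Gordan computation'' giving $J^{2n}\supseteq\bigoplus_{j<n}q^j\mathcal H^{2n-2j}$ is asserted rather than performed, and the final appeal to Schur's lemma does not by itself yield perfection of the pairing --- one must also use that each $B^k\cong\mathrm{Sym}^{\min(k,2n-k)}H$ is multiplicity-free over $SO(H,Q)$ and check that the pairing is nonzero on each isotypic component (equivalently, that the projection of $q^j\mathcal H^{k-2j}\cdot q^{n-k+j}\mathcal H^{k-2j}$ to the socle is nonzero). Since you explicitly propose to quote this Gorenstein/duality structure of $B$ from Bogomolov or Verbitsky, with the caveat $\dim H\ge 3$ so that the harmonic spaces are irreducible, this is no worse than the paper's own treatment; carrying out those two representation-theoretic checks would be what is needed to make the argument fully self-contained.
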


\noindent \textit{Proof of Proposition~\ref{intersectionbogomolovresult}}.  Lemma~\ref{gradedlemma} applies to $(\IH^2(X, \mathbb C), Q_X)$ and $A = \SIH^2(X, \mathbb C)$ by Theorem~\ref{verbitsky component}.  Note that $A^{2n} \supset \IH^{2n, 2n}(X)$ is non-zero.  The problem is invariant under small deformations, so we may assume that $X$ admits a $\mathbb Q$-factorial terminalization $\phi\colon Z \to X$.  There is a commutative diagram

\[\begin{tikzcd}
\Sym^*H^2(Z, \mathbb Q)\arrow{r} & \Sym^*H^2(Z, \mathbb Q)/\langle \alpha^{n+1}|~q_Z(\alpha) = 0\rangle \arrow{r}  &\SH^2(Z, \mathbb Q) \\
\Sym^*\IH^2(X, \mathbb Q) \arrow{rr} \arrow{u} &  &   \SIH^2(X, \mathbb Q) \arrow{u}\rlap{.}
\end{tikzcd}
\]

The natural map $\Sym^*H^2(Z, \mathbb Q) \to \SH^2(Z, \mathbb Q)$ factors through $\Sym^*H^2(Z, \mathbb Q)/\langle \alpha^{n+1}|~q_Z(\alpha) = 0\rangle$ by \cite[Proposition 5.11]{bakker2018global}.  Therefore, the bottom map must factor as 
$$
\Sym^*\IH^2(X, \mathbb Q) \lra \Sym^*\IH^2(X, \mathbb Q)/\langle \alpha^{n+1}|~Q_X(\alpha) = 0\rangle \lra \SIH^2(X, \mathbb Q)
$$
 by Lemma~\ref{IntBBFrest}.  If this map were not  injective, then Lemma~\ref{gradedlemma} would imply that the kernel would contain the rational cohomology class corresponding to the generator of $\IH^{2n,2n}(X) = (\sigma + \overline \sigma)^{2n}$, which does not vanish.  This finishes the proof. \qed 

\begin{corollary}
Let $X$ be a primitive symplectic variety of dimension $2n$ with isolated singularities and $b_2 \ge 5$.  For every $k \le n$, there is an injection 
$$
\Sym^k\IH^2(X, \mathbb Q) \longhookrightarrow \IH^{2k}(X, \mathbb Q).
$$

\end{corollary}

        \subsection{Kuga--Satake Construction on the Cohomology of Primitive Symplectic Varieties} \label{7.2}
        
        The Kuga--Satake construction, see \cite{kuga1967abelian}, associates to a polarized Hodge structure $H = H_{\mathbb Z}$ of K3 type a complex torus $T$ for which $H$ is a sub-Hodge structure of $\Hom(H_1(T), H_1(T))(1)$.  The construction associates to the weight 2 Hodge structure $H $ (with its bilinear form) its \textit{Clifford algebra} $C(H)$.  There is an induced complex structure on $C(H)\otimes \mathbb R$, and one can show that the quotient $C(H)\otimes \mathbb R/C(H)$ is a complex torus, which is in fact an abelian variety in the case that $H$ is the second cohomology of a projective K3 surface.
        
        Understanding the geometric connection between varieties admitting Hodge structures of K3 type and the Kuga--Satake construction is a difficult problem, as the Mumford--Tate group of 
$$
\Hom(H_1(T), H_1(T))(1) \cong H^1(T) \otimes H^1(T)
$$
 is highly restricted, while the Mumford--Tate group associated to a Hodge structure of K3 type can be quite large (see Section~\ref{7.3}).  In special cases, it has been observed that the Kuga--Satake construction for K3 surfaces is related to the Hodge conjecture; see \cite{geemen2000kuga}.  It is therefore interesting to understand the geometry of the Kuga--Satake construction, as well as generalizations to Hodge structures of higher weights.
        
        In \cite{kurnosov2019kuga}, Kurnosov--Soldatenkov--Verbitsky observe that there is a multidimensional Kuga--Satake construction on the cohomology ring of a compact hyperk\"ahler manifold $X$.  Namely, associated to $X$ are a complex torus $T$, a non-negative integer $l$, and embeddings 
$$
\mathfrak g \longhookrightarrow \mathfrak g_{\mathrm{tot}}(T), \quad  \Psi\colon H^*(X, \mathbb C) \longhookrightarrow H^{* + l}(T, \mathbb C),
$$
 where $\mathfrak g_{\mathrm{tot}}(T)$ is the total Lie algebra of $T$.  Here, the embedding $\Psi$ is a morphism with respect to the induced structures as $\mathfrak g$-representations (resp.\ $\mathfrak g_{\mathrm{tot}}(T)$-representations).  Fixing a complex structure on $X$ makes $\Psi$ a morphism of Hodge structures.
        
        Using the existence of the LLV algebra, we outline how the Kuga--Satake construction holds for intersection cohomology, further demonstrating how unique the geometry of primitive symplectic varieties is.
        
        \subsubsection{LLV embedding}
        
           Consider a finite-dimensional complex vector space $H$ and a non-degenerate symmetric bilinear form $Q$.  Let $T^*H$ denote the tensor algebra, and let $\mathfrak a \subset T^*H$ be the ideal generated by elements of the form 
$$
v\otimes v - Q(v,v), \quad v \in H.
$$
  The \textit{Clifford algebra} of $(H,Q)$ is $C = C(H,Q) := T^*H/\mathfrak a$.  
           
           The main technical result of \cite{kurnosov2019kuga} is the following. 
           
           \begin{theorem}[\textit{cf.} \protect{\cite[Theorem 3.14]{kurnosov2019kuga}}]\label{thm65}
           If\, $(H,Q)$ is any quadratic vector space and $W$ is any representation of\, $\mathfrak g : = \mathfrak{so}(\tilde H, \tilde Q)$, where $(\tilde H, \tilde Q)$ is the Mukai completion of\, $(H,Q)$,  then there is a $C(H,Q)$-module $V$ with an invariant symmetric bilinear form $\tau$ such that $\bigwedge^\bullet V^*$ contains $W$ as a $\mathfrak g$-submodule.
           \end{theorem}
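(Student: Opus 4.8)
The plan is to prove this as a purely representation-theoretic statement about complex orthogonal Lie algebras, with no geometric input. Write $(\tilde H,\tilde Q)=(H\oplus\mathfrak h,\,Q\perp q_{\mathfrak h})$ for the Mukai completion, so $\mathfrak g=\mathfrak{so}(\tilde H,\tilde Q)$ and $\dim\tilde H=\dim H+2$. The argument splits into two essentially independent halves: first, manufacture a single $C(H,Q)$-module $V_0$ that simultaneously carries a compatible $\mathfrak g$-action and a nondegenerate invariant symmetric form; second, show that by passing to a large enough direct sum $V=V_0^{\oplus N}$ the exterior algebra $\bigwedge^\bullet V^*$ becomes ``rich enough'' to contain any prescribed finite-dimensional $\mathfrak g$-module $W$ as a submodule.

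For the first half I would realize $\mathfrak g$ inside a Clifford algebra. Under the identification $\mathfrak{so}(\tilde H,\tilde Q)\cong\mathfrak{spin}(\tilde H,\tilde Q)$, the Lie algebra $\mathfrak g$ sits inside the even Clifford algebra $C^{+}(\tilde H,\tilde Q)$, spanned by products $e_ie_j$. Now invoke the structure theory of complex Clifford algebras: $C(\tilde H,\tilde Q)$ is the graded tensor product of $C(H,Q)$ and $C(\mathfrak h)$, and $C(\mathfrak h)\cong\mathrm{End}(\mathbb C^{2})$; equivalently, $C^{+}(\tilde H,\tilde Q)$ is (a single, or a product of two copies of) a matrix algebra into which $C(H,Q)$ maps. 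Consequently a spinor module of $C(\tilde H,\tilde Q)$, viewed through this identification, is of the form $V_0\otimes\mathbb C^{2}$ for a $C(H,Q)$-module $V_0$, and the embedding $\mathfrak g\subset C^{+}(\tilde H,\tilde Q)$ endows this module — hence a finite sum of copies of $V_0$ — with a $\mathfrak g$-action, namely the spin representation of $\mathfrak g$. Spinor modules always carry a nondegenerate $\mathfrak{spin}$-invariant bilinear form $\tau$; if the parity of $\dim\tilde H$ forces $\tau$ to be alternating, I replace $V_0$ by $V_0\oplus V_0$ (still a Clifford module, still with the diagonal $\mathfrak g$-action) and take the hyperbolic symmetric form, so we may assume $\tau$ is symmetric.

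For the second half the key facts are formal. First, the spin representation is faithful on the group $\mathrm{Spin}(\tilde H,\tilde Q)$; since tensor powers of a faithful representation of a reductive group contain every irreducible representation (allowing duals), and since $\mathfrak g$ is semisimple so that $\det V_0$ is the trivial character and $V_0^{*}\cong\bigwedge^{\dim V_0-1}V_0\subset V_0^{\otimes(\dim V_0-1)}$, one concludes that \emph{every} finite-dimensional $\mathfrak g$-module embeds into some pure tensor power $(V_0^{*})^{\otimes a}$. Second, $\bigwedge^\bullet(V_0^{\oplus N})\cong\bigl(\bigwedge^\bullet V_0\bigr)^{\otimes N}$, and taking $\bigwedge^{1}$ in $a$ of the $N$ factors and $\bigwedge^{0}$ in the rest exhibits $V_0^{\otimes a}$ as a direct summand for every $a\le N$. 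Combining these: given $W$, choose $a$ with $W\hookrightarrow(V_0^{*})^{\otimes a}$, set $N\ge a$, and let $V=V_0^{\oplus N}$ with the induced symmetric form $\tau^{\oplus N}$; then $V$ is a $C(H,Q)$-module and $W\hookrightarrow\bigwedge^\bullet V^{*}$ as a $\mathfrak g$-submodule, which is the desired conclusion.

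The hard part will be the Clifford-algebra bookkeeping of the first half: correctly matching the parities of $\dim H$ and $\dim\tilde H$ modulo $8$, deciding whether one needs the full spinor module or a half-spinor module so that the $\mathfrak g$-action is actually well defined on it, and checking that the invariant bilinear form can in fact be arranged to be symmetric. One must also verify that the $\mathfrak g$-action used on $\bigwedge^\bullet V^{*}$ throughout is precisely the one functorially induced from the $\mathfrak g$-module $V$, and that the resulting inclusion $W\hookrightarrow\bigwedge^\bullet V^{*}$ is genuinely $\mathfrak g$-equivariant and not merely an isomorphism of underlying vector spaces. By contrast, the richness argument of the second half is robust and essentially formal once $V_0$ is in hand.
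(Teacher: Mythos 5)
You should note at the outset that the paper itself does not prove this statement; it is quoted verbatim from the reference \cite{kurnosov2019kuga}, so the comparison has to be with the content and proof intended there. The decisive issue is \emph{which} $\mathfrak g$-action on $\bigwedge^\bullet V^*$ the theorem refers to. In the cited result the action is the canonical one attached to the pair $(V,\tau)$: for $h\in H$ with $Q(h)\neq 0$ one forms the $2$-form $\omega_h\in\bigwedge^2 V^*$, $\omega_h(v,w)=\tau(h\cdot v,w)$, and the Lefschetz operators $L_{\omega_h}$, their duals and the grading operator generate a copy of $\mathfrak{so}\bigl((H,Q)\oplus\mathfrak h\bigr)$ acting on $\bigwedge^\bullet V^*$. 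This is precisely what makes the theorem usable in \S\ref{7.2}: there $\bigwedge^\bullet V^*$ is the cohomology of a torus $T$ with $H^1(T)=V^*$, the embedding $IH^*(X,\mathbb Q)\hookrightarrow \bigwedge^{\bullet+l}V^*$ must be equivariant for the total Lie algebras, compatible with the grading up to a shift, and the Weil operator must be the element $\mu=\gamma\gamma'$ of this algebra. Your proposal instead equips $V$ itself with a $\mathfrak g$-action (the spin representation obtained from $\mathfrak g\subset C^+(\tilde H,\tilde Q)$ and the identification $C(\tilde H,\tilde Q)\cong C(H,Q)\,\hat\otimes\, M_2(\mathbb C)$) and then uses the functorially induced action on the exterior algebra. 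These two actions are genuinely different: the functorial action preserves each exterior degree $\bigwedge^k V^*$, whereas the canonical action has nontrivial components shifting the degree by $\pm 2$ --- that degree shift is exactly where the hyperbolic plane of the Mukai completion enters. So the verification you defer at the end (``that the action used on $\bigwedge^\bullet V^*$ is the one functorially induced from $V$'') cannot succeed, and the containment you produce places $W$ inside the wrong $\mathfrak g$-module. A telling symptom is that in your argument the symmetric form $\tau$ plays no role in defining the action, whereas in the actual statement $\tau$ is the essential datum from which the action is built.

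The formal mechanism of your second half --- pass to $V=V_1^{\oplus N}$ so that exterior algebras multiply tensorially, and use that high tensor powers of a faithful self-dual module of a semisimple Lie algebra contain every irreducible --- is in fact the mechanism of the cited proof, but there it is applied to $\bigwedge^\bullet V_1^*$ with the canonical Lefschetz-type action (with $V_1$ a fixed Clifford module carrying an invariant symmetric form, e.g.\ the regular module), via the isomorphism $\bigwedge^\bullet(V_1\oplus V_2)^*\cong\bigwedge^\bullet V_1^*\otimes\bigwedge^\bullet V_2^*$ of $\mathfrak{so}(\tilde H,\tilde Q)$-modules. To repair your argument you would need, first, the preliminary structural theorem that any $(V,\tau)$ as in the statement produces such an $\mathfrak{so}(\tilde H,\tilde Q)$-action at all, and second, faithfulness and self-duality of $\bigwedge^\bullet V_1^*$ for \emph{that} action; once those are in place, the spin-module construction of your first half becomes unnecessary, and without them the first half does not engage the statement being proved.
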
 
           
           The construction is seen by applying  Theorem~\ref{thm65} to $(\IH^2(X, \mathbb C), Q_X)$, where $X$ is a primitive symplectic variety with isolated singularities and $b_2 \ge 5$.  We let $W = \IH^*(X, \mathbb C)$, which is a $\mathfrak g$-representation by Theorem~\ref{GenLLVthm}.  Then there exists a $C = C(\IH^2(X, \mathbb C), Q_X)$-module $V$ with an embedding 
$$
\IH^2(X, \mathbb C) \longhookrightarrow \displaystyle\extp^2V^*
$$
 and an embedding of $\mathfrak g$-modules 
$$
\IH^*(X, \mathbb C) \longhookrightarrow \displaystyle\extp^\bullet V^*.
$$
 Now $\mathfrak g$ induces a grading on $\IH^*(X, \mathbb C)$ and $\displaystyle\extp^\bullet V^*$, whence we have a degree $l$ morphism \begin{equation}\label{equation embedding clifford}
              \psi\colon \IH^*(X, \mathbb C) \longhookrightarrow \displaystyle\extp^{\bullet + l}V^* 
           \end{equation} of graded vector spaces for some $l$.  If we take $T = V/\Gamma$ for some lattice $\Gamma \subset V$, we get an embedding $\Psi\colon H^*(X, \mathbb C) \hookrightarrow H^{*+l}(T, \mathbb C)$ by (\ref{equation embedding clifford}).   
           
           As we have seen, the Hodge structure on $\IH^*(X, \mathbb R)$ is detected by the Weil operators $C_\sigma \in \overline{\mathfrak g} \subset \mathfrak g$, which are determined once we fix a point $[\sigma] \in \Omega$ in the period domain.  Let $\mu \in \overline{\mathfrak g} \cong \mathfrak{so}(\IH^2(X, \mathbb R), Q_X)$ be the corresponding skew-symmetric matrix of rank 2.  Let $W_\sigma' = \langle \gamma, \gamma'\rangle$ be the positive two-space corresponding to $\sigma$, where $\gamma = \mathfrak R(\sigma), \gamma' = \mathfrak I(\sigma)$ as in Section~\ref{4.2}.  Then $\mu = \gamma\gamma' \in \overline{\mathfrak g} \subset C$.  It acts trivially on the orthogonal complement to $W_\sigma$, and $\mu^2 = -1$ in the Clifford algebra.  Thus $\mu$ defines a complex structure on $H^1(T, \mathbb R)$, noting that $T$ is \textit{smooth}.
           
           Finally, since $T$ is a complex torus, the Hodge structures on $\IH^*(X, \mathbb Q)$ and $H^*(T, \mathbb Q)$ are both determined by the symplectic hard Lefschetz theorem (see Section~\ref{3}), which is compatible with the choice $C_\sigma$.  Therefore, the morphism $\Psi$ is compatible with the Hodge structures.

        \subsubsection{Polarized Kuga--Satake construction} \label{7.2.2}
        
        Just as in the case of compact hyperk\"ahler manifolds, the existence of a polarization in (intersection) cohomology induces a polarization on the complex torus $T$ described above.
        
        Let $X$ be a projective primitive symplectic variety and $b_2 \ge 5$.  Suppose that $h \in \IH^2(X, \mathbb Q)$ is an ample class on $X$, and let $h^\perp \subset \IH^2(X, \mathbb R)$ be the orthogonal complement of $h$ with respect to the intersection BBF form $Q_X$.  The torus $T$ is a quotient of some $V$, where (as in \cite{kurnosov2019kuga}), $V = V_1^{\bigoplus N}$ with $V_1 \cong C =  C(\IH^2(X, \mathbb R), Q_X)$.
        
        Constructing the polarization on $T$ is done as follows; see \cite[Section~4.2]{kurnosov2019kuga}.  It is enough to construct a polarization on $C_h : = C(h^\perp, q|_{h^\perp})$.  Note that $(h^\perp, q_{\perp})$ is of signature $(2,k)$ for some $k$.  Let $W_h = \langle\gamma,\gamma'\rangle \subset h^\perp$ be the subspace where $q|_{h^\perp}$ is positive.  By local Torelli, we may assume that $q|_{h^\perp}(\gamma,\gamma') = 0$.  Consider the product $a = \gamma_1\gamma_2 \in C(h^\perp, q|_{h^\perp})$.  For any $x,y \in C(h^\perp, q|_{h^\perp})$, we define 
$$
\sigma_a(x,y) : = \Tr(xa\overline{y}),
$$
 where $\Tr$ is the trace map corresponding to the algebra $C$ and $\overline{y}$ is the operator 
$$
\overline{y} = \alpha\beta(y),
$$
 where $\alpha$ is the natural parity involution on the Clifford algebra and $\beta$ is the anti-automorphism which sends a tensor $y_1\otimes\cdots\otimes y_k$ to $y_k\otimes\cdots\otimes y_1$.
        
        By \cite[Proposition 4.2]{kurnosov2019kuga}, either $\sigma_a$ or $-\sigma_a$ is a polarization on $C(h^\perp, q|_{h^\perp})$.  The proof holds here, as the statement holds for any quadratic vector space $(H,q)$.
      
        \subsection{The Mumford--Tate Algebra} \label{7.3}  There is a connection between the Mumford--Tate group of the intersection cohomology of a primitive symplectic variety and the LLV algebra.  In the compact hyperk\"ahler case, this was studied in \cite[Section~2]{green2019llv}.  Given the LLV structure theorem, similar results follow through with only minor adjustments. For convenience, we reference \textit{loc.\ cit.}~to indicate the corresponding statement in the hyperk\"ahler setting.

\begin{definition}
Let $V$ be a $\mathbb Q$-Hodge structure.  The \textit{special Mumford--Tate algebra} $\overline{\mathfrak{mt}}(V)$ of $V$ is the smallest $\mathbb Q$-algebraic Lie subalgebra of $\mathfrak{gl}(V)$ such that $\overline{\mathfrak{mt}}(V)_{\mathbb R}$ contains the Weil operator $C = i(p-q)\id$.  The \textit{Mumford--Tate algebra} is defined as 
$$
\mathfrak{mt}_0(V) = \overline{\mathfrak{mt}}(V)\oplus \mathbb Q\cdot H,
$$
 where $H$ is the weight operator.
\end{definition}

\begin{proposition} \label{gen MT}
Let $X$ be a primitive symplectic variety with at worst $\mathbb Q$-factorial isolated singularities and $b_2 \ge 5$.  Let $\overline{\mathfrak m} = \overline{\mathfrak{mt}}(\IH^*(X, \mathbb Q))$ be the special Mumford--Tate algebra of the pure Hodge structure $\IH^*(X, \mathbb Q)$.  Then $\overline{\mathfrak m} \subset \overline{\mathfrak g}$, with equality if\, $X$ is very general.
\end{proposition}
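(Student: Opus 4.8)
The plan is to follow the hyperk\"ahler argument of \cite[\S 2]{green2019llv}, proving the two assertions in turn.

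\emph{The inclusion} $\overline{\mathfrak m}\subseteq\overline{\mathfrak g}$ \emph{is the formal half.} By Theorem \ref{QfactorialLLV} the LLV algebra acts $\mathbb Q$-rationally on $IH^*(X,\mathbb Q)$ and is the Lie algebra of the $\mathbb Q$-algebraic group $SO((H^2(X,\mathbb Q),q_X)\oplus\mathfrak h)$ acting through the LLV representation; hence its semisimple part $\overline{\mathfrak g}\cong\mathfrak{so}(H^2(X,\mathbb Q),q_X)$ is a $\mathbb Q$-algebraic Lie subalgebra of $\mathfrak{gl}(IH^*(X,\mathbb Q))$. By Corollary \ref{intersectionVerbitskyresult} (compare Proposition \ref{so(4)-action} and (\ref{weil})) the Weil operator $C_\sigma=i(p-q)\mathrm{id}=[L_\gamma,\Lambda_{\gamma'}]$ of the Hodge structure $IH^*(X,\mathbb Q)$ is a commutator of real operators in $\mathfrak g$, so $C_\sigma\in\mathfrak g_{\mathbb R}$; it has degree $0$ for the grading (\ref{LLVdecomposition}), and since the $(p,q)$-decomposition of $IH^*(X,\mathbb C)$ is multiplicative, $C_\sigma$ acts by derivations and therefore lies in the derivation part $\overline{\mathfrak g}\subset\mathfrak g_0$ by Proposition \ref{semisimplederivations}. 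Thus $\overline{\mathfrak g}_{\mathbb R}$ contains $C_\sigma$, and minimality of $\overline{\mathfrak m}=\overline{\mathfrak{mt}}(IH^*(X,\mathbb Q))$ gives $\overline{\mathfrak m}\subseteq\overline{\mathfrak g}$.

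\emph{For equality at a very general $X$} I would invoke the comparison between algebraic monodromy and Mumford--Tate groups. The intersection cohomology groups $IH^*(X_t,\mathbb Q)$, $t\in\mathrm{Def}^{\mathrm{lt}}(X)$, form a local system by Proposition \ref{intloc} and underlie a polarizable $\mathbb Q$-variation of Hodge structure by Saito's theory. A locally trivial deformation transports the whole LLV package, so the monodromy on $IH^*$ factors through $\mathrm{Mon}(X)\subseteq O(\Gamma)$ acting via the LLV representation; by Remark \ref{monodromyremark} the restricted monodromy $G_X=\mathrm{Mon}(X)\cap SO(\Gamma)$ is Zariski dense in $SO(\Gamma_{\mathbb C})$, so the Zariski closure of the image of $G_X$ in $GL(IH^*(X,\mathbb C))$ is the image of $SO(\Gamma_{\mathbb C})$ acting on $IH^*(X,\mathbb C)$ through the LLV representation. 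As $\overline{\mathfrak g}\cong\mathfrak{so}(b_2)$ is simple for $b_2\ge 5$ and acts nontrivially, this representation has trivial kernel on the level of Lie algebras, so the connected algebraic monodromy group of the $IH^*$-variation has Lie algebra exactly $\overline{\mathfrak g}$. By Andr\'e's theorem this connected monodromy group is a normal subgroup of the derived Mumford--Tate group of a very general fibre, and at such a fibre the Mumford--Tate group is the generic one; hence $\overline{\mathfrak g}\subseteq\overline{\mathfrak m}$, and combined with the first part $\overline{\mathfrak m}=\overline{\mathfrak g}$.

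\emph{Expected main obstacle.} The formal inclusion and the Lie-theoretic bookkeeping are routine; the real content is the middle step of the last paragraph. Specifically, I expect the two delicate points to be: (i) verifying that $IH^*$ genuinely underlies a polarizable $\mathbb Q$-variation of Hodge structure over $\mathrm{Def}^{\mathrm{lt}}(X)$ to which Andr\'e's normality theorem applies; and (ii) making rigorous the passage from the monodromy density statement for $IH^2$ (Theorem \ref{monodensity}, Remark \ref{monodromyremark}) to the assertion that the algebraic monodromy of the $IH^*$-variation is all of $\overline{\mathfrak g}$ rather than a proper subalgebra. Once the algebraic monodromy is identified with $\overline{\mathfrak g}$, the remainder goes through exactly as in \cite[\S 2]{green2019llv}.
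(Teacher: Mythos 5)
Your first half (the inclusion $\overline{\mathfrak m}\subseteq\overline{\mathfrak g}$) is essentially the paper's argument, with one small patch needed: Proposition \ref{semisimplederivations} says that $\overline{\mathfrak g}$ acts by derivations, not that a degree-zero derivation lies in $\overline{\mathfrak g}$, which is the direction you use. The converse is easy but should be said: writing $C_\sigma=\bar x+cH$ with $\bar x\in\overline{\mathfrak g}_{\mathbb R}$, both $C_\sigma$ and $\bar x$ are derivations while $H$ is not (it does not annihilate $1\in IH^0$), so $c=0$. The paper instead lands $C_\sigma$ in $\overline{\mathfrak g}$ by the identity $C_\sigma=[L_\gamma,\Lambda_{\gamma'}]=-\tfrac12[[L_\alpha,\Lambda_\gamma],[L_\alpha,\Lambda_{\gamma'}]]$, i.e.\ by exhibiting it as an element of the derived subalgebra of $\mathfrak g_0$; either mechanism works.

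The equality statement is where your route has a genuine gap, and it is exactly the step you flag in (ii): the claim that the Zariski closure $\mathbf M\subset GL(IH^*(X,\mathbb C))$ of the monodromy is the image of $SO(\Gamma_{\mathbb C})$ under the LLV representation does not follow from Remark \ref{monodromyremark}, which only controls the action on $IH^2$. What density gives you is that the restriction homomorphism $\mathbf M\to GL(IH^2)$ has image containing $SO(\Gamma_{\mathbb C})$, i.e.\ a \emph{quotient} of $\mathrm{Lie}\,\mathbf M$ isomorphic to $\overline{\mathfrak g}$; to conclude $\overline{\mathfrak g}\subseteq\overline{\mathfrak m}$ you need the LLV-embedded copy $\overline{\mathfrak g}\subset\mathfrak{gl}(IH^*)$ itself to sit inside $\mathrm{Lie}\,\mathbf M$ (or inside the derived Mumford--Tate algebra), and a priori the monodromy action on the higher $IH^k$ need not factor through the LLV representation; the compatibility $gL_\alpha g^{-1}=L_{g\cdot\alpha}$ constrains $\mathbf M$ but does not by itself produce the containment. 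There is also the base-point issue for Andr\'e's theorem: $\mathrm{Def}^{\mathrm{lt}}(X)$ is a germ with trivial monodromy, so you must work over the marked moduli space $\mathscr M$, a non-Hausdorff, non-algebraic complex manifold, and justify the fixed-part/normality theorem there. The paper avoids both difficulties by a different reduction: it first proves $\overline{\mathfrak{mt}}(IH^k(X,\mathbb Q))=\overline{\mathfrak m}$ for every $k$, using that $\rho_k:\overline{\mathfrak g}\to\mathfrak{gl}(IH^k)$ is injective --- a purely representation-theoretic fact resting on the Verbitsky component (Theorem \ref{verbitsky component} and Proposition \ref{intersectionbogomolovresult}, following \cite{green2019llv}); then for a very general $X$ it quotes the Noether--Lefschetz/generic Mumford--Tate computation for weight-two Hodge structures of hyperk\"ahler type (\cite{green2012mumford}) together with local Torelli to get $\overline{\mathfrak{mt}}(H^2(X,\mathbb Q))\cong\mathfrak{so}(H^2(X,\mathbb Q),q_X)$, and concludes using $IH^2=H^2$ in the $\mathbb Q$-factorial case and the inclusion already proved. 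If you want to keep your monodromy-theoretic route, you would have to supply an argument identifying the algebraic monodromy of the total $IH^*$ local system with the LLV group, which is precisely the content your sketch leaves open.
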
 

\begin{proof}
The proof is similar to that of \cite[Proposition 2.38]{green2019llv}, although the main input is that the Weil operator $C_\sigma$ with respect to a (fixed) Hodge structure on $\IH^*(X, \mathbb Q)$ is contained in the semisimple part $\overline{\mathfrak g}$ of~$\mathfrak g_0$.  The proof of \textit{loc.\ cit.}~uses the hyperk\"ahler structure, so we indicate how this works algebraically.

Recall that if we fix the point $[\sigma] \in \Omega$ in the period domain, then there is a pair $(\gamma, \gamma')$ of non-isotropic classes in $H^2(X, \mathbb R)$ such that $[L_\gamma, \Lambda_{\gamma'}] = C_\sigma$; see Corollary~\ref{intersectionVerbitskyresult}.  We also saw that by the surjectivity of the period map with respect to $H^2$,  this pair completes to a positive three-space  
$$
W_\sigma = \langle \alpha, \gamma, \gamma'\rangle \cong \mathfrak{so}(4,1).
$$
  It follows that the operators satisfy $[L_\alpha, \Lambda_\gamma] = [L_\alpha, \Lambda_{\gamma '}] = C_\sigma$ as well.  Using the commutativity of the dual Lefschetz operators, one can show that 
$$
C_\sigma = [L_\gamma, \Lambda_{\gamma'}] = -\frac{1}{2}\left[[L_\alpha, \Lambda_\gamma],[L_\alpha, \Lambda_{\gamma '}]\right] \in \overline{\mathfrak g}; 
$$
 see \cite[Proposition 2.24]{green2019llv} for the computation.  This shows that $\overline{\mathfrak{mt}}(\IH^*(X, \mathbb Q)) \subset \overline{\mathfrak g}_{\mathbb Q}$ since $\overline{\mathfrak{mt}}(\IH^*(X, \mathbb Q))_{\mathbb R}$ is the smallest subalgebra to contain the Weil operator $C_\sigma$.

For the statement regarding a general primitive symplectic variety, the proof follows as in \cite[Proposition 2.38]{green2019llv}, and we sketch the main details.  The key observation is to notice that the special Mumford--Tate group of any $\IH^k(X, \mathbb Q)$ is 
\begin{equation}\label{general MTA}
    \overline{\mathfrak{mt}}(\IH^k(X, \mathbb Q)) = \overline{\mathfrak m}. 
\end{equation}
This follows as the $\overline{\mathfrak g}$-module structure on $\IH^k(X, \mathbb Q)$ is determined by the composition 
$$
\rho_k\colon\overline{\mathfrak g} \subset \mathfrak{gl}(\IH^*(X, \mathbb Q)) \lra \mathfrak{gl}(\IH^k(X, \mathbb Q)).
$$
  In the smooth case, this map is shown to be injective; see \cite[Corollary 2.36]{green2019llv}.  This follows in the singular case, however, since the proof only depends on the representation theory of the Verbitsky component $V_{(n)} : = \SH^2(X, \mathbf C)$ (this is \cite[Proposition 2.35]{green2019llv}), which is identical to the smooth case by Theorem~\ref{verbitsky component} and Proposition~\ref{intersectionbogomolovresult}.  It follows that $\overline{\mathfrak g}$ and $\rho_k(\overline{\mathfrak g})$ are isomorphic.  Since $\overline{\mathfrak{mt}}(\IH^k(X, \mathbb Q))$ is the smallest $\mathbb Q$-algebraic subgroup such that $\rho_k(C_\sigma) \in \overline{\mathfrak{mt}}(\IH^k(X, \mathbb Q))$, we see that (\ref{general MTA}) holds. 

By the Noether--Lefschetz theory of period domains of Hodge structures of hyperk\"ahler type (see \cite{green2012mumford}), it follows that a very general primitive symplectic variety with $b_2 \ge 5$ must satisfy 
$$
\overline{\mathfrak{mt}}(H^2(X, \mathbb Q)) \cong \mathfrak{so}(H^2(X, \mathbb Q), q_X),
$$
 noting that $H^2(X, \mathbb Q)$ satisfies the local Torelli theorem; see \cite[Proposition 5.5]{bakker2018global}.
\end{proof} 

\section{Weak \texorpdfstring{$\boldsymbol{P = W}$}{P=W} for Primitive Symplectic Varieties} \label{8}

    One of the more interesting applications of the LLV algebra for compact hyperk\"ahler manifolds involves the $P = W$ conjecture.  Given a degeneration $\mathscr X \to \Delta$ of a compact hyperk\"ahler manifold $X$, the cohomology groups $H^k(X, \mathbb C)$ inherit a weight filtration from the limit mixed Hodge structure on the unique singular fiber.  There is an induced (logarithmic) monodromy operator $N \in \mathfrak{so}(H^2(X, \mathbb Q), q) \cong \overline{\mathfrak g}$, which is nilpotent of index either 2 or 3.  We say a degeneration is of type III if $N$ has index 3.
    
    Any compact hyperk\"ahler manifold admits a type III degeneration; see \cite{soldatenkov2018limit}.  The $P = W$ conjecture for Lagrangian fibrations states that the induced weight filtration from the limit mixed Hodge structure agrees with the perverse filtration when $X$ admits a Lagrangian fibration.  This was answered positively in \cite{harder2021p} by showing that the data of these filtrations agree with the Hodge filtration induced from a positive three-space $W_g$ corresponding to a hyperk\"ahler metric $g$.
    
    We can form an analog of the $P = W$ conjecture for primitive symplectic varieties admitting a Lagrangian fibration, relating the data of a Lagrangian fibration to the filtration induced by the logarithmic monodromy operator $N$ of a type III degeneration.  We expect that the filtration induced by $N$ agrees with a limit mixed Hodge structure for intersection cohomology, although there is a subtle issue describing the intersection cohomology in terms of a variation of pure Hodge structures rather than the underlying pure Hodge module.  This will be explored in future work.
    
    \subsection{Perverse $\boldsymbol{=}$ Hodge}

    The LLV algebra for a compact hyperk\"ahler manifold detects the information of a Lagrangian fibration.  We outline how some of these results hold in the case of primitive symplectic varieties, which is based purely on the work of Shen--Yin \cite{shen2022topology}; see also \cite{huybrechts2022lagrangian} for a survey and \cite{felisetti2022intersection} in the case of primitive symplectic varieties admitting a symplectic resolution.

    \subsubsection{Perverse filtration on the cohomology of a Lagrangian fibration}

    Throughout this section, we assume that $X$ is a primitive symplectic variety of dimension $2n$, admitting a Lagrangian fibration $f\colon X \to B$ to a projective base $B$ of dimension $n$. 
    
    Given a projective morphism $f\colon X \to B$, there is a natural filtration on the cohomology of $X$ induced from the images of the truncated complexes of the perverse $t$-structure associated to the morphism $f$: 
$$
P_mH^k(X, \mathbb C) = \im\left(\mathbb H^{k-2n}(B, {}^{\mathfrak p}\tau_{\le m}(\mathbf Rf_*(\mathcal{IC}_X\otimes \mathbb C)[-2n])) \lra H^k(X, \mathbb C)\right).
$$
 The filtration is completely determined by an ample class on the base $B$.  Indeed, if $\alpha \in H^2(B, \mathbb R)$ is ample and $\beta = f^*\alpha$, then 
$$
P_mH^k(X, \mathbb C) = \sum_i\left(\ker\left(L_\beta^{2n + m + i - k}\right)\cap \im\left(L_\beta^{i-1}\right)\right)\cap \IH^k(X, \mathbb C), 
$$
 where $L_\beta$ is the cupping operator, see \cite[Proposition 5.2.4]{de2005hodge}.
    
    By the Fujiki relations on the BBF form $q_X$, the pullback $\beta$ is $q_X$-isotropic and therefore $Q_X$-isotropic.  For any isotropic class $\mu \in \IH^2(X, \mathbb C)$, we may define an analogous filtration 
$$
P_m^\mu H^k(X, \mathbb C) = \sum_i\left(\ker\left(L_\mu^{2n + m + i - k}\right)\cap \im\left(L_\mu^{i-1}\right)\right)\cap \IH^k(X, \mathbb C).
$$

    \begin{lemma} \label{lemma invariance perverse dimension}
    For any $Q_X$-isotropic classes $\mu_1,\mu_2$, we have 
$$
\dim P_m^{\mu_1}\IH^k(X, \mathbb C) = \dim P_m^{\mu_2}\IH^k(X, \mathbb C).
$$

    \end{lemma}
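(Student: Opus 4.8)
The plan is to realize the two filtrations as translates of one another under the natural group action coming from the LLV structure theorem. Note first that a nonzero $Q_X$-isotropic class exists only when $Q_X$, of signature $(3,B_2-3)$, is actually isotropic (so $B_2\ge 4$), and that if some $\mu_i=0$ then $L_{\mu_i}=0$ and there is nothing to prove; so I may assume $\mu_1,\mu_2$ are nonzero and isotropic.

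First I would recall from the LLV structure theorem (Theorem \ref{GenLLVthm}, the decomposition (\ref{LLVdecomposition}), and Proposition \ref{semisimplederivations}) that the semisimple part $\overline{\mathfrak g}\cong\mathfrak{so}(IH^2(X,\mathbb C),Q_X)$ of $\mathfrak g_0$ acts on $IH^*(X,\mathbb C)$ by derivations, and that under the canonical identification $\mathfrak g_2\cong IH^2(X,\mathbb C)$, $\alpha\mapsto L_\alpha$, this action corresponds to the defining representation of $\mathfrak{so}(IH^2(X,\mathbb C),Q_X)$ on $IH^2(X,\mathbb C)\subset IH^*(X,\mathbb C)$; concretely $[\xi,L_\alpha]=L_{\xi\cdot\alpha}$ for $\xi\in\overline{\mathfrak g}$, since $\xi$ acts as a derivation. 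Exponentiating the derivation action, the connected complex group $G:=SO(IH^2(X,\mathbb C),Q_X)$ acts on $IH^*(X,\mathbb C)$ by graded $\mathbb C$-algebra automorphisms whose restriction to the degree-two piece is the standard representation of $G$.

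Next I would simply unwind the definition of $P_m^\mu$. For $g\in G$, being a ring automorphism gives $g\,L_\mu\,g^{-1}=L_{g\mu}$, hence $g\,L_\mu^a\,g^{-1}=L_{g\mu}^a$ for all $a\ge 0$; as $g$ is invertible and preserves the grading it carries $\ker(L_\mu^{2n+m+i-k})\cap\im(L_\mu^{i-1})\cap IH^k(X,\mathbb C)$ isomorphically onto the same expression with $\mu$ replaced by $g\mu$, and summing over $i$ yields $g(P_m^\mu IH^k(X,\mathbb C))=P_m^{g\mu}IH^k(X,\mathbb C)$, so these have equal dimension. To conclude it then remains to produce $g\in G$ with $g\mu_1=\mu_2$: by Witt's extension theorem the isometry between the (zero-form) lines $\mathbb C\mu_1$ and $\mathbb C\mu_2$ extends to an isometry of $(IH^2(X,\mathbb C),Q_X)$, and composing if necessary with the reflection in a vector $v\in\mu_2^{\perp}$ with $Q_X(v)\ne 0$ — which exists since $Q_X$ is non-degenerate and indefinite, and which fixes $\mu_2$ — puts it in $G$. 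Equivalently, the set of nonzero isotropic vectors of a non-degenerate complex quadratic space of dimension $\ge 3$ is a single $SO$-orbit.

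The only delicate step is the first one — integrating the derivation action of $\overline{\mathfrak g}$ to a genuine action of the group $G$ by graded algebra automorphisms and checking that its degree-two part is the standard representation — but this is a standard consequence of the LLV structure together with $\overline{\mathfrak g}$ acting by derivations; everything after it is formal linear algebra, so I do not expect a real obstacle.
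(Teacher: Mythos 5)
Your proof is correct, but it takes a genuinely different route from the paper's. The paper does not touch the LLV machinery here: it quotes the statement for classes $\mu_1,\mu_2\in H^2(X,\mathbb C)$ from \cite{felisetti2022intersection}, which settles the $\mathbb Q$-factorial case, and then transfers it to $IH^*(X,\mathbb C)$ by pulling the classes back along a $\mathbb Q$-factorial terminalization $\phi\colon Z\to X$ and using that $\phi$ is semismall (Corollary \ref{Qfactorialterminalsemismall}), so that $\phi_*IC_Z$ is perverse and the filtration upstairs determines the one downstairs. You instead re-prove the statement directly from Theorem \ref{GenLLVthm}: the identity $[\xi,L_\alpha]=L_{\xi\cdot\alpha}$ does follow from Proposition \ref{semisimplederivations} together with the fact that the isomorphism $\overline{\mathfrak g}\cong\mathfrak{so}(IH^2(X,\mathbb C),Q_X)$ is defined precisely by the degree-two action, and then conjugating the defining formula for $P_m^{\mu}$ plus Witt transitivity of $SO$ on nonzero isotropic vectors finishes the argument. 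Two small caveats, neither fatal: the $\overline{\mathfrak g}$-representation on $IH^*(X,\mathbb C)$ may a priori integrate only to the spin cover rather than to $SO(IH^2(X,\mathbb C),Q_X)$ itself, but this is harmless --- for a given isometry $h$ with $h\mu_1=\mu_2$ write $h$ as a product of exponentials $\prod_i e^{\xi_i}$ and let $g=\prod_i e^{\rho(\xi_i)}$ act on $IH^*(X,\mathbb C)$; only the conjugation formula $gL_{\mu}g^{-1}=L_{h\mu}$ is used, and that factors through $SO$. Also your parenthetical that the case $\mu_i=0$ is ``nothing to prove'' is not accurate (if exactly one class vanishes the two filtrations genuinely differ), but the lemma is only meaningful, and only applied, for nonzero classes such as $f^*\alpha$ and $\overline\sigma$. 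As for what each approach buys: yours is self-contained within the paper and makes transparent that the lemma is a purely representation-theoretic consequence of the $\mathfrak{so}((IH^2,Q_X)\oplus\mathfrak h)$-module structure, at the price of inheriting the hypotheses under which that structure is established (isolated singularities, $b_2\ge 5$ via monodromy density); the paper's argument uses none of that machinery, deferring the $H^2$-case to the cited result and adding only the semismall transfer across a terminalization, which is why it is stated before, and independently of, the $P=W$ discussion that relies on the LLV algebra.
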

    
    \begin{proof}
    For classes $\mu_1,\mu_2 \in H^2(X, \mathbb C)$, this is \cite[Proposition 1.2]{felisetti2022intersection}, and so the lemma holds in the $\mathbb Q$-factorial case.  By taking a $\mathbb Q$-factorial terminalization $\phi\colon Z \to X$ (noting that $X$ is projective), we see that $\dim P_m^{\phi^*\mu_1}H^k(Z, \mathbb C) = \dim P_m^{\phi^*\mu_2}H^k(Z, \mathbb C)$.  But this implies the statement of the theorem as $\phi\colon Z \to X$ is semismall.  Indeed, we must have that ${}^{\mathfrak p}\mathscr H^j(\phi_*\mathcal{IC}_Z[2n]) = 0$ for every $j \ne 0$, so the induced perverse filtration with respect to the morphism $\phi\colon Z \to X$ is trivial, and the dimension of the perverse filtration will be determined by the pullback.
    \end{proof}
    
    Now consider the filtration $P_m^{\overline \sigma}\IH^k(X, \mathbb C)$ associated to the antiholomorphic symplectic form $\overline \sigma$.  It is an increasing filtration which detects the Hodge filtration by Theorem~\ref{symplecticsymmetry}.  Specifically, 
$$
P_m^{\overline \sigma}\IH^k(X, \mathbb C) = \bigoplus_{p \le m} \IH^{p, m-p}(X).
$$
  We therefore see that the Hodge numbers equal the perverse Hodge numbers: 
$$
\dim \IH^{p,q}(X) = {}^{\mathfrak p}h^{p,q} : = \dim \gr_p^P \IH^m(X, \mathbb C).
$$

    \subsubsection{A Lefschetz class corresponding to $\boldsymbol{(\beta, \eta)}$} \label{subsubsection data lagrangian}  As in the smooth case, the data of a $\mathbb Q$-factorial terminal primitive symplectic variety admitting Lagrangian fibration is encoded in the LLV algebra; see \cite{shen2022topology,harder2021p}.  Let $\beta$ be as above (which is $q_X$-isotropic by the Fujiki relations), and let $\eta \in H^2(X, \mathbb Q)$ be an $f$-relative ample class.  By replacing $\eta$ with a $\mathbb Q$-linear combination of $\eta$ and $\beta$ as needed, we may assume $q_X(\eta) = 0$.  By global Torelli, there is a class $\rho \in H^2(X, \mathbb Q)$ such that 
$$
q_X(\rho) > 0, \quad q_X(\eta, \rho) = q_X(\beta, \rho) = 0.
$$
  The corresponding Lie algebra $\mathfrak g_{\rho}$ generated by the simultaneous $\mathfrak{sl}_2$-triples induced by $\rho, \beta, \eta$ naturally sits inside $\overline{\mathfrak g}\cong \mathfrak{so}(H^2(X, \mathbb C), q_X)$; see \cite[Equation (5)]{harder2021p}.  The Lie algebra $\mathfrak g_\rho$ inherits $\IH^*(X, \mathbb C)$ with the structure of an $\mathfrak{so}(5)$-representation.

    By \cite[Proposition 1.1]{shen2022topology}, there is a canonical splitting of the perverse filtration: \begin{equation}\label{equation perverse splitting} P_l^\beta \IH^k(X, \mathbb Q) = \bigoplus_{p + q = l} P^{p,q}.\end{equation} We note that the proof is stated for compact hyperk\"ahler manifolds, but it is completely algebraic.  The contents of Sections~\ref{4} and~\ref{5} immediately imply (\ref{equation perverse splitting}).

    \subsection{Degenerations}\label{section type III} Soldatenkov's proof of the existence of maximally unipotent degenerations is based purely on lattice theory and knowledge of the period domain of compact hyperk\"ahler manifolds.  As primitive symplectic varieties satisfy global Torelli, the existence of degenerations for locally trivial families will follow exactly as in the smooth case.
    
    \begin{definition}
    Let $X$ be a primitive symplectic variety.  A \textit{degeneration} of $X$ is a flat proper morphism $g\colon\mathscr X \to \Delta$ of complex analytic spaces such that 
    \begin{enumerate}
        \item for some $t \in \Delta^*$, the fiber satisfies $\mathscr X_t \cong X$; 
        \item the restriction $g'\colon \mathscr X^* \to \Delta^*$ is a locally trivial deformation; and
        \item the monodromy action on the $H^2(\mathscr X_t, \mathbb Q)$ is unipotent and non-trivial.
    \end{enumerate}
    We say that a degeneration is \textit{projective} if $g$ is a projective morphism.
    \end{definition}

    If $g$ is a degeneration of $X$, then every fiber $\mathscr X_t$ is also primitive symplectic, and each fiber is $\mathbb Q$-factorial terminal if $X$ is. Moreover, $R^2g'_*\mathbb Z$, where $g'\colon\mathscr X^* \to \Delta^*$ is the restriction, is a local system as we restrict ourselves to locally trivial deformations.  In particular, we get a variation of pure Hodge structures $\mathscr V$, where each fiber is isomorphic to $\Gamma_{\mathbb Z} \cong H^2(\mathscr X_t, \mathbb Z) \cong H^2(X, \mathbb Z)$.  If $h \in \Gamma_{\mathbb Z}$ is the class of a polarization, let $\mathscr V^h$ be the $q_X$-orthogonal complement of the bilinear pairing induced by the Beauville--Bogomolov--Fujiki form on~$\mathscr V$.  It forms a local system with fiber equal to the $q$-complement of $h$, that is, $\Gamma_{\mathbb Z}^h$.

    \subsubsection{Limit mixed Hodge structure for intersection cohomology} \label{subsubsection lmhs} The weight filtration induced by a degeneration of primitive symplectic varieties is a consequence of Schmid's work on the \textit{limit mixed Hodge structure}; see \cite{schmid1973variation}.  Let $\mathscr V$ be an (integral) variation of pure Hodge structures over $\Delta^*$ admitting a maximally unipotent monodromy operator $T$, and let $V_0 = \overline{\mathscr V}_0$ be the fiber of the unique extension of $\mathscr V$ over $\Delta$.  For each $k \in \mathbb Z_{ \ge 0}$, the corresponding log-monodromy operator $N$ defines a unique increasing weight filtration $W^\bullet_k$ satisfying the following properties: \begin{enumerate}
        \item $NW^j_k \subset W^{j-2}_k$ for $j \ge 2$, and
        \item the induced map $N^l\colon \gr_{j + l}^{W_k}V_0 \to \gr_{j-l}^{W_k}V_0$ is an isomorphism for all $l \ge 0$; 
    \end{enumerate} see \cite[Lemma 6.4]{schmid1973variation}. In particular, the triple $(V_0, W_k^\bullet, \overline{\mathcal F}^\bullet_0)$ gives rise to a mixed Hodge structure, where $\overline{\mathcal F}^\bullet$ is a holomorphic extension of the Hodge bundle $\mathcal F^\bullet$ underlying $\mathscr V$. 

    The main application of the nilpotent weight filtration is on the cohomology of $X$ induced by a degeneration of compact K\"ahler manifolds.  Extending Proposition~\ref{intloc}, we can describe a limit mixed Hodge structure on the intersection cohomology of the central fiber of a degeneration $g\colon \mathscr X \to \Delta$ of a primitive symplectic variety $X$.  By definition, the locally trivial family $g'\colon \mathscr X^* \to \Delta^*$ admits a simultaneous resolution of singularities $f'\colon \mathscr Y^* \to \Delta^*$; see \cite[Lemma 4.9]{bakker2018global}. For each $k$, let $\mathscr{H}_{\mathscr Y}^k$ be the variation of pure Hodge structures with local system $R^kf'_*\mathbb Q_{\mathscr Y}$.  If $\mathbb{IH}^k$ is the local system determined by the intersection cohomology of the fibers $(g')^{-1}(t)$, the decomposition theorem (see Proposition~\ref{intprops}) implies that $\mathbb{IH}^k$ underlies a sub-variation of pure Hodge structures $\mathscr H_{\mathscr X}^k$ of $\mathscr H_{\mathscr Y}^k$.  In particular, we have the following. 

    \begin{defthm}
        If $g\colon \mathscr X \to \Delta$ is a degeneration of a primitive symplectic variety $X$, there is a mixed Hodge structure, called the \textit{limit mixed Hodge structure}, on the intersection cohomology $\IH^k(X_\infty, \mathbb Q)$ of the canonical fiber $X_\infty$.
    \end{defthm}

    \subsubsection{Existence of type III degenerations}
    
    There is an induced monodromy transformation $\lambda \in \Aut(\mathscr V^h, q) \cong O(\Gamma_{\mathbb Z}^h, q)$ which, by definition, must be of the form $\lambda = e^N$, where $N \in \mathfrak{so}(\Gamma^h_{\mathbb Q},q)$, and, by \cite[Theorem 6.1]{schmid1973variation}, must be of index 2 or 3.  We say that a degeneration is \textit{maximally unipotent}, or has \textit{maximally unipotent monodromy}, if $N$ is of index 3.
    
    \begin{proposition} \label{type III existence}
    Let $X$ be a primitive symplectic variety with at worst $\mathbb Q$-factorial isolated singularities and $b_2 \ge 5$. There exists a projective degeneration of $X$ with maximally unipotent monodromy.
    \end{proposition}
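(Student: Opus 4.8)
The plan is to mimic Soldatenkov's construction \cite{soldatenkov2018limit} of maximally unipotent degenerations, replacing Verbitsky's Global Torelli theorem with the Bakker-Lehn moduli theory recalled in \S\ref{2.1.3}. Recall that $\Gamma = (H^2(X,\mathbb Z), q_X)$ has signature $(3, b_2-3)$, and that for a polarization class $h \in \Gamma$ the orthogonal complement $\Gamma^h$ has signature $(2, b_2-3)$. The first step is purely lattice-theoretic: produce a rational nilpotent $N \in \mathfrak{so}(\Gamma^h_{\mathbb Q}, q_X)$ of index $3$ --- that is, $N^3 = 0$ but $N^2 \neq 0$ --- whose monodromy weight filtration $W_\bullet$ has the shape of a type III limit, so that $\mathrm{gr}^W_4$ is one-dimensional and $N^2$ induces an isomorphism $\mathrm{gr}^W_4 \xrightarrow{\sim} \mathrm{gr}^W_0$. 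This is where $b_2 \geq 5$ enters: since $\Gamma^h_{\mathbb Q}$ is then indefinite of rank $\geq 4$ with $b_2 - 3 \geq 2$, by Hasse-Minkowski it contains a rational sublattice isometric to $U \oplus U$, on which one writes down an explicit index-$3$ element $N$ exactly as in \cite[\S 2]{soldatenkov2018limit}, extending by $0$ on the orthogonal complement.

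The second step is to realize this nilpotent orbit geometrically. Together with a suitably chosen point $F_0$ of the compact dual of the period domain, $N$ determines a nilpotent orbit $z \mapsto \exp(zN)\cdot F_0$ which, by Schmid's nilpotent orbit theorem \cite{schmid1973variation}, lies in $\Omega_{\Gamma^h}$ for $\mathrm{Im}\, z \gg 0$ and defines a polarized variation of Hodge structure of weight $2$ and K3 type over a punctured disk $\Delta^*$, with unipotent monodromy $e^N$ fixing $h$. Since $\mathrm{Mon}(X) \subset O(\Gamma)$ is of finite index (Theorem \ref{monodensity}), after replacing $N$ by a positive integer multiple --- which does not change its index --- we may assume $e^N \in \mathrm{Mon}(X)$, and then surjectivity of the Bakker-Lehn period map $p \colon \mathscr M \to \Omega_\Gamma$ lets us lift the resulting period arc to a holomorphic arc $\Delta^* \to \mathscr M$. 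Pulling back the universal family of $\Gamma$-marked locally trivial deformations gives a locally trivial family $\mathscr X^* \to \Delta^*$ of $\mathbb Q$-factorial terminal primitive symplectic varieties with $H^2$-monodromy $e^N$; because $h$ is fixed by the monodromy it remains an ample class on every fiber, so $\mathscr X^* \to \Delta^*$ is projective. Finally, filling in a central fiber using properness of a projective compactification of the moduli space of polarized primitive symplectic varieties of the given deformation type produces a flat proper degeneration $g \colon \mathscr X \to \Delta$ of $X$, projective after shrinking $\Delta$, whose logarithmic monodromy operator on $H^2$ is the index-$3$ nilpotent $N$.

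The main obstacle is the passage from the abstract polarized variation over $\Delta^*$ to an honest flat projective degeneration over the full disk: surjectivity of the period map only yields a family away from $0$, and in the singular setting one must ensure the central fiber can be filled in within the class of (flat limits of) primitive symplectic varieties while retaining a relative polarization. Following \cite{soldatenkov2018limit}, one does not need the central fiber to be symplectic --- only that the general fiber is a locally trivial deformation of $X$ and the monodromy is maximally unipotent --- so this reduces to boundedness of the relevant family together with properness of its moduli, both available for polarized $\mathbb Q$-factorial terminal projective primitive symplectic varieties. A secondary point, handled exactly as in \cite{soldatenkov2018limit}, is to arrange that the rational index-$3$ nilpotent carries the correct Hodge-theoretic weight filtration rather than merely the right Jordan type; this is achieved through the choice of the reference point $F_0$ in the compact dual.
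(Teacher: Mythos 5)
Your overall strategy is the same as the paper's: both adapt Soldatenkov's construction \cite[\S 4]{soldatenkov2018limit}, with Bakker--Lehn's finite-index monodromy and moduli theory standing in for the smooth global Torelli inputs. But two of your specific justifications do not hold as stated. First, the lattice step. It is false that an indefinite rational quadratic space of rank $\ge 4$ contains $U\oplus U$: Meyer/Hasse--Minkowski only gives isotropy in rank $\ge 5$, a rank-$4$ form of signature $(2,2)$ can be anisotropic (e.g.\ the norm form of an indefinite rational quaternion division algebra), and even an isotropic form need not have Witt index $2$. This bites exactly in the boundary case $b_2=5$, where $\Gamma^h_{\mathbb Q}$ has rank $4$: if you fix a polarization $h$ first, $h^{\perp}$ may contain no nonzero nilpotent of $\mathfrak{so}$ at all, since for any nonzero nilpotent $N$ with $N^k\ne 0=N^{k+1}$ one has $q(N^kx,N^kx)=\pm q(x,N^{2k}x)=0$, forcing an isotropic vector. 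The paper sidesteps this by citing Soldatenkov's Lemma 4.1, where $h$ and $N$ are chosen \emph{together}: pick an isotropic $u\in\Gamma_{\mathbb Q}$ (Meyer applies to $\Gamma_{\mathbb Q}$ itself, of rank $b_2\ge 5$ and signature $(3,b_2-3)$), then a positive class $h\in u^{\perp}$ realizable as a polarization, then a positive $v\in\{u,h\}^{\perp}$, and set $N(x)=q(u,x)v-q(v,x)u$; this $N$ has index $3$ and $q|_{\mathrm{im}\,N}$ is semi-positive with one-dimensional kernel. Note also that this last ``type III shape'' is a property of $N$ alone and cannot be arranged afterwards by the choice of the reference point $F_0$; the choice of $F_0$ only arranges the nilpotent-orbit condition $q(NF_0,N\overline{F_0})>0$, which is precisely the paper's open nonempty locus $\mathscr N\subset\Omega^h$.

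Second, the filling-in step. You invoke ``properness of a projective compactification of the moduli space of polarized primitive symplectic varieties''; no modular compactification carrying the required families is available in this generality, and it is not needed. As in Soldatenkov's Theorem 4.6 (and implicitly in the paper), once one has a locally trivial, polarized family over $\Delta^*$ whose monodromy fixes $h$, a power of the corresponding relatively ample line bundle embeds the family into $\mathbb P^N\times\Delta^*$ after shrinking the disc, and the closure in $\mathbb P^N\times\Delta$ is already the desired flat projective degeneration; properness of a moduli of the varieties themselves would in fact be in tension with the infinite-order unipotent monodromy you are constructing. Relatedly, be careful with the blanket appeal to ``surjectivity of the Bakker--Lehn period map'' and with the claim that $h$ ``remains ample on every fiber'': what the paper actually uses is the openness and nonemptiness of $\mathscr N$ together with Soldatenkov's lifting argument for the polarized, marked moduli space, and you should check that the exact statement you need is among the results of \cite{bakker2018global} rather than an import from the smooth twistor-line theory.
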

    
    \begin{proof}
    The proof follows as in \cite[Section~4]{soldatenkov2018limit} almost verbatim, and so we  only briefly indicate the main details.  Since $b_2 \ge 5$ and $(\Gamma_{\mathbb Z}, q)$ is of signature $(3,b_2-3)$ as in the smooth case, there exist a polarization $h \in \Gamma_{\mathbb Z}$ and an endomorphism $N \in \mathfrak{so}(\Gamma_{\mathbb Q}^h, q)$ of index 3; see \cite[Lemma 4.1]{soldatenkov2018limit}.  Moreover, the restriction of~$q$ to the image of $N$ is semi-positive with one-dimensional kernel.
    
    Let $\Omega$ be the period domain with respect to $(\Gamma_{\mathbb Z}, q)$, and let $\widehat{\Omega}$ be the compact dual.  The polarization defines a period domain $\Omega^h$ with compact dual $\widehat{\Omega}^h$.  For $N \in \mathfrak{so}(\Gamma_{\mathbb Q}^h, q)$ and $x \in \widehat{\Omega}^h$, Soldatenkov defines the pair $(N,x)$ to be a \textit{nilpotent orbit} if $e^{itN} \in \Omega^h$ for every $t >> 0$.  Equivalently, see \cite[Lemma 4.4]{soldatenkov2018limit}, $(N,x)$ is a nilpotent orbit if and only if $q(Nx, N\overline{x}) > 0$, and the image of such points in $\widehat{\Omega}^h$ therefore defines a non-empty open subset.  For such a nilpotent monodromy operator $N$ of index 3, let $\mathscr N = \{x \in \Omega^h\mid(N,x)~\mathrm{is~a~nilpotent~orbit}\}$.  It is open and non-empty, whence we get an open subset of the period domain which corresponds to this nilpotent operator $N$.  As in \cite[Theorem 4.6]{soldatenkov2018limit}, this open subset predicts a degeneration $\mathscr X \to \Delta$ of $X$ with logarithmic monodromy operator $N$.
    \end{proof}   
    
    \subsection{Singular $\boldsymbol{P = W}$ Theorem} We can now state a singular version of the Lagrangian $P = W$ conjecture, as follows. 
    
    \begin{theorem}
      Let $X$ be a $\mathbb Q$-factorial terminal primitive symplectic variety with isolated singularities
      and $b_2 \ge 5$.  If $f\colon X \to B$ is a Lagrangian fibration, the perverse filtration $P^\beta$ on $\IH^*(X, \mathbb C)$ with respect to the pullback of an ample class on $B$ agrees with the weight filtration $W_N$ on $\IH^*(X_\infty, \mathbb C)$ with respect to the logarithmic monodromy operator $N$ of a type III degeneration $\mathscr X \to \Delta$ of\, $X$.
    \end{theorem}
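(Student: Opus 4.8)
The plan is to adapt the arguments of Soldatenkov \cite{soldatenkov2018limit} and Harder--Li--Shen--Yin \cite{harder2021p}, using the LLV structure theorem for intersection cohomology (Theorem \ref{GenLLVthm}) in place of Verbitsky's global Torelli theorem. The starting observation is that \emph{both} filtrations are governed by nilpotent operators lying in the semisimple part $\overline{\mathfrak g} \cong \mathfrak{so}(IH^2(X,\mathbb C), Q_X)$ of the LLV algebra. For $W_N$ this is essentially the definition: since $\Gamma^h_{\mathbb Q} \subset H^2(X,\mathbb Q) \subset IH^2(X,\mathbb Q)$ and $Q_X|_{H^2} = q_X$ (Lemma \ref{IntBBFrest}), the logarithmic monodromy $N \in \mathfrak{so}(\Gamma^h_{\mathbb Q}, q)$ extends by zero to an element of $\overline{\mathfrak g}$, hence acts on all of $IH^*(X,\mathbb C)$ through the $\overline{\mathfrak g}$-module structure, and by Schmid's $SL_2$-orbit theorem \cite{schmid1973variation} the filtration $W_N$ is precisely the relative monodromy weight filtration of this action; because $N$ acts by derivations, $W_N$ is a filtration by subrings. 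For $P_\beta$, the formula $P_m IH^k(X,\mathbb C) = \sum_i \big( \ker L_\beta^{\,2n+m+i-k} \cap \mathrm{im}\, L_\beta^{\,i-1} \big) \cap IH^k(X,\mathbb C)$ realises it as the Leray-type filtration attached to the single nilpotent $L_\beta$, and multiplicativity --- which, by semismallness of a $\mathbb Q$-factorial terminalization, descends from Shen--Yin \cite{shen2022topology} as in the lemma above --- shows $P_\beta$ is also a filtration by subrings.

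Second, I would localise the comparison to the subalgebra $\mathfrak g_\rho \cong \mathfrak{so}(5)$ generated by the $\mathfrak{sl}_2$-triples of $\rho,\beta,\eta$ introduced above. Relative hard Lefschetz for $f$ identifies $P_\beta$ with the weight filtration, recentred at $n = \dim B$, of the $\mathfrak{sl}_2$-triple completing $L_\beta$ inside $\mathfrak g_\rho$. On the other side, Proposition \ref{type III existence} produces a projective type III degeneration for \emph{every} index-$3$ nilpotent in the relevant open subset of $\mathfrak{so}(\Gamma^h_{\mathbb Q}, q)$; since the restricted monodromy group $G_X$ is Zariski dense in $SO(\Gamma_{\mathbb C})$ (Remark \ref{monodromyremark}), I may choose the degeneration so that $N$, acting on $IH^2(X,\mathbb C)$ with Jordan type $[3,1,\dots,1]$, is conjugated into the semisimple part of $\mathfrak g_\rho$ as the regular nilpotent of $\mathfrak{so}(\langle\rho,\beta,\eta\rangle)$, with $\mathrm{im}(N^2)=\mathbb Q\beta$. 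The equality $P_\beta = W_N$ on $IH^*(X,\mathbb C)$ then reduces to a statement about $\mathfrak g_\rho$-representations: decompose $IH^*(X,\mathbb C)$ into isotypic components for $\overline{\mathfrak g}$, restrict each to $\mathfrak g_\rho \cong \mathfrak{so}(5)$, and on each irreducible summand check that the recentred Lefschetz filtration of $L_\beta$ coincides with the monodromy weight filtration of $N$. For the summands inside the Verbitsky component this follows by multiplicativity once it is verified on the generators in $IH^2(X,\mathbb C)$, where both are three-step filtrations whose graded dimensions are prescribed by the perverse Hodge numbers (computed via Theorem \ref{symplecticsymmetry}) respectively by the Jordan type of $N$, and the normalisation $\mathrm{im}(N^2)=\mathbb Q\beta$ matches them; for the remaining $\overline{\mathfrak g}$-isotypic pieces one argues directly from the $\mathfrak{so}(5)$-representation theory, both filtrations being determined there by elements of the same $\mathfrak g_\rho$, using the explicit structure of the Verbitsky component (Theorem \ref{verbitsky component}, Proposition \ref{intersectionbogomolovresult}) to control which irreps can occur.

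The step I expect to be the main obstacle is the upgrade from the numerical ``perverse $=$ Hodge'' identity to an equality of \emph{filtrations} that is simultaneously compatible with the Lefschetz $\mathfrak{sl}_2$ of $L_\beta$ and with the weight data of $N$: one must pin down the recentring and the identification $\mathrm{im}(N^2)=\mathbb Q\beta$ precisely enough that the two $\mathfrak{sl}_2$-triples inside $\mathfrak g_\rho$ are conjugate under an element of the adjoint group of $\mathfrak g_\rho$ that fixes the filtration --- the ``hyperk\"ahler rotation'' of \cite{harder2021p}, here realised purely algebraically inside $\mathfrak{so}(5)$. I also emphasise a difficulty we deliberately sidestep: we do \emph{not} identify $W_N$ with the weight filtration of an honest limit mixed Hodge structure on $IH^*(X,\mathbb C)$, since the nearby-cycles description of the latter requires treating the degeneration as a variation of Hodge modules rather than of pure Hodge structures; the theorem as stated only concerns the monodromy weight filtration of $N$, for which the representation-theoretic comparison above suffices.
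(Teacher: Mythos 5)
Your proposal follows essentially the same route as the paper's proof: the paper likewise takes the triple $(\rho,\beta,\eta)$ attached to the Lagrangian fibration, realizes the index-$3$ nilpotent $N_{\beta,\rho}=[L_\beta,\Lambda_\rho]\in\overline{\mathfrak g}$ (which is exactly your regular nilpotent of $\mathfrak{so}(\langle\rho,\beta,\eta\rangle)$ with $\mathrm{im}(N^2)=\mathbb Q\beta$) as the logarithmic monodromy of a type III degeneration via Proposition \ref{type III existence}, and identifies its weight filtration with $P_\beta$ through the $\mathfrak g_\rho\cong\mathfrak{so}(5)$ structure, citing \cite[\S 9--11]{harder2021p} for the representation-theoretic comparison that you sketch explicitly. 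So the two arguments coincide in substance, with your write-up supplying more of the detail that the paper delegates to loc.\ cit.
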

    
    \begin{proof}
The proof follows \cite[Sections~9--11]{harder2021p}. 
 Note that by Proposition~\ref{Qcrit} and Lemma~\ref{IntBBFrest},   
$$
(H^2(X, \mathbb Q), q_X) = (\IH^2(X, \mathbb Q), Q_X).
$$
  Let $\rho, \beta, \eta$ be the triple associated to the Lagrangian fibration $f\colon X \to B$ (see Section~\ref{subsubsection data lagrangian}), and let $\mathfrak g_{\rho} \cong \mathfrak{so}(5)$ be the Lie algebra associated to $\rho$.  By \cite[Lemma 4.1]{soldatenkov2018limit}, there is a nilpotent operator\footnote{The proof uses Meyer's theorem on the lattice $(H^2(X, \mathbb Z), q_X)$, which also requires $b_2 \ge 5$.} $N_{\beta,\rho}$ of index 3 in $\mathfrak{so}(H^2(X, \mathbb Q), q_X)$ corresponding to the pair $(\beta, \rho)$.  By (\ref{equation semisimple isomorphism}) and \cite[Lemma 3.9]{kurnosov2019kuga}, we have $N_{\beta, \rho} = [L_{\beta}, \Lambda_{\rho}] \in \mathfrak g_{\rho} \subset \overline{\mathfrak g}$.

 Let $W_N^\bullet$ be the weight filtration corresponding to the completion of the nilpotent operator $N_{\beta, \rho}$ to an $\mathfrak{sl}_2$-triple.  By Proposition~\ref{type III existence} and Section~\ref{subsubsection lmhs}, $W_N^\bullet$ restricts to the weight filtration of the limit mixed Hodge structure $\IH^k(X, \mathbb Q)$ of the corresponding degeneration $\mathscr X \to \Delta$ with logarithmic monodromy operator $N_{\rho, \beta}$.  On the other hand, by local Torelli and Section~\ref{4}, the isotropic pairs $(\beta, \eta)$ induce an $\mathfrak{sl}_2\times \mathfrak{sl}_2$-action with weight decomposition
$$
\IH^*(X, \mathbb C) = \bigoplus_{p,q} H^{p,q}
$$
 such that the corresponding weight operators $H_\beta, H_\eta$ satisfy 
$$
H_{\beta}|_{H^{p,q}} = (q-n)\id, \quad H_{\eta}|_{H^{p,q}} = (p-n)\id.
$$

 From the proof of Lemma~\ref{lemma invariance perverse dimension}, local Torelli, and the symplectic hard Lefschetz theory of Section~\ref{4.1}, it follows that $H^{p,q} = P^{p,q}$, where the $P^{p,q}$ are the summands of the splitting (\ref{equation perverse splitting}).  It follows that the perverse filtration restricted to $\IH^k(X, \mathbb C)$ agrees with the weight filtration on $\IH^k(X_\infty, \mathbb C)$.

    \end{proof}


\end{document}